\DeclareMathOperator{\Ker}{Ker}
\DeclareMathOperator{\Coker}{Coker}
\DeclareMathOperator{\Image}{Im}
\DeclareMathOperator{\Frac}{Frac}
\DeclareMathOperator{\Stab}{Stab}
\DeclareMathOperator{\Ann}{Ann}
\DeclareMathOperator{\length}{length}
\DeclareMathOperator{\Fix}{Fix}
\newcommand{\floor}[1]{\lfloor #1 \rfloor}
\newcommand{\ceil}[1]{\lceil #1 \rceil}
\begin{document}
%
%   D e f i n i t i o n s
%
%
\theoremstyle{definition}
\newtheorem{Definition}{Definition}[section]
\newtheorem*{Definitionx}{Definition}
\newtheorem{Convention}[Definition]{Convention}
\newtheorem{Construction}{Construction}[section]
\newtheorem{Example}[Definition]{Example}
\newtheorem{Examples}[Definition]{Examples}
\newtheorem{Remark}[Definition]{Remark}
\newtheorem*{Remarkx}{Remark}
\newtheorem{Remarks}[Definition]{Remarks}
\newtheorem{Caution}[Definition]{Caution}
\newtheorem{Conjecture}[Definition]{Conjecture}
\newtheorem*{Conjecturex}{Conjecture}
\newtheorem{Question}[Definition]{Question}
\newtheorem{Questions}[Definition]{Questions}
\newtheorem{RDP}[Definition]{Example}
\newtheorem*{Acknowledgements}{Acknowledgements}
\newtheorem*{Organization}{Organization}
\newtheorem*{Disclaimer}{Disclaimer}
\theoremstyle{plain}
\newtheorem{Theorem}[Definition]{Theorem}
\newtheorem*{Theoremx}{Theorem}
\newtheorem{Theoremy}{Theorem}
\newtheorem{Proposition}[Definition]{Proposition}
\newtheorem*{Propositionx}{Proposition}
\newtheorem{Lemma}[Definition]{Lemma}
\newtheorem{Corollary}[Definition]{Corollary}
\newtheorem*{Corollaryx}{Corollary}
\newtheorem{Fact}[Definition]{Fact}
\newtheorem{Facts}[Definition]{Facts}
\newtheorem{Claim}[Definition]{Claim}
\newtheoremstyle{voiditstyle}{3pt}{3pt}{\itshape}{\parindent}%
{\bfseries}{.}{ }{\thmnote{#3}}%
\theoremstyle{voiditstyle}
\newtheorem*{VoidItalic}{}
\newtheoremstyle{voidromstyle}{3pt}{3pt}{\rm}{\parindent}%
{\bfseries}{.}{ }{\thmnote{#3}}%
\theoremstyle{voidromstyle}
\newtheorem*{VoidRoman}{}

% abgeschrieben aus The LaTeX Companion, 2nd edition,
% von Mittelback & Goossens
%
\newcommand{\prf}{\par\noindent{\sc Proof.}\quad}
\newcommand{\blowup}{\rule[-3mm]{0mm}{0mm}}
\newcommand{\cal}{\mathcal}
\newcommand{\Aff}{{\mathds{A}}}
\newcommand{\BB}{{\mathds{B}}}
\newcommand{\CC}{{\mathds{C}}}
\newcommand{\EE}{{\mathds{E}}}
\newcommand{\FF}{{\mathds{F}}}
\newcommand{\GG}{{\mathds{G}}}
\newcommand{\HH}{{\mathds{H}}}
\newcommand{\NN}{{\mathds{N}}}
\newcommand{\ZZ}{{\mathds{Z}}}
\newcommand{\PP}{{\mathds{P}}}
\newcommand{\QQ}{{\mathds{Q}}}
\newcommand{\RR}{{\mathds{R}}}
\newcommand{\Liea}{{\mathfrak a}}
\newcommand{\Lieb}{{\mathfrak b}}
\newcommand{\Lieg}{{\mathfrak g}}
\newcommand{\Liem}{{\mathfrak m}}
\newcommand{\ideala}{{\mathfrak a}}
\newcommand{\idealb}{{\mathfrak b}}
\newcommand{\idealg}{{\mathfrak g}}
\newcommand{\idealm}{{\mathfrak m}}
\newcommand{\idealn}{{\mathfrak n}}
\newcommand{\idealp}{{\mathfrak p}}
\newcommand{\idealq}{{\mathfrak q}}
\newcommand{\idealI}{{\cal I}}
\newcommand{\lin}{\sim}
\newcommand{\num}{\equiv}
\newcommand{\dual}{\ast}
\newcommand{\iso}{\cong}
\newcommand{\homeo}{\approx}
\newcommand{\mathds}[1]{{\mathbb #1}}
\newcommand{\mm}{{\mathfrak m}}
\newcommand{\pp}{{\mathfrak p}}
\newcommand{\qq}{{\mathfrak q}}
\newcommand{\rr}{{\mathfrak r}}
\newcommand{\pP}{{\mathfrak P}}
\newcommand{\qQ}{{\mathfrak Q}}
\newcommand{\rR}{{\mathfrak R}}
%
%  evtl. auch \"uber \mathbb oder \Bbb
%
\newcommand{\OO}{{\cal O}}
\newcommand{\calA}{{\cal A}}
\newcommand{\calO}{{\cal O}}
\newcommand{\calU}{{\cal U}}
\newcommand{\numero}{{n$^{\rm o}\:$}}
\newcommand{\mf}[1]{\mathfrak{#1}}
\newcommand{\mc}[1]{\mathcal{#1}}
\newcommand{\into}{{\hookrightarrow}}
\newcommand{\onto}{{\twoheadrightarrow}}
\newcommand{\Spec}{\mathrm{Spec}\:}
\newcommand{\BigSpec}{{\rm\bf Spec}\:}
\newcommand{\Spf}{\mathrm{Spf}\:}
\newcommand{\Proj}{\mathrm{Proj}\:}
\newcommand{\Pic}{\mathrm{Pic }}
\newcommand{\Picloc}{\mathrm{Picloc }}
\newcommand{\Picloclocloc}{\Picloc^{\loc,\loc}}
\newcommand{\Br}{\mathrm{Br}}
\newcommand{\NS}{\mathrm{NS}}
\newcommand{\id}{\mathrm{id}}
\newcommand{\Sym}{{\mathfrak S}}
\newcommand{\Aut}{\mathrm{Aut}}
\newcommand{\Autp}{\mathrm{Aut}^p}
\newcommand{\End}{\mathrm{End}}
\newcommand{\Hom}{\mathrm{Hom}}
\newcommand{\Ext}{\mathrm{Ext}}
\newcommand{\ord}{\mathrm{ord}}
\newcommand{\coker}{\mathrm{coker}\,}
\newcommand{\divisor}{\mathrm{div}}
\newcommand{\Def}{\mathrm{Def}}
\newcommand{\et}{\mathrm{\acute{e}t}}
\newcommand{\loc}{\mathrm{loc}}
\newcommand{\ab}{\mathrm{ab}}
\newcommand{\pitop}{{\pi_1^{\mathrm{top}}}}
\newcommand{\pitoploc}{{\pi_{\mathrm{loc}}^{\mathrm{top}}}}
\newcommand{\piet}{{\pi_1^{\mathrm{\acute{e}t}}}}
\newcommand{\pietloc}{{\pi_{\mathrm{loc}}^{\mathrm{\acute{e}t}}}}
\newcommand{\pietlocab}{{\pi_{\mathrm{loc}}^{\mathrm{\acute{e}t},{\rm ab}}}}
\newcommand{\piN}{{\pi^{\mathrm{N}}_1}}
\newcommand{\piNloc}{{\pi_{\mathrm{loc}}^{\mathrm{N}}}}
\newcommand{\piNlocab}{{\pi_{\mathrm{loc}}^{{\rm N}, {\rm ab}}}}
\newcommand{\Het}[1]{{H_{\mathrm{\acute{e}t}}^{{#1}}}}
\newcommand{\Hfl}[1]{{H_{\mathrm{fl}}^{{#1}}}}
\newcommand{\Hcris}[1]{{H_{\mathrm{cris}}^{{#1}}}}
\newcommand{\HdR}[1]{{H_{\mathrm{dR}}^{{#1}}}}
\newcommand{\hdR}[1]{{h_{\mathrm{dR}}^{{#1}}}}
\newcommand{\Torsloc}{\mathrm{Tors}_{\mathrm{loc}}}
\newcommand{\prim}{\mathrm{prim}}
\newcommand{\reduced}{\mathrm{red}}%\red is already defined in \usepackage{color}.
\newcommand{\fppf}{\mathrm{fppf}}
\newcommand{\defin}[1]{{\bf #1}}
\newcommand{\oX}{\mathcal{X}}
\newcommand{\oA}{\mathcal{A}}
\newcommand{\oY}{\mathcal{Y}}
\newcommand{\calC}{{\mathcal{C}}}
\newcommand{\calL}{{\mathcal{L}}}
\newcommand{\bmu}{\boldsymbol{\mu}}
\newcommand{\balpha}{\boldsymbol{\alpha}}
\newcommand{\bCW}{{\mathbf{CW}}}
\newcommand{\bG}{{\mathbf{G}}}
\newcommand{\bL}{{\mathbf{L}}}
\newcommand{\bM}{{\mathbf{M}}}
\newcommand{\bW}{{\mathbf{W}}}
\newcommand{\bD}{{\mathbf{D}}}%dihedral group
\newcommand{\BD}{{\mathbf{BD}}}
\newcommand{\BT}{{\mathbf{BT}}}
\newcommand{\BI}{{\mathbf{BI}}}
\newcommand{\BO}{{\mathbf{BO}}}
\newcommand{\C}{{\mathbf{C}}}
\newcommand{\Dic}{{\mathbf{Dic}}}
\newcommand{\SL}{{\mathbf{SL}}}
\newcommand{\MC}{{\mathbf{MC}}}
\newcommand{\GL}{{\mathbf{GL}}}
\newcommand{\Tors}{{\mathbf{Tors}}}
\newcommand{\Dieu}{{\mathds{D}}}
\newcommand{\Dieulocloc}{\Dieu_{\loc,\loc}}

\newcommand{\GM}[1]{\textcolor{blue}{(GM: #1)}}

\newcommand{\CL}[1]{\textcolor{red}{(CL: #1)}}

\newcommand{\YM}[1]{\textcolor{green}{(YM: #1)}}

\makeatletter
\@namedef{subjclassname@2020}{\textup{2020} Mathematics Subject Classification}
\makeatother

\title[Torsors over Rational Double Points]{Torsors over the Rational Double Points in Characteristic $\mathbf{p}$}

\author{Christian Liedtke}
\address{TU M\"unchen, Zentrum Mathematik - M11, Boltzmannstr. 3, 85748 Garching bei M\"unchen, Germany}
\email{liedtke@ma.tum.de}

\author{Gebhard Martin}
\address{Mathematisches Institut der Universit\"at Bonn, Endenicher Allee 60, 53115 Bonn, Germany}
\curraddr{}
\email{gmartin@math.uni-bonn.de}

\author{Yuya Matsumoto}
\address{Department of Mathematics, Faculty of Science and Technology, Tokyo University of Science, 2641 Yamazaki, Noda, Chiba, 278-8510, Japan}
\email{\url{matsumoto_yuya@rs.tus.ac.jp}}
\email{matsumoto.yuya.m@gmail.com}

%\date{\today}
\subjclass[2020]{14L15, 13A50, 14J17, 13A35, 14L30} 
\keywords{torsor, group scheme, quotient singularity, F-singularity, Dieudonn\'e theory, rational double point, local Picard functor}

\begin{abstract}
 We study torsors under finite group schemes over the punctured spectrum of 
 a singularity $x\in X$ in positive characteristic.
 
 We show that the Dieudonn\'e module of the (loc,loc)-part $\Picloclocloc_{X/k}$ of 
 the local Picard sheaf
 can be described in terms of local Witt vector 
 cohomology, making $\Picloclocloc_{X/k}$ computable. 
 Together with the class group and the abelianised local \'etale fundamental group, 
 $\Picloclocloc_{X/k}$ completely describes the finite abelian torsors over 
 $X\setminus\{x\}$.

 We compute $\Picloclocloc_{X/k}$ for every rational double point singularity,
 which complements results of Artin \cite{ArtinRDP} and Lipman \cite{Lipman},
 who determined $\pietloc(X)$ and ${\rm Cl}(X)$. 
 All three objects turn out to be finite.

 We extend the Flenner--Mumford criterion for smoothness of a normal surface germ $x \in X$
 to perfect fields of positive characteristic, generalising work of Esnault and Viehweg:
 If $k$ is algebraically closed, then $X$ is smooth if and only if 
 $\Picloclocloc_{X/k}$, $\pietloc(X)$, and ${\rm Cl}(X)$ are trivial.
 
 Finally, we study the question whether rational double point singularities
 are quotient singularities by group schemes and if so, whether the group scheme is uniquely determined by the singularity.
 We give complete answers to both questions, 
 except for some $D_n^r$-singularities in characteristic $2$. 
 In particular, we will give examples of (F-injective) 
 rational double points that are not quotient singularities.
\end{abstract}
\setcounter{tocdepth}{1}
\maketitle
\tableofcontents
\section{Introduction}
\subsection{Rational double points}
In 1884, Felix Klein \cite{Klein} classified quotients of $\Spec\CC[x,y]$ by finite subgroups of
$\SL_2(\CC)$ and obtained two series of singularities ($A_n$ and $D_n$), as well as three exceptional
singularities ($E_6$, $E_7$, $E_8$), the famous \emph{rational double point singularities} (RDPs for short).
In 1934, the same list of singularities showed up in Du~Val's \cite{DuVal} classification of normal surface
singularities ``that do not affect the condition of adjunction,'' which can be rephrased
as saying that rational double points are exactly the \emph{canonical surface singularities} in modern terminology.
Of course, much more can be said about these singularities, see for example \cite{Durfee} or \cite{Seminaire}.

\subsection{The linearly reductive case}
How much of the above holds over an algebraically closed field $k$ of positive 
characteristic $p>0$?  
When considering \emph{finite and linearly reductive} subgroup schemes 
(rather than finite subgroups) of $\SL_{2,k}$, a similar picture emerges:
Hashimoto \cite{Hashimoto} classified these subgroup schemes and by results of 
Hashimoto and Liedtke--Satriano \cite{Hashimoto, LiedtkeSatriano} it turns out
that canonical surface singularities in characteristic $p\geq7$ are the same
as quotient singularities by finite and linearly reductive subgroup schemes
of $\SL_{2,k}$. 
As over the complex numbers, we obtain two series and three exceptional cases.

Moreover, \emph{linearly reductive quotient singularities} (lrq singularities for short)
have many
beautiful properties as shown by the authors in \cite{LRQ}:
the quotient presentation is unique, their class groups, F-signatures, 
and local fundamental groups can be easily determined, etc. 
Since lrq singularities are \emph{F-regular}, they belong to a particularly
nice class of singularities.

In fact, for a normal surface singularity, being
F-regular (resp. F-regular and Gorenstein) is equivalent to being 
an lrq singularity with respect to a linearly reductive subgroup scheme of
$\GL_{2,k}$ (resp. $\SL_{2,k})$, see \cite[Theorem 11.2]{LRQ}.

\subsection{Beyond F-regularity}
The minimal model programme (MMP) leads us to study canonical singularities.
In dimension 2, these are precisely the RDP singularities and in 
characteristic $p>0$, they have been classified
by Artin \cite{ArtinRDP}.
We can deal with the F-regular ones by the results just mentioned. 
However, in characteristic $p\leq5$, not all of them are F-regular and in particular,
they cannot all be lrq singularities.
But, for the MMP in positive characteristic, we need to understand \emph{all} 
singularities that can occur there and not merely the ``nice'' ones, 
like the F-regular ones.
For example, in dimension 2, we can ask the following.
(We will only work with complete $k$-algebras and consider
formal isomorphisms all the time, as indicated by the hat on top of 
$\widehat{\Aff}^2_k$ in the sequel.)

\begin{Questions}
 \label{questions}
Let $x\in X$ be an RDP over an algebraically closed
field $k$ of positive characteristic.
\begin{enumerate}
    \item Is $x\in X$ quotient by a finite group scheme $G$, that is, 
$X\cong\widehat{\Aff}_k^2/G$?
\item If yes, are the group scheme $G$ and its action on $\widehat{\Aff}_k^2$ 
unique up to isomorphism?
\end{enumerate}
\end{Questions}

The goal of this article is three-fold:
\begin{enumerate}
\item We extend Klein's work \cite{Klein}
to characteristic $p$ and generalise
Artin's computations \cite{ArtinRDP} to the context of arbitrary
\emph{finite group schemes}, rather than merely groups.
\item We develop techniques and define invariants
to handle singularities that are not F-regular, in fact, not even
F-injective.
The main notion will be that of a \emph{local torsor} over a singularity.
These techniques and invariants will prove useful when studying 
singularities of the MMP in positive characteristic.
To illustrate this, we will compute these invariants for the
RDPs.
\item As an application, we establish characteristic-$p$
analogs of Mumford's theorem that characterises smoothness
in dimension 2 in terms of the local fundamental group.
\end{enumerate}

\subsection{Local torsors over singularities}
\label{subsec: intro local torsors}
A major theme of this article is the detection of (local) torsors
over singularities of dimension $d\geq2$, which leads to an 
\emph{inverse problem in invariant theory}:

Given a finite quotient singularity 
$x\in X=\widehat{\Aff}^d/G$ over the complex numbers, 
where $G$ acts freely outside the closed point $0$,
one can easily recover $G$ and 
its action on $\widehat{\Aff}^d$ as follows:
$q:(\widehat{\Aff}^d-\{0\})\to(\widehat{\Aff}^d/G-\{\overline{0}\})$
is the universal topological cover and thus,
we have $G\cong \pitoploc(X):=\pitop(X\setminus\{x\})$ 
and $G$ is the group of deck transformation of $q$.
In particular, this shows uniqueness of $G$ and uniqueness of 
the quotient presentation.

Next, let $k$ be an algebraically closed field of positive characteristic $p$.
Let $G$ be a finite group scheme over $k$, assume that $G$
acts on $\widehat{\Aff}^d$, such that the action is free 
outside the closed point and such that the closed point is fixed by $G$. 
Then, $x\in X:=\widehat{\Aff}^d/G$ is called the \emph{quotient singularity} 
associated to this datum, see also Definition \ref{def: quotient singularity}.
We are now interested in reversing this process, that is, given a singularity, 
we would like to know whether it is a quotient singularity as just defined. 
More precisely, we ask the following.

\begin{Questions}\label{question intro 2}
Let $x\in X$ be a singularity.
\begin{enumerate}
 \item How can we decide whether it is a quotient singularity?
 \item If $x\in X$ is a quotient singularity, does it
    determine the \emph{quotient presentation}, 
    that is, $G$ and its action on $\widehat{\Aff}^d$?
\end{enumerate}
\end{Questions}

Let $x\in X=\widehat{\Aff}^d/G$ be a quotient singularity (in the sense of 
Definition \ref{def: quotient singularity}). 
Then, the $G$-action on $\widehat{\Aff}^d$ makes the quotient
map $\pi:\widehat{\Aff}^d-\{0\}\to X-\{x\}$ a \emph{$G$-torsor}
that does not extend to a $G$-torsor $\widehat{\Aff}^d\to X$.
This leads to the notion of a \emph{local $G$-torsor (of primitive class)}, 
see Definition \ref{def: local torsor} and Definition \ref{def: primitive}.

Before proceeding, we recall the \emph{connected-\'etale sequence}, which is a 
canonical short exact and split sequence of group schemes
$$
 1\,\to\,G^\circ\,\to\,G\,\to\,G^\et\,\to\,1,
$$
where $G^\circ$ is infinitesimal and $G^\et$ is \'etale.

\subsubsection{\'Etale group schemes}
For a quotient singularity  $x\in X=\widehat{\Aff}^d/G$, we
have a factorisation of the quotient map
$$
   \widehat{\Aff}^d \,\to\, \widehat{\Aff}^d/G^\circ \,\to\, X.
$$
The \emph{local \'etale fundamental group}
$\pietloc(X) :=\piet(X-\{x\})$ is isomorphic to $G^\et$ and thus,
recovers $G^\et$ and shows uniqueness and canonicity of
$\widehat{\Aff}^d/G^\circ \to X$.
In particular, the local \'etale fundamental group describes
local torsors under \'etale group schemes.

\begin{RDP} 
For RDP singularities, the groups $\pietloc(X)$ 
have been computed by Artin \cite{ArtinRDP}.
\end{RDP}

\subsubsection{Diagonalisable group schemes}
If $x\in X=\widehat{\Aff}^d/G$ is a quotient singularity and 
$G$ is \emph{diagonalisable}, that is, a subgroup
scheme of $\GG_m^N$ for some $N$, then
the \emph{class group} ${\rm Cl}(X)$ is isomorphic
to the Cartier dual $G^D$, see \cite[Proposition 7.1]{LRQ}.
More generally, the class group describes local torsors
under diagonalisable group schemes, see Proposition \ref{prop: torsfinabcl}.

\begin{RDP}
For RDP singularities, the groups ${\rm Cl}(X)$ 
have been computed by Lipman \cite{Lipman}.
\end{RDP}

\subsubsection{Abelian group schemes}
If $x\in X=\widehat{\Aff}^d/G$ is a quotient singularity and 
$G$ is abelian, then there is a canonical embedding
$G^D\to\Picloc_{X/k}$, where $G^D$ is the Cartier dual of $G$ and 
where $\Picloc_{X/k}$ is the \emph{local Picard functor}.
This functor was extensively studied by Boutot in \cite{Boutot}.
However, it is \emph{not} true in general that $\Picloc_{X/k}$
is isomorphic to $G^D$.
In fact, this is one of the reasons why 
Questions \ref{question intro 2} are hard to answer.

\begin{Example}
 The RDP of type $E_6^0$ in characteristic $p=3$ is a
 quotient singularity by $\balpha_3$.
 Here, $\Picloc_{X/k}$ contains 
 $\bmu_3\cong(\C_3)^D\cong{\rm Cl}(X)^D$ and is
 thus strictly larger than $\balpha_3^D\cong\balpha_3$.
 Recall that $\balpha_3$ and $\bmu_3$ are the Frobenius kernels on
 $\mathbb{G}_a$ and $\mathbb{G}_m$, respectively, and that
 $\C_3$ is the cyclic group of order $3$.
\end{Example}

\subsubsection{Group schemes of (loc,loc)-type}
Given a singularity $x\in X$, we have the associated local
Witt vector cohomology groups $H^i_\idealm(X,W_n \OO_X)$.
For a group scheme $G$ over $k$ (or a $W(k)$-module $M$) with an
endomorphism $F$, we denote the kernel of $F$ by $G[F]$ 
(resp. $M[F]$).
The colimit
$$
 \varinjlim_{m,n}\, H^i_{\idealm}(X,W_n\OO_X)[F^m]
$$
is a $W(k)$-module with actions of Frobenius $F$
and Verschiebung $V$ turning it into a left module under the
\emph{Dieudonn\'e ring} $\Dieu:=W(k)\{F,V\}/(FV-p)$, a non-commutative
polynomial ring over $W(k)$.

Let us recall the \emph{(covariant) (loc,loc)-Dieudonn\'e module} 
associated to a finite and commutative group scheme $G$
$$
   \Dieulocloc(G) \,:=\, \varinjlim_{n,m}\, \Hom(\bL_{n,m},G),
$$
which is a $\Dieu$-module, and
where the $\bL_{n,m}$ are certain finite, commutative, and infinitesimal
group schemes, see, for example, \cite{Oort}.
If $G$ is of \emph{(loc,loc)-type}, that is, both $G$ and $G^D$ are
infinitesimal, then $\Dieulocloc$ gives rise to an equivalence of
categories with a category of certain $\Dieu$-modules, see Section \ref{subsec: loclocCartierDieudonne} for more details.

In Section \ref{subsec: Cartier--Dieudonne theory}, we will consider, more generally, (loc,loc)-Dieudonn\'e modules of fppf sheaves of abelian groups.
In Section \ref{subsec: abelian loc loc}, we define the subfunctor
$$
\Picloclocloc_{X/k} \,:=\, \varinjlim_{m,n} \,\Picloc_{X/k}[F^n,V^m].
$$
Even if the functor $\Picloc_{X/k}$ is not representable by a scheme,
this colimit will be an ind-group scheme.

\begin{Theorem}[Corollary \ref{cor: Dieudonneofpicloc}]
 For a singularity $x\in X$ of dimension $d \geq 2$ in characteristic $p>0$, $\Picloclocloc_{X/k}$ is an ind-(loc,loc) group scheme and there exist isomorphisms of Dieudonn\'e modules
 $$
  \Dieulocloc(\Picloclocloc_{X/k}) \,\cong\, 
  \Dieulocloc(\Picloc_{X/k}) \,\cong\, 
  \varinjlim_{m,n}\, H^2_{\idealm}(X,W_n\OO_X)[F^m].
 $$
\end{Theorem}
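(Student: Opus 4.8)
The plan is to establish the two isomorphisms separately, starting from the right. First I would identify $\Dieulocloc(\Picloc_{X/k})$ with local Witt vector cohomology. The key input is the exponential/Artin--Schreier--Witt type description of the $W_n$-torsion of the local Picard functor: for each $n$, one has a functorial identification relating $\Picloc_{X/k}[F^n]$ (or an appropriate subfunctor) to the cohomology group $H^1$ of the relevant complex of Witt vector sheaves on the punctured spectrum, which by the local cohomology long exact sequence and the fact that $H^1$ of $W_n\OO$ on $X$ itself vanishes (or is controlled) translates into $H^2_{\idealm}(X,W_n\OO_X)$. Concretely, I would use the short exact sequences $0 \to W_{n}\OO_X \xrightarrow{V} W_{n+1}\OO_X \to \OO_X \to 0$ and $0 \to W_n\OO_X \xrightarrow{F^m} W_{n+?}\OO_X \to (\text{something}) \to 0$, take local cohomology, and assemble the colimit over $m$ and $n$. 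The compatibility of the Frobenius and Verschiebung operators on both sides (coming from the Witt vector functoriality) is what upgrades the abstract isomorphism of $W(k)$-modules to an isomorphism of $\Dieu$-modules.

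Next I would prove that $\Picloclocloc_{X/k}$ is an ind-(loc,loc) group scheme. By Boutot's results \cite{Boutot}, $\Picloc_{X/k}$ is (for $x\in X$ a singularity of dimension $d\geq 2$) a formal/ind-group object with representable torsion subfunctors; the subfunctor $\Picloc_{X/k}[F^n,V^m]$ is then represented by a finite commutative group scheme killed by $F^n$ and $V^m$, hence of (loc,loc)-type, and the colimit over $m,n$ is an ind-(loc,loc) group scheme. Here one must check that the relevant torsion is finite over $k$ — this follows from finiteness of the local cohomology groups $H^2_\idealm(X,W_n\OO_X)$ as $W_n(k)$-modules, which holds because $X$ is the spectrum of a complete local ring of dimension $\geq 2$ with an isolated singularity, so $H^2_\idealm$ is cofinite-type.

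For the remaining isomorphism $\Dieulocloc(\Picloclocloc_{X/k}) \cong \Dieulocloc(\Picloc_{X/k})$, the point is that passing from a commutative affine group scheme to its maximal (loc,loc) ind-subgroup scheme does not change the (loc,loc)-Dieudonn\'e module, since $\Dieulocloc(G) = \varinjlim_{n,m}\Hom(\bL_{n,m},G)$ and each $\bL_{n,m}$ is itself infinitesimal with infinitesimal Cartier dual, so every homomorphism $\bL_{n,m}\to G$ factors through $G[F^N,V^M]$ for $N,M$ large; thus $\Hom(\bL_{n,m}, \Picloc_{X/k}) = \Hom(\bL_{n,m}, \Picloclocloc_{X/k})$ and the colimits agree. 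This is essentially formal once the bookkeeping is set up correctly, using the equivalence of categories between (loc,loc) group schemes and the corresponding category of $\Dieu$-modules recalled in Section \ref{subsec: loclocCartierDieudonne}.

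The main obstacle I expect is the first step: pinning down the precise functorial comparison between $\Picloc_{X/k}[F^n]$ and local Witt vector cohomology in a way that is simultaneously compatible with $F$, with $V$, and with the transition maps defining the colimit. The naive Kummer/Artin--Schreier argument gives the degree shift, but controlling the error terms $H^1_\idealm(X,W_n\OO_X)$ and $H^2_\idealm(X,\OO_X)$ and checking that they do not interfere with the colimit (e.g.\ that they are killed after applying $[F^m]$ and passing to the limit, using that $H^1_\idealm$ of a sheaf that is "locally free in codimension $\leq 1$" vanishes, and rationality-type vanishing when $X$ is an RDP) is where the real work lies. Once the $n=1$ case is handled cleanly, the general $n$ follows by d\'evissage along the Verschiebung filtration, and the colimit formula then drops out.
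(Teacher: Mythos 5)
Your overall architecture --- reduce to local Witt vector cohomology, d\'evissage along the Verschiebung filtration, then formal (loc,loc)-Dieudonn\'e theory --- matches the paper's, and your last step (that passing to the maximal (loc,loc)-subsheaf does not change $\Dieulocloc$, since every map from an $\bL_{n,m}$ lands in it) is exactly the paper's argument. The genuine gap is in your first step, which is precisely the bridge you admit is "where the real work lies," and the route you sketch for it is not the one that works. The paper does \emph{not} compute $\Picloc_{X/k}[F^n]$ directly by an exponential or Artin--Schreier--Witt description of the torsion of the local Picard functor (which would drag $\GG_m$ into the picture and is delicate because $\Picloc_{X/k}$ is not representable when $d=2$). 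Instead it uses Boutot's theorem that $\Hom(G^D,\Picloc_{X/k})\cong\overline{\Hfl{1}(U,G)}$ for finite abelian $G$, together with Cartier duality $\bL_{n,m}^D\cong\bL_{m,n}$, to rewrite $\Dieulocloc(\Picloc_{X/k})=\varinjlim\Hom(\bL_{n,m},\Picloc_{X/k})$ as $\varinjlim\overline{\Hfl{1}(U,\bL_{m,n})}$; the latter is computed from the flat-cohomology long exact sequences (on $X$ and on $U$) of the group-scheme sequence $0\to\bL_{n,m}\to\bW_n\xrightarrow{F^m}\bW_n\to 0$, using $\Hfl{1}(X,\bW_n)=H^1(X,W_n\OO_X)=0$ and $\Hfl{1}(U,\bW_n)=H^2_{\idealm}(X,W_n\OO_X)$. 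Without this torsor-theoretic input your "functorial identification" has no content, and the error terms you worry about are exactly what this route disposes of.

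There is also a concrete error in your representability argument: $H^2_{\idealm}(X,W_n\OO_X)$ is \emph{not} a finite $W_n(k)$-module for a surface singularity (already $H^2_{\idealm}$ of $k[[x,y]]$ is infinite-dimensional; being Artinian or of cofinite type does not give finite length). What is finite is the kernel $H^2_{\idealm}(X,W_n\OO_X)[F^m]$, and proving this is where the hypotheses enter: for $d\geq 3$ one quotes SGA2 for $H^2_{\idealm}(X,\OO_X)$ itself, while for $d=2$ one needs $X$ normal so that $U$ is regular, hence F-rational, so that $H^2_{\idealm}(X,\OO_X)$ has finite length as an $R[F]$-module; the case of general $n$ then follows by induction via $0\to\OO_X\xrightarrow{V^n}W_{n+1}\OO_X\to W_n\OO_X\to 0$. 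Relatedly, you cannot invoke Boutot for representability of $\Picloc_{X/k}[F^n,V^m]$ in dimension $2$; in the paper, representability and finiteness of $\Picloclocloc_{X/k}[V^m,F^n]$ are deduced \emph{a posteriori} from the finiteness of its Dieudonn\'e module via the equivalence of categories for ind-(loc,loc) group schemes, so the logical order is the reverse of what you propose.
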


Thus, the $\Dieu$-module structure on the right hand side
describes local torsors over $x\in X$ 
under abelian group schemes of (loc,loc)-type.

\begin{Remark}
If $x\in X$ is a normal surface singularity that is 
\emph{not} F-injective, then the Frobenius action on $H^2_\idealm(X,\OO_X)$
has a non-trivial kernel.
Then, $\varinjlim_{m,n}\, H^2_{\idealm}(X,W_n\OO_X)[F^m]$ is
non-zero, thus, $\Picloclocloc_{X/k}$ is non-trivial,
and thus, there exist non-trivial torsors under (loc,loc) group
schemes over $x\in X$.
Thus, failure of F-injectivity gives rise to 
geometric objects.
\end{Remark}

\begin{RDP}
For RDPs, we compute these $\Dieu$-modules and the associated group schemes
of (loc,loc)-type in Table \ref{table: torsors with Picloclocloc} and 
Table \ref{table: wittvectorcohomology}.
\end{RDP}

An interesting corollary of our computations is the following
finiteness result, which should be compared with Artin's finiteness
result for $\pietloc(X)$ and Lipman's finiteness result for ${\rm Cl}(X)$
if $x\in X$ is an RDP.

\begin{Theorem}[Theorem \ref{thm: picloclocloc finite}]
 If $x\in X$ is an RDP in characteristic $p>0$, then
 the length of $\varinjlim_{m,n} H^2_{\idealm}(X,W_n\OO_X)[F^m]$
 as a $\Dieu$-module is finite.
 In particular, $\Picloc_{X/k}^{\loc,\loc}$ 
 is a finite group scheme.
\end{Theorem}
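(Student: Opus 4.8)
The plan is to reduce the statement to a finiteness property of the local Witt vector cohomology of an RDP and then exploit the resolution of singularities. By the preceding Theorem (Corollary \ref{cor: Dieudonneofpicloc}), it suffices to show that $M := \varinjlim_{m,n} H^2_{\idealm}(X, W_n\OO_X)[F^m]$ has finite length as a $\Dieu$-module; the claim about $\Picloc_{X/k}^{\loc,\loc}$ being finite then follows from the $\Dieu$-module dictionary for (loc,loc) group schemes recalled in Section \ref{subsec: loclocCartierDieudonne}, since a finite-length $\Dieu$-module corresponds to a finite group scheme.

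First I would relate the local cohomology groups $H^2_{\idealm}(X, W_n\OO_X)$ to the cohomology of a resolution $\pi : \widetilde{X} \to X$ with exceptional divisor $E$. Because $x \in X$ is a \emph{rational} singularity, $R^1\pi_*\OO_{\widetilde X} = 0$, and more is true at the level of Witt vectors: rationality of an RDP propagates through the Witt-vector sheaves, so that the groups $H^2_{\idealm}(X, W_n\OO_X)$ are controlled by $H^1(E, W_n\OO_E)$ (via the long exact sequence relating local cohomology on $X$ to cohomology of $\widetilde{X}$ and of the formal completion along $E$, together with the theorem on formal functions). Since $E$ is a proper curve over $k$ — in fact a tree of $\PP^1$'s, as $X$ is an RDP — we have $H^1(E, W_n\OO_E) = 0$ for all $n$: each component contributes nothing, and the nodal/tree structure introduces no $H^1$ either. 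Tracking this through, $H^2_{\idealm}(X, W_n\OO_X)$ is a finite-length $W_n(k)$-module for each fixed $n$, with length bounded independently of $n$ by the geometric genus-type data of the resolution (which is $0$ for an RDP).

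The key point is then that the Frobenius $F$ acts on $\varinjlim_n H^2_{\idealm}(X, W_n\OO_X)$ in a way that makes the stabilised module $M = \varinjlim_{m,n} H^2_{\idealm}(X,W_n\OO_X)[F^m]$ finite. For an RDP, $H^2_{\idealm}(X,\OO_X)$ is a finite-dimensional $k$-vector space (its dimension is the geometric genus $p_g = 0$, or more precisely its length is $\dim_k H^2_{\idealm}(X,\OO_X)$ which is small and computed case-by-case), and the Verschiebung-filtration on $\varinjlim_n H^2_{\idealm}(X,W_n\OO_X)$ has graded pieces all isomorphic to subquotients of $H^2_{\idealm}(X,\OO_X)$. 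Hence, after passing to $F$-nilpotent elements and taking the colimit, $M$ is built from finitely many copies of (subquotients of) $H^2_{\idealm}(X,\OO_X)$, each of finite $k$-dimension, so $M$ has finite length as a $W(k)$-module and a fortiori as a $\Dieu$-module. Concretely, one can run this through the explicit half-page of computations per Dynkin type; the finiteness is uniform because the key invariant — essentially $\dim_k H^2_{\idealm}(X,\OO_X)$, which is $\leq 1$ for RDPs and $0$ in the F-injective cases — is bounded.

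The main obstacle I expect is the Witt-vector bookkeeping: one must show that rationality of the RDP is inherited by all the Witt sheaves $W_n\OO$, i.e.\ that $R^1\pi_* W_n\OO_{\widetilde X} = 0$ (or at least has bounded length), which requires an induction on $n$ using the exact sequences $0 \to \OO \xrightarrow{V^{n-1}} W_n\OO \to W_{n-1}\OO \to 0$ together with the rationality input $R^1\pi_*\OO_{\widetilde X} = 0$, and care is needed because these sequences of sheaves of abelian groups are not $\OO$-linear. Once this vanishing is in hand, the finiteness of the $\Dieu$-module length is essentially formal, and the ``in particular'' statement follows from the equivalence of categories between finite (loc,loc) group schemes and finite-length $\Dieu$-modules of the appropriate type. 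The explicit tables then provide not just finiteness but the exact value of the length and the isomorphism type of $\Picloc_{X/k}^{\loc,\loc}$ for each RDP.
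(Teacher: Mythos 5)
There is a genuine gap, and it sits exactly at the point where the theorem has its content. Your argument rests on the claim that $H^2_{\idealm}(X,\OO_X)$ is a finite-dimensional $k$-vector space of dimension $p_g=0$. This is false: $p_g$ is $\dim_k R^1\pi_*\OO_{\widetilde X}$, whereas $H^2_{\idealm}(X,\OO_X)$ is the top local cohomology of a $2$-dimensional ring, an Artinian but \emph{infinite}-length module (in the paper's own computation it has the infinite basis $\{e_{i,j}, f_{i,j}\}_{i,j\geq 1}$). Likewise $H^2_{\idealm}(X,W_n\OO_X)$ is not of finite length over $W_n(k)$, so rationality of $X$ and the vanishing of $H^1(E,W_n\OO_E)$ for a tree of $\PP^1$'s do not make these groups small. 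What \emph{is} finite is each Frobenius kernel $H^2_{\idealm}(X,W_n\OO_X)[F^m]$ for fixed $(m,n)$ --- but that is Lemma \ref{lem: finitelength}, which is already needed just to make sense of Corollary \ref{cor: Dieudonneofpicloc}, and it only exhibits the module $M$ as an increasing union of finite pieces. The theorem is precisely the assertion that this union stabilizes, i.e.\ that the lengths are bounded as $m,n\to\infty$, and nothing in your proposed mechanism delivers that bound: the $V$-filtration on $\varinjlim_n H^2_{\idealm}(X,W_n\OO_X)$ has infinitely many graded pieces, and the issue is whether $F$-nilpotent classes keep lifting to higher Witt length.

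The paper's proof does not attempt a conceptual argument at all: for F-injective RDPs the module is trivial (Corollary \ref{cor: picloclocloc trivial}), and for the non-F-injective ones the finiteness is read off from the explicit case-by-case computation of Proposition \ref{prop: torsors loclocloc new}, whose key technical input is Lemma \ref{lem: extension}, a criterion showing that specific classes in $H^2_{\idealm}(X,W_{n-1}\OO_X)[F^\infty]$ do \emph{not} lift to $H^2_{\idealm}(X,W_n\OO_X)[F^\infty]$. The authors state explicitly that they are not aware of an a priori reason for this finiteness, so the conceptual route you sketch is not merely different from the paper's --- it is unlikely to exist in the form you describe. Your closing remark that ``one can run this through the explicit half-page of computations per Dynkin type'' is in fact the entire proof; everything before it either restates known reductions or relies on the false finite-dimensionality claim.
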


We note that there is also a curious coincidence of numbers between 
this finite length, the Artin co-index of an RDP, and
certain local cohomology groups of the minimal resolution
of singularities that are related to equisingular deformations, see
Section \ref{subsec: curious}.

\subsubsection{Torsors and the local fundamental group scheme}
To describe all torsors over a (suitable) scheme, 
Nori \cite{Nori} constructed the fundamental group scheme.
Using this, Esnault and Viehweg \cite{EsnaultViehweg} constructed
a \emph{local fundamental group scheme} $\piNloc(U,X,x)$ 
for a singularity $x\in X$, where we set $U:=X-\{x\}$.
As shown in loc.cit., the maximal \'etale quotient of $\piNloc(U,X,x)$
is isomorphic to the local fundamental group $\pietloc(X)$.

Now, let $x\in X=\widehat{\Aff}^d/G$ be a quotient singularity.
Then, we have $G^\et\cong\pietloc(X)$, and one might be tempted to think
that $\piNloc(U,X,x)$ is isomorphic to $G$.
One can prove (Lemma \ref{lem: quotientsofpinloc})
that $G$ is always a quotient of $\piNloc(U,X,x)$ that
induces an isomorphism on maximal \'etale quotients.

\begin{Examples}
There exist examples of the following
\begin{enumerate}
  \item There exists a quotient singularity 
   $x\in X=\widehat{\Aff}^2/G$ that is an RDP,
   such that the natural surjection $\piNloc(U,X,x)\to G$ 
   is \emph{not} an isomorphism, see
   Example \ref{ex: weirdpinlocs}.
   We refer to Remark \ref{rem: cokerneltoolarge} for some implications of this phenomenon.
  \item There exists an RDP $x\in X$ and a tower of two local torsors 
   that is \emph{not} dominated by a local torsor over $x\in X$,
   see Lemma \ref{lemma example 2}.
   Thus, Galois closures of towers of torsors may fail to exist.
   Of course, non-existence of Galois closure of torsors was already known since
   \cite{ABETZ}, but this shows that this phenomenon has geometric significance in the study 
   of quotient singularities.
  \item There exists an RDP $x\in X$ and a finite and infinitesimal group scheme $G$
  that acts freely on $X\setminus\{x\}$ and fixes $\{x\}$, such that
  $X/G\cong X$.
  Thus, we obtain a local $G$-torsor $X\to X$ and thus, even an infinite tower,
  see Proposition \ref{prop: infinitetowers}.
\end{enumerate}
We note that the group schemes used in these examples are finite and commutative, 
so these pathological phenomena do not stem from the complexity of finite and non-commutative 
group schemes.
\end{Examples}

Thus, the local fundamental group scheme
$\piNloc(U,X,x)$ displays quite strange properties already for RDPs, 
making it very different from classical local fundamental groups.

\subsection{Detecting smoothness via local torsors}
A famous theorem of Mumford \cite{Mumford} states that a normal
two-dimensional singularity
$x\in X$ is smooth if and only if its local fundamental group
is trivial.
Moreover, Flenner \cite{Flenner} showed that already triviality of the local \'etale
fundamental group $\pietloc(X)$ is sufficient for smoothness.

This result was generalised to characteristic $p>0$ by Esnault and Viehweg \cite{EsnaultViehweg}
by showing that a normal two-dimensional singularity $x\in X$ is smooth if and only if 
the local Nori 
fundamental group scheme $\piNloc(U,X,x)$ is trivial 
(here, we set $U:=X\backslash\{x\}$) with the possible exception of three 
RDPs in characteristic $p=2,3$.
Using the results and techniques developed in this article, 
we can remove these exceptions:

\begin{Theorem}(Theorem \ref{thm: Mumfordstheorem})
 Let $x\in X$ be a normal two-dimensional singularity over
 an algebraically closed field of characteristic $p>0$.
 Then, the following are equivalent:
 \begin{enumerate}
     \item $x\in X$ is a smooth point.
     \item $\piNloc(U,X,x)$ is trivial.
     \item ${\rm Cl}(X)$ is trivial, $\pietloc(X)$ is trivial, and $X$ is F-injective.
 \end{enumerate}
\end{Theorem}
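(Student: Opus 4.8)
The plan is to prove the cycle of implications $(1)\Rightarrow(2)\Rightarrow(3)\Rightarrow(1)$, where the only genuinely new content — and the whole point of removing Esnault--Viehweg's exceptions — lies in $(3)\Rightarrow(1)$.

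\textbf{The easy implications.} For $(1)\Rightarrow(2)$: if $x\in X$ is a smooth point, then $U=X\setminus\{x\}$ has trivial Nori fundamental group scheme because every torsor under a finite group scheme over $U$ extends across the codimension-$2$ point $x$ (reflexivity / purity for finite flat torsors over a regular scheme, e.g.\ via the fact that $\OO_{X,x}$ is a regular local ring of dimension $2$ and torsors correspond to finite flat $\OO_X$-algebras that are reflexive hence locally free), and a torsor over $\Spec\OO_{X,x}$ under a finite group scheme with connected base and a rational point is trivial since $\OO_{X,x}$ is strictly Henselian. For $(2)\Rightarrow(3)$: the maximal \'etale quotient of $\piNloc(U,X,x)$ is $\pietloc(X)$ (Esnault--Viehweg), so triviality of $\piNloc$ forces $\pietloc(X)=1$; the maximal diagonalisable (multiplicative) quotient detects ${\rm Cl}(X)$ via the embedding ${\rm Cl}(X)^D\hookrightarrow\piNloc$ coming from Proposition~\ref{prop: torsfinabcl} (class group $=$ local torsors under diagonalisable group schemes), so ${\rm Cl}(X)=0$; and if $X$ were not F-injective then by the Remark after Corollary~\ref{cor: Dieudonneofpicloc} the module $\varinjlim H^2_\idealm(X,W_n\OO_X)[F^m]$ is nonzero, hence $\Picloclocloc_{X/k}$ is a nontrivial ind-(loc,loc) group scheme, producing a nontrivial local torsor under a finite (loc,loc) group scheme, which again is a nontrivial finite quotient of $\piNloc(U,X,x)$ — contradiction.

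\textbf{The main implication $(3)\Rightarrow(1)$.} Here is where we use the classification. Suppose $x\in X$ is a normal two-dimensional singularity with ${\rm Cl}(X)=0$, $\pietloc(X)=1$, and $X$ F-injective, but assume for contradiction that $x$ is singular. First, F-injectivity of a two-dimensional normal singularity implies it is rational (indeed Cohen--Macaulay and, combined with the hypotheses, one deduces rationality — F-injective rational double point-type arguments; more precisely an F-injective normal surface singularity that is not rational would have $H^1$ of the structure sheaf of a resolution contributing to $H^2_\idealm$ in a way incompatible with, or one invokes that F-injective $\Rightarrow$ the singularity is ``close to'' Du~Val). Actually the cleanest route: by Flenner/Mumford-type reduction, triviality of $\pietloc(X)$ and ${\rm Cl}(X)$ already forces $x\in X$ to be an RDP — this is essentially the content of Esnault--Viehweg's argument, which reduces the general case to a finite explicit list of RDPs. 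So we are reduced to the finitely many RDPs in characteristic $p=2,3$ that Esnault--Viehweg could not handle. For each of these RDPs we invoke our own computations: Table~\ref{table: torsors with Picloclocloc}/Table~\ref{table: wittvectorcohomology} show that precisely these RDPs are \emph{not} F-injective (equivalently, for each of them $\varinjlim H^2_\idealm(X,W_n\OO_X)[F^m]\neq 0$, i.e.\ $\Picloclocloc_{X/k}\neq 0$). Hence hypothesis~(3) — F-injectivity — is violated for every remaining candidate, so no singular $x\in X$ can satisfy~(3), completing $(3)\Rightarrow(1)$.

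\textbf{The main obstacle.} The delicate point is not the formal diagram-chase among $\piNloc$, $\pietloc$, and ${\rm Cl}$, but pinning down the finite list of RDPs to which Esnault--Viehweg's reduction leaves us, and then checking against our tables that each such RDP fails F-injectivity — i.e.\ confirming that the ``exceptional'' RDPs in characteristic $2,3$ in \cite{EsnaultViehweg} are exactly the non-F-injective ones appearing with nonzero $\Picloclocloc_{X/k}$ in our computation. One must be careful that the reduction step genuinely produces an RDP (using rationality, which we must extract from F-injectivity plus the triviality hypotheses, or from the Esnault--Viehweg analysis directly) rather than some other singularity, and that no RDP with trivial $\pietloc$ and ${\rm Cl}$ is simultaneously F-injective and singular — which is precisely what Table~\ref{table: wittvectorcohomology} is designed to rule out. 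Once that bookkeeping is in place the theorem follows.
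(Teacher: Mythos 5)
Your overall architecture — the cycle $(1)\Rightarrow(2)\Rightarrow(3)\Rightarrow(1)$, purity for the first arrow, torsor-detection for the second, and a reduction to a finite list of RDPs plus a table check for the third — is the same as the paper's. But two steps as you have written them contain genuine gaps. First, in $(3)\Rightarrow(1)$ your opening claim that F-injectivity of a normal surface singularity implies rationality is false: the completed cone over an ordinary polarized elliptic curve is F-injective (Proposition \ref{prop: cone}) but is an elliptic, non-rational singularity. Your fallback, a ``Flenner/Mumford-type reduction'' to RDPs via Esnault--Viehweg, is also not the right mechanism and is never made precise. The engine the paper actually uses is Lipman's Theorem \ref{thm: Lipman}: triviality of ${\rm Cl}(X)$ alone forces $x\in X$ to be either smooth or an RDP of type $E_8$. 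That single citation replaces your entire reduction step, and one then only has to observe from Table \ref{table: RDPEquations} and Table \ref{table: torsors with Picloclocloc} that no $E_8$-type RDP in any characteristic is simultaneously F-injective and has trivial $\pietloc(X)$ (the F-injective ones, $E_8$, $E_8^1$ in $p=5$, $E_8^2$ in $p=3$, $E_8^4$ in $p=2$, all have non-trivial local \'etale fundamental group). Your final bookkeeping paragraph is pointing at the right verification, but without Lipman's theorem the reduction to that verification is not justified.

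Second, in $(2)\Rightarrow(3)$ your deduction of ${\rm Cl}(X)=0$ is too quick. Triviality of all local torsors under finite diagonalisable group schemes gives, via Proposition \ref{prop: torsfinabcl}, only that $\Hom(A,{\rm Cl}(X))=0$ for every finite abelian group $A$, i.e.\ that ${\rm Cl}(X)$ is torsion-free; a priori ${\rm Cl}(X)$ could be infinite (as it is for non-rational singularities such as Salmon's example or elliptic cones), and then this says nothing. The paper closes this by first showing that triviality of $\overline{\Hfl{1}(U,\bmu_p)}$, equivalently of $(\Picloc_{X/k})_{\reduced}[F]$, forces the tangent space $H^1(\widetilde{X},\OO_{\widetilde{X}})$ of $(\Picloc_{X/k})_{\reduced}$ to vanish, so that $X$ is rational; Lipman's theorem then gives finiteness of ${\rm Cl}(X)$, and only after that does the $\Hom$-vanishing yield triviality. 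You need this intermediate rationality/finiteness step (or some substitute for it) before you can conclude ${\rm Cl}(X)=0$.
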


If  $p\geq7$, then the condition on F-injectivity in (3) can
also be dropped, see Corollary \ref{cor: mumford}.
We also establish a version of this theorem over perfect fields.
We refer to Section \ref{sec: Mumford} for details, discussion,
and examples that illustrate that this result is rather sharp.

\subsection{Quotient and non-quotient RDPs}
We now address Questions \ref{questions}: let $x\in X$ be an RDP.
If it is a quotient singularity, say
$X\cong\widehat{\Aff}^2/G$, then there exists a local
$G$-torsor over $x\in X$.
The techniques and results described in 
Section \ref{subsec: intro local torsors} allow us to either
determine $G$ directly or to at least limit the possibilities.
In some cases, these limitations are the key to showing that
a given RDP \emph{cannot} be a quotient singularity at all.

As already mentioned, if an RDP is F-regular, then
it is a lrq singularity by \cite[Theorem 11.2]{LRQ}.
By \cite[Theorem 8.1]{LRQ}, we even have
uniqueness of the quotient presentation in this case.
Note that all RDPs in characteristic $p\geq7$ are F-regular.
It remains to treat the RDPs that are not F-regular:

\begin{Theorem} \label{thm: QuotientRDPs}
 (Theorem \ref{thm: quotient} and Theorem \ref{thm: not quotient})
Let $x\in X$ be an RDP in characteristic $p>0$
and assume that it is not of type $D_n^r$ with $ 2 < 4r < n$. 
Then, $x\in X$ is a quotient singularity if and only if one of the following hold:
\begin{enumerate}
\item $p \geq 5$.
\item $p = 3$ and $X$ is not of type $E_8^0$.
\item $p = 2$ and $X$ is of type $A_n$, $D_{4r}^r$, $D_{4n}^0$, $E_6^0$, $E_6^1$, $E_7^3$, $E_8^0$, $E_8^2$, or $E_8^4$.
\end{enumerate}
Moreover, if $x\in X$ is a quotient singularity, 
then there is a unique group scheme realizing it as a quotient singularity.
\end{Theorem}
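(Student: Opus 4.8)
The plan is to establish the biconditional in two halves --- the positive direction (the RDPs in (1)--(3) are quotient singularities) and the negative direction (the remaining ones are not) --- and then to treat uniqueness separately. I would begin by disposing of the F-regular RDPs, where nothing new is needed: every RDP with $p\geq 7$, together with the F-regular RDPs in characteristics $2,3,5$, is an lrq singularity by \cite[Theorem~11.2]{LiedtkeMartinMatsumotoLRQ}, and its quotient presentation is unique by \cite[Theorem~8.1]{LiedtkeMartinMatsumotoLRQ}. Thus the work is concentrated on the non-F-regular RDPs: $E_8^0$ in characteristic $5$, several $E$- and $D$-types in characteristic $3$, and a long list in characteristic $2$.

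For the positive direction (this is Theorem~\ref{thm: quotient}) I would, for each non-F-regular RDP appearing in (1)--(3), exhibit an explicit finite group scheme $G$ --- assembled from $\bmu_p$, $\balpha_p$, $\ZZ/p\ZZ$, their non-split self-extensions, and, when $\pietloc(X)\neq 1$, the classical binary polyhedral groups --- together with an explicit action on $\widehat{\Aff}^2_k$ that is free away from the origin, and then compute the invariant ring against Artin's normal forms \cite{ArtinRDP} to confirm that $\widehat{\Aff}^2_k/G$ is the prescribed RDP. This is precisely the promised extension of Klein's and Artin's classifications to finite group schemes, whose systematic form is recorded in the tables of the paper; the family $A_n\cong\widehat{\Aff}^2_k/\bmu_{n+1}$ already works uniformly in every characteristic, wild cases included.

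For the negative direction (Theorem~\ref{thm: not quotient}), suppose toward a contradiction that $x\in X=\widehat{\Aff}^2_k/G$ is an RDP not in the list (and not of type $D_n^r$ with $2<4r<n$). An induction on $|G|$ reduces to the abelian case: from the connected--\'etale sequence $1\to G^\circ\to G\to G^\et\to 1$ one has $G^\et\cong\pietloc(X)$ by \cite{ArtinRDP}, and if $G$ is non-abelian the commutator subgroup gives a quotient $\widehat{\Aff}^2_k/[G,G]$ of strictly smaller group scheme, with $X$ its quotient by the abelian $G/[G,G]$. For abelian $G$ the Cartier dual $G^D$ embeds into $\Picloc_{X/k}$; its multiplicative part is then controlled by ${\rm Cl}(X)$ via Proposition~\ref{prop: torsfinabcl} (computed by Lipman \cite{Lipman}), and its $(\mathrm{loc},\mathrm{loc})$-part by $\Picloclocloc_{X/k}$, which we have made computable through Corollary~\ref{cor: Dieudonneofpicloc} and which is finite by Theorem~\ref{thm: picloclocloc finite}. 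Since the generic degree of $\Spec k[[u,v]]\to\Spec\OO_X$ equals $|G|$, these constraints leave only finitely many candidates for $G$, and one eliminates them one by one. In the decisive cases --- $E_8^0$ in characteristic $3$, and the remaining $E_7$- and $E_8$-types and the $D_n^0$ with $4\nmid n$ in characteristic $2$ --- the clinching point is that $\Picloclocloc_{X/k}$ is small, or even trivial, exactly because $X$ is F-injective there, while $\pietloc(X)$ and ${\rm Cl}(X)$ are likewise too small to accommodate any admissible $G$.

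For uniqueness, given $\widehat{\Aff}^2_k/G_1\cong X\cong\widehat{\Aff}^2_k/G_2$, the \'etale quotients agree ($G_i^\et\cong\pietloc(X)$), and the abelianisations agree because the quotient map is a local $G_i$-torsor of primitive class whose class in $\Picloc_{X/k}$ generates a subgroup scheme isomorphic to $(G_i^{\mathrm{ab}})^D$; as $\Picloc_{X/k}$ is now pinned down (\'etale part dual to ${\rm Cl}(X)$, $(\mathrm{loc},\mathrm{loc})$-part equal to $\Picloclocloc_{X/k}$), this forces $G_1^{\mathrm{ab}}\cong G_2^{\mathrm{ab}}$, and for the non-abelian $D$- and $E$-types the residual ambiguity disappears since $G_i$ must be one of the ``binary'' group schemes in the classification, of which exactly one matches the given \'etale quotient, class group and order; the F-regular cases are covered by \cite[Theorem~8.1]{LiedtkeMartinMatsumotoLRQ}. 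The hard part will be the characteristic-$2$ analysis throughout: there the infinitesimal group schemes and their extensions proliferate and Artin's normal forms degenerate, and it is precisely for this reason that the $D_n^r$ with $2<4r<n$ resist the method --- the triple $(\pietloc(X),{\rm Cl}(X),\Picloclocloc_{X/k})$ does not separate the candidate group schemes in that family --- which is why it is excluded from the statement.
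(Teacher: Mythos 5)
Your overall skeleton (explicit constructions for the positive half, torsor-theoretic obstructions for the negative half, uniqueness separately) matches the paper's strategy, but the negative direction as you describe it does not work, and the factual premise you lean on is wrong. The cases you call decisive ($E_8^0$ in $p=3$; the non-F-regular $E_7$- and $E_8$-types and $D_{4n+2}^0$ in $p=2$) are precisely \emph{not} F-injective, and $\Picloclocloc_{X/k}$ there is non-trivial (it is $\balpha_9$ for $E_8^0$ in $p=3$, and has length $4$ for $E_8^1$ in $p=2$); what the paper exploits is not smallness of this group scheme but its precise Dieudonn\'e structure, namely the absence of \emph{primitive} classes for $\balpha_{p^2}$, $\bM_2$ or $\bM_3$ (Proposition \ref{prop: primitive class}, Corollary \ref{cor: no primitive class}, Lemma \ref{lem: possible argument}). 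More seriously, the invariants of $X$ alone do not ``leave finitely many candidates to be eliminated one by one'': there is no a priori bound on $|G|$ from them, and the constraints only see abelian quotients of $G$. For $E_8^1$ in $p=2$ they genuinely fail to close the case — there \emph{do} exist primitive $\balpha_4$-classes over $X$ (with $f_2=[x^{-1}y^{-2}z]$ one computes $F(f_2)=f_1\neq 0$, $F^2(f_2)=0$) — so the paper must climb the tower: it identifies the total space $X_1$ of the primitive local $\balpha_2$-torsor explicitly (an RDP $D_5^{1/2}$ or $E_7^2$ for $E_7^2$, $E_8^3$; a normal \emph{non-rational} singularity for $E_8^1$), shows $X_1$ is not an $\balpha_2$-quotient via the Tyurina-ideal/Frobenius-power criterion, recomputes Frobenius kernels of local cohomology on $X_1$, and invokes the classification of infinitesimal group schemes of order $p^3$ with one-dimensional tangent space (Lemmas \ref{lem: alphap-torsor char 2}, \ref{lem: E81main}, Proposition \ref{prop: groupschemesoforderp3}). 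Finally, the $D_n^r$ with $4r>n$ in $p=2$ (missing from your list, and some of them F-injective) are not excluded by $\pietloc$, ${\rm Cl}$, or $\Picloclocloc$ at all; the paper needs the separate fact that $\bmu_2\times\C_2$ admits no very small action on $\widehat{\Aff}^2$ (Lemma \ref{Lemma: quotientsinchar2}).

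The uniqueness argument is also too coarse. Knowing $G^{\et}\cong\pietloc(X)$ and the abelianisation does not determine $G$: you must exclude non-trivial semidirect products $G^\circ\rtimes\pietloc(X)$ (e.g.\ $\balpha_3\rtimes\C_2$ versus $\balpha_3\times\C_2$ for $E_7^0$ in $p=3$), which the paper does by comparing the $\pietloc(X)$-action on $H^2_{\idealm}(\OO_Y)[F]$ with that on the dualizing module (Lemmas \ref{lem: comparison of automorphisms} and \ref{lem: possible argument uniqueness}); and for height $\geq 2$, e.g.\ $E_8^1$ in $p=3$, uniqueness again runs through the tower: one shows the intermediate $\balpha_3$-cover is $E_6^0$, uses uniqueness there, and then separates $\bM_2$ from $\balpha_9$ via the Dieudonn\'e module. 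Your assertion that ``exactly one binary group scheme matches'' the \'etale quotient, class group and order is not an argument in this wild, non-linearisable setting. The positive half of your plan is fine in outline (and is what the paper does, via explicit derivations and the realisations in Matsumoto's work), but the negative half and the uniqueness half have genuine gaps as written.
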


We refer to Table \ref{table: Quotient- and non-quotient RDPs}, Theorem \ref{thm: quotient}, 
and Theorem \ref{thm: not quotient} for the group schemes that show up and more details. 
For a study of the RDPs of type $D_n^r$ with $2 < 4r < n$ in characteristic $p=2$, 
we refer the reader to Section \ref{subsec: dnr}.
To give an impression of our findings, we give an example. 

\begin{Example}
 Artin \cite{ArtinRDP} showed that there exist three analytic 
 isomorphism classes of RDPs of type $E_8$  in characteristic $p=3$.

\begin{enumerate}
 \item $E_8^2$ is a quotient singularity by the group ${\BT}_{24}$ of order $24$,
 \item $E_8^1$ is a quotient singularity by the infinitesimal group scheme $\bM_2$ of order $9$, and
 \item $E_8^0$ cannot be realised as a quotient singularity.
\end{enumerate}
In particular, this shows that Koll\'ar's conjectural characterisation 
of quotient singularities is false in positive characteristic,
even if one allows quotient singularities by finite group schemes
-  in fact, it already fails for RDPs in dimension two,
see Remark \ref{rem: KollarConjecture}.
\end{Example}

In case we have a quotient presentation $x\in X=\widehat{\Aff}^2/G$ where 
$G$ is not linearly reductive and $G$ is not \'etale, 
then we do not know whether the $G$-action on $\widehat{\Aff}^2$ is unique
up to conjugation.
To give an idea of why these questions are subtle,
we note that the action of a finite group scheme $G$ acting on 
$\widehat{\Aff}^d$, such that the action is free outside the closed point and such that the closed point is fixed,
is \emph{linearisable} if and only if $G$ is linearly reductive,
see \cite{LRQ}.
In particular, the two actions of (1) and (2) in the example
\emph{cannot} be linearised, which makes such actions hard to write down, 
let alone to classify.

\subsection*{Organisation}
This article is organised as follows:

In Section \ref{sec: recap}, we recall the notions of F-regularity
and F-injectivity, as well as the explicit equations of the RDP singularities.

In Section \ref{sec: group schemes}, we recall some results on finite group
schemes needed for this article, especially some, which are 
`well-known to the experts' and hard to find.

In Section \ref{sec: maximalsubsheaves}, we define the notion of \emph{maximal $\mathcal{C}$-subsheaf}, which we use to extract representable subfunctors of the local Picard functor.

In Section \ref{sec: local torsors}, we introduce the notion of 
a \emph{local torsor} over a singularity $x\in X$
and develop techniques to detect such torsors using $\pietloc(X)$, 
${\rm Cl}(X)$, $\varinjlim_{m,n}\, H^2_{\idealm}(X,W_n\OO_X)[F^m]$,
and $\Picloc_{X/k}$.

In Section \ref{sec: torsors over rdp}, we use these techniques
to establish Table \ref{table: torsors with Picloclocloc}, that is,
the structure of local torsors over the RDPs.

In Section \ref{sec: Mumford}, we extend theorems of Mumford
and Flenner \cite{Flenner, Mumford}
characterising smoothness in dimension 2 to positive 
characteristic and remove some assumptions from Esnault and Viehweg's
theorem \cite{EsnaultViehweg}.

In Section \ref{sec: Nori}, we recall the notion of a \emph{very small}
group scheme action from \cite{LRQ} and discuss
the results of Section \ref{sec: local torsors} from the
point of view of the (local) Nori fundamental group scheme.

In Section \ref{sec: rdp}, we address Question \ref{questions}, that is,
which RDPs are quotient singularities, which are not, and 
uniqueness of the group scheme.
% The proofs require case-by-case studies, which are carried out in Section \ref{sec: rdp details}.

In Section \ref{sec: pathologies}, we collect some results
concerning pathologies of local torsors over RDPs, which 
include (non-)uniqueness, (non-)normality, and (non-)rationality 
of such local torsors, as well as non-existence of Galois closures and 
some pathologies in the deformation theory of quotient RDPs.

\begin{VoidRoman}[Acknowledgements]
 We thank Fabio Bernasconi, Brian Conrad, H\'el\`ene Esnault, Leo Herr, 
 Hiroyuki Ito, Matt Satriano, Karl Schwede, Nick Shepherd-Barron, and 
 Claudia Stadlmayr for discussion.
  We also thank the referees for their many comments.
   Part of the research on this article was done whilst the first named author was on sabbatical at the University of Oxford and he thanks the Mathematical Institute for kind hospitality during his stay.
 The first named author was supported by the ERC Consolidator Grant 681838 (K3CRYSTAL).
 The second named author was supported by the DFG Research Grant MA 8510/1-1. 
 The third named author was supported by JSPS KAKENHI Grant Numbers 
 JP16K17560 and JP20K14296.
\end{VoidRoman}

\begin{VoidRoman}[Notations and Conventions]

Throughout this article, $k$ denotes a field of characteristic 
$p\geq0$ and it will be algebraically closed until otherwise stated.
By abuse of notation, we shall not distinguish between finite groups and
their associated constant group schemes over $k$.
\end{VoidRoman}

\section{F-regular and canonical singularities in dimension two}
\label{sec: recap}
First, recall that a local, complete, and Noetherian $k$-algebra $(R,\mathfrak{m})$ of dimension $d\geq2$ is called:

\begin{enumerate}
 \item \emph{F-regular} if all ideals of all localisations of 
 $R$ are tightly closed, and
 \item \emph{F-injective} if the map induced by Frobenius
on local cohomology $F:H^i_{\idealm}(R)\to H^i_{\idealm}(R)$ 
is injective for all $i=0,...,\dim(R)$.
\end{enumerate}
It is well-known that F-regular rings are F-injective.

\begin{Remark}
For the sake of completeness, we also recall that a 
Cohen--Macaulay local ring $(R,\mathfrak{m})$ is called 
\begin{enumerate}
    \item \emph{F-split} if the Frobenius map $F: R \to F_*R$ splits as a map of $R$-modules, and
    \item \emph{F-rational} if for every $c \in R$ that is not in a minimal prime of $R$, there exists $e > 0$ such that $c \cdot F^e: H^d_{\idealm}(R)\to H^d_{\idealm}(R)$ is injective.
\end{enumerate}
If $R$ is Gorenstein, then it is F-rational if and only if it is
F-regular \cite[Corollary 4.7]{HochsterHunekeTestelements}, and it is
F-split if and only if it is F-injective \cite[Lemma 3.3]{Fedder}. 
Since we will mainly work with rational double points and these are Gorenstein, 
we will not use the notions of F-splitting and F-rationality 
in this article.
\end{Remark}

The formal isomorphism classes of rational double points were calculated by Artin \cite{ArtinRDP}. 
In Table \ref{table: RDPEquations}, we recall the classification of rational double 
points and we also recall which of them are F-regular or F-injective 
(see \cite[Theorem 1.1. and Theorem 1.2]{Hara}).

\begin{Convention} \label{convention}
For the notation of rational double points, we follow \cite{ArtinRDP} with the modification 
made by the third named author in \cite[Convention 1.2]{Matsumotoinseparable}:
We call $z^2 + x^2 y + z y^m + z x y^{m-s} = 0$ ($m \geq 2$, $0 \leq s \leq m-1$) in characteristic $2$ 
to be of type $D_{2m+1}^{s + 1/2}$, instead of Artin's notation $D_{2m+1}^s$.
Consequently, the range of $r$ for $D_{2m+1}^r$ is $\{\frac{1}{2}, \frac{3}{2}, \dots, \frac{2m-1}{2}\}$.
\end{Convention}

\begin{table}
  $$\begin{array}{|c|c|c|c|l|} \hline
      \mbox{char.} & X    &  \mbox{F-regular} & \mbox{F-injective} & \mbox{equation} \\
       \hline \hline
   \mbox{all} &   A_n &  \checkmark  &  \checkmark & z^{n+1} + xy\\
  > 2 &    D_n &  \checkmark  &  \checkmark & z^2 + x^2y + y^{n-1} \\
  > 3 &   E_6 &  \checkmark  &  \checkmark & z^2 + x^3 + y^4 \\
   > 3 &   E_7 &  \checkmark  &  \checkmark & z^2 + x^3 + xy^3 \\
    > 5  & E_8 &  \checkmark  &  \checkmark & z^2 + x^3 + y^5 \\ \hline
    5 &  E_8^1 & \times & \checkmark & z^2 + x^3 + y^5 + xy^4 \\
    5 &   E_8^0 & \times & \times & z^2 + x^3 + y^5 \\
\hline
   3 &   E_6^1 & \times & \checkmark & z^2 + x^3 + y^4 + x^2y^2 \\
    3 &  E_6^0 & \times & \times & z^2 + x^3 + y^4  \\
    3 & E_7^1 & \times & \checkmark & z^2 + x^3 + xy^3 + x^2y^2 \\
    3 &   E_7^0 & \times & \times & z^2 + x^3 + xy^3 \\
    3 &  E_8^2 & \times & \checkmark & z^2 + x^3 + y^5 + x^2y^2 \\
   3 &    E_8^1 & \times & \times & z^2 + x^3 + y^5 +  x^2y^3 \\
   3 &    E_8^0 & \times & \times & z^2 + x^3 + y^5  \\
\hline
  2 &  D_{2m}^r & \times & \mbox{iff $r = m -1$} & 
    z^2 + x^2 y + x y^m + z x y^{m-r}  \\
  2 &  D_{2m+1}^r & \times & \mbox{iff $r = m -1/2$} & 
    z^2 + x^2 y + z y^m + z x y^{m-r+1/2}  \\
   2 &    E_6^1 & \times & \checkmark &  z^2 + x^3 + y^2z +xyz \\
    2 &  E_6^0 & \times &  \times & z^2 + x^3 + y^2z \\
    2 &  E_7^3 & \times & \checkmark& z^2 + x^3 + xy^3 +xyz \\
     2 & E_7^2 & \times & \times & z^2 + x^3 + xy^3 + y^3z\\
    2 &  E_7^1 & \times & \times & z^2 + x^3 + xy^3 + x^2yz \\
     2 & E_7^0 & \times & \times & z^2 + x^3 + xy^3  \\
     2 & E_8^4 & \times & \checkmark & z^2 + x^3 + y^5 + xyz \\
     2 & E_8^3 & \times & \times & z^2 + x^3 + y^5  + y^3z  \\
    2 &  E_8^2 & \times & \times & z^2 + x^3 + y^5 + xy^2z  \\
    2 &  E_8^1 & \times & \times & z^2 + x^3 + y^5 +xy^3z  \\
    2 &  E_8^0 & \times & \times & z^2 + x^3 + y^5  \\ \hline

  \end{array}$$
  \caption{Equations of rational double points}
    \label{table: RDPEquations}
\end{table}

\section{Finite group schemes}
\label{sec: group schemes}

In this section, we give a short review of some well-known results about 
finite flat group schemes.

Let $G$ be a finite group scheme over an algebraically closed 
field $k$ of characteristic $p\geq0$. 
Since $k$ is perfect, there exists a split short exact sequence 
of finite group schemes over $k$
$$
   1\,\to\,G^\circ\,\to\,G\,\to\, G^{\et} \,\to\,1,
$$
where $G^\circ$ is the connected component of the identity and where $G^{\et}$
is an \'etale group scheme over $k$.
Thus, we have a canonical isomorphism $G\cong G^\circ\rtimes G^\et$.
Moreover, $G^\circ$ is an infinitesimal group scheme of length equal to 
some power of $p$.
In particular, if $p=0$ or if the length of $G$ is prime to $p$, then $G^\circ$ is trivial and $G$ is \'etale. 

\subsection{Finite \'etale group schemes}
\label{sec: finet}
Since $k$ is algebraically closed, a finite and \'etale group scheme
over $k$ is the \emph{constant} group scheme associated to a finite group.
In particular, the classification of finite \'etale group schemes over $k$
is equivalent to that of finite groups.
We will use the notation $\C_n$ for the cyclic group of order $n$
and the associated constant group scheme.

\subsection{Diagonalisable group schemes}
\label{sec: findiag}
If $M$ is a finitely generated abelian group, then the group algebra
$k[M]$ carries a Hopf algebra structure, and the associated commutative
group scheme is denoted $D(M):=\Spec k[M]$.
By definition, such group schemes are called \emph{diagonalisable}.
For example, we have $D(\ZZ)\cong\GG_m$ and
$D(\C_n)\cong\bmu_n$.
Recall that every diagonalisable group scheme can be embedded
into $\GG_m^N$ for some $N\geq1$.
Moreover, $\bmu_n$ is \'etale over $k$ if and only if
$p\nmid n$.

\subsection{Linearly reductive group schemes}
A finite group scheme $G$ over $k$ is said to be \emph{linearly reductive} 
if every (finite-dimensional) representation of $G$ is semisimple.
If $p=0$, then all finite group schemes over $k$ are \'etale and
linearly reductive.
If $p>0$, then, by a theorem of  Nagata \cite[Theorem 2]{Nagata61}
(see also \cite[Proposition 2.10]{AOV} and \cite[Section 2]{Hashimoto}),
a finite group scheme over $k$ is
linearly reductive if and only if it is an extension of a finite and \'etale
group scheme, whose order is prime to $p$ 
by a diagonalisable group scheme.

\subsection{Finite abelian group schemes and Cartier duality}
\label{sec: finab}
If $G$ is a finite and \emph{abelian} group scheme, 
then $G^D:=\mathcal{H}\mathit{om}(G,\GG_m)$ is again a group scheme, 
called
the \emph{Cartier dual} group scheme. Recall that in this setting, the \emph{Verschiebung} homomorphism is the endomorphism of $G$ induced by Frobenius on its Cartier dual.
Since $k$ is algebraically closed, we have a canonical
decomposition
\begin{equation}
 \label{eq: canonical decomposition}
   G \,\cong\, G^{\loc,\loc}\times G^{\loc,\et}\times G^{\et,\loc}\times G^{\et,\et}
\end{equation}
where $\loc$ (resp. $\et$) in the first argument means that the group scheme in 
question is infinitesimal (resp. \'etale), and the second argument refers to
the corresponding property of the Cartier dual group scheme.

We have Frobenius $F \colon G\to G^{(p)}$ and Verschiebung $V \colon G^{(p)}\to G$ that
factor multiplication by $p$, that is, $p=FV=VF$.
Moreover, with respect to the canonical decomposition 
\eqref{eq: canonical decomposition} of $G$, they
behave as follows:
$$
\begin{array}{lll}
 G^{\loc,\loc} & \mbox{$F$ is nilpotent} & \mbox{$V$ is nilpotent} \\
 G^{\loc,\et} & \mbox{$F$ is nilpotent} & \mbox{$V$ is an isomorphism} \\
 G^{\et,\loc} & \mbox{$F$ is an isomorphism} & \mbox{$V$ is nilpotent} \\
 G^{\et,\et} & \mbox{$F$ is an isomorphism} & \mbox{$V$ is an isomorphism} 
\end{array}
$$

\subsection{Local-local group schemes and Cartier--Dieudonn\'e theory}
\label{subsec: loclocCartierDieudonne}

By Section \ref{sec: finet}, we have a good understanding of the \'etale factors 
in the canonical decomposition \eqref{eq: canonical decomposition}
and, via Cartier duality, also of the $G^{\loc,\et}$-part of $G$. 
It thus remains to understand $G^{\loc,\loc}$.
Finite and abelian group schemes $G$ with
$G=G^{\loc,\loc}$ will be called \emph{finite group schemes of (loc,loc)-type}.
To describe them, we follow \cite{Oort} and 
define
$$
\bL_{n,m} \,:=\, \bW_n[F^m],
$$
where $m,n \in \NN$ are natural numbers, $\bW_n$ is the group scheme of Witt vectors of length $n$, 
$F^m$ is the $m$-fold Frobenius on $\bW_n$, and $\bW_n[F^m]$ denotes the kernel of $F^m$. 
We recall that $\bW_1 = \GG_a$ and that $\balpha_p := \bL_{1,1} = \GG_a[F]$ is the 
simplest example of a finite group scheme of (loc,loc)-type. 
In general, $\bL_{n,m}$ is a finite and abelian group scheme of (loc,loc)-type and 
length $p^{nm}$ over $k$.

For every $m,n \in \NN$, there are four homomorphisms
\begin{eqnarray*}
F: &\bL_{n,m} \twoheadrightarrow \bL_{n,m-1} &\text{ induced by Frobenius } F: W_n \to W_n \\
R: &\bL_{n,m} \twoheadrightarrow \bL_{n-1,m} &\text{ induced by the restriction } R: W_n \to W_{n-1} \\
I: &\bL_{n,m} \hookrightarrow \bL_{n,m+1} &\text{ the natural inclusion }  \\
V: &\bL_{n,m} \hookrightarrow \bL_{n+1,m} &\text{ induced by Verschiebung } V: W_n \to W_{n+1}.
\end{eqnarray*}
We note that $\bL_{n,m}^D = \bL_{m,n}$ and that Cartier duality interchanges $F$ with $V$ and $R$ with $I$.
Then, the $\{\bL_{n,m}\}_{m,n\in \mathbb{N}}$ form an inverse system $(\{\bL_{n,m}\}, R,F)$ 
with respect to $F$ and $R$ and they form a 
directed system $(\{\bL_{n,m}\}, I,V)$ with respect to $I$ and $V$.

The importance of the group schemes $\bL_{m,n}$ comes from the fact that they 
can be used to classify \emph{all} finite group schemes of (loc,loc)-type.
Let $W(k)$ be the ring of Witt vectors over $k$, and let
$$
  \Dieu \,:=\, W(k)\{F,V\}/(FV-p)
$$
be the \emph{Dieudonn\'e ring}, which is by definition the
non-commutative polynomial ring in $F,V$ over $W(k)$ subject to the
relations
\begin{equation}
    \label{dieudonne}
    FV=VF=p,\quad Fw=\sigma(w)F,\quad wV=V\sigma(w),\quad \forall w\in W(k),
\end{equation}
where $\sigma$ denotes the Frobenius on $W(k)$.

The ring $\Dieu$ acts from the left on the directed system $(\{\bL_{n,m}\},I,V)$ by letting $F$ act as $F$, $V$ as $V$, and a scalar $a$ as $\sigma^{-n}(a)$.
Dualising everything and using the identification $\bL_{n,m}^D \cong \bL_{m,n}$, $\Dieu$ acts from the right on the inverse system $(\{\bL_{n,m}\}, R,F)$ by letting $F$ act as $V$, $V$ as $F$, and a scalar $a$ as $\sigma^{-m}(a)$.  
 
Now, if $G$ is a finite and commutative group scheme over $k$, then 
there is an associated left $\Dieu$-module 
$$
\Dieulocloc(G) \,:=\, \varinjlim_{n,m}\, \Hom\left(\bL_{n,m},G\right),
$$
where $\{\bL_{n,m}\}_{m,n\in \mathbb{N}}$ is viewed as an inverse system 
with transition maps $R$ and $F$ and the left $\Dieu$-module structure comes from the 
right-action of $\Dieu$ on $\bL_{n,m}$, that is,
$a \in W(k)$ acts by $\sigma^{-m}(a)$, and $F$ (resp.\ $V$) by the pre-composition 
$- \circ V$ (resp.\ $- \circ F$).
The module $\Dieulocloc(G)$ is called the
\emph{(covariant) Dieudonn\'e module} of 
the $(\loc,\loc)$-factor of $G$. Recall the main theorem of (covariant $(\loc,\loc)$-)Dieudonn\'e theory:

\begin{Theorem} \label{thm: Dieudonnemain}
Let $k$ be a perfect field of characteristic $p>0$.
Then, the functor $\Dieulocloc$ induces an equivalence of categories between 
finite (loc,loc) group schemes over $k$ and left $\Dieu$-modules that
are finite as $W(k)$-modules and on which $F$ and $V$ are nilpotent.
\end{Theorem}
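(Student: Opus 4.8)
This is the fundamental theorem of covariant $(\loc,\loc)$-Dieudonn\'e theory, due in various forms to Dieudonn\'e, Gabriel, Cartier and Oort, and the plan is to prove it by reduction to the groups $\bL_{n,m}$, following \cite{Oort}. A preliminary point is that the source and target categories already match on objects: over the perfect field $k$, a finite commutative group scheme $G$ is of $(\loc,\loc)$-type precisely when $F$ and $V$ are both nilpotent on $G$ (since $F_G$ is nilpotent $\iff$ $G$ is infinitesimal, and $V_G$ is nilpotent $\iff$ $G^D$ is infinitesimal), whereas a $W(k)$-finite left $\Dieu$-module on which $F$ and $V$ are nilpotent is automatically of finite length, because $FV = p$. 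I would first check that the functor lands in the asserted target by computing the \emph{standard modules}: since $\bL_{n,m} = \bW_n[F^m]$ is annihilated by $F^m$ (by construction) and by $V^n$ (as $V^n = 0$ on $\bW_n$, and $F$ and $V$ commute on $\bW_n$), an explicit look at homomorphisms of truncated Witt vectors gives
$$
 \Dieulocloc(\bL_{n,m}) \,\iso\, \Dieu/\bigl(\Dieu F^m + \Dieu V^n\bigr),
$$
a cyclic left $\Dieu$-module of length $nm$ over $W(k)$, annihilated by $F^m$ and $V^n$; this matches $\log_p$ of the length $p^{nm}$ of $\bL_{n,m}$. The general finite $(\loc,\loc)$ case then follows via Step~2.

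The heart of the proof is a reduction to the $\bL_{n,m}$, resting on two inputs. The first is that the $\bL_{n,m}$ \emph{generate}: every finite $(\loc,\loc)$ group scheme $G$ annihilated by $F^m$ and $V^n$ is a quotient of a finite power of $\bL_{n,m}$ (dually, embeds into one), and iterating this yields a two-term presentation $\bigoplus_j \bL_{n_j,m_j} \to \bigoplus_i \bL_{n_i,m_i} \to G \to 0$ by finite direct sums of groups $\bL_{n,m}$. The second input, which I expect to be the main obstacle, is that $\Dieulocloc$ is \emph{exact}: being a filtered colimit of the left-exact functors $\Hom(\bL_{n,m},-)$ it is left exact, and right exactness is equivalent to
$$
 \varinjlim_{n,m}\, \Ext^1(\bL_{n,m},G) \,=\, 0 \qquad\text{for every finite }(\loc,\loc)\ G,
$$
i.e.\ to the statement that a homomorphism $\bL_{n,m}\to G$ lifts along a given epimorphism $G'\onto G$ once precomposed with a suitable transition map $\bL_{n',m'}\onto\bL_{n,m}$. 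My plan for this lifting is to filter $\bW_{n'}$ by copies of $\GG_a$ and reduce to solvability of additive $p$-polynomial equations over $k$, which can always be arranged after increasing $n'$ and $m'$.

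Granting these, I would conclude as follows. Applying the exact functor $\Dieulocloc$ to the presentation above produces a presentation of $\Dieulocloc(G)$ by finite direct sums of standard modules. For \emph{full faithfulness}, a diagram chase (five lemma) with such presentations of two objects $G$ and $G'$ reduces the assertion to bijectivity of
$$
 \Hom(\bL_{a,b},\bL_{c,d}) \,\longrightarrow\, \Hom_\Dieu\bigl(\Dieulocloc(\bL_{a,b}),\Dieulocloc(\bL_{c,d})\bigr),
$$
which is a direct computation: homomorphisms of truncated Witt vectors are classified by additive polynomials, and both sides are identified with the $W(k)$-module of those elements of the standard module $\Dieu/(\Dieu F^d + \Dieu V^c)$ that are killed by $F^b$ and $V^a$, with the $\sigma$-twists in the $\Dieu$-action accounted for. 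For \emph{essential surjectivity}, given $M$ in the target, pick $n, m$ with $F^mM = V^nM = 0$; then $M$, and likewise the kernel of a surjection onto $M$ from a finite direct sum of copies of $\Dieulocloc(\bL_{n,m})$, is a quotient of such sums, giving a presentation $P_1 \to P_0 \to M \to 0$ by finite direct sums of standard modules. By full faithfulness, $P_1 \to P_0$ is $\Dieulocloc$ of a unique homomorphism between finite direct sums of groups $\bL_{n,m}$; letting $G$ be its cokernel --- again $(\loc,\loc)$, since this is a Serre subcategory --- right exactness of $\Dieulocloc$ gives $\Dieulocloc(G) \iso M$. Together with the additivity of $\Dieulocloc$, this yields the desired equivalence of categories.
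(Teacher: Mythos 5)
The paper does not actually prove this statement: Theorem \ref{thm: Dieudonnemain} is recalled as the main theorem of covariant $(\loc,\loc)$-Dieudonn\'e theory with a pointer to \cite{Oort, Oort2}, so there is no in-paper argument to compare yours against. Your outline --- compute the standard modules $\Dieulocloc(\bL_{n,m})$, show that the $\bL_{n,m}$ generate and that the functor is exact, then deduce full faithfulness and essential surjectivity from two-term presentations --- is the standard route taken in those references, and the architecture is sound. The step you rightly single out as the main obstacle, exactness (equivalently $\varinjlim_{n,m}\Ext^1(\bL_{n,m},G)=0$, i.e.\ that $\bL_{n,m}$ becomes projective after passing far enough up the system), is indeed where all the work lies; your plan of filtering $\bW_{n'}$ by copies of $\GG_a$ and solving additive $p$-polynomial equations over the perfect field $k$ is the right idea but remains a gesture at this point.

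There is, however, one concrete error. With the conventions fixed in Section \ref{subsec: loclocCartierDieudonne} --- where $F\in\Dieu$ acts on $\Hom(\bL_{n,m},G)$ by precomposition with $V$, and $V$ by precomposition with $F$ --- the covariant functor interchanges Frobenius and Verschiebung, so
$$
\Dieulocloc(\bL_{n,m})\;\cong\;\Dieu/\bigl(\Dieu F^{n}+\Dieu V^{m}\bigr),
$$
killed by $F^{n}$ and $V^{m}$, not by $F^{m}$ and $V^{n}$ as you assert. The length $nm$ is the same either way, which is why your sanity check does not detect it; but already for $\balpha_{p^2}=\bL_{1,2}$ versus $\balpha_{p^2}^D=\bL_{2,1}$ your formula assigns to each the module belonging to the other. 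This is a bookkeeping rather than a structural mistake, but if carried through it corrupts the identification of $\Hom(\bL_{a,b},\bL_{c,d})$ with a submodule of the standard module in your full-faithfulness step, and the matching of annihilators in essential surjectivity, so it must be corrected before the details can be filled in.
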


We refer to \cite{Oort, Oort2} for background, proofs, and further references. 
In particular, note that we are only using the $(\loc,\loc)$-part of Dieudonn\'e theory.

\begin{Remark} \label{rem: choice of dieustructure}
We remark that there are at least three other (more or less common) 
variants of Dieudonn\'e theory that we could have used. 
These different variants correspond to different choices of the $\Dieu$-action 
on $\bL_{n,m}$. 
More precisely, one has to choose whether $\Dieu$ acts from the left or from the right 
and whether one uses $\varinjlim_{n,m}\, \Hom(\bL_{n,m},G)$, in which case the
$\Dieu$-action on $\bL_{n,m}$ must be compatible with $R$ and $F$, or else
$\varinjlim_{n,m}\, \Hom(G,\bL_{n,m})$, in which case the $\Dieu$-action on 
$\bL_{n,m}$ must be compatible with $I$ and $V$.
\end{Remark}

Let us also recall a couple of facts that we will need in the sequel:
if $G$ is of length $p^n$, then $\Dieulocloc(G)$ is of length $n$ as a 
$W(k)$-module.
Moreover, $\Dieulocloc(F)$ is left-multiplication by $V$ and thus,
$$
   \Dieulocloc( \bL_{n,m} ) \,\cong\, \Dieu / (F^n,V^m).
$$

\begin{Example} We record the following important special cases:
\begin{enumerate}
    \item $\balpha_{p^m} = \bL_{1,m}$, hence $\Dieulocloc(\balpha_{p^m}) \cong \Dieu/(F,V^m)$.
    \item $\balpha_{p^n}^D = \bL_{n,1}$, hence $\Dieulocloc(\balpha_{p^n}^D) \cong \Dieu/(F^n,V)$.
    \item The kernel $\bM_n$ of $F^n$ on a supersingular elliptic curve is $\bM_n = \bL_{n,n}[F-V] \cong \bL_{n,n}/(V-F)$. Hence,
    $$
    \hspace{10mm}
    \begin{array}{lclcl}
 \Dieulocloc( \bM_n) & = & (\Dieu/(F^n,V^n))[F-V]  &\cong&  \Dieu/\Dieu(F^n,V^n, F-V) \\
 &\cong &  \Dieu/\Dieu(F^n, F-V) &\cong&  \Dieu/\Dieu(V^n, F-V),
 \end{array}
$$
where $\Dieu(F^n,V^n,F-V)$ is the left ideal generated by the indicated elements.
\end{enumerate}
\end{Example}

More generally, we have
$$
\begin{array}{lcl}
\Dieulocloc ( \bL_{n,m}[F^a-V^b] ) &\cong& (\Dieu / (F^n,V^m))[F^b-V^a], \\
\end{array}
$$
where $[F^b-V^a]$ on the right-hand side denotes the kernel of the right multiplication by $F^b-V^a$.

\subsection{Derivations, p-Lie algebras, and group schemes of height $1$} \label{sec: height1}
Let $G$ be a finite and infinitesimal group scheme over $k$.
We say that $G$ has \emph{height one} if $G = G[F]$.
Let $\idealg$ be its Lie algebra (see \cite[Exp. VIIA, Section 6]{SGA31}), which is in fact a $p$-Lie algebra, that is, a finite 
dimensional $k$-vector space with a Lie bracket and a $p$-operation $x\mapsto x^{[p]}$ satisfying certain axioms and compatibilities \cite[Exp. VIIA, Section 5]{SGA31}.

Conversely, given a $p$-Lie algebra $\idealg$, the dual of the universal envelopping $p$-algebra $\underline{U}_p(\idealg)$ is a commutative Hopf algebra whose spectrum $\mathcal{G}_p(\idealg)$ is a finite and infinitesimal group scheme of height one.
The functors $G \mapsto \idealg$ and $\idealg \mapsto \mathcal{G}_p(\idealg)$ are essential inverses and they establish an equivalence of categories between
finite and infinitesimal group schemes of height one over $k$
and finite dimensional $p$-Lie algebras over $k$ (see \cite[Exp. VIIA, Section 8]{SGA31}).

\begin{Examples}
The group scheme $G=\bmu_{p^n}$ (resp. $G=\balpha_{p^n}$) is of height one if and only if $n=1$. In this case, $\idealg$ is a one-dimensional $k$-vector space with trivial Lie bracket and there is a basis $e\in\idealg$ such that $e^{[p]}=e$ (resp. $e^{[p]}=0$).

Conversely, if $\idealg$ is a $1$-dimensional $p$-Lie algebra, then its Lie bracket must be trivial. If $k$ is algebraically closed, we can find a basis $e\in\idealg$ such that $e^{[p]}=e$ (resp. $e^{[p]}=0$), so $\bmu_p$ and $\balpha_p$ are the only two finite and infinitesimal group schemes of height $1$ with $1$-dimensional Lie algebra.
\end{Examples}

Let $X$ be any $k$-scheme and let $H^0(X,\Theta_X)$ be the space of global vector fields on $X$, which is a (not necessarily finite-dimensional) $p$-Lie algebra over $k$. 
Let $\idealg$ be a $p$-Lie algebra and let ${\rm Aut}_{X/k}$ be the automorphism functor of $X$ over $k$. 
By \cite[Exp. VIIA, Th\'eor\`eme 7.2]{SGA31}, there exists a bijection
\begin{equation} \label{eq: integrationofvectorfields}
{\rm Hom}(\mathcal{G}_p(\idealg),{\rm Aut}_{X/k}) 
\,\to\, 
{\rm Hom}(\idealg,H^0(X,\Theta_X)),
\end{equation}
where the left-hand side denotes morphisms of group schemes over $k$ and the right-hand side denotes morphisms of $p$-Lie algebras over $k$. 
The quotient of $X$ by $G$ can be computed via $\idealg$-invariants on an open affine cover.
We refer to \cite[Chapter 0.3]{CDL} for details, discussion, and further references.

\begin{Example} \label{ex: integratevectorfield}
In particular, giving a $\bmu_p$-action (resp. $\balpha_p$-action)
on $X$ is the same as giving a $\delta\in H^0(X,\Theta_X)$ with 
$\delta^{[p]}=\delta$ (resp. $\delta^{[p]}=0$).

Let us be more explicit in the case of $G = \balpha_p$: If $X = \Spec R$, then the $\balpha_p$-action induced by $\delta$ is given by the co-action
\begin{eqnarray*}
R & \to & R[x]/(x^p) \\
r & \mapsto & \sum_{i=0}^{p-1} \frac{\delta^i(r)}{i!} x.
\end{eqnarray*}
In particular, note that $r \in R^{\balpha_p}$ if and only if $\delta(r) = 0$.
\end{Example}

We refer to \cite{Ekedahl} and to 
\cite[Section 1]{Schroeer} for further details.

\subsection{The height filtration}
\label{sec: height}

Given a finite and infinitesimal group scheme $G$ over $k$,
the Frobenius morphism yields a canonical decomposition 
of $G$ 
$$
   1\,\unlhd\,G[F]\,\unlhd\,G[F^2]\,\unlhd\,\ldots\,\unlhd\,G[F^n]\,=\,G
$$
for some sufficiently large $n$.
By definition, the minimal $n$ for which we have $G=G[F^n]$ is called
the \emph{height} of $G$.
Each subquotient in this series is of height one.

For later use, let us mention that
a finite group scheme is linearly reductive if and only if
it does not contain $\balpha_p$ or $\C_p$,
see \cite[Lemma 2.3]{LRQ}.

\subsection{Finite group schemes of length $\mathbf{p},\mathbf{p}^{\mathbf{2}}$, or $\mathbf{p}^{\mathbf{3}}$}

Group schemes of length $p$ over an algebraically closed field $k$ of characteristic $p$ are classified in \cite{TateOort} and are all abelian. 
More precisely, they are the constant group scheme $\C_p$, the diagonalisable group scheme $\bmu_p$,
and the group scheme $\balpha_p$, which is neither \'etale nor linearly reductive.

In the following proposition we summarise the classification of finite group schemes of length $p^2$, which is probably known to the experts.

\begin{Proposition}
    \label{prop: simple infinitesimal}
    Let $G$ be a finite group scheme of length $p^2$ over an
    algebraically closed field $k$ of characteristic $p>0$. 
    \begin{enumerate}
      \item If $G$ is abelian, then it is either a product of group schemes of length $p$ 
      or one of the following group schemes
    \begin{enumerate}
    \item $\C_{p^2}$,
	\item $\bmu_{p^2} \cong D(\C_{p^2}) \cong (\C_{p^2})^D$,
	\item $\balpha_{p^2} \cong \bL_{1,2}$ ,
	\item $\balpha_{p^2}^D \cong \bL_{2,1}$, or
	\item $\bM_2 \cong \bL_{2,2}[F-V]$.
   \end{enumerate}
    \item If $G$ is non-abelian, then $G$ is the non-trivial semidirect product $\balpha_p\rtimes\bmu_p$. 
    In particular, $G$ is of height $1$.
    \item If $G$ is infinitesimal, 
      then $G$ is of height $2$ if and only if it is isomorphic to 
      $\balpha_{p^2}$, $\bM_2$ or $\bmu_{p^2}$.
    \end{enumerate}
    In particular,  there exist no simple group schemes of length $p^2$ over $k$.
\end{Proposition}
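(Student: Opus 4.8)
The plan is to classify $G$ by first splitting along the connected--\'etale sequence $1 \to G^\circ \to G \to G^\et \to 1$ (which is split since $k$ is perfect), reducing to the case of small length factors, and then treating the infinitesimal case by hand using Dieudonn\'e theory and the height filtration. Since $\#G = p^2$, the factor $G^\circ$ has length $1$, $p$, or $p^2$. If $\#G^\circ = 1$ then $G = G^\et$ is \'etale, hence a constant group of order $p^2$, so $\C_{p^2}$ or $\C_p \times \C_p$; both are products of length-$p$ group schemes (the first trivially, since length-$p$ groups include $\C_p$, but $\C_{p^2}$ is not such a product, so it is one of the listed exceptional abelian cases --- I would phrase the dichotomy as "product of length-$p$ groups \emph{or} one of (a)--(e)"). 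If $\#G^\circ = p$ and $\#G^\et = p$, then $G^\circ \in \{\bmu_p, \balpha_p\}$ (the infinitesimal length-$p$ groups, by \cite{TateOort}) and $G^\et = \C_p$; I would analyze the possible semidirect products $G^\circ \rtimes \C_p$. When $G^\circ = \bmu_p$, the action of $\C_p$ on $\bmu_p$ is via $\Aut(\bmu_p) = (\ZZ/p)^\times$, which has no element of order $p$, so the product is direct: $G = \bmu_p \times \C_p$, a product of length-$p$ groups. When $G^\circ = \balpha_p$, $\Aut(\balpha_p) = \GG_m$ again has no $p$-torsion, so again the product is direct: $G = \balpha_p \times \C_p$. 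Hence no \emph{non-abelian} group arises with $G^\circ$ of length $p$ and $G$ not infinitesimal.

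The remaining case is $G$ infinitesimal, i.e. $G = G^\circ$ of length $p^2$. Here I would use the height filtration: either $G$ has height $1$ or height $2$. If $G$ has height $1$, it corresponds to a $p$-dimensional restricted Lie algebra over $k$ of dimension $\leq$ (appropriate bound); concretely a restricted Lie algebra $\mathfrak{g}$ with $\dim_k \mathfrak{g} = 2$. Up to isomorphism there are only two $2$-dimensional Lie algebras: abelian and the non-abelian one with basis $x,y$, $[x,y]=y$; I would enumerate the possible $p$-operations on each. On the abelian $\mathfrak{g} = k^2$ the $p$-operation is a $p$-semilinear (additive but $p$-power-twisted) map, and classifying these up to isomorphism recovers $\balpha_p \times \balpha_p$, $\balpha_p \times \bmu_p$, and $\bmu_p \times \bmu_p$ (according to whether the $p$-operation is nilpotent, has a $1$-dimensional "toral part", or is bijective --- this is the Jordan-type decomposition of the restricted structure). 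On the non-abelian $\mathfrak{g}$ with $[x,y]=y$, the constraint that $\mathrm{ad}(z^{[p]}) = \mathrm{ad}(z)^p$ forces $y^{[p]} = 0$ and pins down $x^{[p]}$ up to the relevant equivalence, yielding exactly the one non-trivial semidirect product $\balpha_p \rtimes \bmu_p$ (whose Lie algebra is this $\mathfrak{g}$); this is part (2), and being height $1$ it is \emph{not} of height $2$, consistent with (3). If instead $G$ has height $2$, then $G[F]$ has length $p$ and is one of $\balpha_p, \bmu_p$, with quotient $G/G[F]$ also of length $p$; moreover $G$ is abelian in this case (a length-$p^2$ group that is infinitesimal of height $2$ is automatically commutative, which I would justify either by the short exact sequence $1 \to G[F] \to G \to G^{(p)} \to 1$ combined with $\Hom$/$\Ext$ vanishing, or by invoking that a minimal non-abelian example must have height $1$ as in (2)). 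Then I pass to Dieudonn\'e modules via Theorem \ref{thm: Dieudonnemain}: $M := \Dieulocloc(G)$ has $W(k)$-length $2$ with $F,V$ nilpotent. Writing $M$ explicitly --- it is a quotient of $\Dieu$ by a left ideal, and length $2$ with the nilpotency and height-$2$ (i.e. $F^2 = 0$ on $M$ but $F \neq 0$, or the dual) constraints --- forces $M \cong \Dieu/(F^2, V)$, $\Dieu/(F,V^2)$, or $\Dieu/(F-V, F^2)$, i.e. $G \cong \balpha_{p^2}$, $\balpha_{p^2}^D \cong \bmu_{p^2}$ wait --- more carefully: $\balpha_{p^2}$, its Cartier dual $\balpha_{p^2}^D$, or $\bM_2$; among these the ones with $V$ also of "height $2$" versus not distinguishes $\balpha_{p^2}^D$ and the self-dual $\bM_2$, and $\bmu_{p^2}$ enters as the one where one of $F,V$ is... here I must be careful: $\bmu_{p^2}$ has $F$ bijective, so it is \emph{not} of (loc,loc)-type and is not captured by $\Dieulocloc$; but $\bmu_{p^2}$ is infinitesimal of height $2$, so it must appear in (3). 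I would handle $\bmu_{p^2}$ separately as the unique infinitesimal, height-$2$, \emph{diagonalisable} group of length $p^2$ (its Cartier dual is $\C_{p^2}$), noting its Dieudonn\'e-type classification lives in the $\loc,\et$ part.

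The main obstacle I expect is the infinitesimal height-$1$ case: correctly enumerating $2$-dimensional restricted Lie algebras over $k$ \emph{up to isomorphism of restricted Lie algebras} (not just Lie algebras), since the $p$-operation $\xi \mapsto \xi^{[p]}$ is only $p$-semilinear and its classification requires a Jordan--Chevalley-type argument over the perfect field $k$; getting the equivalence relation right (conjugation by $\Aut(\mathfrak{g})$, which for the abelian algebra is all of $\GL_2(k)$ acting on the semilinear operator) is the delicate point, and it is exactly what produces the trichotomy $\balpha_p^2$, $\balpha_p \times \bmu_p$, $\bmu_p^2$ in the abelian case and the single non-abelian $\balpha_p \rtimes \bmu_p$. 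A secondary subtlety is bookkeeping the self-duality claims $\bmu_{p^2} \cong (\C_{p^2})^D$ and $\bM_2 \cong \bL_{2,2}[F-V]$ and the assertion that height-$2$ infinitesimal forces exactly $\{\balpha_{p^2}, \bM_2, \bmu_{p^2}\}$ --- i.e. ruling out that $\balpha_{p^2}^D$ has height $2$ (it has height $1$, since it is killed by Frobenius... no: $\balpha_{p^2}^D = \bL_{2,1}$, on which $F$ acts as $V$ on $\bL_{1,2}$; I would check directly that $F$ is \emph{not} nilpotent-of-order-$\leq 1$... actually $\bL_{2,1} = \bW_2[F]$ so $F = 0$ on it, height $1$). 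These height computations via the $\bL_{n,m}$ notation are routine once set up, so the real work is the restricted Lie algebra classification.
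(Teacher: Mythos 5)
Your route is genuinely different from the paper's: where the paper simply quotes Oort's classification of abelian group schemes of length $p^2$ and Wang's classification of local Hopf algebras of dimension $p^2$ (the latter giving both the uniqueness of the non-cocommutative one and the height-$2$ criterion), you try to re-derive everything from the connected--\'etale splitting, the classification of $2$-dimensional restricted Lie algebras for height $1$, and Dieudonn\'e theory for height $2$. That is a legitimate alternative in principle, but as written it has a genuine error in the height-$1$ abelian case. A nilpotent $p$-semilinear operator on a $2$-dimensional vector space over $k$ need not be zero: up to $\GL_2(k)$-conjugation there are \emph{two} nilpotent classes, the zero operator and a rank-one operator $V$ with $V^2=0$. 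The latter gives the abelian restricted Lie algebra with $x^{[p]}=y$, $y^{[p]}=0$, whose height-$1$ group scheme is precisely $\balpha_{p^2}^D=\bL_{2,1}=\bW_2[F]$ (its distribution algebra is $k[t]/(t^{p^2})$ with primitives spanned by $t,t^p$ and $p$-operation $t\mapsto t^p\mapsto 0$). So your trichotomy ``nilpotent / one-dimensional toral part / bijective'' is really a four-way split, and your list $\balpha_p\times\balpha_p$, $\balpha_p\times\bmu_p$, $\bmu_p\times\bmu_p$ omits exactly the exceptional group of part (1)(d). Your own closing remark --- that $F=0$ on $\bL_{2,1}$, so it has height $1$ --- already contradicts your claim that every abelian height-$1$ group of length $p^2$ is a product of length-$p$ groups.

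A second, smaller gap is the assertion that a height-$2$ infinitesimal group of length $p^2$ is automatically commutative. Neither justification you offer works as stated: the Hom/Ext argument must in particular rule out a nontrivial conjugation action $G\to\Aut(G[F])$, and when $G[F]\cong\balpha_p$ the target is $\Aut(\balpha_p)\cong\GG_m$, which does admit nontrivial homomorphisms from infinitesimal group schemes, so this is not immediate; and ``a minimal non-abelian example must have height $1$ as in (2)'' is circular, since (2) is what you are proving. The paper closes exactly this point by citing Wang: the unique non-cocommutative local Hopf algebra of dimension $p^2$ corresponds to $\balpha_p\rtimes\bmu_p$ (which is of height $1$), and the local Hopf algebras with underlying algebra $k[x]/(x^{p^2})$, i.e.\ the height-$2$ ones, are exactly those of $\balpha_{p^2}$, $\bmu_{p^2}$, and $\bM_2$. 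To keep your approach self-contained you would need to supply an honest argument that any non-abelian infinitesimal $G$ of length $p^2$ is killed by $F$ (for instance by analysing the conjugation action of $G$ on $G[F]$ and the resulting commutator pairing), in addition to repairing the restricted Lie algebra enumeration above. The remaining parts of your proposal --- the mixed case via $\Aut(\bmu_p)$ and $\Aut(\balpha_p)$ having no $p$-torsion, and the height-$2$ abelian case via Dieudonn\'e modules with $\bmu_{p^2}$ treated separately as the $(\loc,\et)$ member --- are sound and agree in substance with the paper.
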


\begin{proof}
If $G$ is of length $p$ over $k$, then it is commutative and isomorphic 
to $\balpha_p$, $\bmu_p$, or $\C_p$, see \cite{TateOort}.
The classification of abelian group schemes over $k$ is classical, see, for
example \cite{Oort}.
In particular, the only complicated ones are the ones of $(\loc,\loc)$-type,
and for length $p^2$, we refer to \cite[(15.5)]{Oort} for the classification list.
This establishes (1).

Before proceeding, let us note that $\Lieg = k\cdot a\oplus k\cdot m$
with $[m,a]=a$, $a^{[p]}=0$, and $m^{[p]}=m$ defines a non-abelian restricted
Lie algebra.
In fact, this is the restricted Lie algebra of a non-abelian semidirect
product $\balpha_p\rtimes\bmu_p$.

Let $G$ be a non-abelian group scheme of length $p^2$ over $k$.
If it is infinitesimal, then it corresponds to a local Hopf algebra of dimension $p^2$
over $k$.
By \cite[Corollary 7.5]{Wang}, there is only one that is not co-commutative 
(namely, number (5) in the list), which then necessarily corresponds to the previously constructed
semidirect product $\balpha_p\rtimes\bmu_p$.
If $G$ is \'etale and of length $p^2$ over $k$, then it is a constant
group scheme, and thus, by the classification of finite groups of order $p^2$, it is abelian.
It remains to deal with the case where $G$ is non-abelian of length $p^2$
and neither infinitesimal nor \'etale.
Using the local-\'etale sequence, we then obtain a semidirect product 
$G\cong G^\circ\rtimes G^\et$, where both $G^\circ$ and $G^\et$
are of length $p$.
Thus, $G^\circ\in\{\balpha_p,\bmu_p\}$ and $G^\et\iso\C_p$, and we note that
non-abelian semidirect products correspond to non-trivial
homomorphisms  $\C_p\to\Aut(G^\circ)$.
Since $\Aut(\bmu_p)\iso\Aut(\bmu_p^D)\iso(\ZZ/p\ZZ)^\times$, there
are no non-abelian extensions of $\C_p$ by $\bmu_p$.
Since $\Aut(\balpha_p)\iso\GG_m$, % Waterhouse, Chapter 7, Exercise 17
there are also no non-abelian extensions of $\C_p$ by $\balpha_p$.

From the classification it is clear that there are no simple group schemes of length $p^2$.
The height of an infinitesimal group scheme of length $p^2$ is equal to $2$ if and only if
its Hopf algebra is isomorphic to $k[x]/(x^{p^2})$, which is true only for
$\balpha_{p^2}$, $\bmu_{p^2}$ or $\bM_2$ by \cite[Corollary 7.5]{Wang}.
(Alternatively, one can also argue via their Dieudonn\'e-modules in the abelian
case, and note that the unique non-abelian group scheme of length $p^2$ is
of height $1$).
\end{proof}

The classification of finite group schemes of length $p^3$ is more complicated and 
in the following we will only describe the class of infinitesimal group schemes 
with $1$-dimensional tangent space. 
This restriction is very natural for the purposes of our article 
(see Proposition \ref{prop: linearlyreductive or unipotent}). 
The classification can be derived from the literature on Hopf algebras as follows.

\begin{Proposition} \label{prop: groupschemesoforderp3}
Let $G$ be a finite infinitesimal group scheme with $1$-dimensional tangent space and length $p^3$ over an algebraically closed field $k$ of characteristic $p > 0$.
\begin{enumerate}
    \item If $G$ is abelian, then it is one of the following four group schemes:
    \begin{enumerate}
        \item $\bmu_{p^3}$.  In particular, $G[F^2] \cong G/G[F] \cong \bmu_{p^2}$.
        \item $\balpha_{p^3}$. In particular, $G[F^2] \cong G/G[F] \cong \balpha_{p^2}$.
        \item $\bM_3$. In particular, $G[F^2] \cong G/G[F] \cong \bM_2$.
        \item $\bL_{3,2}[V-F^2]$. In particular, $G[F^2] \cong G/G[F] \cong \balpha_{p^2}$.
    \end{enumerate}
    \item \label{item: non-commutative} If $G$ is non-abelian, then $G[F^2] \cong G/G[F] \cong \balpha_{p^2}$.
    \item \label{item: p=2 non-commutative} If $p = 2$ and $G$ is non-abelian, then $G$ is isomorphic to the subgroup scheme of $\GL_{3,k}$, whose $S$-valued points are as follows
    $$
    G(S) \,=\,\left\{ \left( \begin{matrix} 1 & x^2 & x \\
    0 & 1 & x^4 \\
    0 & 0 & 1 
    \end{matrix} \right) \Bigg| ~ x \in H^0(S,\mathcal{O}_S), ~ x^8 = 0
    \right\}.
    $$ 
    In other words, it is given by $G = \Spec k[x]/(x^8)$ with the comultiplication formula $\Delta(x) = x \otimes 1 + 1 \otimes x + x^2 \otimes x^4$.
\end{enumerate}
\end{Proposition}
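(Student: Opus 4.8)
The plan is to reduce the classification to known results on finite-dimensional Hopf algebras over $k$, exploiting the hypothesis that the tangent space is $1$-dimensional, which forces the underlying algebra of $G$ to be monogenic.

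First I would record the structural consequence of the hypothesis: if $G$ is infinitesimal of length $p^3$ with $\dim_k \mathfrak{t}_G = 1$, where $\mathfrak{t}_G = (\mathfrak{m}/\mathfrak{m}^2)^\vee$, then the local Hopf algebra $A$ representing $G$ has $\dim_k \mathfrak{m}/\mathfrak{m}^2 = 1$, hence $A \cong k[x]/(x^{p^h})$ for some $h$; since $\dim_k A = p^3$ we get $A \cong k[x]/(x^{p^3})$ and $h=3$, so $G$ has height $3$ with respect to the Frobenius filtration. In particular $G[F] \cong \balpha_p$, $G[F^2]$ has length $p^2$ with $1$-dimensional tangent space, and $G/G[F]$ and $G/G[F^2]$ are infinitesimal of lengths $p^2$ and $p$ respectively. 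By Proposition \ref{prop: simple infinitesimal}(3), the only infinitesimal group schemes of length $p^2$ with monogenic Hopf algebra (equivalently height $2$) are $\balpha_{p^2}$, $\bmu_{p^2}$, and $\bM_2$; so $G[F^2]$ is one of these three, and likewise $G/G[F] \cong (G/G[F])$ is one of them. This already pins down the ``In particular'' clauses in (1), (2) once we know which case we are in, using self-duality considerations: $\bmu_{p^2}$ arises only when $G$ has a $\bmu$-type subquotient, which for a monogenic infinitesimal group means $G\cong\bmu_{p^3}$ itself (its Cartier dual is \'etale-over-\'etale in the relevant sense); the surviving possibilities for $G[F^2]$ in the non-$\bmu$ cases are $\balpha_{p^2}$ and $\bM_2$.

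Next I would treat the abelian case (1) via covariant $(\loc,\loc)$-Dieudonn\'e theory (Theorem \ref{thm: Dieudonnemain}), noting that abelian infinitesimal $G$ is automatically of $(\loc,\loc)$-type unless it contains a $\bmu_p$, and a monogenic abelian group scheme containing $\bmu_p$ must be $\bmu_{p^2}$ or $\bmu_{p^3}$ by the length count — giving case (a). For the genuinely $(\loc,\loc)$ cases, $M := \Dieulocloc(G)$ is a $\Dieu$-module of $W(k)$-length $3$ on which $F,V$ are nilpotent, with $M/FM \cong k$ (this is the dual of the tangent space condition: $\dim_k M/FM = \dim_k \mathfrak{t}_{G^D}$... more precisely I would use that the monogenic condition on $A$ translates to $M$ being generated by one element over $\Dieu$, i.e. $M \cong \Dieu/I$ for a left ideal $I$ with $\dim_k \Dieu/I = 3$ and $V$ nilpotent). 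Enumerating such cyclic $\Dieu$-modules of length $3$: the relation $FV=VF=p$ together with $V$-nilpotence of the right length forces $I \in \{(F^3, V\cdot\text{something}), \dots\}$; carrying out the short combinatorial enumeration of left ideals of colength $3$ yields exactly $\Dieu/(F^3, V) = \Dieulocloc(\balpha_{p^3})$, $\Dieu/(F^3, V^3, F-V) = \Dieulocloc(\bM_3)$, $\Dieu/(F^3,V^3, V-F^2) = \Dieulocloc(\bL_{3,2}[V-F^2])$, and the $\bmu$-case $\Dieu/(F, V^3)$ which is $\Dieulocloc$ of a group whose Cartier dual is $\balpha_{p^3}^D$ — but that dual has $p^3$-dimensional but non-monogenic algebra; one checks $\bmu_{p^3}$ is the monogenic representative, giving (a)–(d). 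The ``In particular'' clauses then follow by computing $M/FM$-truncations.

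For the non-abelian cases (2) and (3): by the above $A \cong k[x]/(x^{p^3})$ with some comultiplication $\Delta$. The quotient $G/G[F]$ is infinitesimal of length $p^2$; if it were $\bmu_{p^2}$ then $G$ would surject onto a $\bmu$, contradicting monogenicity-with-non-abelianness (or one argues it forces commutativity since extensions of $\bmu_{p^2}$ by $\balpha_p$ with monogenic algebra are abelian), and $\bM_2$ is excluded similarly because $\End$-arguments as in the proof of Proposition \ref{prop: simple infinitesimal} show the relevant automorphism groups admit no non-abelian extension; hence $G/G[F] \cong \balpha_{p^2}$, which is (2). For (3), specialising to $p=2$ we have $A = k[x]/(x^8)$ and must classify the non-cocommutative Hopf algebra structures; by the Hopf-algebra classification results cited in the paper (the Wang-type references), there is a unique such, and I would exhibit it concretely as the given upper-triangular subgroup scheme of $\GL_{3,k}$ — verifying directly that $x \mapsto x$, with $\Delta(x) = x\otimes 1 + 1\otimes x + x^2 \otimes x^4$, is coassociative (the only nontrivial check, using $x^8=0$ to kill the obstruction terms), that the antipode exists, and that the resulting group functor is the matrix group displayed — and checking it is indeed non-abelian (the matrices for $x=a$ and $x=b$ fail to commute since $a^2 b^4 \neq b^2 a^4$ in general) and infinitesimal of length $2^3$ with $1$-dimensional tangent space. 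The main obstacle I anticipate is the non-abelian $p$-arbitrary case (2): ruling out $\bmu_{p^2}$ and $\bM_2$ as the Frobenius-quotient in a clean way, and more generally extracting from the literature a classification statement about monogenic Hopf algebras of dimension $p^3$ precise enough to conclude — the abelian case is routine Dieudonn\'e bookkeeping, but the non-abelian case genuinely needs the Hopf-algebraic input and care with cocommutativity versus commutativity.
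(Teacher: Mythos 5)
There is a genuine gap, and it sits exactly where you flag it: part (\ref{item: non-commutative}), the non-abelian case for general $p$. Your reduction does show $G[F^2]$ and $G/G[F]$ are monogenic of height $2$, hence in $\{\balpha_{p^2},\bmu_{p^2},\bM_2\}$, but the proposed exclusions of $\bmu_{p^2}$ and especially of $\bM_2$ are not arguments. The ``$\End$-arguments as in Proposition \ref{prop: simple infinitesimal}'' do not transfer: there the non-abelian possibilities were semidirect products, i.e.\ split extensions with an \'etale or multiplicative complement acting through $\Aut(G^\circ)$, whereas here you must control \emph{non-split} infinitesimal-by-infinitesimal extensions (of $\balpha_p$ by $\bM_2$ or $\balpha_{p^2}$, say), where there is no complement to act and the relevant extension groups are large; nothing in your sketch rules out a non-abelian $G$ with $G/G[F]\cong\bM_2$. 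The paper does not argue this way at all: it dualizes ($R$ local with $1$-dimensional cotangent space forces $R^\vee$ connected with $1$-dimensional primitive space, by the cited result of Wang) and then quotes the Nguyen--Wang--Wang classification of connected Hopf algebras of dimension $p^3$; both the abelian list, the claim $G[F^2]\cong G/G[F]\cong\balpha_{p^2}$ in the non-abelian case, and the uniqueness for $p=2$ in part (\ref{item: p=2 non-commutative}) are read off from that classification. Since you yourself say you would need ``a classification statement about monogenic Hopf algebras of dimension $p^3$ precise enough to conclude,'' the proposal as written does not contain a proof of (\ref{item: non-commutative}); for (\ref{item: p=2 non-commutative}) you in effect invoke the same literature as the paper, plus a direct verification of the displayed example, which is fine.

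The abelian part via Dieudonn\'e theory is a legitimately different route from the paper, but your bookkeeping has errors that need repair. ``Monogenic'' is \emph{not} equivalent to ``$M$ cyclic over $\Dieu$'': $\balpha_{p^3}^D=\bL_{3,1}$ has cyclic module $\Dieu/(F^3,V)$ yet $3$-dimensional tangent space. In the paper's covariant normalization the correct criterion is $\dim_k M/VM=1$ (this computes the tangent space of $G$; $M/FM$ computes that of $G^D$), and with it the enumeration of length-$3$ modules does give exactly $\Dieu/(F,V^3)$, $\Dieulocloc(\bM_3)$ and $\Dieulocloc(\bL_{3,2}[V-F^2])$. Note also that your labels are swapped relative to the paper's conventions ($\Dieulocloc(\balpha_{p^3})=\Dieu/(F,V^3)$, not $\Dieu/(F^3,V)$), that $\bmu_{p^3}$ is not of (loc,loc)-type and so cannot occur in this enumeration at all (it must be, and in your first paragraph is, split off beforehand via the canonical decomposition, which also shows your blanket claim $G[F]\cong\balpha_p$ fails for $G=\bmu_{p^3}$), and that the length count there should give only $\bmu_{p^3}$, not ``$\bmu_{p^2}$ or $\bmu_{p^3}$.'' These slips are fixable, but together with the unproved non-abelian case they leave the proposal short of a proof of the Proposition.
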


\begin{proof}
Write $G = \Spec R$ for a $p^3$-dimensional Hopf algebra $R$. 
Since $R$ is local, $R^\vee$ is connected in the sense that its co-radical is $1$-dimensional.
Moreover, the primitive subspace of $R^\vee$ is $1$-dimensional, see \cite[Proposition 2.2]{Wang}.
Such $R^\vee$ are classified in \cite[Theorem 1.1]{NguyenWangWang}. 
There are four isomorphism classes of commutative $R^\vee$ and one irreducible 
family ($0$-dimensional if $p = 2$)
of isomorphism classes of non-commutative $R^\vee$. 
The four abelian group schemes we listed are pairwise non-isomorphic, 
hence they are in fact all infinitesimal abelian group schemes of order $p^3$ 
with $1$-dimensional tangent space. 
If $G$ is non-abelian, then $G[F^2] \cong G/G[F] \cong \balpha_{p^2}$ is easily 
derived from the description of $R^\vee$ in \cite[Theorem 1.1]{NguyenWangWang}. 
Finally, if $p = 2$, then the non-abelian $G$ is unique, so it coincides with the 
group scheme we describe in (\ref{item: p=2 non-commutative}).
\end{proof}

\begin{Remark}
If $p>2$, then the group schemes of type (\ref{item: non-commutative}) of the proposition even
form one-dimensional families.
It would be very interesting to know whether
quotient singularities by such families of group schemes show up in the 
MMP in higher dimensions.
\end{Remark}

\section{Maximal $\mathcal{C}$-subsheaves} \label{sec: maximalsubsheaves}

Let $G$ be a sheaf of abelian groups on $(Sch/k)_{\fppf}$
and let $\mathcal{C}$ be a category of abelian group schemes over $k$.
Under suitable assumptions on $\mathcal{C}$,
we define a maximal $\mathcal{C}$-subsheaf $G^{\mathcal{C}}$,
which is the smallest subsheaf of $G$ that receives all morphisms
from group schemes in $\mathcal{C}$.
We use this to define a canonical decomposition of $G$
into factors of type (loc,loc), (loc, \'et), (\'et,loc), and (\'et,\'et) 
as familiar from finite abelian group schemes.
Moreover, we set up a Cartier--Dieudonn\'e theory for 
$G^{\loc,\loc}$.
The motivation for developing this machinery is that we want to 
study the local Picard sheaf $\Picloc_{X/k}$ (Section \ref{subsec: picloc}) for a singularity
$x\in X$ and that this sheaf is usually \emph{not} representable
by a group scheme.

\subsection{Sheaves of abelian groups}

In this section, $k$ is a field, $G$ is an fppf sheaf of abelian groups on 
$(Sch/k)_{\fppf}$ and $\mathcal{C}$ is a category of abelian group schemes over $k$ 
such that
\begin{enumerate}
   \item for $G_1,G_2 \in \mathcal{C}$, also $G_1 \times G_2 \in \mathcal{C}$, and
   \item for $G_1 \in \mathcal{C}$ and for every normal subgroup scheme $H \subseteq G_1$ we have $G_1/H \in \mathcal{C}$.
\end{enumerate}
That is, $\mathcal{C}$ is stable under taking finite products and quotients.

\begin{Remark}
 Since we are working over a field $k$, the quotient $G/H$ of a group scheme $G$ by a normal subgroup 
 scheme $H$ exists and it represents the categorical quotient in the category of fppf sheaves.
 This can be seen by combining \cite[Tag 047T]{StacksProject} with 
 \cite[Corollaire 4.2.3]{Perrin}.
\end{Remark}

\begin{Example}
The conditions on $\mathcal{C}$ hold, for example, for the category of 
finite abelian group schemes over $k$, the category of geometrically reduced abelian group schemes 
over $k$, 
the category of finite and linearly reductive abelian group schemes over $k$, 
or the category of finite abelian group schemes of $(\loc,\loc)$-type over $k$.
\end{Example}

We now come to the main definition of this section.

\begin{Definition}\label{def: maximal C-subsheaf}
Let $G$ be an fppf sheaf of abelian groups on $(Sch/k)_{\fppf}$ and let $\mathcal{C}$ be as above. 
The \emph{maximal $\mathcal{C}$-subsheaf} $G^{\mathcal{C}}$ of $G$ is the subsheaf of $G$ generated 
by the images of all homomorphisms of sheaves of abelian groups from group schemes in 
$\mathcal{C}$ to $G$.
\end{Definition}

By construction, $G^{\mathcal{C}}$ is the smallest subsheaf of $G$ that receives 
all morphisms from group schemes in $\mathcal{C}$. 
We would like to say that $G^{\mathcal{C}}$ is itself a union of subsheaves 
that are representable by objects in $\mathcal{C}$. 
We can prove this in the following case, which is
sufficient for our applications below.

\begin{Proposition}\label{prop: maximal C-subsheaf}
Let $G$ be a sheaf of abelian groups on $(Sch/k)_{\fppf}$,
such that the identity section $e_G:\Spec k\to G$ 
is representable by a closed immersion. 
Then, we have
$$
G^{\mathcal{C}} \,=\, \bigcup_{i \in I}\, G_i,
$$
where $G_i$ runs over all subsheaves of $G$ that are representable by 
objects in $\mathcal{C}$. 
\end{Proposition}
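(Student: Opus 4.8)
The plan is to show the two inclusions. One direction, $\bigcup_{i\in I} G_i \subseteq G^{\mathcal{C}}$, is immediate: each $G_i$ is representable by an object of $\mathcal{C}$ and is a subsheaf of $G$, so in particular the inclusion $G_i\hookrightarrow G$ is a homomorphism from a group scheme in $\mathcal{C}$, and hence its image $G_i$ lies in $G^{\mathcal{C}}$ by Definition \ref{def: maximal C-subsheaf}. Taking the union over $i\in I$ gives the inclusion.

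For the reverse inclusion, I would first reduce to showing that the image of a single homomorphism $\varphi\colon H\to G$ with $H\in\mathcal{C}$ is representable by an object of $\mathcal{C}$, and then that $G^{\mathcal{C}}$, being generated by such images, is the union of the sub\emph{sheaves} it contains of this form (using that $\mathcal{C}$ is stable under finite products to handle sums of finitely many such images, and that a filtered union of the $G_i$ is again a subsheaf of $G$). So the crux is: given $\varphi\colon H\to G$ with $H\in\mathcal{C}$, the image subsheaf $\Image(\varphi)\subseteq G$ is representable by an object of $\mathcal{C}$. The natural candidate is $H/\Ker(\varphi)$: the kernel $\Ker(\varphi)$ is the fibre product $H\times_G \Spec k$ along $\varphi$ and the identity section $e_G$, which is representable by a closed immersion into $H$ by the hypothesis that $e_G$ is a closed immersion; hence $\Ker(\varphi)$ is a closed subgroup scheme of $H$, and since we are over a field, it is automatically normal-in-the-sense-needed (it is a kernel, hence normal), so $H/\Ker(\varphi)\in\mathcal{C}$ by the quotient-stability assumption on $\mathcal{C}$. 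By the universal property of the fppf quotient sheaf, $\varphi$ factors as $H\twoheadrightarrow H/\Ker(\varphi)\to G$, and the induced map $H/\Ker(\varphi)\to G$ is a monomorphism of fppf sheaves; therefore it identifies $H/\Ker(\varphi)$ with the subsheaf $\Image(\varphi)$ of $G$. Thus $\Image(\varphi)$ is one of the $G_i$.

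Finally I would assemble these: every section of $G^{\mathcal{C}}$ over some test scheme, after an fppf cover, lies in the subsheaf generated by finitely many images $\Image(\varphi_1),\dots,\Image(\varphi_n)$ with $\varphi_j\colon H_j\to G$, $H_j\in\mathcal{C}$; this subsheaf is the image of $\varphi_1+\dots+\varphi_n\colon H_1\times\dots\times H_n\to G$, and $H_1\times\dots\times H_n\in\mathcal{C}$ by product-stability, so by the previous paragraph it is again representable by an object of $\mathcal{C}$, i.e.\ it is one of the $G_i$. Hence $G^{\mathcal{C}}=\bigcup_{i\in I}G_i$. The main obstacle I anticipate is the bookkeeping around fppf sheafification — one must be careful that the subsheaf "generated by" the images, as in Definition \ref{def: maximal C-subsheaf}, really is the fppf-sheaf union of the finitely-generated pieces, so that local sections are controlled by finitely many $H_j$ at a time; once that is granted, the closed-immersion hypothesis on $e_G$ does exactly the work needed to keep kernels (and hence quotients) inside $\mathcal{C}$.
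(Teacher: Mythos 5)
Your proposal is correct and follows essentially the same route as the paper: the closed-immersion hypothesis on $e_G$ makes $\Ker(\varphi)$ a (closed) subgroup scheme of $H$, so $\Image(\varphi)\cong H/\Ker(\varphi)$ lies in $\mathcal{C}$, and sums of sections are handled by the sum homomorphism out of a finite product of objects of $\mathcal{C}$. The paper phrases the last step as checking that $\bigcup_i G_i(S)$ is a subgroup of $G(S)$ via $f_i+f_j\colon G_i\times G_j\to G$, which is exactly your finite-sum argument.
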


\begin{proof}
Since $e_G$ is representable by a closed immersion, the kernel of every 
homomorphism $f: H \to G$ from a group scheme $H \in \mathcal{C}$ is a group scheme, 
so $H/\Ker(f)$ is a group scheme as well. 
Moreover, by our assumptions on $\mathcal{C}$, we have $H/\Ker(f) \in \mathcal{C}$ 
and since $G$ is a sheaf and $H/\Ker(f)$ is the fppf-quotient of $H$ by $\Ker(f)$, 
$f$ factors through a monomorphism of sheaves of abelian groups
$\alpha': H/\Ker(f) \to G$. 
In particular, $\bigcup_{i \in I} G_i \subseteq G^{\mathcal{C}}$ and $G^{\mathcal{C}}$ 
is generated by the $G_i$.

To finish the proof, it thus suffices to show that $\bigcup_{i \in I} G_i(S)$ 
is a subgroup of $G(S)$ for every $k$-scheme $S$. 
For this, we take two elements $g_i \in G_i(S)$ and $g_j \in G_j(S)$. 
The inclusions $f_i: G_i \to G$ yield homomorphisms $f_{ij} := f_i + f_j: G_i \times G_j \to G$. 
As in the previous paragraph, $\Ker(f_{ij})$ is representable. 
Thus, by our assumptions on $\mathcal{C}$, both 
$G_i \times G_j$ and $(G_i \times G_j)/\Ker(f_{ij})$ 
are representable by group schemes in $\mathcal{C}$. 
Hence, ${\rm im}(f_{ij}) = G_k$ for some $k$. 
Thus, $g_i + g_j \in G_i(S) + G_j(S) \subseteq G_k(S) \subseteq \bigcup_{i \in I} G_i$. 
\end{proof}

In particular, $G^\mathcal{C}$ is the filtered colimit (in the category of sheaves 
of abelian groups on $(Sch/k)_{\fppf}$) over those subsheaves $G_i$ 
that are representable by objects in $\mathcal{C}$. 
Therefore, under suitable assumptions on $\mathcal{C}$, $G^{\mathcal{C}}$ 
is an ind-group scheme.

\begin{Lemma} \label{lem: indgroupscheme}
Let $G$ be a sheaf of abelian groups on $(Sch/k)_{\fppf}$,
such that the identity section $e_G:\Spec k\to G$ is representable by a closed immersion. 
If all objects in $\mathcal{C}$ are affine, then $G^{\mathcal{C}}$ is an ind-group scheme.
\end{Lemma}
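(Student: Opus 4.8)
To show that $G$ is an ind-group scheme, the plan is to realise it as a filtered colimit of affine group schemes along closed immersions, which is exactly the datum of an ind-group scheme, and the engine for this is Proposition \ref{prop: maximal C-subsheaf}. Since $e_G$ is representable by a closed immersion, that proposition gives $G^{\mathcal{C}} = \bigcup_{i \in I} G_i$, a filtered union indexed by the subsheaves $G_i \subseteq G$ that are representable by objects of $\mathcal{C}$; the new hypothesis that all objects of $\mathcal{C}$ are affine says precisely that each $G_i$ is an affine group scheme over $k$.

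First I would record that the index poset $I$ is filtered: given $G_i$ and $G_j$, the image of $f_i + f_j \colon G_i \times G_j \to G$ is again some $G_k$ containing both, exactly as in the proof of Proposition \ref{prop: maximal C-subsheaf}. Next I would verify that every inclusion $G_i \hookrightarrow G_j$ is a closed immersion. Since $\mathcal{C}$ is stable under quotients, $G_j/G_i \in \mathcal{C}$ is again an affine group scheme over $k$, and affine group schemes over a field are separated, so their identity section is a closed immersion; as $G_i$ is the pullback of the identity section of $G_j/G_i$ along the quotient map $G_j \to G_j/G_i$, the inclusion $G_i \hookrightarrow G_j$ is a base change of a closed immersion, hence itself a closed immersion. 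With this in hand, $\{G_i\}_{i \in I}$ is a filtered system of affine schemes with closed-immersion transition maps, whose colimit is an ind-affine scheme; the compatible abelian group structures then promote it to an ind-affine group scheme, that is, to an ind-group scheme.

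The step I expect to be the real obstacle is the passage from $G^{\mathcal{C}}$ to $G$ that the statement demands. What the construction above produces is an ind-group scheme structure on $\bigcup_{i} G_i = G^{\mathcal{C}}$, and for a general sheaf $G$ one has only $G^{\mathcal{C}} \subseteq G$, with the inclusion strict in the motivating example of the local Picard sheaf of Section \ref{subsec: picloc} (which is not ind-representable). Thus the crux of the lemma is the identification $G = G^{\mathcal{C}}$: it is to be applied to sheaves generated by their $\mathcal{C}$-subsheaves, and precisely under this hypothesis one has $\bigcup_i G_i = G^{\mathcal{C}} = G$, so the ind-affine group scheme just built is $G$ itself and we conclude that $G$ is an ind-group scheme. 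The auxiliary fact used above, that a monomorphism of affine group schemes over a field is a closed immersion, is standard for the finite-type group schemes occurring in $\mathcal{C}$ and presents no genuine difficulty.
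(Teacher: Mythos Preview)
Your argument is essentially the paper's: use Proposition \ref{prop: maximal C-subsheaf} to write $G^{\mathcal{C}}$ as a filtered union of affine subgroup schemes $G_i$, and then observe that the transition maps are closed immersions. The paper simply cites \cite[Corollaire 4.2.4]{Perrin} for the latter, while you give the direct argument via pullback of the identity section of $G_j/G_i$; both are fine.

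You have also correctly diagnosed the statement: the lemma as written asserts that $G$ is an ind-group scheme, but the paper's own proof only establishes this for $G^{\mathcal{C}}$, and indeed that is all that is ever used (for instance in Corollary \ref{cor: Dieudonneofpicloc}, applied to $\Picloclocloc_{X/k}$, which is by construction a maximal $\mathcal{C}$-subsheaf). So the ``obstacle'' you flag is a typo in the conclusion rather than a missing hypothesis: the intended statement is that $G^{\mathcal{C}}$ is an ind-group scheme, and no assumption that $G = G^{\mathcal{C}}$ is needed or made.
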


\begin{proof}
We already know that $G^{\mathcal{C}}$ is a filtered colimit over those subsheaves $G_i$ 
that are representable by objects in $\mathcal{C}$. 
If all the $G_i$ are affine, then the morphisms between them are closed immersions 
(see, for example, \cite[Corollaire 4.2.4]{Perrin}), and then, 
$G^\mathcal{C}$ is an ind-group scheme in this case.
\end{proof}

If $k$ is perfect, then we have the functorial decomposition 
\eqref{eq: canonical decomposition} of finite and abelian group schemes.
Hence, for an fppf sheaf of abelian groups $G$ over $k$, such that $e_G$ is 
representable by a closed immersion and if $\mathcal{C}$ is the category of 
finite abelian group schemes over $k$, then 
we get the analogous canonical decomposition of ind-group 
schemes
$$
G^{\mathcal{C}} \,\cong\, G^{\loc,\loc} \times G^{\loc,\et} \times G^{\et,\loc} \times G^{\loc,\loc}.
$$
As with finite group schemes, if $k$ is non-perfect, then we only get a 
canonical filtration of $G^{\mathcal{C}}$ with graded pieces in the 
four subcategories.

\begin{Definition}
A sheaf $G$ of abelian groups on $(Sch/k)_{\fppf}$ is said to be an 
\emph{ind-(loc,loc) group scheme} if $G = G^{\loc,\loc}$.
\end{Definition}

\subsection{Cartier--Dieudonn\'e theory} \label{subsec: Cartier--Dieudonne theory}
In this section, we describe the maximal $(\loc,\loc)$-subsheaf $G^{\loc,\loc}$ of $G$ using Dieudonn\'e theory.

\begin{Definition}
Let $G$ be an fppf sheaf of abelian groups on $(Sch/k)_{\fppf}$.
We define the \emph{(covariant (loc,loc)-) Dieudonn\'e module of $G$} 
to be 
$$
 \Dieulocloc(G) \,:=\, \varinjlim_{n,m}\, \Hom\left(\bL_{n,m},G\right),
$$
which is a left $\Dieu$-module via precomposition with the right-action of $\Dieu$ on $\bL_{n,m}$. 
\end{Definition}

If $G$ is the fppf sheaf associated to some finite group scheme over $k$,
then $\Dieulocloc(G)$ coincides with the $\Dieu$-module associated
to that particular group scheme as discussed in Section \ref{subsec: loclocCartierDieudonne}.
Moreover, since $G^{\loc,\loc}$ receives all morphisms from 
finite $(\loc,\loc)$-group schemes, we have 
$\Dieulocloc(G) = \Dieulocloc(G^{\loc,\loc})$.
The main theorem of covariant Dieudonn\'e theory extends 
to ind-group schemes of $(\loc,\loc)$-type as follows: 

\begin{Proposition} \label{prop: colimitsofdieudonne}
Let $k$ be a perfect field of characteristic $p>0$. 
Then, the functor $\Dieulocloc$ induces an equivalence of categories 
between the category of ind-(loc,loc) group schemes over $k$ and 
colimits of left $\Dieu$-modules that are finite as $W(k)$-modules and
on which $F$ and $V$ are nilpotent.
\end{Proposition}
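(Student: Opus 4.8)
The plan is to bootstrap the classical equivalence (Theorem \ref{thm: Dieudonnemain}) to the ind-category by a general nonsense argument about filtered colimits, being careful that both sides are genuine categories of \emph{objects} and not just objects equipped with presentations. First I would make precise the target category: its objects are filtered colimits $\varinjlim_i M_i$ of $\Dieu$-modules $M_i$ that are finite over $W(k)$ with $F,V$ nilpotent, and morphisms are just $\Dieu$-module homomorphisms of the underlying modules; equivalently, this is the category of $\Dieu$-modules $M$ such that $M$ is the union of its finite-$W(k)$-length $\Dieu$-submodules on which $F$ and $V$ act nilpotently (note any such submodule automatically has $F,V$ nilpotent once it is finite over $W(k)$ if it sits inside such a colimit, but in general one should impose ``$M = \bigcup M_i$ with each $M_i$ of that type'' — this is an Ind-completion of the classical abelian category). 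Similarly, by Proposition \ref{prop: maximal C-subsheaf} and Lemma \ref{lem: indgroupscheme}, an ind-(loc,loc) group scheme $G = G^{\loc,\loc}$ is precisely the filtered colimit $\bigcup_{i} G_i$ of its finite (loc,loc) subgroup schemes, with the transition maps closed immersions; so the source category is the Ind-completion of the category of finite (loc,loc) group schemes over $k$.

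The key steps: (1) Show $\Dieulocloc$ commutes with filtered colimits of the relevant shape, i.e. if $G = \varinjlim_i G_i$ with $G_i$ finite (loc,loc), then $\Dieulocloc(G) = \varinjlim_i \Dieulocloc(G_i)$. This follows because each $\bL_{n,m}$ is a \emph{finite} group scheme, hence quasi-compact and in fact of finite presentation, so $\Hom(\bL_{n,m}, \varinjlim_i G_i) = \varinjlim_i \Hom(\bL_{n,m}, G_i)$ when the colimit is filtered along closed immersions (a map from a finite scheme to the colimit factors through some $G_i$ because its scheme-theoretic image is a finite closed subscheme); then commute the two filtered colimits over $(n,m)$ and over $i$. (2) Invoke the general principle: a functor between Ind-completions induced by an equivalence of the underlying small categories is again an equivalence. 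Concretely, since $\Dieulocloc$ restricts to the equivalence of Theorem \ref{thm: Dieudonnemain} on finite (loc,loc) group schemes, and since both the source and target of the proposition are the Ind-completions of these equivalent categories (using step (1) on the source side and the analogous ``every module in the target is the filtered union of its finite-length nilpotent $\Dieu$-submodules'' on the target side), the induced functor on Ind-completions is an equivalence. Fully faithfulness: for $G = \varinjlim G_i$ and $H = \varinjlim H_j$, $\Hom(G,H) = \varinjlim_i \Hom(G_i, H) = \varinjlim_i \varinjlim_j \Hom(G_i, H_j)$ using that $G_i$ is finite (so maps out of it factor through some $H_j$), and this matches $\Hom(\Dieulocloc G, \Dieulocloc H)$ computed the same way on the module side via the classical equivalence. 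Essential surjectivity: given a $\Dieu$-module $M$ in the target, write $M = \varinjlim M_i$ with $M_i$ finite over $W(k)$ and $F,V$ nilpotent, lift each $M_i$ to a finite (loc,loc) group scheme $G_i$ by Theorem \ref{thm: Dieudonnemain}, lift transition maps (automatically closed immersions if the $M_i \to M_j$ can be arranged injective — replace $M_i$ by its image in $M$), and set $G := \varinjlim G_i$.

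The main obstacle I expect is purely bookkeeping around the colimit in step (1): one must check that a morphism $\bL_{n,m} \to G = \varinjlim_i G_i$ really does factor through a single $G_i$. The cleanest route is to use that the transition maps $G_i \to G_j$ are closed immersions (Lemma \ref{lem: indgroupscheme}, via \cite[Corollaire 4.2.4]{Perrin}) so that $G$ is an increasing union of the $G_i$ as fppf sheaves, and then that $\bL_{n,m}$ is representable by a finite (hence affine, Noetherian) scheme: the image sheaf of $\bL_{n,m} \to G$ is a quotient of $\bL_{n,m}$, hence representable by a finite group scheme, hence a finite closed subscheme of $G$, which must land in some $G_i$ since the $\bL_{n,m}$-point corresponds to an element of $G(\bL_{n,m}) = \bigcup_i G_i(\bL_{n,m})$ (the last equality because $\bL_{n,m}$ is affine and the $G_i \subseteq G$ exhaust $G$ — one needs that a finite diagram, here just the single scheme $\bL_{n,m}$, sees the colimit, which holds for filtered colimits of sheaves evaluated on quasi-compact quasi-separated schemes, or more simply because $\bL_{n,m}$ is affine and one can argue on rings). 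A secondary subtlety is to confirm that the target category as defined (filtered colimits of finite-length nilpotent $\Dieu$-modules) is exactly the essential image, i.e. that it is closed under the isomorphisms and is a full subcategory of all $\Dieu$-modules — this is immediate once one observes that such a module is the directed union of its finitely generated $\Dieu$-submodules, each of which is finite over $W(k)$ (since $F,V$ nilpotent forces $W(k)$-finiteness of finitely generated submodules) with $F,V$ nilpotent, so the ``colimit'' description is intrinsic.
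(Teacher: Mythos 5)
Your proposal is correct and follows essentially the same route as the paper: both reduce to the classical equivalence on finite (loc,loc) group schemes by commuting $\Hom(\bL_{n,m},-)$ and $\Hom(G_i,-)$ with the filtered colimits (using that these objects are finite, hence quasi-compact in the fppf topology) and using finite generation/presentation of $\Dieulocloc(G_i)$ on the module side, your ``Ind-completion of an equivalence is an equivalence'' framing being just a more categorical packaging of the paper's explicit full-faithfulness and essential-surjectivity computations. Only a small slip: in your full-faithfulness chain the first step should read $\Hom(\varinjlim_i G_i, H) = \varprojlim_i \Hom(G_i,H)$ (an inverse limit, as in the paper), not $\varinjlim_i$.
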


\begin{proof}
Let $G = \varinjlim_i G_i$ and $H = \varinjlim_j H_j$ 
be ind-(loc,loc) group schemes over $k$ presented as colimits
over finite (loc,loc) group schemes.

First, we show essential surjectivity of the functor $\Dieulocloc$.
For this, we use that the $\bL_{n,m}$ are finite, hence quasi-compact in the fppf-topology, 
and thus, $\Hom(\bL_{n,m},G) = \varinjlim_i \Hom(\bL_{n,m},G_i)$,
see \cite[Tag 090G]{StacksProject}. 
Therefore,
\begin{eqnarray*}
 \Dieulocloc(G) &=&  \Dieulocloc(\varinjlim_i G_i) 
 = \varinjlim_{n,m}\, \Hom(\bL_{n,m}, \varinjlim_i  G_i)  \\
 &=& \varinjlim_{n,m}\, \varinjlim_i\, \Hom(\bL_{n,m}, G_i)  \,=\, \varinjlim_i\, \varinjlim_{n,m}\, \Hom(\bL_{n,m}, G_i)  \\
 &=& \varinjlim_{i}\, \Dieulocloc(G_i).
\end{eqnarray*}
Thus, essential surjectivity follows from classical Dieudonn\'e theory, see 
Theorem \ref{thm: Dieudonnemain}.

To prove that $\Dieulocloc$ is fully faithful, we use the following facts:
\begin{enumerate}
    \item $\Hom(G_i,H_j) = \Hom(\Dieulocloc(G_i),\Dieulocloc(H_j))$ by 
    Theorem \ref{thm: Dieudonnemain},
    \item being finite, the $G_i$ are quasi-compact in the fppf-topology,
    \item $\Dieulocloc(G_i)$ is finitely presented as a $\Dieu$-module,
    \item $\Dieulocloc(G) = \varinjlim_i \Dieulocloc(G_i)$ and 
    $\Dieulocloc(H) = \varinjlim_j \Dieulocloc(H_j)$.
\end{enumerate}
Using these, we deduce that
\begin{eqnarray*}
\Hom(G,H) &=& \Hom(\varinjlim_i G_i, \varinjlim_j H_j) \,=\, 
\varprojlim_i \Hom(G_i,\varinjlim_j H_j) \\
&\overset{(2)}{=}& \varprojlim_i\, \varinjlim_j\, \Hom(G_i,H_j) \\
&\overset{(1)}{\cong}&  \varprojlim_i\, \varinjlim_j\, \Hom(\Dieulocloc(G_i),\Dieulocloc(H_j)) \\
&\overset{(3)}{=}& \varprojlim_i \Hom(\Dieulocloc(G_i), \varinjlim_j \Dieulocloc(H_j)) \\
&=& \Hom(\varinjlim_i \Dieulocloc(G_i), \varinjlim_j \Dieulocloc(H_j)) \\
&\overset{(4)}{\cong} & \Hom(\Dieulocloc(G),\Dieulocloc(H))
\end{eqnarray*}
so $\Dieulocloc$ is fully faithful, hence an equivalence of categories.
\end{proof}

In particular, we have the following:

\begin{Corollary}\label{cor: finiteifdieufinite}
Let $G$ be a sheaf of abelian groups on $(Sch/k)_{\fppf}$, such that the 
identity section $e_G:\Spec k\to G$ is representable by a closed immersion.
Then, $G^{\loc,\loc}$ is a finite group scheme if and only if 
$\Dieulocloc(G)$ is a finite $W(k)$-module.
\end{Corollary}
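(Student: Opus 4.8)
The plan is to deduce Corollary \ref{cor: finiteifdieufinite} directly from Proposition \ref{prop: colimitsofdieudonne} together with the structural results on maximal $\mathcal{C}$-subsheaves. First I would observe that the hypothesis that $e_G$ is representable by a closed immersion is exactly what is needed to apply Proposition \ref{prop: maximal C-subsheaf} and Lemma \ref{lem: indgroupscheme} with $\mathcal{C}$ the category of finite abelian group schemes of $(\loc,\loc)$-type (which is stable under finite products and quotients and consists of affine schemes). Hence $G^{\loc,\loc}$ is an ind-(loc,loc) group scheme, presented as a filtered colimit $G^{\loc,\loc} = \varinjlim_i G_i$ over its finite $(\loc,\loc)$-subgroup schemes, with transition maps closed immersions. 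Moreover $\Dieulocloc(G) = \Dieulocloc(G^{\loc,\loc}) = \varinjlim_i \Dieulocloc(G_i)$ as in the proof of Proposition \ref{prop: colimitsofdieudonne}.

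Next I would argue the two implications. If $G^{\loc,\loc}$ is a finite group scheme, then it is itself one of the $G_i$ (the colimit stabilises), so $\Dieulocloc(G) = \Dieulocloc(G^{\loc,\loc})$ is finite as a $W(k)$-module by the length statement recalled after Theorem \ref{thm: Dieudonnemain} (length $p^n$ corresponds to $W(k)$-length $n$). Conversely, suppose $\Dieulocloc(G)$ is a finite $W(k)$-module. Since the transition maps $\Dieulocloc(G_i) \to \Dieulocloc(G_j)$ are injective (being Dieudonn\'e modules of the closed immersions $G_i \hookrightarrow G_j$, and $\Dieulocloc$ is exact/faithful on finite $(\loc,\loc)$ group schemes by Theorem \ref{thm: Dieudonnemain}), the finite module $\Dieulocloc(G)$ is an increasing union of the finite-length submodules $\Dieulocloc(G_i)$; a finitely generated module over the Noetherian ring $W(k)$ that is such a union must equal one of them, say $\Dieulocloc(G) = \Dieulocloc(G_{i_0})$. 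Applying the equivalence of categories of Proposition \ref{prop: colimitsofdieudonne} (or just full faithfulness together with the fact that $G_{i_0} \hookrightarrow G^{\loc,\loc}$ induces an isomorphism on Dieudonn\'e modules) gives $G^{\loc,\loc} \cong G_{i_0}$, which is finite.

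The only mild subtlety — and the step I would be most careful about — is the claim that the transition maps on Dieudonn\'e modules are injective and that a finite $W(k)$-module cannot be a strictly increasing union of submodules; the first point follows from exactness of the classical Dieudonn\'e functor on monomorphisms of finite $(\loc,\loc)$ group schemes, and the second is just Noetherianity of $W(k)$ applied to a finitely generated module. Everything else is a formal consequence of the colimit description of $G^{\loc,\loc}$ and of $\Dieulocloc$ already established. I would keep the write-up short, citing Proposition \ref{prop: maximal C-subsheaf}, Lemma \ref{lem: indgroupscheme}, Theorem \ref{thm: Dieudonnemain}, and Proposition \ref{prop: colimitsofdieudonne} rather than reproving anything.
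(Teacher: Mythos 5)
Your argument is correct and is essentially the paper's own (implicit) proof: the paper states this corollary as an immediate consequence of Proposition \ref{prop: colimitsofdieudonne}, exactly the deduction you spell out via the colimit presentation $G^{\loc,\loc}=\varinjlim_i G_i$, $\Dieulocloc(G)=\varinjlim_i\Dieulocloc(G_i)$, stabilisation of the colimit when the module is finite, and full faithfulness of $\Dieulocloc$. Your extra care about injectivity of the transition maps and stabilisation (for which directedness plus finite generation already suffices) fills in details the paper leaves to the reader, but introduces nothing genuinely different.
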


\section{Local torsors and how to detect them}
\label{sec: local torsors}

In this section, we introduce the notion of \emph{local torsors} over a
singularity $x\in X$ and show how to detect and classify them.
On our way, we discuss a number of subtleties. We will be working over an algebraically closed field $k$ and refer the reader to Section \ref{sec: Mumford} for a discussion of the situation over non-closed fields.

\begin{Definition} \label{def: singularity}
A \emph{($d$-dimensional) singularity} is a pair $x\in X$, where $X=\Spec R$, where $(R,\idealm)$ is 
an integral, local, complete, and Noetherian $k$-algebra of 
dimension $d \geq 2$ that satisifies Serre's condition $S_2$, 
and where $x$ is the closed point corresponding to $\idealm$. 
In this context, we will set $U:=X\backslash\{x\}$.
\end{Definition}

Let $G$ be a finite group scheme over $k$.

\subsection{Local torsors}
First, we describe $G$-torsors over $U$.
Abstractly, these are classified by the first flat cohomology group
$\Hfl{1}(U,G)$:
if $G$ is abelian, then this is an abelian group, whereas if $G$ is non-abelian,
then this is merely a pointed set (the distinguished point corresponds
to the trivial $G$-torsor).
We note that if $G$ is \'etale over $k$, then the natural morphism
$\Het{*}(-,G)\to \Hfl{*}(-,G)$ is an isomorphism.
Next, the canonical inclusion $\imath:U\to X$ induces a pullback map
\begin{equation}
 \label{eq: imath}
   \imath^* \,:\, \Hfl{1}(X,G) \,\to\, \Hfl{1}(U,G)
\end{equation}
of pointed sets and of abelian groups if $G$ is abelian. This pullback map is injective by Lemma \ref{lem: carvajal-rojas} below. 

If $G$ is \'etale over $k$, then $\Hfl{1}(X,G)$ is trivial, see Section \ref{subsec: etale} below,
but if $G$ is not \'etale, then this is not necessarily the case, 
which motivates the following definition.

\begin{Definition} \label{def: local torsor}
Let $x\in X=\Spec R$ be a singularity in the sense of Definition \ref{def: singularity}.
  Let $G$ be a finite group scheme over $k$. 
  \begin{enumerate}
      \item  A \emph{local $G$-torsor} over $X$ is a $G$-torsor over $U$. In particular, elements of $\Hfl{1}(U,G)$ are isomorphism classes of local $G$-torsors over $X$. 
   \item If $G$ is abelian or if $\Hfl{1}(X,G)$ is trivial, then two local $G$-torsors are 
     called \emph{equivalent} if their classes in $\Hfl{1}(U,G)$ differ by an element of $\Hfl{1}(X,G)$. 
     In this case, we write $\overline{\Hfl{1}(U,G)}$ for the set of equivalence classes of local $G$-torsors over $X$.
  \end{enumerate}
\end{Definition}

\begin{Remark}
If $G$ is abelian, then the set $\overline{\Hfl{1}(U,G)}$ coincides with 
$\Coker(\imath^*)$ in the category of abelian groups.

If $G$ is non-abelian and $\Hfl{1}(X,G)$ is non-trivial, 
then we cannot even talk about the ``difference'' between two classes in $\Hfl{1}(U,G)$, 
hence we cannot talk about $\overline{\Hfl{1}(U,G)}$. 
Also, note that $\Coker(\imath^*)$ in the category of pointed sets is not a suitable 
replacement for $\overline{\Hfl{1}(U,G)}$.
\end{Remark}

To explain our terminology, let $V\to U$ be a local $G$-torsor as in the definition.
In this case, the \emph{integral closure} of $X$ in $V$ 
(see \cite[Corollaire 6.3.2, Proposition 6.3.4]{EGA2}) is 
given by $\Spec H^0(V, \mathcal{O}_V) \to X$ and it is a natural finite morphism,
which restricts to the given $G$-torsor over $U$.
By abuse of terminology, we will also use the term \emph{local $G$-torsor over $X$} 
for the integral closure of a local $G$-torsor $V \to U$ over $X$ 
as in Definition \ref{def: local torsor} together with the $G$-action on $V$. 

While it is clear from the universal properties of integral closures that a morphism between
$G$-torsors over $U$ extends to a morphism between their integral closures, it is not 
immediately clear whether the action of a (non-constant) finite group scheme extends to the 
integral closure.
This is indeed the case, because $X$ satisfies Serre's condition $S_2$, which is the 
content of the following lemma, which can be found in \cite[Lemma 4.1]{Carvajal-Rojas}.

\begin{Lemma}[Carvajal-Rojas] \label{lem: carvajal-rojas}
Let $V \to U$ be a $G$-torsor and let $Y$ be the integral closure of $X$ in $V$. 
Then,
\begin{enumerate}
\item \label{item: Y S2} $Y$ is $S_2$, and
\item the $G$-action on $V$ extends to a unique $G$-action on $Y$.
\end{enumerate}
\end{Lemma}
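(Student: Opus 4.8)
The plan is to deduce both statements from the standard machinery of reflexive sheaves and integral closures on an $S_2$ scheme, using that $Y = \Spec H^0(V,\mathcal{O}_V)$ is a finite $X$-scheme which is étale over $U$. First I would observe that $Y \to X$ is finite and birational to nothing in particular — rather, it restricts to the $G$-torsor $V \to U$, so $Y$ agrees with $V$ over the punctured spectrum, and $\mathcal{O}_Y$ is a coherent $\mathcal{O}_X$-algebra whose restriction to $U$ is locally free. The key point for (\ref{item: Y S2}) is that $H^0(V,\mathcal{O}_V) = \imath_*(\mathcal{O}_X|_U\text{-algebra})$ pushed forward from $U$: concretely, if $A$ denotes the finite $R$-algebra with $\Spec A = Y$, then $A = H^0(U, \jmath_*\mathcal{O}_V)$ where $\jmath\colon V \to U$, and since $V \to U$ is finite flat, $\jmath_*\mathcal{O}_V$ is a locally free $\mathcal{O}_U$-module. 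Because $R$ is $S_2$ and $\mathrm{codim}(\{x\},X) = d \geq 2$, pushforward of a locally free (hence $S_2$, indeed reflexive) sheaf on $U$ along the open immersion $U \hookrightarrow X$ is again $S_2$; equivalently, $A = \bigcap_{\mathfrak{p}} A_{\mathfrak{p}}$ over height-one primes, so depth$_{\mathfrak{m}} A \geq 2$. This gives (\ref{item: Y S2}).

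For part (2), the $G$-action on $V$ is a morphism $G \times_k V \to V$ over $U$, equivalently a co-action $\mathcal{O}_V \to \mathcal{O}_G \otimes_k \mathcal{O}_V$ of sheaves on $U$ (or on $V$). Taking global sections over $V$ — or equivalently pushing forward to $U$ and then to $X$ and taking $H^0$ — and using that $H^0$ is left exact, I would extend the co-action to a ring homomorphism $A \to \mathcal{O}_G(G) \otimes_k A$; here one uses that $\mathcal{O}_G(G)$ is a \emph{finite-dimensional} $k$-vector space (as $G$ is finite), so $\mathcal{O}_G(G) \otimes_k (-)$ commutes with the inverse-limit/intersection description of $A$ and with taking sections over $U$. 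Then I would check that the resulting map on $\Spec$, namely $G \times_k Y \to Y$, satisfies the group-action axioms: these are equalities of morphisms $G \times G \times Y \to Y$ (associativity) and $Y \to Y$ (identity), and since $Y$ is integral (hence reduced) and the equalities already hold on the dense open $U$ — indeed on $V$, where we started with an honest action — they hold on all of $Y$. Uniqueness is immediate: two extensions agree on the schematically dense open $V \subseteq Y$, and a morphism out of a reduced scheme to a separated scheme is determined by its restriction to a dense open.

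The main obstacle I anticipate is making the extension of the \emph{co-action} rigorous in the non-constant group scheme case: one must be careful that ``take global sections over $U$'' genuinely turns the torsor co-action datum on $U$ into an algebra co-action on $A$, i.e.\ that the Hopf-algebra/torsor identities are preserved under $H^0(U,-)$. This works precisely because $\mathcal{O}_G(G)$ is a \emph{finite} flat $k$-module, so $-\otimes_k \mathcal{O}_G(G)$ is an exact functor that commutes with the left-exact functor $H^0(U,-)$ applied to quasi-coherent sheaves; with $G$ constant one could argue componentwise, but for $G = \Spec$ of a non-trivial Hopf algebra this finiteness input is exactly what is needed, and it is also what forces the $S_2$ hypothesis on $X$ to enter (so that $A$ is recovered from its restriction to $U$). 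Once this compatibility is in place, everything else is formal: reducedness of $Y$ propagates the action axioms from $U$, and the $S_2$-property of $Y$ is the reflexive-pushforward statement recalled above. I expect the write-up to be short, citing \cite[Lemma 4.1]{Carvajal-Rojas} for the precise statement, with the above as the underlying argument.
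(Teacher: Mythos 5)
First, note that the paper itself does not prove this lemma: it is quoted verbatim from \cite[Lemma 4.1]{Carvajal-Rojas}, so there is no internal argument to compare yours against. Your overall strategy is the natural one and your proof of (\ref{item: Y S2}) is fine: $\jmath_*\mathcal{O}_V$ is locally free on the $S_2$ scheme $U$, and pushing forward along $U\hookrightarrow X$ (whose complement has codimension $d\geq 2$) kills $H^0_{\{x\}}$ and $H^1_{\{x\}}$, giving depth $\geq 2$ at $x$; granting finiteness of $A=H^0(V,\mathcal{O}_V)$ over $R$ (which the paper takes from EGA), this yields $S_2$.

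There is, however, a genuine flaw in your argument for part (2): the claim that ``$Y$ is integral (hence reduced)'' is false in the generality required. For an infinitesimal group scheme the total space of a $G$-torsor need not be reduced --- e.g.\ the restriction to $U$ of the trivial $\balpha_p$-torsor over $X$ has $V\cong\Spec\mathcal{O}_U[t]/(t^p)$, hence $Y\cong\Spec R[t]/(t^p)$ --- and the paper applies the lemma precisely to arbitrary torsors, including these (for instance to prove injectivity of $\Hfl{1}(X,G)\to\Hfl{1}(U,G)$). So the ``equalities hold on a dense open, and $Y$ is reduced'' step, which you use both for the action axioms and for uniqueness, does not apply as stated. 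Fortunately the repair is already contained in your own setup and requires no density argument at all: since $A=\Gamma(V,\mathcal{O}_V)$ and $\Gamma(G^{\times n}\times_k V,\mathcal{O})=\mathcal{O}_G(G)^{\otimes n}\otimes_k A$ (finiteness of $\mathcal{O}_G(G)$ over $k$), the coassociativity and unit diagrams for the coaction on $A$ are obtained by applying the functor $\Gamma$ to the corresponding commutative diagrams for the action on $V$, and a functor applied to a commutative diagram yields a commutative diagram. Uniqueness is likewise formal: any action on $Y$ restricting to the given one on $V$ induces on $A=\Gamma(Y,\mathcal{O}_Y)=\Gamma(V,\mathcal{O}_V)$ exactly the global-sections map of the action on $V$, so its comorphism is determined. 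With that substitution your proof is correct.
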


\begin{Example}
Let $V \to U$ be a $G$-torsor and assume that the integral closure $Y$ of $X$ in $V$ 
is a regular local ring. 
Then, $Y \cong \Spec k[[u_1,\hdots,u_d]]$ and the $G$-action on $Y$, whose existence is 
guaranteed by Lemma \ref{lem: carvajal-rojas}, is free outside the closed point. 

Conversely, given a $G$-action on $Y := \Spec k[[u_1,\hdots,u_d]]$ that is free outside 
the closed point, the quotient morphism $Y \to Y/G =: X$ is (the integral closure of) 
a local $G$-torsor over $X$.

This is the connection between local torsors over $X$ and the question whether $X$ is 
a quotient singularity, which we will use to study quotient and non-quotient RDPs 
in Section \ref{sec: rdp}.
\end{Example}

\begin{Remark} \label{rem: quotientisnormal}
It is a basic fact that singularities that are quotients of normal singularities by a finite group scheme action are again normal. Conversely, Lemma \ref{lem: carvajal-rojas} shows that local torsors over normal singularities are $S_2$. However, we warn the reader that a local torsor over a normal singularity may not be normal. For an explicit example of this phenomenon, see Example \ref{ex: non-normal}. 
\end{Remark}

Note that a morphism between local $G$-torsors over $X$ is, by definition, a $G$-equivariant morphism over $X$. In the following two remarks, we shall see that this implies that both the automorphism group of $G_X$ and the automorphism group of $X$ act on the set of local $G$-torsors over $X$.

\begin{Remark}\label{rem: reductionofstructure}
A homomorphism of group schemes $\varphi: H\to G$ over $k$
induces a homomorphism of group schemes $\varphi:H_X\to G_X$ over $X$,
which induces a homomorphism 
of pointed sets $\varphi_*: \Hfl{1}(X,H) \to \Hfl{1}(X,G)$. 
(Note that a special feature of the group scheme situation is that the 
natural injective homomorphism $\Hom(H,G) \to \Hom(H_X,G_X)$ might be very far away 
from being surjective. 
For example, if $H = G = \balpha_p$, then $\Hom(H,G) \cong k$,
while $\Hom(H_X,G_X) \cong R$.) 

If $H$ and $G$ are abelian, then we also get an induced homomorphism of abelian
groups
$$
  \varphi_*\,:\,  \overline{\Hfl{1}(U,H)} \,\to\, \overline{\Hfl{1}(U,G)}.
$$
An equivalence class of local $G$-torsors lies in the image of $\varphi_*$ 
if and only if it contains a local $G$-torsor that admits a reduction of 
structure group to the image of $H$ under $\varphi$. Here, a $G$-torsor $Z \to U$ is said to admit a reduction of structure group to the image of $H$ under $\varphi$, if there exists an $H$-torsor $Z' \to U$ and a $\varphi$-equivariant morphism $Z' \to Z$ over $U$.
\end{Remark}

Equivalence classes of local torsors that cannot be represented by local torsors that admit a reduction of structure group to proper subgroup schemes play a special role in this article (see Proposition \ref{prop: primitive class}). In the following, we define such ``primitive'' classes.

\begin{Definition} \label{def: primitive}
Let $x\in X=\Spec R$ be a singularity in the sense of Definition \ref{def: singularity}.
Let $G$ be a finite and abelian group scheme. The set of \emph{primitive} equivalence classes of local $G$-torsors over $X$ is defined as
$$
\overline{\Hfl{1}(U, G)}_{\prim} \,:=\, 
\overline{\Hfl{1}(U, G)} \setminus 
 \left(\bigcup_{H \subsetneq G} \overline{\Hfl{1}(U, H)}\right).
$$
A \emph{local $G$-torsor of primitive class} is a local $G$-torsor, whose class in 
$\overline{\Hfl{1}(U, G)}$ is contained in $\overline{\Hfl{1}(U, G)}_{\prim}$. 
\end{Definition}

\begin{Remark}
In particular, if $G$ is simple and abelian (for example, if $G \in \{\bmu_p,\balpha_p\})$, 
then $\overline{\Hfl{1}(U, G)}_{\prim} =  \overline{\Hfl{1}(U, G)} \setminus \{0\}$ and the class 
of a local $G$-torsor $Y \to X$ is primitive if and only if it 
``does not extend to a global torsor over $X$''.
If $G$ is abelian but not simple, then the class of a local $G$-torsor that does not extend to $X$ 
is not necessarily primitive. 
For example, if $Y \to X$ is a local $\bmu_2$-torsor that does not extend to $X$, then its extension 
of structure group along $\bmu_2 \to \bmu_4$ still does not extend to $X$, but its class
in $\overline{\Hfl{1}(U,\bmu_4)}$ is no longer primitive.  
\end{Remark}

\begin{Remark}\label{rem: automorphismsoftorsors}
There are two groups acting naturally on $\Hfl{1}(U,G)$ and
$\Hfl{1}(X,G)$:
\begin{enumerate}
    \item By Remark \ref{rem: reductionofstructure}, we have a natural action of 
    $\Aut(G_X)$
    on both via reduction of structure group along automorphisms of $G_X$.
    \item The group $\Aut(X,x)$ of automorphisms of $X$ preserving $x$ acts via pullbacks on 
    $\Hfl{1}(U,G)$ and $\Hfl{1}(X,G)$.
\end{enumerate}
If $G$ is abelian, then these two group actions descend to actions on the abelian
group $\overline{\Hfl{1}(U,G)}$ and its subset $\overline{\Hfl{1}(U, G)}_{\prim}$.
\end{Remark}

\begin{Example} \label{ex: endomorphismsofalphap}
If $G = \balpha_p$, then $\End(G_X) = R$ and $\Aut(G_X) = R^\times$. 
Reduction of structure group along endomorphisms of $G_X$ turns  $\Hfl{1}(U,G)$,  
$\Hfl{1}(X,G)$, and $\overline{\Hfl{1}(U,G)}$ into $R$-modules. 
In Corollary \ref{cor: torslocloc}, we will see that the $R$-module 
$\overline{\Hfl{1}(U,G)}$ is naturally isomorphic to $H^2_{\idealm}(X,\OO_X)[F]$, 
the kernel of Frobenius on the second local cohomology group of $X$. 
In particular, two classes in $H^2_{\idealm}(X,\OO_X)[F]$ that are the same up to 
multiplication by a unit in $R$ can be represented by local $\balpha_p$-torsors over $X$
that differ by an automorphism of $\balpha_{p,X}$. 
We will exploit this fact in Section \ref{sec: rdp} to describe the total spaces of 
local $\balpha_{p}$-torsors over certain RDPs.
\end{Example}

We will now recall several objects parametrising (equivalence classes of) 
local $G$-torsors for \'etale, abelian, and general finite group schemes $G$, respectively.

\subsection{\'Etale torsors: the local \'etale fundamental group}
\label{subsec: etale}

Let $G$ be a finite and \'etale group scheme over $k$.
Since $k$ is assumed to be algebraically closed, $G$ is the constant group scheme associated to a finite group. 
By Hensel's lemma \cite[Proposition 5]{Nagata50} and since $k$ is algebraically closed, 
$X$ admits no non-trivial finite \'etale covers by \cite[Proposition 18.8.1]{EGA4}. 
In particular, $\Hfl{1}(X,G)$ is trivial and thus, each equivalence class of local $G$-torsors 
over $X$ contains a unique isomorphism class of local $G$-torsors.

By \cite{SGA1}, there exist \emph{Galois categories} classifying torsors
under \emph{all} finite and \'etale group schemes over $k$ over $X$ and $U$,
which leads to the \emph{\'etale fundamental groups} $\piet(X,x)$ and $\piet(U,u)$.
Here, $x\in X$ and $u\in U$ are base-points that have to be chosen
to begin with.
For example, if we choose the geometric generic points $\overline{\eta}$ 
of $U$ and $X$
as base-points, then we obtain a continuous homomorphism of profinite groups
$$
 \piet(U,\overline{\eta}) \,\to\, \piet(X,\overline{\eta}).
$$
Since there are no non-trivial torsors under finite and \'etale group 
schemes over $X$, we have $\piet(X,\overline{\eta})=\{e\}$, that is, 
$X$ is algebraically simply connected.
The group 
$$
  \pietloc(X) \,:=\, \piet(U,\overline{\eta})
$$
is called the \emph{local (\'etale) fundamental group} of $X$ and classifies 
local $G$-torsors in the following sense, see
\cite[Proposition 5.1]{LRQ}.

\begin{Proposition} \label{prop: torsfinet}
Let $x\in X=\Spec R$ be a singularity in the sense of Definition \ref{def: singularity}.
If $G$ is finite and \'etale, then there is a canonical 
bijection of pointed sets (resp. isomorphism of abelian groups if $G$ is abelian)
     $$
    \Hfl{1}(U,G) \, = \, \overline{\Hfl{1}(U,G)} 
      \quad\cong\quad
      \Hom(\pietloc(X),G)/\sim,
     $$
where the right hand side denotes homomorphisms of profinite groups
modulo inner automorphisms of $G$.
\end{Proposition}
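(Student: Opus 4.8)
The plan is to identify $\overline{\Hfl{1}(U,G)}$ with $\Hfl{1}(U,G)$ (since $\Hfl{1}(X,G)$ is trivial for $G$ finite \'etale, as recalled just above the statement), and then translate the classification of $G$-torsors over $U$ into a statement about the \'etale fundamental group. The key point is that for a finite \'etale group scheme $G = \underline{\Gamma}$ associated to a finite group $\Gamma$, the flat cohomology $\Hfl{1}(U,G)$ agrees with the \'etale cohomology $\Het{1}(U,G)$, and the latter is governed by the \'etale fundamental group $\piet(U,\eta)$.

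First I would recall the general fact that for a connected, locally Noetherian scheme $U$ with geometric point $\overline{\eta}$, the pointed set $\Het{1}(U,\underline{\Gamma})$ of isomorphism classes of $\underline{\Gamma}$-torsors is in canonical bijection with the set $\Hom_{\mathrm{cont}}(\piet(U,\overline{\eta}),\Gamma)/\!\!\sim$ of continuous homomorphisms up to conjugation by $\Gamma$ (see \cite{SGA1}, or the standard identification of $\underline{\Gamma}$-torsors with degree-$\Gamma$ covering data). When $G = \underline{\Gamma}$ is abelian, conjugation is trivial, so this becomes a genuine isomorphism of abelian groups $\Het{1}(U,\underline{\Gamma}) \cong \Hom_{\mathrm{cont}}(\piet(U,\overline{\eta}),\Gamma)$, the group structure on the left coming from the Baer sum / the abelian group structure of $H^1$ with coefficients in an abelian sheaf, matching pointwise multiplication on the right. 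Since $U$ is integral (being the punctured spectrum of an integral local ring) its generic point $\eta$ is a valid base point, and $\pietloc(X) := \piet(U,\eta)$ by definition.

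Next I would assemble the pieces: the equality $\Hfl{1}(U,G) = \Het{1}(U,G)$ for \'etale $G$ was already noted in the text; the equality $\Hfl{1}(U,G) = \overline{\Hfl{1}(U,G)}$ follows because $\imath^*$ has trivial source $\Hfl{1}(X,G) = 0$ (established in Section \ref{subsec: etale} via Hensel's lemma and \cite[Proposition 18.8.1]{EGA4}, as recalled right before the Proposition), so the equivalence relation defining $\overline{\Hfl{1}(U,G)}$ is trivial; and finally the fundamental-group description gives the right-hand bijection. Stringing these together yields the claimed canonical bijection of pointed sets, which upgrades to an isomorphism of abelian groups when $G$ is abelian because all three identifications are compatible with the abelian group structures in that case.

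The only genuine subtlety — the ``main obstacle'', though it is really a matter of bookkeeping rather than difficulty — is to make sure the bijection is stated with the correct naturality and that one is not secretly conflating $\piet(U,\eta)$ (with $\eta$ a scheme-theoretic point) with $\piet(U,\overline{\eta})$ (with a geometric point above it); over an algebraically closed $k$ the residue field at $\eta$ need not be separably closed, so one should be slightly careful, but since $\pietloc(X)$ is \emph{defined} to be $\piet(U,\eta)$ and the Galois-category formalism of \cite{SGA1} produces the torsor classification with respect to whatever base point is chosen, this is automatic. So the proof is essentially a citation of \cite{SGA1} together with the triviality of $\Hfl{1}(X,G)$ and the observation that $\overline{\Hfl{1}(U,G)} = \Hfl{1}(U,G)$ in this case; I would keep it to a few lines.
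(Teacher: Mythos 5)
Your argument is correct and is essentially the paper's own: the Proposition is presented there as an immediate consequence of the preceding discussion, namely the triviality of $\Hfl{1}(X,G)$ via Hensel's lemma and \cite[Proposition 18.8.1]{EGA4}, the agreement of flat and \'etale cohomology for \'etale coefficients, and the Galois-category classification of finite \'etale covers of $U$ from \cite{SGA1}, exactly as you assemble them. Your observation that for non-abelian $G$ the torsor set is really $\Hom(\pietloc(X),G)$ modulo conjugation is in fact more careful than the literal statement of the Proposition, which suppresses that quotient; this is harmless since the two coincide in the abelian case, which is the only case the paper uses in an essential way.
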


\subsection{Abelian torsors: the local Picard scheme} 
\label{subsec: picloc}
Let $G$ be a finite and abelian group scheme over $k$ and 
let $G^D$ be its Cartier dual group scheme.

There is a notion of \emph{local Picard functor} $\Picloc_{X/k}$ discussed by 
Boutot \cite{Boutot}.
This functor is representable by a scheme under finite-dimensionality assumptions 
on certain local cohomology groups, which are satisfied, for example, 
if $d = \dim X \geq 3$, $U$ is smooth, and $X$ is Cohen--Macaulay. 
In dimension $2$, the local Picard functor $\Picloc_{X/k}$ is usually 
not representable.
However, it is still an fppf-sheaf \cite[Corollaire II.2.4]{Boutot},
whose identity section is representable by a closed immersion 
\cite[Th\'eor\`eme 6.6.]{Boutot}.
By \cite[Corollaire III.4.9]{Boutot}, it classifies local $G$-torsors 
in the following sense.

\begin{Proposition} \label{prop: torsfinab}
Let $x\in X=\Spec R$ be a singularity in the sense of Definition \ref{def: singularity}.
If $G$ is finite and abelian, then there is a canonical isomorphism of abelian groups
     $$
     \overline{\Hfl{1}(U,G)}   
      \quad\cong\quad 
      \Hom(G^D,\Picloc_{X/k}),
     $$
where the right hand side denotes morphisms of sheaves of abelian groups.
\end{Proposition}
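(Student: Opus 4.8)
The plan is to deduce the statement from Cartier duality together with Boutot's construction of the local Picard functor, essentially reorganising \cite[Corollaire III.4.9]{Boutot}. First I would record that, since $G$ is abelian, the pullback map \eqref{eq: imath} is an \emph{injective} homomorphism of abelian groups by Lemma \ref{lem: carvajal-rojas}, so that $\overline{\Hfl{1}(U,G)}$ really is the quotient group $\Coker(\imath^*)$; the same remains true after any flat base change $T\to\Spec k$, so the isomorphism produced below will be functorial in $T$, hence canonical.

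The mechanism is as follows. Cartier self-duality gives an identification $G\cong\mathcal{H}\mathit{om}(G^D,\GG_m)$ of fppf sheaves, and since $G^D$ is finite and flat one has $\Ext^{i}(G^D,\GG_m)=0$ in the fppf topos for $i\geq 1$ (the vanishing that underlies exactness of Cartier duality). A naive construction --- sending a $G^D$-graded system of line bundles to the $G$-torsor obtained as the relative spectrum of the associated graded algebra --- does produce the map when $G^D$ is of multiplicative type, i.e.\ when $G$ is \'etale, but it is awkward for infinitesimal $G$, which is why one argues cohomologically. Plugging the identification into the Leray spectral sequences of $U\to\Spec k$ and of $X\to\Spec k$ with coefficients in $\underline{G}$, and using that over the algebraically closed field $k$ every finite commutative group scheme $H$ satisfies $\Hfl{i}(\Spec k,H)=0$ for $i\geq 1$ (reduce by d\'evissage to $\bmu_p$, $\balpha_p$, $\C_p$ and $\GG_a$), one is reduced to the $\GG_m$-coefficient statement that $\Hfl{1}(-,G)$ is controlled by $\mathcal{H}\mathit{om}(G^D,\mathrm{Pic}_{-/k})$, where $\mathrm{Pic}_{-/k}$ denotes the relative Picard sheaf. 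The point that makes the \emph{cokernel} clean is that, because $R$ satisfies Serre's condition $S_2$ and is complete local, the restriction map on functions $H^{0}(X_T,\mathcal{O})\to H^{0}(U_T,\mathcal{O})$ is an isomorphism (Hartogs), so $\underline{G}$ has canonically the same pushforward to $\Spec k$ whether computed on $U$ or on $X$; hence the ``error terms'' in the two Leray spectral sequences are identified and disappear on passing to $\Coker(\imath^*)$, leaving $\mathcal{H}\mathit{om}\bigl(G^D,\ \mathrm{Pic}_{U/k}/\mathrm{Pic}_{X/k}\bigr)$. By Boutot's construction of $\Picloc_{X/k}$ as (the fppf sheafification of) this quotient --- an fppf sheaf whose identity section is a closed immersion, by \cite[Corollaire II.2.4, Th\'eor\`eme 6.6]{Boutot} --- this is exactly $\Hom(G^D,\Picloc_{X/k})$.

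The step I expect to be the main obstacle is the last one: the sheaf-theoretic bookkeeping around the local Picard functor --- that $\Picloc_{X/k}$ genuinely is the quotient sheaf $\mathrm{Pic}_{U/k}/\mathrm{Pic}_{X/k}$; that $\mathrm{Pic}_{X/k}\to\mathrm{Pic}_{U/k}$ is a monomorphism (the sheaf-level reflection of Lemma \ref{lem: carvajal-rojas}); that forming this quotient commutes with the base changes needed for $\Hom(G^D,-)$ to behave; and that the residual $\Hfl{2}$-contributions in the Leray sequences do not interfere. All of this is non-formal and is precisely what \cite[Corollaire III.4.9]{Boutot} supplies; the only genuinely new observations here are the reduction via Cartier duality and the remark that the $S_2$-hypothesis forces the two pushforwards to coincide. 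Everything else --- the $\Ext$-vanishing, the vanishing of higher flat cohomology of finite commutative group schemes over an algebraically closed field, and the snake-lemma diagram chase --- is routine.
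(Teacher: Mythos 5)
Your proposal is correct and in substance coincides with what the paper does: the paper states this proposition as a direct citation of Boutot's Corollaire III.4.9, and your argument likewise defers all the non-formal content (the identification of $\Picloc_{X/k}$ with the relevant quotient sheaf and the spectral-sequence bookkeeping) to that same corollary. The reductions you supply around it --- Cartier duality, the vanishing of the fppf sheaf $\mathcal{E}\mathit{xt}^{1}(G^{D},\GG_m)$ for finite flat group schemes, the vanishing of higher flat cohomology of finite commutative group schemes over the algebraically closed field $k$, and the $S_2$/Hartogs identification of sections over $X$ and over $U$ --- are all correct and are essentially the ingredients of Boutot's own argument rather than a different route.
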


Even though $\Picloc_{X/k}$ is not representable in general if $d = 2$, its restriction
$(\Picloc_{X/k})_{\reduced}$ to normal $k$-schemes is representable 
if $X$ is normal by \cite[Corollaire V.1.8.1]{Boutot}.  
More precisely, if $X$ is a normal surface singularity, then $(\Picloc_{X/k})_{\reduced}$ 
is representable by a smooth group scheme locally of finite type over $k$, whose 
tangent space is isomorphic to $H^1(\widetilde{X},\mathcal{O}_{\widetilde{X}})$, 
where $\widetilde{X}$ is the minimal resolution of $X$, see 
\cite[Corollaire IV.2.9]{Boutot}.
Moreover, by \cite[Proposition II.3.2]{Boutot},
$$
\Picloc_{X/k}(k) \,=\, (\Picloc_{X/k})_{\reduced}(k) \,=\, \Pic(U) \,=\, {\rm Cl}(X)
$$
is nothing but the \emph{class group} of $X$. 
Hence, we have the following addition to Proposition \ref{prop: torsfinab}.

\begin{Proposition} \label{prop: torsfinabcl}
Let $x\in X=\Spec R$ be a singularity in the sense of Definition \ref{def: singularity}.
If $G$ is finite and abelian and $G^D$ is \'etale, then there is a canonical 
isomorphism of abelian groups
     $$
     \overline{\Hfl{1}(U,G)} 
      \quad\cong\quad 
      \Hom(G^D,{\rm Cl}(X)),
     $$
where the right hand side denotes morphisms of abelian groups.
\end{Proposition}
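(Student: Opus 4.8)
The plan is to deduce Proposition \ref{prop: torsfinabcl} from Proposition \ref{prop: torsfinab} together with the identification $\Picloc_{X/k}(k) = {\rm Cl}(X)$ recalled just above. The key point is that when $G^D$ is \'etale over the algebraically closed field $k$, a homomorphism of sheaves of abelian groups $G^D \to \Picloc_{X/k}$ is determined by, and determines, a homomorphism of abstract abelian groups $G^D(k) \to \Picloc_{X/k}(k) = {\rm Cl}(X)$, and (again using that $k$ is algebraically closed) $G^D(k)$ is canonically the finite abelian group underlying the constant group scheme $G^D$. So the statement should fall out of a natural bijection $\Hom_{\mathrm{gp\text{-}sch}/k}(G^D, \Picloc_{X/k}) \cong \Hom_{\mathrm{Ab}}(G^D(k), {\rm Cl}(X))$.

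First I would record that since $k$ is algebraically closed, an \'etale group scheme $G^D$ is the constant group scheme on the finite abelian group $A := G^D(k)$, so $G^D \cong \coprod_{a \in A} \Spec k$ as a scheme, with group law induced from $A$. Then for any fppf sheaf of abelian groups $\mathcal{F}$ on $(Sch/k)_{\fppf}$ one has $\Hom_{\mathrm{sheaves\ of\ ab.\ gps}}(G^D, \mathcal{F}) = \Hom_{\mathrm{Ab}}(A, \mathcal{F}(\Spec k)) = \Hom_{\mathrm{Ab}}(A, \mathcal{F}(k))$: a homomorphism out of the constant sheaf $\underline{A}$ is the same as a choice, for each $a \in A$, of a section over $\Spec k$, compatible with the group law, because $\underline{A} = \bigsqcup_{a} \Spec k$ is a coproduct of copies of the final object and sheaf Hom out of a coproduct is a product of evaluations. (One should note $\underline{A}$ is already a sheaf since $k$ is a field, so no sheafification is needed, or if one worries about that, the constant presheaf differs from the constant sheaf only by locally constant functions, which on the connected base $\Spec k$ still gives $A$.) Applying this with $\mathcal{F} = \Picloc_{X/k}$ and invoking $\Picloc_{X/k}(k) = {\rm Cl}(X)$ gives $\Hom(G^D, \Picloc_{X/k}) \cong \Hom_{\mathrm{Ab}}(G^D(k), {\rm Cl}(X))$.

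Finally, I would observe that since $k$ is algebraically closed and $G^D$ is \'etale, there is no harm in writing $G^D$ for both the group scheme and its group of $k$-points (as the paper's conventions already allow), so the displayed isomorphism reads $\overline{\Hfl{1}(U,G)} \cong \Hom(G^D, {\rm Cl}(X))$ with $\Hom$ now denoting homomorphisms of abelian groups, exactly as asserted. Chaining this with the isomorphism $\overline{\Hfl{1}(U,G)} \cong \Hom(G^D, \Picloc_{X/k})$ of Proposition \ref{prop: torsfinab} completes the proof.

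I do not anticipate a serious obstacle here; the only mildly delicate point is bookkeeping about constant sheaves versus constant group schemes and making sure the bijection $\Hom_{\mathrm{sheaves}}(\underline{A},\mathcal{F}) \cong \Hom_{\mathrm{Ab}}(A,\mathcal{F}(k))$ is stated correctly over a field (so that $\Spec k$ is connected and the constant presheaf is already a sheaf). Everything else is a formal consequence of Proposition \ref{prop: torsfinab} and the already-recalled computation $\Picloc_{X/k}(k) = {\rm Cl}(X)$ from \cite[Proposition II.3.2]{Boutot}.
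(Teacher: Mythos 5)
Your proposal is correct and follows essentially the same route as the paper, which states this result as an immediate "addition" to Proposition \ref{prop: torsfinab} after recalling $\Picloc_{X/k}(k)={\rm Cl}(X)$ from \cite[Proposition II.3.2]{Boutot}; you have merely made explicit the (correct) formal step that over the algebraically closed field $k$ a homomorphism of fppf sheaves out of the constant group scheme $G^D=\underline{G^D(k)}$ is the same as a homomorphism of abstract abelian groups $G^D(k)\to\Picloc_{X/k}(k)$.
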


In Section \ref{sec: maximalsubsheaves}, we studied sheaves $G$ 
of abelian groups on $(Sch/k)_{\fppf}$.
To a category $\mathcal{C}$ of abelian group schemes that is closed under products and 
quotients, we associated a maximal $\mathcal{C}$-subsheaf $G^{\mathcal{C}}$ of $G$,
see Definition \ref{def: maximal C-subsheaf}.
If the identity section $e_G:\Spec k\to G$ is representable by a closed immersion, 
then Proposition \ref{prop: maximal C-subsheaf}
gives a description of $G^{\mathcal{C}}$.
In particular, the results of Section \ref{sec: maximalsubsheaves} apply to the fppf sheaf
$\Picloc_{X/k}$ and we gather them in the following.

\begin{Proposition}\label{prop: decompositionofpicloc}
Let $x\in X=\Spec R$ be a singularity in the sense of Definition \ref{def: singularity}. Assume that $d \geq 3$ or that $d = 2$ and $X$ is normal. Let $\Picloc_{X/k}$ be the local Picard sheaf.
  \begin{enumerate}
      \item If $\mathcal{C}$ is the category of constant abelian group schemes, 
      then $\Picloc_{X/k}^{\mathcal{C}}$ is (the constant sheaf associated to) the class group of $R$.
      \item If $\mathcal{C}$ is the category of finite diagonalisable group schemes, 
      then $\Picloc_{X/k}^{\mathcal{C}}$ is the colimit over the Cartier duals of the 
      finite abelian quotients of $\pietloc(X)$.
      \item If $k$ is perfect and $\mathcal{C}$ is the category of reduced abelian group schemes, 
      then $\Picloc_{X/k}^{\mathcal{C}}$ coincides with $(\Picloc_{X/k})_{\reduced}$.
      \item If $k$ is perfect and $\mathcal{C}$ is the category finite abelian group schemes,
      then we obtain a canonical decomposition
\begin{eqnarray*}
\Picloc_{X/k}^{\mathcal{C}} &\cong& 
 \Picloclocloc_{X/k} \quad\times\quad 
 \varinjlim_n\, \left(\pietlocab(X)/(G_n)\right)^D \quad\times \\
 & & \varinjlim_n\,  \underline{{\rm Cl}(X)[p^n]} \quad\times\quad 
 \varinjlim_{p \nmid m}\, \underline{{\rm Cl}(X)[m]},
\end{eqnarray*}
where $\pietlocab(X)/(G_n)$ runs over all finite $p$-torsion quotients 
of $\pietlocab(X)$. 
  \end{enumerate}
\end{Proposition}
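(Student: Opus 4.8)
The plan is to deduce all four assertions from Proposition~\ref{prop: maximal C-subsheaf}, applied to the fppf sheaf $G=\Picloc_{X/k}$, whose identity section is representable by a closed immersion by \cite[Th\'eor\`eme 6.6]{Boutot}; in each case one then only has to describe the subsheaves of $\Picloc_{X/k}$ that are representable by an object of the relevant $\mathcal{C}$ (all four categories are readily seen to be stable under finite products and quotients, as already noted after Definition~\ref{def: maximal C-subsheaf}). Two inputs will be used throughout: $\Picloc_{X/k}(k)={\rm Cl}(X)$ by \cite[Proposition II.3.2]{Boutot}, and the classification results Proposition~\ref{prop: torsfinet} and Proposition~\ref{prop: torsfinab}, which for every finite abelian group $A$ yield natural identifications
\begin{equation*}
 \Hom(\underline{A},\Picloc_{X/k})\,=\,\Hom(A,{\rm Cl}(X)),
 \qquad
 \Hom(D(A),\Picloc_{X/k})\,=\,\Hom(\pietlocab(X),A),
\end{equation*}
the first because $\underline{A}=\coprod_{a\in A}\Spec k$, so that a morphism of sheaves out of it is the same datum as a group homomorphism $A\to\Picloc_{X/k}(k)$, and the second by feeding $G=\underline{A}$ into Proposition~\ref{prop: torsfinab} and then applying Proposition~\ref{prop: torsfinet}.

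For (1), a morphism $\underline{A}\to\Picloc_{X/k}$ is a monomorphism precisely when $A\to{\rm Cl}(X)$ is injective (injectivity on $k$-points spreads to every connected finite-type test scheme, which has a $k$-rational point, and then to all test schemes by a colimit argument), so the constant abelian subsheaves are exactly the $\underline{A}$ with $A\le{\rm Cl}(X)$; taking their union, i.e.\ $A={\rm Cl}(X)$, gives $\Picloc_{X/k}^{\mathcal{C}}=\underline{{\rm Cl}(X)}$. For (2), the second displayed identification shows that the image of any $D(A)\to\Picloc_{X/k}$ is a finite diagonalisable subgroup scheme $D(Q)$, where $Q$ is the image of the associated homomorphism $\pietlocab(X)\to A$; conversely, since the identification is functorial in $A$ and hence compatible with passing to images, a monomorphism $D(Q)\hookrightarrow\Picloc_{X/k}$ corresponds to a surjection $\pietlocab(X)\twoheadrightarrow Q$. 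Thus the finite diagonalisable subsheaves of $\Picloc_{X/k}$ are precisely the Cartier duals $Q^{D}=D(Q)$ of the finite abelian quotients $Q$ of $\pietloc(X)$, and $\Picloc_{X/k}^{\mathcal{C}}=\varinjlim_{Q}Q^{D}$. For (3), over the perfect field $k$ a group scheme locally of finite type is reduced if and only if it is smooth, so $(\Picloc_{X/k})_{\reduced}$ of \cite[Corollaire V.1.8.1]{Boutot} is a reduced abelian group scheme; on reduced (equivalently normal) test schemes it already equals $\Picloc_{X/k}$, hence it is a subsheaf representable by an object of $\mathcal{C}$, while conversely any reduced subgroup scheme of $\Picloc_{X/k}$ factors through it by the universal property of the reduced restriction --- so $\Picloc_{X/k}^{\mathcal{C}}$ coincides with $(\Picloc_{X/k})_{\reduced}$.

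For (4), the canonical splitting into four factors $\Picloclocloc_{X/k}\times\Picloc_{X/k}^{\loc,\et}\times\Picloc_{X/k}^{\et,\loc}\times\Picloc_{X/k}^{\et,\et}$ comes for free from the discussion following Lemma~\ref{lem: indgroupscheme} (applicable since $k$ is perfect), so it remains to name and compute the pieces. The $(\loc,\loc)$-factor is $\Picloclocloc_{X/k}$: a finite $(\loc,\loc)$ group scheme is annihilated by $F^{n}$ and $V^{m}$ for $n,m\gg 0$ (Section~\ref{sec: finab}), so the maximal $(\loc,\loc)$-subsheaf lies in $\varinjlim_{n,m}\Picloc_{X/k}[F^{n},V^{m}]$, and the reverse inclusion --- equivalently, that each $\Picloc_{X/k}[F^{n},V^{m}]$ is ind-finite of type $(\loc,\loc)$, since $F$ and $V$ are nilpotent on it --- is what Section~\ref{subsec: abelian loc loc} provides. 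Over the algebraically closed field $k$ the finite abelian group schemes of type $(\loc,\et)$, $(\et,\loc)$ and $(\et,\et)$ are respectively the $D(A)$ with $A$ a finite abelian $p$-group, the $\underline{A}$ with $A$ a finite abelian $p$-group, and the $\underline{A}$ with $A$ finite abelian of order prime to $p$; plugging these into the computations of (2) and (1) gives $\varinjlim_{n}(\pietlocab(X)/(G_{n}))^{D}$, $\varinjlim_{n}\underline{{\rm Cl}(X)[p^{n}]}$ and $\varinjlim_{p'}\underline{{\rm Cl}(X)[p']}$, where $\pietlocab(X)/(G_{n})$ runs over the finite $p$-torsion quotients of $\pietlocab(X)$.

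The bookkeeping is routine once the two displayed identifications are in hand; the point that will need care is the functoriality in the coefficient group scheme of the classifying maps of Proposition~\ref{prop: torsfinet} and Proposition~\ref{prop: torsfinab}, used in (2) and (4) to match monomorphisms of diagonalisable group schemes into $\Picloc_{X/k}$ with surjections out of $\pietlocab(X)$; this I would read off directly from Boutot's construction in \cite{Boutot}. A lesser subtlety, in (3), is that $(\Picloc_{X/k})_{\reduced}$ is a priori only the restriction of $\Picloc_{X/k}$ to reduced (or normal) test schemes rather than a bona fide subsheaf on all of $(Sch/k)_{\fppf}$; since the assertion only concerns agreement on such test schemes, this causes no real difficulty.
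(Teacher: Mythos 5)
Your argument is correct and follows essentially the route the paper intends: the paper gives no separate proof of Proposition~\ref{prop: decompositionofpicloc}, presenting it as a direct compilation of Proposition~\ref{prop: maximal C-subsheaf}, the identification $\Picloc_{X/k}(k)={\rm Cl}(X)$, Propositions~\ref{prop: torsfinet} and~\ref{prop: torsfinab}, and the four-factor decomposition following Lemma~\ref{lem: indgroupscheme}, which is exactly the assembly you carry out. Your explicit attention to the two genuine subtleties (functoriality of the classifying bijections in the coefficient group, and the status of $(\Picloc_{X/k})_{\reduced}$ as a subsheaf) supplies detail the paper leaves implicit.
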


The class group ${\rm Cl}(X)$, the local fundamental group $\pietloc(X)$, and 
the Picard variety $(\Picloc_{X/k})_{\reduced}^\circ$ are ``classical'' invariants and are 
known for many singularities. It remains to deal with $\Picloclocloc_{X/k}$.

\subsection{Abelian loc-loc torsors: local Witt vector cohomology}
\label{subsec: abelian loc loc}
In this subsection, we assume ${\rm char}(k) = p > 0$.  
For a finite and abelian group scheme $G$ over $k$, we have the canonical 
decomposition \eqref{eq: canonical decomposition}.
By Proposition \ref{prop: decompositionofpicloc} and Proposition \ref{prop: torsfinab}, 
we have fairly explicit descriptions of local \mbox{$G^{\et,\loc}$-,} $G^{\et,\et}$-, and  $G^{\loc,\et}$-torsors. 
The purpose of this section is to study local $G^{\loc,\loc}$-torsors.

We will describe $\Picloclocloc_{X/k}$ in terms of local Witt vector cohomology on $X$, 
making it amenable to computations. 
By Proposition \ref{prop: colimitsofdieudonne} and Proposition \ref{prop: torsfinab}, 
we have isomorphisms of $W(k)$-modules
$$
\begin{array}{lclcl}
\Dieu(\Picloclocloc_{X/k}) &=& \Dieu(\Picloc_{X/k}) &\cong& 
\varinjlim_{n,m}\, \Hom(\bL_{n,m},\Picloc_{X/k})  \\ 
&\cong & \varinjlim_{n,m}\, \overline{\Hfl{1}(U,\bL_{n,m}^D)} &\cong& 
\varinjlim_{n,m} \overline{\Hfl{1}(U,\bL_{m,n})}.   
\end{array}
$$
So, to understand $\Picloclocloc_{X/k}$, we have to understand local $\bL_{n,m}$-torsors over $X$.

\begin{Proposition} \label{prop: torslocloc}
 There is a canonical isomorphism of abelian groups
    $$
    \overline{\Hfl{1}(U,\bL_{n,m})} 
      \quad\cong\quad
     H^2_{\idealm}(X,W_n\OO_X)[F^m].
     $$
\end{Proposition}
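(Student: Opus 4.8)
The plan is to identify both sides with the $\fppf$-cohomology of $U$ with coefficients in $\bL_{n,m}=\bW_n[F^m]$ via the fundamental short exact sequences relating $\bW_n$, its Frobenius, and its kernel. First I would recall that on $(Sch/k)_{\fppf}$ there is an exact sequence of sheaves of abelian groups
\begin{equation*}
 0 \,\to\, \bL_{n,m} \,\to\, \bW_n \,\xrightarrow{F^m}\, \bW_n \,\to\, 0,
\end{equation*}
where the surjectivity of $F^m$ on $\bW_n$ is a standard fact in the $\fppf$-topology (it fails in the Zariski or \'etale topology, but the scheme $\bW_n$ is smooth, so $F^m$ is faithfully flat). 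Taking the long exact $\fppf$-cohomology sequence on $U$ and on $X$ and using the five lemma / snake lemma, one reduces the computation of $\overline{\Hfl{1}(U,\bL_{n,m})} = \Coker(\imath^*\colon \Hfl{1}(X,\bL_{n,m})\to\Hfl{1}(U,\bL_{n,m}))$ to a statement about $H^1$ and $H^0$ of $\bW_n$ on $U$ and $X$.

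The second ingredient is the comparison between $\fppf$-cohomology of the smooth (indeed, affine space over $k$) group scheme $\bW_n$ and the coherent cohomology of the quasi-coherent sheaf $W_n\OO$: since $\bW_n\cong\GG_a^{n}$ as a scheme and is in particular smooth and commutative, $\Hfl{i}(-,\bW_n)\cong \Het{i}(-,W_n\OO)\cong H^i(-,W_n\OO)$ for the small Zariski site as well, by Grothendieck's theorem that quasi-coherent cohomology agrees with $\fppf$-cohomology for quasi-coherent sheaves (or the filtration of $\bW_n$ by copies of $\GG_a$ together with $\Hfl{i}(-,\GG_a)=H^i(-,\OO)$). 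Then I would bring in local cohomology: for $X=\Spec R$ with $R$ local complete $S_2$ and $U=X\setminus\{x\}$, the excision/local-cohomology exact sequence
\begin{equation*}
 \cdots \,\to\, H^i_{\idealm}(X,W_n\OO_X) \,\to\, H^i(X,W_n\OO_X) \,\to\, H^i(U,W_n\OO_X|_U) \,\to\, H^{i+1}_{\idealm}(X,W_n\OO_X) \,\to\, \cdots
\end{equation*}
holds, and since $X$ is affine the middle terms vanish for $i\geq1$, giving $H^i(U,W_n\OO_X)\cong H^{i+1}_{\idealm}(X,W_n\OO_X)$ for $i\geq1$, while for $i=0,1$ one uses the $S_2$ hypothesis (so that $H^0_{\idealm}=H^1_{\idealm}=0$ on $\OO_X$, hence on $W_n\OO_X$ by the filtration) to control the low-degree terms; in particular $H^1(U,W_n\OO_X)\cong H^2_{\idealm}(X,W_n\OO_X)$ and the restriction $H^0(X,W_n\OO_X)\to H^0(U,W_n\OO_X)$ is an isomorphism.

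Assembling these: the long exact sequence for $0\to\bL_{n,m}\to\bW_n\xrightarrow{F^m}\bW_n\to0$ over $U$ reads
\begin{equation*}
 H^0(U,W_n\OO_X) \xrightarrow{F^m} H^0(U,W_n\OO_X) \to \Hfl{1}(U,\bL_{n,m}) \to H^1(U,W_n\OO_X) \xrightarrow{F^m} H^1(U,W_n\OO_X),
\end{equation*}
and the analogous sequence over $X$ maps into it. Because $H^0(X,W_n\OO_X)\to H^0(U,W_n\OO_X)$ is an isomorphism and (since $X$ is affine) $H^1(X,W_n\OO_X)=0$, a diagram chase shows $\imath^*\colon\Hfl{1}(X,\bL_{n,m})\to\Hfl{1}(U,\bL_{n,m})$ is injective with cokernel $\Ker\big(F^m\colon H^1(U,W_n\OO_X)\to H^1(U,W_n\OO_X)\big)$, i.e. $H^1(U,W_n\OO_X)[F^m]$. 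Finally, substituting $H^1(U,W_n\OO_X)\cong H^2_{\idealm}(X,W_n\OO_X)$ — and checking that the Frobenius $F^m$ on $\bW_n$ induces, under all these identifications, the Frobenius $F^m$ on Witt vector local cohomology — yields $\overline{\Hfl{1}(U,\bL_{n,m})}\cong H^2_{\idealm}(X,W_n\OO_X)[F^m]$, as claimed.

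The main obstacle I expect is bookkeeping of compatibilities rather than any single hard theorem: one must check that the map $\imath^*$ of \eqref{eq: imath} agrees with the connecting-map description, that the isomorphism $\Hfl{i}(-,\bW_n)\cong H^i(-,W_n\OO)$ is compatible with the Frobenius $F^m$ on both sides (this is where the two possible meanings of ``Frobenius'' — the $m$-fold relative Frobenius of the group scheme $\bW_n$ versus the operator $F$ on Witt vectors — must be reconciled, and they do match by the very definition of $\bW_n[F^m]$), and that the $S_2$ hypothesis is exactly what is needed to kill the degree-$\leq1$ local cohomology so the low-degree part of the long exact sequence collapses. A secondary point requiring care is the surjectivity of $F^m\colon\bW_n\to\bW_n$ as $\fppf$ sheaves, for which one notes $\bW_n$ is a smooth affine group scheme and $F^m$ is a finite faithfully flat isogeny onto $\bW_n$ (its kernel $\bL_{n,m}$ being finite flat of order $p^{nm}$), hence an $\fppf$-epimorphism.
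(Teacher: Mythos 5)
Your proposal is correct and follows essentially the same route as the paper's proof: the short exact sequence $0\to\bL_{n,m}\to\bW_n\xrightarrow{F^m}\bW_n\to0$, the identifications $\Hfl{0}(X,\bW_n)=\Hfl{0}(U,\bW_n)$ (from $S_2$) and $\Hfl{1}(X,\bW_n)=H^1(X,W_n\OO_X)=0$, the resulting short exact sequence $0\to\Hfl{1}(X,\bL_{n,m})\to\Hfl{1}(U,\bL_{n,m})\to\Hfl{1}(U,\bW_n)[F^m]\to0$, and finally $\Hfl{1}(U,\bW_n)\cong H^2_{\idealm}(X,W_n\OO_X)$ via local cohomology. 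The extra care you take with the fppf-surjectivity of $F^m$ and the compatibility of the two Frobenii is sound but not a different argument.
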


\begin{proof}
We write $\bW_n$ for the group scheme of Witt vectors of length $n$ 
and $W_n(R)$ (resp. $W_n \mathcal{O}_X$) for the ring (resp. sheaf) 
of Witt vectors of length $n$.
By definition, there is a short exact sequence of group schemes
\begin{equation}\label{seslnm}
  0\,\to\,\bL_{n,m}\,\to\,\bW_n\,\stackrel{F^m}{\longrightarrow}\,\bW_n\,\to\,0.
\end{equation}
Since $R$ is $S_2$, we have $W_n(R) =\Hfl{0}(X,\bW_n) = \Hfl{0}(U,\bW_n)$ and since $\bW_n$ 
is an iterated extension of $\GG_a$'s, 
we know that $\Hfl{1}(X,\bW_n) = H^1(X,W_n\mathcal{O}_X) = 0$ using Serre vanishing of the cohomology of coherent sheaves on 
affine schemes. 
Therefore, from the long exact sequence in flat cohomology on $X$ associated to 
the short exact sequence (\ref{seslnm}), we deduce 
$\Hfl{1}(X,\bL_{n,m}) = W_n(R)/F^m(W_n(R)) = H^0(U,W_n\mathcal{O}_U)/F^m(H^0(U,W_n\mathcal{O}_U))$.

Hence, from the long exact sequence in flat cohomology on $U$ associated to the short 
exact sequence (\ref{seslnm}), we obtain the short exact sequence
\begin{equation}\label{seslnm2}
0\,\to\, \Hfl{1}(X,\bL_{n,m}) \,\stackrel{\mathrm{res}}{\longrightarrow}\, \Hfl{1}(U,\bL_{n,m})  \,\to\, \Hfl{1}(U,\bW_n)[F^m] \,\to\, 0
\end{equation}
The long exact sequence in local cohomology shows that 
$\Hfl{1}(U,\bW_n)= H^1(U,W_n\mathcal{O}_U) = H^2_{\idealm}(X,W_n \OO_X)$.
\end{proof}

\begin{Remark}[Concerning the $\Dieu$-module structures in Proposition \ref{prop: torslocloc}] \label{rem: dieustructures} 
When we recalled Dieudonn\'e theory in Section \ref{subsec: loclocCartierDieudonne}, 
we have chosen a $\Dieu$-action on $\bL_{n,m}$ to turn $\Hom(\bL_{n,m},G)$ 
into a $\Dieu$-module for any $G$. 
Chasing through the various identifications, we obtain a left $\Dieu$-module structure 
on $H^2_{\idealm}(X,W_n\OO_X)[F^m]$. 
It is the left-module structure where $F$ and $V$ in $\Dieu$ act as the endomorphisms 
$F$ and $V$ on $H^2_{\idealm}(X,W_n\OO_X)[F^m]$ induced by $F$ and $V$ on $W_n\OO_X$ and 
the scalar multiplication is chosen so that $\{H^2_{\idealm}(X,W_n\OO_X)[F^m]\}_{m,n}$ 
becomes a directed system of left $\Dieu$-modules with respect to $I$ and $V$. 
For the reader's convenience, we summarise the steps necessary for this translation 
in the following table. 

\begin{tabular}{l|lllll}
& & compat.\ with & $a$ & $F$ & $V$ \\ \hline
$\bL_{n,m}$       & left  & $V,I$     & $\sigma^{-n}(a)$ & $F$ & $V$ \\
$\bL_{n,m}^D$     & right & $V^D,I^D$ & $\sigma^{-n}(a)$ & $F^D$ & $V^D$ \\
$\Hom(\bL_{n,m}^D, \Picloc_{X/k})$ & left & $ \circ V^D,  \circ I^D$& $ \circ \sigma^{-n}(a)$ & $ \circ F^D$ & $ \circ V^D$ \\
$H^2_{\idealm}(X,W_n \OO_X)[F^m]$ & left & $V,I$ & $\sigma^{-n}(a)$ & $F$ & $V$ 
\end{tabular}

\end{Remark}

Therefore, with respect to the $\Dieu$-actions of Remark \ref{rem: dieustructures}, we have the following corollary.

\begin{Corollary} \label{cor: torsloclocwithdieu}
 There is a canonical isomorphism of left $\Dieu$-modules
      $$
    \Hom(\bL_{n,m}^D,\Picloc_{X/k}) \,\cong\,
    \overline{\Hfl{1}(U,\bL_{n,m})} 
    \quad\cong\quad
     H^2_{\idealm}(X,W_n\OO_X)[F^m].
     $$
\end{Corollary}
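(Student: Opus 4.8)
The plan is to build the asserted $\Dieu$-linear isomorphism by composing the two isomorphisms of abelian groups already in hand and then to check $\Dieu$-equivariance by naturality. Indeed, applying Proposition \ref{prop: torsfinab} to the finite abelian group scheme $G = \bL_{n,m}$, whose Cartier dual is $\bL_{n,m}^D = \bL_{m,n}$, gives a canonical isomorphism of abelian groups $\overline{\Hfl{1}(U,\bL_{n,m})} \cong \Hom(\bL_{n,m}^D,\Picloc_{X/k})$, while Proposition \ref{prop: torslocloc} gives $\overline{\Hfl{1}(U,\bL_{n,m})} \cong H^2_{\idealm}(X,W_n\OO_X)[F^m]$. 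Composing these two produces the isomorphism of underlying abelian groups, so the entire content of the corollary is that this composite is compatible with the $\Dieu$-module structures described in Remark \ref{rem: dieustructures}.

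For this I would argue by functoriality. The isomorphism of Proposition \ref{prop: torsfinab} is Boutot's \cite[Corollaire III.4.9]{Boutot} and is natural in the finite abelian group scheme $G$; hence it intertwines, for every homomorphism of $\bL_{n,m}^D$, the induced map on $\Hom(-,\Picloc_{X/k})$ with the induced map on $\overline{\Hfl{1}(U,-)}$. In particular it is compatible with the homomorphisms $F^D$, $V^D$, $R^D$, $I^D$ obtained by dualising the structure maps $F$, $V$, $R$, $I$ of Section \ref{subsec: loclocCartierDieudonne}, so the left $\Dieu$-action on $\Hom(\bL_{n,m}^D,\Picloc_{X/k})$ by precomposition is carried to the corresponding action on $\overline{\Hfl{1}(U,\bL_{n,m})}$. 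On the other side, the isomorphism of Proposition \ref{prop: torslocloc} is assembled from the long exact sequences in flat and local cohomology attached to $0 \to \bL_{n,m} \to \bW_n \stackrel{F^m}{\longrightarrow} \bW_n \to 0$; all the groups and connecting maps occurring there are functorial for the operators $F$, $V$, $R$ on Witt vectors and for the inclusion $I\colon\bL_{n,m}\hookrightarrow\bL_{n,m+1}$, so the identification $\overline{\Hfl{1}(U,\bL_{n,m})}\cong H^2_{\idealm}(X,W_n\OO_X)[F^m]$ is compatible with the endomorphisms $F$, $V$ and the transition maps $R$, $I$ induced on local Witt vector cohomology.

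It then remains to match the two sets of operators, and this is exactly the translation recorded in the table of Remark \ref{rem: dieustructures}: under Cartier duality one has $F \leftrightarrow V$ and $R \leftrightarrow I$, and the scalar $a\in W(k)$ acts through $\sigma^{-n}(a)$ on all four of the displayed modules. Tracking this through the composite shows that it is an isomorphism of left $\Dieu$-modules, compatibly with the transition maps $I$ and $V$, so that on passing to the colimit over $n$ and $m$ one recovers the identification of Dieudonn\'e modules used in Section \ref{subsec: abelian loc loc}.

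The main obstacle is purely one of bookkeeping: there is no content beyond Propositions \ref{prop: torsfinab} and \ref{prop: torslocloc}, but one has to be careful at the two places where Cartier duality intervenes --- first in expressing $\overline{\Hfl{1}(U,\bL_{n,m})}$ as $\Hom$ out of $\bL_{n,m}^D$, and then in the exchange of $F$ with $V$ and of $R$ with $I$ --- as well as with the Frobenius twist on scalars, so that it is the left-module structures, and not the right-module or an untwisted version, that get matched. Remark \ref{rem: dieustructures} is the device that keeps this straight, and the proof is in essence an invocation of it.
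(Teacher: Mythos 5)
Your proposal is correct and follows exactly the route the paper intends: the paper gives no separate proof of this corollary, presenting it as the composite of Proposition \ref{prop: torsfinab} (applied to $G=\bL_{n,m}$) with Proposition \ref{prop: torslocloc}, with the $\Dieu$-equivariance being precisely the bookkeeping recorded in the table of Remark \ref{rem: dieustructures}. Your naturality argument simply makes explicit what the paper leaves implicit, and it is accurate, including the exchange $F\leftrightarrow V$, $R\leftrightarrow I$ under Cartier duality and the $\sigma^{-n}$-twist on scalars.
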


We now make this correspondence explicit for (loc,loc)-group schemes of length $p^2$,
whose classification we recalled in Proposition \ref{prop: simple infinitesimal}.

\begin{Corollary} \label{cor: torslocloc}
There are the following isomorphisms of left $\Dieu$-modules
$$
\begin{array}{lcl}
\overline{\Hfl{1}(U,\balpha_p)} &\cong & H^2_{\idealm}(X,\OO_X)[F] \\
\overline{\Hfl{1}(U,\balpha_{p^2})} & \cong & H^2_{\idealm}(X,\OO_X)[F^2] \\
\overline{\Hfl{1}(U,\balpha_{p^2}^D)}  & \cong & H^2_{\idealm}(X,W_2 \OO_X)[F] \\
\overline{\Hfl{1}(U,\bM_2)}  & \cong & H^2_{\idealm}(X,W_2 \OO_X)[F-V].
\end{array}
$$
\end{Corollary}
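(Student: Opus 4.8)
The plan is to derive all four isomorphisms as special cases of the general isomorphism of left $\Dieu$-modules
$$
\overline{\Hfl{1}(U,\bL_{n,m})} \,\cong\, H^2_{\idealm}(X,W_n\OO_X)[F^m]
$$
established in Corollary \ref{cor: torsloclocwithdieu}, combined with the explicit identifications of the group schemes $\balpha_p$, $\balpha_{p^2}$, $\balpha_{p^2}^D$, and $\bM_2$ in terms of the $\bL_{n,m}$ recorded in Proposition \ref{prop: simple infinitesimal} and in Section \ref{subsec: loclocCartierDieudonne}. Concretely: $\balpha_p = \bL_{1,1}$ is self-Cartier-dual, $\balpha_{p^2} = \bL_{1,2}$ has Cartier dual $\balpha_{p^2}^D = \bL_{2,1}$, and $\bM_2 = \bL_{2,2}[F-V]$. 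Since Corollary \ref{cor: torsloclocwithdieu} is phrased in terms of $\overline{\Hfl{1}(U,\bL_{n,m})}$ with $\bL_{n,m}^D = \bL_{m,n}$ appearing on the Picard side, I would just plug in the relevant $(n,m)$.

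First, for $G = \balpha_p = \bL_{1,1}$: take $n = m = 1$ in Corollary \ref{cor: torsloclocwithdieu} to get $\overline{\Hfl{1}(U,\balpha_p)} \cong H^2_{\idealm}(X,W_1\OO_X)[F] = H^2_{\idealm}(X,\OO_X)[F]$, using $W_1\OO_X = \OO_X$. Second, for $G = \balpha_{p^2} = \bL_{1,2}$: take $n = 1$, $m = 2$ to obtain $\overline{\Hfl{1}(U,\balpha_{p^2})} \cong H^2_{\idealm}(X,\OO_X)[F^2]$. Third, for $G = \balpha_{p^2}^D = \bL_{2,1}$: take $n = 2$, $m = 1$ to obtain $\overline{\Hfl{1}(U,\balpha_{p^2}^D)} \cong H^2_{\idealm}(X,W_2\OO_X)[F]$. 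In each of these three cases, compatibility with the $\Dieu$-module structures is automatic from Corollary \ref{cor: torsloclocwithdieu}, so nothing further is needed.

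The fourth case, $G = \bM_2$, requires a small extra argument because $\bM_2$ is not itself one of the $\bL_{n,m}$ but rather the kernel $\bL_{2,2}[F-V]$ of $F - V$ on $\bL_{2,2}$. Here I would use the last displayed formula in Section \ref{subsec: loclocCartierDieudonne}, namely $\Dieulocloc(\bL_{n,m}[F^a - V^b]) \cong (\Dieu/(F^n,V^m))[F^b - V^a]$, together with the observation that taking $\overline{\Hfl{1}(U,-)}$ is compatible with passing to the $(F-V)$-kernel: from the exact sequence $0 \to \bM_2 \to \bL_{2,2} \xrightarrow{F-V} \bL_{2,2}^{(p)} $ (or more precisely the presentation $\bM_2 \cong \bL_{2,2}/(V-F)$ together with self-duality $\bL_{2,2}^D = \bL_{2,2}$), the functor $\overline{\Hfl{1}(U,-)}$ applied to $\bM_2$ yields the kernel of $F - V$ on $\overline{\Hfl{1}(U,\bL_{2,2})} \cong H^2_{\idealm}(X,W_2\OO_X)[F^2]$. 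Since $F - V$ acts on $H^2_{\idealm}(X,W_2\OO_X)$ via the endomorphisms $F$ and $V$ of $W_2\OO_X$, and since $[F-V]$-kernel inside $[F^2]$-kernel is the same as $[F-V]$-kernel of the whole module (as $F = V$ on that kernel forces $F^2 = FV = p = 0$ on $W_2$, hence $[F-V] \subseteq [F^2]$ already inside $W_2\OO_X$), we get $\overline{\Hfl{1}(U,\bM_2)} \cong H^2_{\idealm}(X,W_2\OO_X)[F-V]$ as $\Dieu$-modules.

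The only mild subtlety — and the step I would be most careful about — is verifying that the functor $\overline{\Hfl{1}(U,-)}$ commutes with the formation of the $(F-V)$-kernel in the $\bM_2$ case, i.e. that the identification $\bM_2 = \bL_{2,2}[F-V]$ translates into $\overline{\Hfl{1}(U,\bM_2)} = \overline{\Hfl{1}(U,\bL_{2,2})}[F-V]$. This follows by applying the long exact flat-cohomology sequence on $U$ to $0 \to \bM_2 \to \bL_{2,2} \xrightarrow{F-V} \bL_{2,2}$ exactly as in the proof of Proposition \ref{prop: torslocloc} (one needs the vanishing of $\Hfl{1}(X,\bL_{2,2})$ after taking the $(F-V)$-kernel, or equivalently the relevant surjectivity, which holds since $W_2$-cohomology on $X$ vanishes in degree $1$ by Serre vanishing as used there), and then matching it with the analogous presentation of $\Dieulocloc(\bM_2)$ recorded in Section \ref{subsec: loclocCartierDieudonne}. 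Everything else is bookkeeping of indices and of the $\Dieu$-actions from Remark \ref{rem: dieustructures}.
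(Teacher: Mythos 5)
Your handling of the first three isomorphisms is exactly the paper's: they are the cases $(n,m)=(1,1),(1,2),(2,1)$ of Proposition \ref{prop: torslocloc} (equivalently Corollary \ref{cor: torsloclocwithdieu}), and nothing further is required. For the fourth isomorphism your intended mechanism --- use $\bM_2=\bL_{2,2}[F-V]$ and the fact that the isomorphism of Corollary \ref{cor: torsloclocwithdieu} is $\Dieu$-linear --- is also the paper's, but the concrete justification you give in your last paragraph does not work as stated. The map $F-V\colon \bL_{2,2}\to\bL_{2,2}$ is \emph{not} an epimorphism of fppf sheaves: its image is a proper subgroup scheme of order $p^2$ inside the order-$p^4$ group scheme $\bL_{2,2}$ (indeed $\bM_2\cong\bL_{2,2}/(V-F)$ has order $p^2$). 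So the sequence $0\to\bM_2\to\bL_{2,2}\to\bL_{2,2}$ is only left exact, and the argument is not ``exactly as in the proof of Proposition \ref{prop: torslocloc}'', where surjectivity of $F^m$ on the vector group $\bW_n$ was essential. Moreover, the vanishing you invoke is not available: $\Hfl{1}(X,\bL_{2,2})\cong W_2(R)/F^2W_2(R)$ is in general nonzero (this is precisely why one passes to $\overline{\Hfl{1}(U,-)}$ at all), and Serre vanishing only applies to the coherent groups $\bW_n\OO$, not to the finite group scheme $\bL_{2,2}$.

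The clean repair is the route you mention only parenthetically, and it is what the paper means by ``$\Dieu$-linearity'': work on the Hom side. By Proposition \ref{prop: torsfinab}, $\overline{\Hfl{1}(U,\bM_2)}\cong\Hom(\bM_2^D,\Picloc_{X/k})$; dualizing the inclusion $\bM_2\subset\bL_{2,2}$ gives a surjection $\bL_{2,2}=\bL_{2,2}^D\twoheadrightarrow\bM_2^D$ whose kernel is the image of $V-F$, so $\Hom(\bM_2^D,\Picloc_{X/k})$ is identified with those homomorphisms $\bL_{2,2}\to\Picloc_{X/k}$ killed by precomposition with $V-F$, i.e.\ (with the conventions of Remark \ref{rem: dieustructures}) with the kernel of $F-V$ acting on $\Hom(\bL_{2,2}^D,\Picloc_{X/k})\cong H^2_{\idealm}(X,W_2\OO_X)[F^2]$. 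No long exact sequence is needed. Finally, your reduction from $H^2_{\idealm}(X,W_2\OO_X)[F^2][F-V]$ to $H^2_{\idealm}(X,W_2\OO_X)[F-V]$ is correct but for the wrong reason: $p=FV$ does \emph{not} vanish on $W_2\OO_X$; rather $V^2=0$ on $W_2\OO_X$, so $Fx=Vx$ forces $F^2x=FVx=V(Vx)=0$.
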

\begin{proof}
The first three isomorphisms are special cases of Proposition \ref{prop: torslocloc}. 
For the fourth isomorphism, we use that $\bM_2$ can be described as $\bL_{2,2}[F-V]$ and then the isomorphism follows from the fact that the isomorphism in Corollary \ref{cor: torsloclocwithdieu} is an isomorphism of $\Dieu$-modules.
\end{proof}

Next, we want to recover $\Picloclocloc_{X/k}$ from Corollary \ref{cor: torsloclocwithdieu}. For this, we need the 
following finiteness result.

\begin{Lemma} \label{lem: finitelength}
Assume that $d \geq 3$ or that $d=2$ and $X$ is normal. Then, for all $m,n > 0$, the left $\Dieu$-module $H^2_{\idealm}(X,W_n\OO_X)[F^m]$ has finite 
length as $W(k)$-module and satisfies $V^n = F^m = 0$. 
\end{Lemma}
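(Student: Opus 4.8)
The statement has two parts: finite length as a $W(k)$-module, and the vanishing $V^n = F^m = 0$. The second part is essentially immediate: on $\bW_n$ we have $V^n = 0$ by construction (the Verschiebung raised to the $n$-th power kills Witt vectors of length $n$), and the operator $F$ on $H^2_\idealm(X, W_n\OO_X)[F^m]$ is by definition killed by $F^m$. These operators on $H^2_\idealm(X, W_n\OO_X)$ are induced functorially from $F$ and $V$ on the sheaf $W_n\OO_X$, so the relations $V^n = F^m = 0$ pass to the local cohomology group and a fortiori to the submodule $[F^m]$. So the real content is finiteness of length.

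For finite length, the plan is to reduce to the case $n = 1$ by dévissage along the exact sequences of Witt sheaves, and then handle $n=1$ directly. First I would use the exact sequence $0 \to \bW_{n-1} \xrightarrow{V} \bW_n \xrightarrow{R} \GG_a \to 0$ of fppf sheaves (equivalently of quasi-coherent sheaves $0 \to W_{n-1}\OO_X \to W_n\OO_X \to \OO_X \to 0$), take the long exact sequence in local cohomology $H^\bullet_\idealm(X, -)$, and observe that $H^2_\idealm(X, W_n\OO_X)$ is sandwiched between subquotients of $H^2_\idealm(X, W_{n-1}\OO_X)$ and $H^2_\idealm(X, \OO_X)$, together with a contribution from $H^1_\idealm$'s. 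Under the hypothesis ($d \geq 3$, or $d = 2$ and $X$ normal), the groups $H^1_\idealm(X, \OO_X)$ vanish: for $d \geq 3$ because $X$ is $S_2$ hence $\mathrm{depth} \geq 2$, and for $d = 2$ with $X$ normal again $\mathrm{depth}\, \OO_X \geq 2$ so $H^0_\idealm = H^1_\idealm = 0$. This kills the awkward terms and shows $H^2_\idealm(X, W_n\OO_X)$ is an extension of a submodule of $H^2_\idealm(X,\OO_X)$ by a quotient of $H^2_\idealm(X, W_{n-1}\OO_X)$. By induction it therefore suffices to prove that $H^2_\idealm(X, \OO_X)[F^m]$ has finite length over $W(k)$, i.e. over $k$ — equivalently finite $k$-dimension. (Note the submodule $[F^m]$ in the statement is crucial: $H^2_\idealm(X,\OO_X)$ itself is typically infinite-dimensional.)

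For the $n=1$ case, the key point is that $H^2_\idealm(X,\OO_X)$ carries a natural Frobenius-semilinear operator $F$, and one must show its $F^m$-torsion is finite-dimensional over $k$. When $d = 2$ and $X$ is normal with minimal resolution $\pi: \widetilde X \to X$, there is an identification of $H^2_\idealm(X,\OO_X)$ with (a colimit related to) $H^1(\widetilde X, \OO_{\widetilde X})$ twisted along the exceptional divisor — more precisely $H^2_\idealm(X, \OO_X) \cong \varinjlim_\nu H^1(E_\nu, \OO_{E_\nu})$ where $E_\nu$ runs over thickenings of the exceptional fibre, or directly $H^2_\idealm(X,\OO_X) \cong R^1\pi_* \OO_{\widetilde X}$ completed — and the point is that $H^1(\widetilde X, \OO_{\widetilde X})$ is a finite-dimensional $k$-vector space equipped with a Frobenius, and $H^2_\idealm(X,\OO_X)$ is built from it by an inverse-limit/colimit process under which the $F$-semilinear structure is "eventually injective modulo a fixed finite piece". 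The cleanest route is: $H^2_\idealm(X,\OO_X)$ is a cofinite $W(k)$-module (Matlis dual to a finitely generated module) on which $F$ acts, and a theorem on such $F$-modules (Lyubeznik-type finiteness, or the concrete description via the resolution) gives that $H^2_\idealm(X,\OO_X)[F^\infty] = \bigcup_m H^2_\idealm(X,\OO_X)[F^m]$ is finite-dimensional; in fact it is $0$ precisely when $X$ is $F$-injective, and the excerpt already flags that its nonvanishing detects failure of F-injectivity. For $d \geq 3$ one uses the analogous statement for $H^d_\idealm$ or reduces to surfaces by a generic hyperplane section argument. I expect the main obstacle to be pinning down this finiteness of the $F$-stable torsion in $H^2_\idealm(X,\OO_X)$ in a way that works uniformly — this is where one must invoke either the explicit resolution picture (clean for $d=2$) or a general Lyubeznik-style theorem about Frobenius actions on local cohomology of excellent local rings; everything else (the dévissage, the depth vanishing, the relations $F^m = V^n = 0$) is routine.
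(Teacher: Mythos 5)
Your reduction to the case $n=1$ is exactly the paper's: the relations $V^n=F^m=0$ are immediate, and the d\'evissage along the Witt vector filtration, using $H^1_{\idealm}(X,\OO_X)=0$ (from $S_2$/normality) to get left-exactness of the sequence of $F^m$-kernels, correctly reduces everything to showing that $H^2_{\idealm}(X,\OO_X)[F^m]$ is finite-dimensional over $k$. That part is fine.

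The gap is in the $n=1$ step, which is the actual content of the lemma, and both of your proposed routes for it are problematic. For $d\geq 3$ you suggest ``the analogous statement for $H^d_{\idealm}$ or a generic hyperplane section argument''; neither is needed nor obviously workable in the complete local setting. The point here is much simpler: since $X$ is $S_2$ of dimension $\geq 3$, the module $H^2_{\idealm}(X,\OO_X)$ is already \emph{finitely generated} (SGA~2, Exp.~VIII, Cor.~2.3), so its $F^m$-kernel is trivially of finite length -- no Frobenius input is required. For $d=2$ your proposed identification of $H^2_{\idealm}(X,\OO_X)$ with $H^1(\widetilde{X},\OO_{\widetilde{X}})$ (or a completion/colimit built from it) cannot be right as stated: $H^2_{\idealm}(X,\OO_X)$ is infinite-dimensional for essentially every normal surface singularity (for RDPs it has the infinite basis $e_{i,j}, f_{i,j}$ used later in the paper), whereas $H^1(\widetilde{X},\OO_{\widetilde{X}})$ is finite-dimensional, so any argument resting on that identification collapses. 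Your fallback to a ``Lyubeznik-style'' finiteness theorem is the right instinct but is left unspecified; the Hartshorne--Speiser--Lyubeznik theorem by itself gives a uniform bound on $F$-nilpotency, not finiteness of the $k$-dimension of $[F^m]$. What the paper actually uses is that $U=X\setminus\{x\}$ is regular, hence $F$-rational, so that $H^2_{\idealm}(X,\OO_X)$ has finite length in the category of $R[F]$-modules (Smith); since $F$ is nilpotent on the $R[F]$-submodule $[F^m]$, every simple subquotient there is a simple $R$-module, i.e.\ a copy of $k$, and finiteness follows. Without some such input your proof does not close.
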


\begin{proof}
Clearly, $V^n = F^m = 0$ holds and the only non-trivial claim is that $H^2_{\idealm}(X,W_n\OO_X)[F^m]$ 
has finite length as $W(k)$-module. 
We argue by induction on $n$. 
There is a natural commutative diagram
$$
\xymatrix{
0 \ar[r] & \mathcal{O}_X \ar^(0.35){V^n}[r] \ar^{F^m}[d]& W_{n+1}\mathcal{O}_X \ar^{R}[r] \ar^{F^m}[d]& W_n\mathcal{O}_X \ar[r] \ar^{F^m}[d] & 0 \\
0 \ar[r] & \mathcal{O}_X \ar^(0.35){V^n}[r] &  W_{n+1}\mathcal{O}_X \ar^{R}[r] &  W_{n}\mathcal{O}_X \ar[r] & 0.
}
$$
Since $X$ is $S_2$, we have $H^1_{\idealm}(X,\mathcal{O}_X)=0$, which yields
$H^1_{\idealm}(X,W_n \mathcal{O}_X) = 0$ by induction.
Thus, the long exact sequences in local cohomology for the diagram above yield an exact sequence
$$
0 \to H^2_{\idealm}(X,\mathcal{O}_X)[F^m] \to H^2_{\idealm}(X,W_{n+1}\OO_X)[F^m] \to H^2_{\idealm}(X,W_{n}\OO_X)[F^m]
$$
and thus, by the induction hypothesis, it suffices to show that $H^2_{\idealm}(X,\mathcal{O}_X)[F^m]$ 
has finite length.

If $\dim(X) \geq 3$, then the claim is true even before taking Frobenius kernels by 
\cite[Exp. VIII. Corollaire 2.3]{SGA2}. 
If $\dim(X) = 2$ and $X$ is normal, then $U$ is regular, hence $F$-rational by assumption, hence $H^2_{\idealm}(X,\mathcal{O}_X)$ has finite length as $R[F]$-module as explained in \cite[Section 4.17.3]{Smith}. 
In particular, for a fixed $m$, $H^2_{\idealm}(X,\mathcal{O}_X)[F^m]$ has finite length as $R$-module, hence it is finite-dimensional as a $k$-vector space, which is what we had to show.
\end{proof}

\begin{Corollary} \label{cor: Dieudonneofpicloc}
With assumptions as in Lemma \ref{lem: finitelength}, we set 
$$
 \Picloclocloc_{X/k}[V^m,F^n] \,:=\, \Ker(V^m) \cap \Ker(F^n).
$$
Then, $\Picloclocloc_{X/k}[V^m,F^n]$ is a finite abelian group scheme of (loc,loc)-type 
with Dieudonn\'e module $H^2_{\idealm}(X,W_n\OO_X)[F^m]$. 
In particular,
$$
\Picloclocloc_{X/k} \,=\, \varinjlim_{m,n}\, \Picloclocloc_{X/k}[V^m,F^n]
$$
is a description of $\Picloclocloc_{X/k}$ as an ind-(loc,loc) group scheme.
\end{Corollary}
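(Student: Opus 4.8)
The plan is to assemble Corollary \ref{cor: Dieudonneofpicloc} from the pieces already in place, the main input being the equivalence of categories in Proposition \ref{prop: colimitsofdieudonne} together with the identification of Dieudonné modules in Corollary \ref{cor: torsloclocwithdieu} and the finiteness from Lemma \ref{lem: finitelength}. First I would observe that $\Picloc_{X/k}$ is an fppf sheaf of abelian groups whose identity section is representable by a closed immersion by \cite[Th\'eor\`eme 6.6.]{Boutot}, so that the machinery of Section \ref{sec: maximalsubsheaves} applies: in particular $(\Picloc_{X/k})^{\loc,\loc} = \Picloclocloc_{X/k}$ is an ind-(loc,loc) group scheme by Lemma \ref{lem: indgroupscheme} (all finite (loc,loc) group schemes are affine), and Proposition \ref{prop: colimitsofdieudonne} gives $\Dieulocloc(\Picloc_{X/k}) \cong \varinjlim_i \Dieulocloc(G_i)$ where the $G_i$ are the finite (loc,loc) subgroup schemes of $\Picloc_{X/k}$.

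Next I would pin down the finite subgroup schemes $\Picloclocloc_{X/k}[V^m, F^n]$. Since $\Picloclocloc_{X/k}$ is an ind-(loc,loc) group scheme, $F$ and $V$ act on it, and $\Ker(V^m) \cap \Ker(F^n)$ is a subsheaf; I would argue it is represented by a finite group scheme by computing its Dieudonné module. Under the equivalence of Proposition \ref{prop: colimitsofdieudonne}, applying $\Dieulocloc$ to the left-exact functor $\Ker(V^m)\cap\Ker(F^n)$ corresponds to taking the submodule of $\Dieulocloc(\Picloc_{X/k})$ killed by $V^m$ and $F^n$; by Corollary \ref{cor: torsloclocwithdieu}, $\Dieulocloc(\Picloc_{X/k}) \cong \varinjlim_{n,m} H^2_{\idealm}(X,W_n\OO_X)[F^m]$ as a $\Dieu$-module, and one checks directly from the explicit description of the colimit (transition maps $I$ and $V$) that the submodule killed by $F^n$ and $V^m$ is exactly $H^2_{\idealm}(X,W_n\OO_X)[F^m]$ sitting inside the colimit — this uses that $F^m$ already annihilates $H^2_{\idealm}(X,W_n\OO_X)[F^m]$ and that $V^n$ does too by Lemma \ref{lem: finitelength}. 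By that same Lemma this module has finite length over $W(k)$, so by Theorem \ref{thm: Dieudonnemain} it is the Dieudonné module of a genuine finite (loc,loc) group scheme, and by full faithfulness that group scheme is $\Picloclocloc_{X/k}[V^m,F^n]$.

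Finally, the colimit statement $\Picloclocloc_{X/k} = \varinjlim_{m,n} \Picloclocloc_{X/k}[V^m,F^n]$ follows because on Dieudonné modules it is the tautological presentation of $\varinjlim_{n,m} H^2_{\idealm}(X,W_n\OO_X)[F^m]$ as the colimit of its finite-length pieces, and the equivalence of categories of Proposition \ref{prop: colimitsofdieudonne} transports this back; alternatively, one notes that every finite (loc,loc) subgroup scheme $G_i$ of $\Picloclocloc_{X/k}$ satisfies $F^{n} = V^{m} = 0$ on $G_i$ for suitable $m,n$, hence is contained in some $\Picloclocloc_{X/k}[V^m,F^n]$, so these subgroup schemes are cofinal in the directed system defining $\Picloclocloc_{X/k}$ via Proposition \ref{prop: maximal C-subsheaf}.

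The step I expect to be the main obstacle is the second one: verifying cleanly that $\Ker(V^m)\cap\Ker(F^n)$, formed as a subsheaf of the ind-group scheme $\Picloclocloc_{X/k}$, is \emph{represented} by the finite group scheme whose Dieudonné module is $H^2_{\idealm}(X,W_n\OO_X)[F^m]$ — i.e. that forming kernels commutes with the colimit and with passing through the Dieudonné equivalence. This requires knowing that $\Dieulocloc$ sends the inclusion $\Picloclocloc_{X/k}[V^m,F^n] \hookrightarrow \Picloclocloc_{X/k}$ to the inclusion of the corresponding submodule, which follows from exactness properties of covariant Dieudonné theory on the subcategory of (loc,loc) group schemes, but one must be careful that the relevant $\mathrm{Hom}(\bL_{n,m}, -)$ commute with the filtered colimit (as in the proof of Proposition \ref{prop: colimitsofdieudonne}, using quasi-compactness of the $\bL_{n,m}$) so that taking $F^n$- and $V^m$-kernels on the sheaf side matches taking them on the module side.
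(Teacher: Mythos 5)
Your proposal is correct and follows essentially the same route as the paper: identify $\Picloclocloc_{X/k}$ as an ind-(loc,loc) group scheme via Lemma \ref{lem: indgroupscheme}, compute the Dieudonn\'e module of $\Ker(V^m)\cap\Ker(F^n)$ using Corollary \ref{cor: torsloclocwithdieu} and the equivalence of Proposition \ref{prop: colimitsofdieudonne}, and deduce finiteness from Lemma \ref{lem: finitelength} before passing to the colimit. One small slip worth fixing: since $\Dieulocloc(F)$ is left-multiplication by $V$ and $\Dieulocloc(V)$ is left-multiplication by $F$, the subsheaf $\Ker(V^m)\cap\Ker(F^n)$ corresponds on the module side to the elements killed by $F^m$ and $V^n$ (not by $V^m$ and $F^n$ as you write at one point); your subsequent justification via Lemma \ref{lem: finitelength} does invoke the correct conditions, so the argument itself goes through unchanged.
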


\begin{proof}
Being the maximal $(\loc,\loc)$-subsheaf of $\Picloc_{X/k}$, 
$\Picloclocloc_{X/k}$ is an ind-$(\loc,\loc)$ group scheme by Lemma \ref{lem: indgroupscheme}. 
By Corollary \ref{cor: torsloclocwithdieu}, we have isomorphisms
\begin{eqnarray*}
\Dieulocloc(\Picloclocloc_{X/k}) &=& \varinjlim_{n,m}\, \Hom(\bL_{n,m},\Picloclocloc_{X/k})  \\
&\cong &  \varinjlim_{n,m}\, H^2_{\idealm}(X,W_m\OO_X)[F^n].
\end{eqnarray*}
Since $\Dieulocloc(F)$ is left-multiplication by $V$ and $\Dieulocloc(V)$ is left-multiplication 
by $F$, we can use that $\Dieulocloc$ induces an equivalence of categories to conclude
\begin{eqnarray*}
  \Dieulocloc(\Picloclocloc_{X/k}[V^m,F^n]) &=& H^2_{\idealm}(X,W_n\OO_X)[F^m].
\end{eqnarray*}
By Lemma \ref{lem: finitelength} and Proposition \ref{prop: colimitsofdieudonne}, 
this implies that the $\Picloclocloc_{X/k}[V^m,F^n]$ are finite group schemes, 
hence, using Proposition \ref{prop: colimitsofdieudonne} once more, the equality
\begin{eqnarray*}
\Dieulocloc(\Picloclocloc_{X/k}) &=& \Dieulocloc(\varinjlim_{m,n} \Picloclocloc_{X/k}[V^m,F^n])
\end{eqnarray*}
implies $\Picloclocloc_{X/k} = \varinjlim_{m,n} \Picloclocloc_{X/k}[V^m,F^n]$. 
\end{proof}

\begin{Corollary} \label{cor: picloclocloc trivial}
With assumptions as in Lemma \ref{lem: finitelength}, the following three assertions are equivalent:
\begin{enumerate}
    \item $\Picloclocloc_{X/k}$ is trivial,
    \item $H^2_{\idealm}(X,W_n\OO_X)[F^m] = 0$ for all $m$ and $n$, and
    \item Frobenius is injective on $H^2_{\idealm}(X,\OO_X)$.
\end{enumerate}
\end{Corollary}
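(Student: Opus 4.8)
The plan is to read everything off Corollary~\ref{cor: Dieudonneofpicloc} and the equivalence of categories of Proposition~\ref{prop: colimitsofdieudonne}, which together translate the vanishing of $\Picloclocloc_{X/k}$ first into the vanishing of the Dieudonn\'e module $\varinjlim_{m,n}H^2_{\idealm}(X,W_n\OO_X)[F^m]$ and then into the vanishing of each individual term.

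For $(1)\Leftrightarrow(2)$ I would argue as follows. By Corollary~\ref{cor: Dieudonneofpicloc}, for each $m,n$ the sheaf $\Picloclocloc_{X/k}[V^m,F^n]=\Ker(V^m)\cap\Ker(F^n)$ is a subsheaf of $\Picloclocloc_{X/k}$ which is a finite $(\loc,\loc)$ group scheme with Dieudonn\'e module $H^2_{\idealm}(X,W_n\OO_X)[F^m]$, and $\Picloclocloc_{X/k}=\varinjlim_{m,n}\Picloclocloc_{X/k}[V^m,F^n]$. If $(1)$ holds, every subsheaf $\Picloclocloc_{X/k}[V^m,F^n]$ is trivial, hence so is its Dieudonn\'e module by Theorem~\ref{thm: Dieudonnemain}; this is $(2)$. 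Conversely, if $(2)$ holds, then every $\Picloclocloc_{X/k}[V^m,F^n]$ has trivial Dieudonn\'e module, hence is trivial by Theorem~\ref{thm: Dieudonnemain}, and therefore the colimit $\Picloclocloc_{X/k}$ is trivial as well; this is $(1)$. One can also phrase the equivalence directly at the level of Dieudonn\'e modules: the transition maps of the directed system $\{H^2_{\idealm}(X,W_n\OO_X)[F^m]\}_{m,n}$ are injective, being induced by precomposition with the epimorphisms $R$ and $F$ on the $\bL_{n,m}$, so the colimit vanishes if and only if each term does.

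For $(2)\Leftrightarrow(3)$, the implication $(2)\Rightarrow(3)$ is the special case $m=n=1$, since $H^2_{\idealm}(X,W_1\OO_X)[F^1]=H^2_{\idealm}(X,\OO_X)[F]$ and its vanishing is exactly injectivity of Frobenius on $H^2_{\idealm}(X,\OO_X)$. For $(3)\Rightarrow(2)$: if $F$ is injective on $H^2_{\idealm}(X,\OO_X)$, then so is $F^m$ for all $m$, so $H^2_{\idealm}(X,\OO_X)[F^m]=0$ for all $m$; I would then fix $m$ and induct on $n$ using the exact sequence
$$
0\to H^2_{\idealm}(X,\OO_X)[F^m]\to H^2_{\idealm}(X,W_{n+1}\OO_X)[F^m]\to H^2_{\idealm}(X,W_n\OO_X)[F^m]
$$
produced in the proof of Lemma~\ref{lem: finitelength} (from the short exact sequence $0\to\OO_X\xrightarrow{V^n}W_{n+1}\OO_X\xrightarrow{R}W_n\OO_X\to0$ together with $H^1_{\idealm}(X,W_n\OO_X)=0$). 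The base case $n=1$ is already established, and in the inductive step both outer terms of the displayed sequence vanish, forcing the middle one to vanish. This yields $(2)$, completing the cycle.

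Since the statement is a formal consequence of the machinery already in place, I do not expect a genuine obstacle; the only thing requiring care is keeping the index conventions straight between $\bL_{n,m}$, its Cartier dual $\bL_{m,n}$, and the groups $H^2_{\idealm}(X,W_n\OO_X)[F^m]$, and making sure the exact sequence invoked in the last step is precisely the one extracted in the proof of Lemma~\ref{lem: finitelength}.
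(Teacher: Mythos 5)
Your proposal is correct and follows essentially the same route as the paper: the equivalence $(1)\Leftrightarrow(2)$ is read off from the already-established identification of $\Dieulocloc(\Picloclocloc_{X/k})$ with $\varinjlim_{m,n}H^2_{\idealm}(X,W_n\OO_X)[F^m]$ (the paper cites Corollary~\ref{cor: torsloclocwithdieu} where you invoke Corollary~\ref{cor: Dieudonneofpicloc}, but these package the same information), and $(2)\Leftrightarrow(3)$ is exactly the paper's induction on $n$ via the exact sequence from the proof of Lemma~\ref{lem: finitelength}. Your additional remark that the transition maps of the directed system are injective is a harmless (and correct) extra observation not needed for the argument.
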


\begin{proof}
The equivalence $(1) \Leftrightarrow (2)$ follows from Corollary \ref{cor: Dieudonneofpicloc}, as $\Picloclocloc_{X/k}$ is trivial 
if and only if $\Picloclocloc_{X/k}[V^m,F^n] = 0$ for all $m$ and $n$,
and a finite group scheme of $(\loc,\loc)$-type is trivial 
if and only if its Dieudonn\'e module is trivial. 
The equivalence $(2) \Leftrightarrow (3)$ follows by induction on $n$.
\end{proof}

Thus, the size of $\Picloclocloc_{X/k}$ can be viewed as a measure for the
non-F-injectivity of $X$. 
For example, we have the following result.

\begin{Proposition} \label{prop: finiteVlengthsemiabelian}
Assume that $X$ is normal. Consider the following three assertions:
\begin{enumerate}
    \item $\Picloclocloc_{X/k}$ is a finite group scheme,
    \item $V$ is nilpotent on $\Dieulocloc(\Picloc_{X/k})$, and
    \item $(\Picloc_{X/k})_{\reduced}^\circ$ is an ordinary semiabelian variety.
\end{enumerate}
Then, we have the implications $(1) \Rightarrow (2) \Rightarrow (3)$. 
If $\dim(X) \geq 3$, then the three assertions are equivalent.
\end{Proposition}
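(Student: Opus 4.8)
The plan is to prove the two implications separately, using the Dieudonné-theoretic dictionary established in Corollary~\ref{cor: Dieudonneofpicloc}. Throughout I write $M := \Dieulocloc(\Picloc_{X/k}) = \varinjlim_{m,n} H^2_{\idealm}(X,W_n\OO_X)[F^m]$, a left $\Dieu$-module on which $F$ is nilpotent (indeed $F^m=0$ on the $m$-th piece). By Corollary~\ref{cor: finiteifdieufinite}, $\Picloclocloc_{X/k}$ is finite if and only if $M$ is finite as a $W(k)$-module, so this is the reformulation of (1) that I would work with.

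For $(1)\Rightarrow(2)$: if $M$ is a finite $W(k)$-module, then since $V$ increases $W$-length filtration in a controlled way — more precisely, on each finite-length subquotient $\Dieulocloc(\Picloclocloc_{X/k}[V^m,F^n]) = H^2_{\idealm}(X,W_n\OO_X)[F^m]$ we have $V^n=0$ by Lemma~\ref{lem: finitelength}, and $M$ is the colimit of these — the union stabilizes once $M$ itself is finite, so $V^N=0$ on all of $M$ for some $N$. Concretely: $M$ finite means $M = H^2_{\idealm}(X,W_n\OO_X)[F^m]$ for some fixed $m,n$ (the colimit maps eventually become isomorphisms), and on that module $V^n=0$. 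Hence $V$ is nilpotent on $M$, which is (2).

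For $(2)\Rightarrow(3)$: here I would bring in the representable part $(\Picloc_{X/k})_{\reduced}^\circ$, which by Boutot's results (quoted in Section~\ref{subsec: picloc}) is, when $X$ is normal, a smooth connected commutative group scheme locally of finite type over $k$; being connected and smooth it is (an extension involving) a semiabelian variety, and its $p$-divisible / formal part is governed by the Dieudonné module of its formal completion. The point is that the formal group of $(\Picloc_{X/k})_{\reduced}^\circ$ has Cartier–Dieudonné module closely related to the $(\loc,\et)$- and $(\loc,\loc)$-parts visible inside $\Dieulocloc(\Picloc_{X/k})$; an abelian variety is ordinary exactly when $V$ is bijective on the Dieudonné module of its formal group, equivalently when the "$V$-nilpotent, $F$-nilpotent" (i.e. $(\loc,\loc)$) part is zero — and here $V$ nilpotent on $M$ forces that $(\loc,\loc)$ part to vanish in the relevant range, leaving only the multiplicative ($(\loc,\et)$, i.e. $V$-bijective) contribution, which is precisely ordinarity of the semiabelian variety. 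I would need to be careful that $\Dieulocloc$ as defined here only sees the finite $(\loc,\loc)$ truncations, so translating "$V$ nilpotent on $M$" into a statement about the $p$-divisible group of $(\Picloc_{X/k})_{\reduced}$ requires the compatibility $\Picloc_{X/k}[F^n,V^m] \subseteq (\Picloc_{X/k})_{\reduced}$ for $m$ large — which holds because the reduced part contains all multiplicative-type and connected subgroup schemes, hence all finite $(\loc,\loc)$ ones.

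Finally, for the equivalence when $\dim X\geq 3$: in this range $H^2_{\idealm}(X,\OO_X)$ is a finite $W(k)$-module already (by \cite[Exp. VIII. Corollaire 2.3]{SGA2}, as used in Lemma~\ref{lem: finitelength}) and moreover the local Picard functor is representable by a scheme, so $(\Picloc_{X/k})_{\reduced}^\circ$ is genuinely a semiabelian variety (finite-dimensional) and $\Picloclocloc_{X/k}$ is its infinitesimal $(\loc,\loc)$-part. Then $(3)\Rightarrow(2)$ because ordinarity of the semiabelian variety says $V$ is bijective on the Dieudonné module of the formal group, i.e. the $V$-nilpotent part is zero, i.e. $\Picloclocloc_{X/k}$ is trivial, which certainly makes $V$ nilpotent (in fact zero) on $M$; and $(2)\Rightarrow(1)$ because with $\dim X \geq 3$ the ambient Dieudonné module $M = \varinjlim H^2_{\idealm}(X,W_n\OO_X)[F^m]$ sits inside (the Dieudonné module of) the $p$-divisible group of a finite-dimensional semiabelian variety, on which $F$ is already nilpotent on the $(\loc,\loc)$-part; adding "$V$ nilpotent" cuts it down to a module that is both $F$- and $V$-nilpotent inside a finite-height object, hence finite, giving (1) via Corollary~\ref{cor: finiteifdieufinite}.

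The main obstacle I anticipate is the implication $(2)\Rightarrow(3)$ in the surface case ($\dim X = 2$), where $(\Picloc_{X/k})_{\reduced}^\circ$ is only a group scheme \emph{locally} of finite type, not of finite type, so "ordinary semiabelian variety" must be interpreted as a statement about $(\Picloc_{X/k})_{\reduced}^{\circ}$ — which by Boutot is an extension of an abelian variety by a torus (the Picard variety of the minimal resolution) — and I have to make sure that $V$ nilpotent on the \emph{colimit} $M$ really does kill the infinitesimal formal-group contribution to that fixed abelian variety, i.e. that no "hidden" $(\loc,\loc)$ part escapes the truncations $\Picloclocloc_{X/k}[V^m,F^n]$. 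This is exactly the content of Corollary~\ref{cor: Dieudonneofpicloc}, so the argument should go through, but the bookkeeping between the non-representable sheaf $\Picloc_{X/k}$, its reduced part, and the finite truncations is where care is needed.
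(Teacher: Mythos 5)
Your overall architecture matches the paper's, and your $(1)\Rightarrow(2)$ argument is fine (the transition maps in the colimit are injective, so a finite colimit stabilises at some $H^2_{\idealm}(X,W_n\OO_X)[F^m]$, on which $V^n=0$). But there is a genuine gap in $(2)\Rightarrow(3)$. Assertion (3) makes \emph{two} claims: that $(\Picloc_{X/k})_{\reduced}^\circ$ is semiabelian (no unipotent part), and that it is ordinary. You only address the second. Your parenthetical ``being connected and smooth it is (an extension involving) a semiabelian variety'' and, worse, your later assertion that ``by Boutot [it] is an extension of an abelian variety by a torus'' are not correct: Boutot only gives a smooth connected commutative group scheme whose tangent space is $H^1(\widetilde{X},\OO_{\widetilde{X}})$, and in general this group has a nontrivial unipotent part (the paper's discussion of cones exhibits $(\Picloc_{X/k})_{\reduced}^\circ\cong \mathbf{U}\times(\Pic_{Y/k})_{\reduced}^\circ$ with $\mathbf{U}$ unipotent and nontrivial whenever $H^1(Y,\mathcal{L})\neq 0$). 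Killing the unipotent part is precisely where the hypothesis (2) must be used a second time, and it is the step your sketch omits.

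The missing argument is the following: write $(\Picloc_{X/k})_{\reduced}^\circ$ via Barsotti--Chevalley as an extension of an abelian variety by $\mathbf{T}\times\mathbf{U}$ with $\mathbf{T}$ a torus and $\mathbf{U}$ unipotent; if $\mathbf{U}\neq 0$ then $\GG_a\subseteq\mathbf{U}\subseteq\Picloc_{X/k}$, and $\Dieulocloc(\GG_a)=\varinjlim_m\Dieulocloc(\balpha_{p^m})$ is a module on which $V$ acts injectively and non-nilpotently, contradicting (2). Only after $\mathbf{U}=0$ is eliminated does your ``ordinary iff the $(\loc,\loc)$-part of the $p$-divisible group vanishes'' reasoning apply to the abelian quotient. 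With that insertion your proof of $(2)\Rightarrow(3)$, and hence your treatment of the $\dim X\geq 3$ equivalence (where you may legitimately assume (3) when proving $(2)\Rightarrow(1)$, since $(2)\Rightarrow(3)$ is by then established), goes through essentially as in the paper.
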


\begin{proof}
The implication $(1) \Rightarrow (2)$ is clear.

Let us show $(2) \Rightarrow (3)$.
By \cite[Th\'eor\`eme II.7.8, Corollaire V.1.8.1]{Boutot}, 
$(\Picloc_{X/k})_{\reduced}^\circ$ is a smooth and connected group scheme over $k$. 
By the Barsotti--Chevalley theorem (see \cite{Barsotti} and \cite{Chevalley}), 
we can write $(\Picloc_{X/k})_{\reduced}^\circ$ as an extension of an abelian 
variety $\mathbf{A}$ by a smooth affine algebraic group $\mathbf{L}$. 
Since $(\Picloc_{X/k})_{\reduced}^\circ$ is commutative, so is $\mathbf{L}$ and hence, 
we can write $\mathbf{L} = \mathbf{T} \times \mathbf{U}$, where $\mathbf{T}$ 
is a torus and $\mathbf{U}$ is unipotent.

If $\mathbf{U}$ was non-trivial, then it would contain $\mathbb{G}_a$.
Then, we would find
$$
\Dieulocloc(\mathbb{G}_a) \subseteq
\Dieulocloc(\mathbf{U}) \subseteq \Dieulocloc((\Picloc_{X/k})_{\reduced}^\circ) \subseteq \Dieulocloc(\Picloc_{X/k}).
$$
In particular, $V$ would not be nilpotent on $\Dieulocloc(\Picloc_{X/k})$, 
contradicting our assumption.
Thus, $\mathbf{U}$ is trivial.

Since $\mathbf{U}$ is trivial and there are no non-trivial homomorphisms 
from a finite commutative group scheme of $(\loc,\loc)$-type to $\mathbf{T}$, 
we have 
$$
\Dieulocloc(\textbf{A}) \,\cong\, \Dieulocloc((\Picloc_{X/k})_{\reduced}^\circ) 
\,\subseteq\, \Dieulocloc(\Picloc_{X/k}),
$$ 
where $\Dieulocloc(\textbf{A})$ is the Dieudonn\'e module of the 
$(\loc,\loc)$-part of the $p$-divisible group of $\textbf{A}$. 
In particular, if $V$ is nilpotent on $\Dieulocloc(\Picloc_{X/k})$, then 
$\Dieulocloc(\textbf{A})$ is trivial and hence, $\textbf{A}$ is ordinary. 
Therefore, $(\Picloc_{X/k})_{\reduced}^\circ$ is an extension of the 
ordinary abelian variety $\textbf{A}$ by the torus $\textbf{T}$, 
which is what we had to show.

Finally, assume that $\dim(X) \geq 3$. 
Then, we have to show that $(3) \Rightarrow (1)$. 
Since $\dim(X)\geq3$, we have that $\Picloc_{X/k}$ 
is representable by a group scheme locally of finite type over $k$  
by \cite[Th\'eor\`eme II.7.8]{Boutot}.
Thus, $\Picloc_{X/k}^\circ$ is a group scheme of finite type over $k$. 
In particular, $(\Picloc_{X/k})_{\reduced}^\circ$ has finite index in $\Picloc_{X/k}^\circ$. 
Now, if $(\Picloc_{X/k})_{\reduced}^\circ$ is an ordinary semiabelian variety, 
then $\Dieulocloc((\Picloc_{X/k})_{\reduced}^\circ) = 0$ and hence, 
$\Dieulocloc(\Picloc_{X/k}) = \Dieulocloc(\Picloc_{X/k}^\circ)$ has finite length as a 
$\Dieu$-module and so $\Picloclocloc_{X/k}$ is a finite group scheme by 
Corollary \ref{cor: finiteifdieufinite}.
\end{proof}

\begin{Question} \label{qu: length of picloclocloc}
Are all three assertions equivalent if $\dim(X)=2$?
\end{Question}

Let us shortly digress on cones - 
we will not need these results in the sequel, but they may be
helpful in order to put the results of this section in a broader
perspective.
Also, it provides some evidence that 
Question \ref{qu: length of picloclocloc} may have a positive answer.
Let $Y$ be a positive-dimensional projective variety over $k$, let $\mathcal{L}$ be 
a very ample invertible sheaf on $Y$, such that $Y$ is projectively normal with respect 
to the embedding determined by $\mathcal{L}$. 
Let $x\in X$ be the completion of the affine cone 
$\Spec \bigoplus_{n\geq 0} H^0(Y,\mathcal{L}^{\otimes n})$ 
over $(Y,\mathcal{L})$. 
Note that our assumptions imply that $x \in X$ is a normal singularity of dimension at least $2$.
By \cite[Corollaire V.1.8.2, Proposition V.5.1]{Boutot} and 
\cite[Section 3]{EsnaultViehweg}, we have
$$
(\Picloc_{X/k})_{\reduced}^\circ \,\cong\, \mathbf{U} \times (\Pic_{Y/k})_{\reduced}^\circ,
$$
where $\mathbf{U}$ is smooth, commutative, and unipotent (see \cite[p.9 Corollaire]{Oort}) and 
$(\Pic_{Y/k})_{\reduced}^\circ$ is the Picard variety of $Y$.  
Moreover, $\mathbf{U}$ is trivial if and only if $H^1(Y,\mathcal{L}) = 0$. In particular, by Proposition \ref{prop: finiteVlengthsemiabelian}, if $V$ is nilpotent 
on  $\varinjlim_{m,n} H^2_{\idealm}(X,W_n\OO_X)[F^m]$, then $(\Picloc_{X/k})_{\reduced}^\circ \cong (\Pic_{Y/k})_{\reduced}^\circ$ is an ordinary abelian variety and $H^1(Y,\mathcal{L}) = 0$.

\begin{Proposition}\label{prop: cone}
  Let $Y$ be a smooth projective curve of genus $g$ over $k$,
  let $\mathcal{L}$ be a very ample invertible sheaf of degree $e> 2g + 1$, and
  let $x\in X$ be the completed affine cone over $(Y,\mathcal{L})$.
  \begin{enumerate}
      \item  $\Pic_Y$ is an ordinary abelian variety if and only if $x \in X$ is F-injective.
      \item  All three assertions of Proposition \ref{prop: finiteVlengthsemiabelian} are equivalent for $X$.
  \end{enumerate}
\end{Proposition}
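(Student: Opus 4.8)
The plan is to use the $\GG_m$-action on the cone to put a $\ZZ$-grading on the local cohomology $H^2_{\idealm}(X,\OO_X)$, identify the Frobenius action on it, and then deduce both assertions essentially formally from the results of Section~\ref{sec: local torsors}. Write $R=\bigoplus_{d\ge 0}H^0(Y,\mathcal{L}^{\otimes d})$, so $X=\Spec R$ (the completion does not affect local cohomology or normality). Since $\deg\mathcal{L}=e>2g+1$, the pair $(Y,\mathcal{L})$ is projectively normal, so $R$ is a normal ring of dimension $2$, hence Cohen--Macaulay; thus $H^0_{\idealm}(X,\OO_X)=H^1_{\idealm}(X,\OO_X)=0$, and $X$ is F-injective if and only if $F$ is injective on $H^2_{\idealm}(X,\OO_X)$. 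The punctured cone $U:=X\setminus\{x\}$ is an affine $\GG_m$-torsor $\pi\colon U\to Y$ with $\pi_\ast\OO_U\iso\bigoplus_{d\in\ZZ}\mathcal{L}^{\otimes d}$, so the local-cohomology exact sequence (using $H^{>0}(X,\OO_X)=0$ and $\pi$ affine) yields a $\ZZ$-graded isomorphism
\[
 H^2_{\idealm}(X,\OO_X)\;\iso\;H^1(U,\OO_U)\;\iso\;\bigoplus_{d\in\ZZ}H^1(Y,\mathcal{L}^{\otimes d}),
\]
under which $F$ (the $p$-th power map on $R$) carries the degree-$d$ summand into the degree-$pd$ summand via the absolute Frobenius $F_Y^\ast\colon H^1(Y,\mathcal{L}^{\otimes d})\to H^1(Y,\mathcal{L}^{\otimes pd})$. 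As $d\mapsto pd$ is injective on $\ZZ$, the kernel of $F$ on $H^2_{\idealm}(X,\OO_X)$ equals $\bigoplus_{d\in\ZZ}\Ker\big(F_Y^\ast\colon H^1(Y,\mathcal{L}^{\otimes d})\to H^1(Y,\mathcal{L}^{\otimes pd})\big)$. For $d\ge 1$ the $d$-th summand vanishes, since $\deg\mathcal{L}^{\otimes d}=de\ge e>2g-2$; for $d=0$ the map is the Frobenius on $H^1(Y,\OO_Y)$, which is injective precisely when $Y$ is ordinary, i.e.\ when $\Pic_Y$ is an ordinary abelian variety.

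It remains — and this is the \emph{crux} — to prove that $F_Y^\ast\colon H^1(Y,\mathcal{L}^{\otimes d})\to H^1(Y,\mathcal{L}^{\otimes pd})$ is injective for every $d\le -1$. I would deduce this from the exact sequence $0\to\OO_Y\to F_{Y\ast}\OO_Y\to B_1\to 0$, where $B_1:=\coker(\OO_Y\to F_{Y\ast}\OO_Y)$, noting that the exterior derivative embeds $B_1$ into $F_{Y\ast}\Omega^1_Y$. Twisting by $\mathcal{L}^{\otimes d}$ and using the projection formula together with $F_Y^\ast\mathcal{L}^{\otimes d}\iso\mathcal{L}^{\otimes pd}$ gives
\[
 0\;\to\;\mathcal{L}^{\otimes d}\;\to\;F_{Y\ast}\!\big(\mathcal{L}^{\otimes pd}\big)\;\to\;B_1\otimes\mathcal{L}^{\otimes d}\;\to\;0,
\]
and the connecting homomorphism of its long exact cohomology sequence exhibits $\Ker\big(F_Y^\ast\colon H^1(Y,\mathcal{L}^{\otimes d})\to H^1(Y,\mathcal{L}^{\otimes pd})\big)$ as a quotient of $H^0(Y,B_1\otimes\mathcal{L}^{\otimes d})$. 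The inclusion $B_1\hookrightarrow F_{Y\ast}\Omega^1_Y$ and the projection formula give $H^0(Y,B_1\otimes\mathcal{L}^{\otimes d})\hookrightarrow H^0(Y,\omega_Y\otimes\mathcal{L}^{\otimes pd})$, which vanishes because $\deg(\omega_Y\otimes\mathcal{L}^{\otimes pd})=2g-2+pde<0$ for $d\le -1$ (here $e>2g+1$ is used). I expect the main point requiring care to be exactly this bookkeeping: identifying the Frobenius on the local cohomology of the cone with $F_Y^\ast$ on the curve, and handling the standard identifications of $Y^{(p)}$ with $Y$ and of $\mathcal{L}^{(p)}$ with a line bundle of the same degree, which affect none of the dimension counts. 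Granting the claim, $X$ is F-injective if and only if $F$ is injective on the degree-$0$ summand $H^1(Y,\OO_Y)$, if and only if $\Pic_Y$ is ordinary; this proves part~(1).

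For part~(2), recall from the discussion preceding the proposition that $(\Picloc_{X/k})_{\reduced}^\circ\iso\mathbf{U}\times(\Pic_{Y/k})_{\reduced}^\circ$, with $\mathbf{U}$ trivial if and only if $H^1(Y,\mathcal{L})=0$. Since $\deg\mathcal{L}=e>2g-2$, this vanishing holds, so $(\Picloc_{X/k})_{\reduced}^\circ\iso(\Pic_{Y/k})_{\reduced}^\circ$, the Jacobian of $Y$; hence assertion~(3) of Proposition~\ref{prop: finiteVlengthsemiabelian} — that $(\Picloc_{X/k})_{\reduced}^\circ$ be an ordinary semiabelian variety — holds exactly when $Y$ is ordinary. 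By part~(1) this is equivalent to $X$ being F-injective, which by Corollary~\ref{cor: picloclocloc trivial} is equivalent to $\Picloclocloc_{X/k}$ being trivial, and a fortiori implies that $\Picloclocloc_{X/k}$ is finite, i.e.\ assertion~(1) of Proposition~\ref{prop: finiteVlengthsemiabelian}. Since that proposition already provides $(1)\Rightarrow(2)\Rightarrow(3)$ for the normal surface singularity $X$, all three assertions are equivalent — which in particular answers Question~\ref{qu: length of picloclocloc} affirmatively for cones over smooth curves.
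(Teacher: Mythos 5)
Your proof is correct and follows essentially the same route as the paper: both decompose $H^2_{\idealm}(X,\OO_X)\cong\bigoplus_{d}H^1(Y,\mathcal{L}^{\otimes d})$ via the grading on the cone, kill the positive-degree summands by Serre duality, and reduce everything to the Frobenius action on the degree-zero piece $H^1(Y,\OO_Y)$, with part (2) then closing the cycle through Corollary \ref{cor: picloclocloc trivial} and Proposition \ref{prop: finiteVlengthsemiabelian} exactly as in the paper. The only divergence is in the negative-degree summands, where the paper cites Hara--Watanabe's Frobenius-injectivity criterion while you give a short self-contained argument via $0\to\OO_Y\to F_{Y\ast}\OO_Y\to B_1\to 0$ and the inclusion $B_1\hookrightarrow F_{Y\ast}\Omega^1_Y$; your degree count $2g-2+pde<0$ for $d\le -1$ is valid under $e>2g+1$, so this substitution is sound.
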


\begin{proof}
First, note that $e > 2g+1$ guarantees that the embedding of $Y$ determined by $\mathcal{L}$ is projectively normal (see e.g. \cite[p.55]{Mumford}), so that $x \in X$ is a normal singularity.

First, we show that Claim (2) follows from Claim (1).
As explained before the statement of the Proposition, Assertion (3) of Proposition \ref{prop: finiteVlengthsemiabelian} implies that $\Pic_Y$ is
an ordinary abelian variety (by our assumptions on $Y$,
$\Pic_Y$ contains no torus), so it is enough to show that $\Pic_Y$ being ordinary implies that $X$ is 
F-injective, for then $\Picloclocloc_{X/k}$ 
will be trivial by Corollary \ref{cor: picloclocloc trivial}.

So, let us prove Claim (1). By Proposition \ref{prop: finiteVlengthsemiabelian} and the discussion before the statement of the Proposition, it is clear that if $x \in X$ is $F$-injective, then $\Pic_Y$ is ordinary. For the converse, let us assume that $\Pic_Y$ is an ordinary abelian  variety.
By a standard \v{C}ech cohomology computation, we have an isomorphism
$$
 H^2_\idealm(X,\calO_X) \,\cong\,
 \bigoplus_{m\in\ZZ}\, H^1(Y,\mathcal{L}^{\otimes m}).
$$
Since $e>2g+1$, Serre duality gives $H^1(Y,\mathcal{L}^{\otimes m})=0$ for
all $m>0$. 
Let $D$ be a non-zero divisor in $|\mathcal{L}|$.
Using the notation of \cite{HaraWatanabe}, we have $D'=0$ in this
case and since $p\deg D > 2g + 1$, we have that Frobenius is injective
on $H^1(Y,\mathcal{L}^{\otimes m})$ for all $m<0$ by 
\cite[Corollary 2.6]{HaraWatanabe}.
In particular,
$$
H^2_{\idealm}(X,\OO_X)[F] \,\cong\, H^1(Y,\OO_Y)[F].
$$
Since $\Pic_Y$ is ordinary, the right-hand side vanishes, hence $H^2_{\idealm}(X,\OO_X)[F] = 0$, that is, $X$ is F-injective. 
\end{proof}

\begin{Remarks}
Concerning finiteness of $\Dieulocloc(\Picloc_{X/k})$:
\begin{enumerate}
\item If $x\in X$ is an RDP, then $\Dieulocloc(\Picloc_{X/k})$ is a $\Dieu$-module
of finite length by Theorem \ref{thm: picloclocloc finite} below.
In this case, all three assertions of Proposition \ref{prop: finiteVlengthsemiabelian}
hold true.
\item If $x \in X$ is the completed affine cone over a polarised elliptic curve $(Y,\mathcal{L})$ with $\mathcal{L}$ very ample, then the degree assumption in Proposition \ref{prop: cone} is satisfied, hence all three assertions of Proposition \ref{prop: finiteVlengthsemiabelian} are equivalent.
\end{enumerate}
\end{Remarks}

\subsection{General torsors: the local Nori fundamental group scheme}
Let $G$ be a finite group scheme over $k$.

A scheme-theoretic analogue $\piNloc(U,X,x)$ of $\pietloc(X)$ was described in \cite{EsnaultViehweg} using Nori's fundamental group scheme $\pi^{\rm N}(X,x)$ from \cite{Nori}. 
First, one constructs a pro-finite group scheme $\piNloc(U,x)$ with the property that there is a canonical bijection of pointed sets
$$
\Hfl{1}(U,G)  \quad\leftrightarrow\quad \Hom(\piNloc(U,x),G)/\sim,
$$
where the right-hand side denotes morphisms of profinite group schemes up to inner automorphisms of $G$. 
Under this bijection, surjective homomorphisms on the right hand side correspond to
$G$-torsors that do not admit a reduction of structure group to strict 
subgroup schemes of $G$. 
Such torsors are called \emph{Nori-reduced}.
 
Using the canonical morphism $\piNloc(U,x) \to \pi^{\rm N}(X,x)$, 
which is faithfully flat, one defines
$$
\piNloc(X) \,:=\, \piNloc(U,X,x) \,:=\, \Ker\left(\piNloc(U,x) \to \pi^{\rm N}(X,x)\right).
$$
However, in contrast to $\piet(X,x)$, the fundamental group scheme $\pi^{\rm N}(X,x)$ is 
highly non-trivial since $R$ is not perfect and therefore, 
$\piNloc(X)$ does not satisfy the analogue of Proposition \ref{prop: torsfinet}. 
Nevertheless, we have the following weaker observation, which follows immediately from the definitions.

\begin{Proposition}\label{prop: torspinloc}
 If $G$ is finite, then there is a canonical bijection of pointed sets
  $$
   \Hfl{1}(U,G)
      \quad\cong\quad
      \Hom(\piNloc(U,x),G)/\sim,
  $$
 where $\Hom$ denotes morphisms of profinite group schemes and where $\sim$ means that we identify homomorphisms that differ by an inner automorphism of $G$. 
 Moreover,
 \begin{enumerate}
     \item this bijection identifies the subsets $\Hfl{1}(X,G)$ and $\Hom(\piN(X,x),G)/\sim$.
     \item If $G$ is abelian, then there is a canonical isomorphism
of abelian groups
$$
 \overline{\Hfl{1}(U,G)}
 \quad\cong\quad
  {\rm Im}\left(\Hom(\piNloc(U,x),G) \,\to\, \Hom(\piNloc(U,X,x),G)\right)/\sim.
 $$
 \end{enumerate}
\end{Proposition}

\begin{proof}
Using the local Nori fundamental group scheme of \cite{RTZ}
we reduce to the case where $G$ is finite and \'etale.
In this latter case, the result is classical
(see also Proposition \ref{prop: torsfinet}).
\end{proof}

\begin{Remark}
In the setting of Proposition \ref{prop: torspinloc}, \cite[Theorem 4.4]{ToniniZhang}
shows that $U$ admits a Nori fundamental gerbe $\Pi_{U/k}$ in the sense of 
Borne--Vistoli \cite[Definition 5.1]{BorneVistoli}. 
It seems reasonable to expect that $\Pi_{U/k}$ can be identified with the classifying stack of $\piNloc(U,x)$. 
Such an identification would upgrade the bijection in Proposition \ref{prop: torspinloc} 
to an equivalence of categories.
\end{Remark}

\begin{Corollary}\label{cor: noritrivial}
The local Nori fundamental group scheme $\piNloc(X)$ is trivial if and only if 
$\Hfl{1}(U,G) = \Hfl{1}(X,G)$ for every finite group scheme 
$G$ over $k$.
\end{Corollary}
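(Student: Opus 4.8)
The plan is to translate the statement into one about the canonical faithfully flat homomorphism $q\colon \piNloc(U,x) \to \pi^N(X,x)$, whose kernel is by definition $\piNloc(X)$. By Proposition \ref{prop: torspinloc}, for every finite group scheme $G$ over $k$ there is a bijection $\Hfl{1}(U,G) \cong \Hom(\piNloc(U,x),G)$ carrying the subset $\Hfl{1}(X,G)$ onto the image of $q^*\colon \Hom(\pi^N(X,x),G)\to \Hom(\piNloc(U,x),G)$. Since $q$ is faithfully flat, it is an epimorphism of fppf sheaves, so $q^*$ is injective; hence $\Hfl{1}(U,G)=\Hfl{1}(X,G)$ holds if and only if $q^*$ is bijective, equivalently, if and only if every homomorphism $\piNloc(U,x)\to G$ factors through $q$. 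Therefore it suffices to prove that $\Ker(q)$ is trivial if and only if $q^*$ is bijective for every finite $G$.

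For the forward direction, if $\Ker(q)$ is trivial then, $q$ being faithfully flat, it realizes $\pi^N(X,x)$ as the fppf quotient $\piNloc(U,x)/\Ker(q)$, which is just $\piNloc(U,x)$ itself; so $q$ is an isomorphism and $q^*$ is bijective for all $G$. For the converse I would argue contrapositively: if $N:=\Ker(q)$ is nontrivial, then, since $\piNloc(U,x)$ is a profinite group scheme, some finite quotient $\varphi\colon\piNloc(U,x)\twoheadrightarrow G$ restricts nontrivially to $N$; such a $\varphi$ cannot factor through $q$, since any factorization would kill $N$. Hence $q^*$ fails to be surjective for this $G$, and correspondingly $\Hfl{1}(X,G)\subsetneq\Hfl{1}(U,G)$.

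The only delicate points, which I would isolate as small observations or cite from the standard theory of (pro)affine group schemes, are that a faithfully flat homomorphism of group schemes with trivial kernel is an isomorphism, and that a nontrivial closed normal subgroup scheme of a profinite group scheme maps nontrivially to some finite quotient. Granting these, the corollary is a formal consequence of Proposition \ref{prop: torspinloc}, with no geometric input beyond what has already been set up; the main work is simply the careful bookkeeping of subsets under the bijections of that proposition.
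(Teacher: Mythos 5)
Your argument is correct and is essentially the fleshed-out version of what the paper intends: the paper states the corollary without proof as an immediate consequence of Proposition \ref{prop: torspinloc}, and your route — identifying $\Hfl{1}(X,G)$ with the image of $q^*$, using faithful flatness of $q$ for injectivity of $q^*$, and detecting a nontrivial kernel via a finite quotient of the profinite group scheme $\piNloc(U,x)$ — is exactly the bookkeeping being left to the reader. The two auxiliary facts you isolate (faithfully flat with trivial kernel implies isomorphism; a nontrivial closed normal subgroup scheme survives in some finite quotient) are standard for pro-finite affine group schemes and suffice.
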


In other words, the local Nori fundamental group scheme $\piNloc(X)$ is trivial
if and only if for every finite group scheme $G$ over $k$ every
$G$-torsor over $U$ uniquely extends to a $G$-torsor over $X$.
For example, if $x\in X$ is regular, then purity for torsors implies
that $\piNloc(X)$ is trivial. 
As a generalisation of the Flenner--Mumford smoothness criterion, % for smoothness of surface germs, 
we show in Section \ref{sec: Mumford} that the converse holds for normal surface 
germs over perfect fields.

\subsection{Local torsors over surfaces}
Now, assume that $x \in X$ has dimension $d = 2$ so that our assumption that $X$ 
is $S_2$ implies that $X$ is Cohen--Macaulay. 
Then, the notion of F-injectivity has the following 
geometric interpretation in terms of local torsors. 

\begin{Lemma}\label{lem: Finjective}
 For a Cohen--Macaulay surface singularity $x\in X$, the following are 
 equivalent
 \begin{enumerate}
 \item it is F-injective,
 \item for every finite and abelian group scheme $G$ of (loc, loc)-type, the 
 group $\overline{\Hfl{1}(U,G)} $ is trivial.
 \item $\overline{\Hfl{1}(U,\balpha_p)} $ is trivial, that is, every local 
 $\balpha_p$-torsor over $X$ extends to a global $\balpha_p$-torsor over $X$.
 \end{enumerate}
\end{Lemma}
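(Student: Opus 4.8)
The plan is to prove the chain of implications $(1) \Rightarrow (2) \Rightarrow (3) \Rightarrow (1)$, although in fact $(2) \Rightarrow (3)$ is trivial (take $G = \balpha_p$, which is of $(\loc,\loc)$-type since $\balpha_p = \bL_{1,1}$ and $\balpha_p^D \cong \balpha_p$ is infinitesimal), so the content lies in $(1) \Rightarrow (2)$ and $(3) \Rightarrow (1)$.

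\textbf{Reduction to Witt vector cohomology.} For a Cohen-Macaulay surface singularity $x \in X$, F-injectivity is the requirement that Frobenius act injectively on $H^i_{\idealm}(X,\OO_X)$ for $i = 0, 1, 2$. Since $X$ is $S_2$ of dimension $2$ we have $H^0_{\idealm}(X,\OO_X) = H^1_{\idealm}(X,\OO_X) = 0$, so the only condition is that $F$ be injective on $H^2_{\idealm}(X,\OO_X)$, i.e. $H^2_{\idealm}(X,\OO_X)[F] = 0$. By Corollary \ref{cor: picloclocloc trivial}, this is equivalent to $H^2_{\idealm}(X,W_n\OO_X)[F^m] = 0$ for all $m,n$, and equivalently to triviality of $\Picloclocloc_{X/k}$.

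\textbf{The implication $(1) \Rightarrow (2)$.} Let $G$ be finite and abelian of $(\loc,\loc)$-type. By Proposition \ref{prop: torsfinab}, $\overline{\Hfl{1}(U,G)} \cong \Hom(G^D, \Picloc_{X/k})$. Since $G$ is of $(\loc,\loc)$-type, so is $G^D$, and hence any homomorphism $G^D \to \Picloc_{X/k}$ factors through the maximal $(\loc,\loc)$-subsheaf $\Picloclocloc_{X/k}$; thus $\overline{\Hfl{1}(U,G)} \cong \Hom(G^D, \Picloclocloc_{X/k})$. Under hypothesis (1), Corollary \ref{cor: picloclocloc trivial} gives $\Picloclocloc_{X/k} = 0$, hence $\overline{\Hfl{1}(U,G)}$ is trivial. (Alternatively one can bypass representability entirely: write $G^D = \bL_{m,n}[\phi]$ for a suitable kernel description and use Corollary \ref{cor: torsloclocwithdieu} together with $H^2_{\idealm}(X,W_n\OO_X)[F^m] = 0$.)

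\textbf{The implication $(3) \Rightarrow (1)$.} Assume every local $\balpha_p$-torsor extends to a global one, i.e. $\overline{\Hfl{1}(U,\balpha_p)}$ is trivial. By the first isomorphism of Corollary \ref{cor: torslocloc}, $\overline{\Hfl{1}(U,\balpha_p)} \cong H^2_{\idealm}(X,\OO_X)[F]$, so the hypothesis says exactly $H^2_{\idealm}(X,\OO_X)[F] = 0$, which as noted above is precisely F-injectivity for the Cohen-Macaulay surface $x \in X$. This closes the cycle. I do not expect a genuine obstacle here: the only point requiring care is the bookkeeping that for a $(\loc,\loc)$-group scheme $G$ one really may replace $\Picloc_{X/k}$ by $\Picloclocloc_{X/k}$ in Proposition \ref{prop: torsfinab} — but this is immediate from the defining property of the maximal $\mathcal{C}$-subsheaf (Definition \ref{def: maximal C-subsheaf}) applied to $\mathcal{C}$ the category of finite abelian $(\loc,\loc)$-group schemes, since $G^D$ lies in $\mathcal{C}$.
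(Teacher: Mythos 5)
Your proof is correct and follows essentially the same route as the paper: the equivalence is reduced to the vanishing of $H^2_{\idealm}(X,W_n\OO_X)[F^m]$ via Proposition \ref{prop: torslocloc} and its corollaries, with $(2)\Rightarrow(3)$ trivial and $(3)\Rightarrow(1)$ coming from the case $G=\balpha_p=\bL_{1,1}$. The only (harmless) difference is that for $(1)\Rightarrow(2)$ you package the induction on Witt vector length into Corollary \ref{cor: picloclocloc trivial} and pass through $\Hom(G^D,\Picloclocloc_{X/k})$, whereas the paper performs the induction directly and applies Proposition \ref{prop: torslocloc}.
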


\begin{proof}
If $X = \Spec R$ is F-injective, then one easily sees, using induction on $n$, 
that $F$ is also injective on $H^i_{\idealm}(W_n R)$ for every $n \in \NN$. 
Thus, by Proposition \ref{prop: torslocloc}, every $G$-torsor over $U$ as in (2) 
extends to a $G$-torsor over $X$.
This establishes $(1)\Rightarrow(2)$.

The implication $(2)\Rightarrow(3)$ is trivial.

Finally, since $R$ is Cohen--Macaulay, the only non-trivial local cohomology 
group is $H^2_{\idealm}(R)$.
Thus, $(3)\Rightarrow(1)$ follows from Proposition \ref{prop: torslocloc} applied 
to $G = \balpha_p=\bL_{1,1}$.
\end{proof}

\section{Local torsors over the rational double points}
\label{sec: torsors over rdp}

In this section, we specialise to the case where $x \in X$ is a rational double point
(RDP for short).
We recall the local fundamental group $\pietloc(X)$ was determined by Artin and 
that the class group ${\rm Cl}(X)$ was calculated by Lipman. 
We now complement these classical results on RDPs by computing the scheme
$\Picloclocloc_{X/k}$, which turns out to be finite in every case. 
As explained in Section \ref{sec: local torsors}, these three objects together 
control the finite and \'etale or abelian local torsors over $X$.

Our results are displayed in Table \ref{table: torsors with Picloclocloc}. 
In it, we denote by $\C_n$ the cyclic group of order $n$,
by $\BD_n$ the binary dihedral group of order $n$, and by $\bD_n$ the dihedral group of order $n$.
For the loc-loc group schemes $\bL_{m,n}$ and $\bM_n$, see Section \ref{subsec: loclocCartierDieudonne}.
The loc-loc group schemes $\bG_n^r$ will be introduced and described in 
Section \ref{subsec loclocloc torsors}.
Moreover, we denote 
by $\Dic_{12}$ the dicyclic group of order $12$ (which is also isomorphic to 
a metacyclic group of order $12$), 
by $\BT_{24}$ the binary tetrahedral group of order $24$,
by $\BO_{48}$ the binary octahedral group of order $48$, and
by $\BI_{120}$ the binary icosahedral group of order $120$.
For a singularity in characteristic $p>0$, we denote by $n'$ 
the prime-to-$p$ part of an integer $n$.
  
   \begin{table}[!h]
  $$\begin{array}{|c|c|c|c|c|c|}
  \hline
  &\mbox{char.}     & X  & \pietloc(X) & {\rm Cl}(X) & \Picloc^{\loc,\loc}_{X/k} \\ \hline \hline
  
\parbox[t]{3mm}{\multirow{6}{*}{\rotatebox[origin=c]{90}{\text{F-regular}}}}  & \mbox{all} &     A_n  & \C_{(n+1)'} & \C_{n+1} & 0 \\
 
 & > 2 & D_n & \BD_{(4(n-2))'} & \begin{array}{c}
    \C_2^2 \mbox{ if } 2 \mid n \\
      \C_4 \mbox{ if } 2\nmid n
      \end{array}
      & 0  \\
      
 &  > 3 & E_6 & \BT_{24} & \C_3 & 0  \\
 & > 3  & E_7 & \BO_{48} & \C_2 & 0 \\
 & > 5 & E_8 & \BI_{120} & 0 & 0 \\ \hline \hline
 
\parbox[t]{3mm}{\multirow{20}{*}{\rotatebox[origin=c]{90}{\text{Not F-regular}}}}  & 5 & E_8^1  & \C_5 & 0 & 0 \\
 & 5 &  E_8^0 & 0 & 0 & \balpha_5 \\ \cline{2-6}
  
 & 3 & E_6^1 & \C_3 & \C_3 & 0  \\
 &  3 & E_6^0 & 0 & \C_3 & \balpha_3 \\
 & 3 & E_7^1  & \C_6 & \C_2 & 0  \\
 & 3 &E_7^0& \C_2 & \C_2 & \balpha_3 \\
 & 3 & E_8^2& \BT_{24} & 0 & 0 \\
 &3 & E_8^1 & 0 & 0 & \bM_2 \\
 & 3 &  E_8^0  & 0 & 0 & \balpha_{9}  \\ \cline{2-6}
 & 2 &  D_n^r  & 
 \begin{array}{cc}
 \bD_{2(4r-n)'} & \text{if $4r>n$}, \\
\C_2 & \text{if $4r=n$}, \\
0 & \text{if $4r<n$},
 \end{array}
 & \begin{array}{cc}
      \C_2^2 &\mbox{ if } 2 \mid n \\
      \C_4  & \mbox{ if } 2\nmid n
      \end{array} & \bG_n^r  \\
 & 2 & E_6^1 & \C_6      & \C_3 & 0 \\
 & 2 & E_6^0 & \C_3      & \C_3 &  \balpha_2  \\
 & 2 & E_7^3 & \C_4      & \C_2 & 0  \\
 & 2 & E_7^2 & 0         & \C_2 & \bM_2 \\
 & 2 & E_7^1 & 0         & \C_2 & \bL_{2,3}[V+F^2] \\
 & 2 & E_7^0 & 0         & \C_2 & \balpha_8 \\
 & 2 & E_8^4 & \Dic_{12} & 0    & 0  \\
 & 2 & E_8^3 & 0         & 0    & \bM_3  \\
 & 2 & E_8^2 & \C_2      & 0    & \bL_{2,3}[V+F^2]  \\
 & 2 & E_8^1 & 0         & 0    & \bL_{2,3}[p] \\
 & 2 & E_8^0 & 0         & 0    & \balpha_2 \times \balpha_8 \\ \hline
  \end{array}$$
  \caption{Torsors over the rational double points}
    \label{table: torsors with Picloclocloc}
  \end{table}

\subsection{Local \'etale torsors} \label{subsec: etale torsors}

The local fundamental groups of the rational double points
have been computed by Artin \cite{ArtinRDP}.

The F-regular RDPs are precisely those that are
lrq singularities by
\cite[Theorem 11.2]{LRQ}.
Thus, their local fundamental groups can be computed
using \cite[Proposition 7.1 and Theorem 11.1]{LRQ}
as $G^{\et}$, where $G$ is the group scheme attached to $X$.

\subsection{Local $\bmu_n$-torsors} \label{subsec: mun torsors}

By Proposition \ref{prop: torsfinabcl}, local $\bmu_n$-torsors can be computed
from the class group. 
The class groups of rational double points have been calculated by Lipman \cite[Section 24]{Lipman} and they are summarised in Table \ref{table: torsors with Picloclocloc}.

As with the local fundamental groups, also the class groups of the F-regular 
RDPs can be computed
using \cite[Proposition 7.1, Theorem 11.1, Theorem 11.2]{LRQ} 
as $\Hom(G,\GG_m) = (G^{\mathrm{ab}})^D$, where $G$ is the group scheme attached to $X$.

\subsection{Local Witt vector cohomology} \label{subsec: loc-loc torsors}

By Proposition \ref{prop: torslocloc}, local (loc, loc)-torsors over a singularity are 
in bijection with elements in the kernel of certain Frobenius and Verschiebung 
maps on local Witt vector cohomology, see also Corollary \ref{cor: torslocloc}.
By Lemma \ref{lem: Finjective}, there are no such torsors
over F-injective RDPs and thus, not over F-regular RDPs. 
Thus, the results of this section are mainly relevant for RDPs that are not F-injective.  
In Proposition \ref{prop: torsors loclocloc new}, we compute
$\varinjlim_{m,n} H^2_{\idealm}(X,W_n\OO_X)[F^m]$ together
with the actions of $F$ and $V$ on it for these RDPs.

While the calculation of 
$H^2_{\idealm}(X, \OO_X)[F^\infty] \,:=\, 
 \varinjlim_{m} H^2_{\idealm}(X, \OO_X)[F^m]$
is more or less standard, the calculation of 
$$
  H^2_{\idealm}(X, W_n\OO_X)[F^{\infty}]
  \,:=\, 
   \varinjlim_{m}\, H^2_{\idealm}(X, W_n\OO_X)[F^m]
$$
for $n \geq 2$ is more involved. 
We will use the following technical lemma, which allows us to check whether elements of $H^2_{\idealm}(X, W_{n-1}\OO_X)[F^{\infty}]$ lift to $H^2_{\idealm}(X, W_n\OO_X)[F^{\infty}]$, thus allowing us to bound the difference between $H^2_{\idealm}(X, W_n\OO_X)[F^{\infty}]$ and $V^{n-1}(H^2_{\idealm}(X, \OO_X)[F^\infty])$.

\begin{Lemma} \label{lem: extension} 
Let $x\in X$ be a normal surface singularity.
Let 
$$
f \,\in\, H^2_{\idealm}(X,W_{n-1} \OO_X)[F^{\infty}]
$$
and choose $\tilde{f} \in H^2_{\idealm}(X,W_{n} \OO_X)$ with $R(\tilde{f}) = f$.
Next, suppose that there exists an element 
$$
g \,\in\, H^2_{\idealm}(X, \OO_X) \,\setminus\, 
\left( F(H^2_{\idealm}(X, \OO_X))+ H^2_{\idealm}(X, \OO_X)[F^{\infty}]\right),
$$
such that 
$F(\tilde{f}) - V^{n-1}(g) \in H^2_{\idealm}(X,W_{n} \OO_X)[F^{\infty}]$.
Then, $f$ does not admit a lift to $H^2_{\idealm}(X,W_{n} \OO_X)[F^{\infty}]$.

\end{Lemma}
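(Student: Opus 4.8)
The plan is to argue by contradiction. A \emph{lift of $f$ to $H^2_{\idealm}(X,W_n\OO_X)[F^\infty]$} means an element $\tilde{f}' \in H^2_{\idealm}(X,W_n\OO_X)$ with $R(\tilde{f}') = f$ and $F^N(\tilde{f}') = 0$ for some $N$ (note that $R$ commutes with $F$, so $R$ does send $F$-power-torsion to $F$-power-torsion, and the hypothesis $f\in H^2_{\idealm}(X,W_{n-1}\OO_X)[F^\infty]$ is exactly the obvious necessary condition for such a lift to exist). So I would assume such a $\tilde{f}'$ exists and show that $g$ must then lie in $F(H^2_{\idealm}(X,\OO_X)) + H^2_{\idealm}(X,\OO_X)[F^\infty]$, contradicting the hypothesis on $g$.

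First I would record two structural facts. Consider the short exact sequence of fppf sheaves
$$
0 \,\to\, \OO_X \,\xrightarrow{V^{n-1}}\, W_n\OO_X \,\xrightarrow{R}\, W_{n-1}\OO_X \,\to\, 0 .
$$
Since $x\in X$ is a normal surface singularity, it is $S_2$, so $H^1_{\idealm}(X,\OO_X)=0$, and then $H^1_{\idealm}(X,W_{n-1}\OO_X)=0$ by induction along this sequence, exactly as in the proof of Lemma \ref{lem: finitelength}. Hence the connecting map in local cohomology vanishes and $V^{n-1}\colon H^2_{\idealm}(X,\OO_X) \to H^2_{\idealm}(X,W_n\OO_X)$ is injective with image $\Ker(R)$. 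Second, on Witt vectors in characteristic $p$ the map $V^{n-1}$ merely shifts coordinates and $F$ raises each coordinate to the $p$-th power, so $F\circ V^{n-1} = V^{n-1}\circ F$ as maps $\OO_X \to W_n\OO_X$, hence also on local cohomology groups.

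Now the main step. Since $R(\tilde{f}' - \tilde{f}) = f - f = 0$, the exact sequence gives an $h \in H^2_{\idealm}(X,\OO_X)$ with $\tilde{f}' - \tilde{f} = V^{n-1}(h)$. Applying $F$ and using the commutation relation,
$$
F(\tilde{f}') \,=\, F(\tilde{f}) + V^{n-1}(Fh) \,=\, V^{n-1}(g + Fh) + \bigl(F(\tilde{f}) - V^{n-1}(g)\bigr),
$$
and by hypothesis $\epsilon := F(\tilde{f}) - V^{n-1}(g)$ is killed by some power of $F$. Since $\tilde{f}' \in H^2_{\idealm}(X,W_n\OO_X)[F^\infty]$, so is $F(\tilde{f}')$, and therefore $V^{n-1}(g + Fh) = F(\tilde{f}') - \epsilon$ is killed by $F^{N'}$ for some $N'$. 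Using $F\circ V^{n-1} = V^{n-1}\circ F$ again together with the injectivity of $V^{n-1}$ on $H^2_{\idealm}$, this forces $F^{N'}(g + Fh) = 0$, i.e. $g + Fh \in H^2_{\idealm}(X,\OO_X)[F^\infty]$. Then $g = F(-h) + (g + Fh) \in F(H^2_{\idealm}(X,\OO_X)) + H^2_{\idealm}(X,\OO_X)[F^\infty]$, the desired contradiction.

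I do not expect a genuine obstacle here: the argument is entirely formal once the exact sequence, the vanishing $H^1_{\idealm}(X,W_{n-1}\OO_X)=0$, and the identity $F\circ V^{n-1} = V^{n-1}\circ F$ are in place. The only points needing a little care are (i) pinning down the meaning of ``lift'' and checking that $R$ preserves $F$-power-torsion, and (ii) observing that the hypothesis on $g$ is independent of the chosen lift $\tilde{f}$ of $f$ — two such lifts differ by $V^{n-1}(c)$, which changes $g$ only by $Fc \in F(H^2_{\idealm}(X,\OO_X))$ — though this last remark need not appear in the final write-up.
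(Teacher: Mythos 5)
Your argument is correct and is essentially the paper's own proof in different packaging: the paper applies the snake lemma to the sequence $0 \to H^2_{\idealm}(X,\OO_X) \xrightarrow{V^{n-1}} H^2_{\idealm}(X,W_n\OO_X) \xrightarrow{R} H^2_{\idealm}(X,W_{n-1}\OO_X) \to 0$ and shows the connecting map sends $f$ to $F^{m'-1}(g) \neq 0$ in $H^2_{\idealm}(X,\OO_X)/F^{m'}$ for $m' \gg 0$, which is exactly your chase ``$\tilde f' - \tilde f = V^{n-1}(h)$, apply $F$, use $FV^{n-1}=V^{n-1}F$ and injectivity of $V^{n-1}$, contradict the hypothesis on $g$'' written via the connecting homomorphism instead of by hand. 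Both rest on the same ingredients ($H^1_{\idealm}(X,W_{n-1}\OO_X)=0$ from $S_2$, the commutation of $F$ with $V^{n-1}$), so there is nothing to fix.
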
 

\begin{proof}
By applying the snake lemma to the short exact sequence 
$$
0 \,\to\, H^2_{\idealm}(X,\OO_X) 
\,\overset{V^{n-1}}{\to}\, 
H^2_{\idealm}(X,W_n \OO_X) 
\,\overset{R}{\to}\,  
H^2_{\idealm}(X,W_{n-1} \OO_X) \,\to\, 0
$$
and the map $F^{m'}$, we obtain an exact sequence 
$$
H^2_{\idealm}(X,W_n \OO_X)[F^{m'}] 
\,\overset{R}{\to}\, 
H^2_{\idealm}(X,W_{n-1} \OO_X)[F^{m'}] \,\overset{\delta_{m'}}{\to}\,
H^2_{\idealm}(X,\OO_X)/F^{m'}.
$$
By our assumptions, $f$ lies in the middle term for $m'\gg0$
and its image under the connecting map is 
$\delta_{m'}(f) = F^{m'-1}(g)$ (in $H^2_{\idealm}(X,\OO_X)/F^{m'}$). 
It suffices to show that this is nonzero for all $m' \gg 0$.
If it is zero, then $F^{m'-1}(g) = F^{m'}(h)$ (in $H^2_{\idealm}(X,\OO_X)$), 
hence $g - F(h) \in H^2_{\idealm}(X, \OO_X)[F^{m'-1}] \subseteq H^2_{\idealm}(X, \OO_X)[F^{\infty}]$, 
which contradicts our assumptions.
\end{proof}

\begin{Proposition} \label{prop: torsors loclocloc new}
Let $x\in X$ be a non-F-injective RDP.
Let 
\[
 f_{j} \,:=\, \frac{z}{x y^{j}} \,\in\, H^2_{\idealm}(X,\OO_X)
\]
with respect to the equations in Table \ref{table: RDPEquations} and the identification given in the proof.
Let 
\[
 f_{j}^{(l)} \,:=\, (f_{j}, 0, \dots, 0) \,\in\, 
 H^2_{\idealm}(X,W_l\OO_X)
\]
be the Teichm\"uller lift of $f_{j}$, and observe that $f_j^{(l)} = 0$ for $j \leq 0$. 

If $x$ is of type $D_n^r$ and $p = 2$, then we set
let $C(l,j) := 2^{l-1}(2j-1)$ and 
$L_{n}^r := \floor{{\rm min}\{n/2 - 1, n - 2r - 2\}}$.

Then, a presentation of $\varinjlim_{m,l} H^2_{\idealm}(X,W_l\OO_X)[F^m]$ as a left
$\Dieu$-module is as given in Table \ref{table: wittvectorcohomology}.

  \begin{table}[!h]
  $$\begin{array}{|c|c|l|l|c|}
  \hline
  \mbox{char.}     & X  & \text{generators} & \text{relations} & \text{length} \\ \hline \hline
 
  5 &  E_8^0 & f_{1} & F \equiv V \equiv 0 & 1 \\ \hline
  
   3 & E_6^0 & f_{1} & F \equiv V \equiv 0 & 1 \\
  3 &E_7^0 & f_{1} & F \equiv V \equiv 0 & 1 \\
 3 & E_8^1 & f_{1}^{(2)} & (F + V) \equiv V^2 \equiv 0 & 2 \\
  3 &  E_8^0  & f_{1}^{(2)} &  F \equiv V^2 \equiv  0  & 2 \\ \hline
  2 &  D_n^r  & 
 \begin{array}{l}
 \{f_{j}^{(l)}\} \text{ such that } \\
 C(l,j) \leq L_{n}^r
 \end{array}
 & \begin{array}{l}
      V(f_j^{(l)}) = f_j^{(l-1)}, \\
      F(f_j^{(l)}) = f_{C(l,j) - ((n/2) - r - 1)}^{(1)}
      \end{array} &  L_{n}^r  \\
 2 & E_6^0 & f_{1} & F \equiv  V \equiv  0 & 1  \\
 2 & E_7^2  &f_{1}^{(2)} & F + V \equiv  V^2 \equiv  0 & 2 \\
 2 & E_7^1  &f_{1}^{(3)} & F + V^2 \equiv  V^3 \equiv  0 & 3  \\
 2 & E_7^0  &f_{1}^{(3)} & F  \equiv  V^3 \equiv  0 & 3  \\
 2 & E_8^3  &f_{1}^{(3)} & F+V \equiv  V^3 \equiv  0 & 3  \\
 2 & E_8^2  &f_{1}^{(3)} & F+V^2 \equiv  V^3 \equiv  0 & 3  \\
 2 & E_8^1  &f_{1}^{(3)},f_2 & 
  \begin{array}{l}
      F(f_1^{(3)}) = V^3(f_1^{(3)})  = 0, \\
    F(f_2) - V^2(f_1^{(3)}) = V(f_2) = 0
      \end{array}
      & 4 \\
 2 & E_8^0  &f_{1}^{(3)},f_2 & 
   \begin{array}{l}
      F(f_1^{(3)}) = V^3(f_1^{(3)})  = 0, \\
      F(f_2)  = V(f_2) = 0
      \end{array}
& 4 \\ \hline
  \end{array}$$
  \caption{The $\Dieu$-module $\protect\varinjlim_{m,n} H^2_{\idealm}(X,W_n\OO_X)[F^m]$ for non-F-injective RDPs}
    \label{table: wittvectorcohomology}
  \end{table}
\end{Proposition}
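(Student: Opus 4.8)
By Corollary \ref{cor: torsloclocwithdieu} it suffices to compute, for each non-F-injective RDP $R=k[[x,y,z]]/(f)$ of Table \ref{table: RDPEquations}, the left $\Dieu$-modules $H^2_{\idealm}(X,W_l\OO_X)[F^m]$ together with their $F$- and $V$-actions, and then to pass to the colimit over $l$ and $m$. Each such $R$ is a hypersurface, hence Cohen--Macaulay of dimension $2$, and in every case the pair $(x,y)$ is a system of parameters; thus $H^2_{\idealm}(X,W_l\OO_X)$ is computed by the \v{C}ech complex of $W_l\OO_X$ on the cover $\{\Spec R_x,\Spec R_y\}$, i.e.\ $H^2_{\idealm}(X,W_l\OO_X)=W_l(R_{xy})/(W_l(R_x)+W_l(R_y))$ (one uses that $W_l$ is an iterated extension of copies of $\GG_a$ and that $H^1_{\idealm}(X,W_l\OO_X)=0$, which follows from the $S_2$-property by induction on $l$, exactly as in the proof of Lemma \ref{lem: finitelength}). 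Writing $R=k[[x,y]]\oplus k[[x,y]]\,z$ as a free $k[[x,y]]$-module exhibits a $k$-basis of $H^2_{\idealm}(X,\OO_X)$ consisting of the inverse monomials $x^{-a}y^{-b}$ and $z\,x^{-a}y^{-b}$ with $a,b\geq 1$, so in particular the classes $f_j=z/(xy^j)$ occur in it. On the bottom layer $l=1$, Frobenius is induced by the $p$-th power map of the ring $R_{xy}$, so $F(f_j)$ is obtained by raising $z/(xy^j)$ to the $p$-th power, substituting $f=0$ to lower the $z$-degree, and discarding the summands lying in $R_x+R_y$. This produces the rows of Table \ref{table: wittvectorcohomology} with $n=1$ and makes $H^2_{\idealm}(X,\OO_X)[F^\infty]$ explicit (e.g.\ spanned by $f_1$ for $E_8^0$ in characteristics $3$ and $5$, and by the $f_j$ with $2j-1\leq L_n^r$ for $D_n^r$ in characteristic $2$); that these RDPs are not F-injective is exactly the statement that this kernel is nonzero.

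For the higher layers $l\geq 2$ I would argue by induction on $l$, using the two short exact sequences $0\to\OO_X\xrightarrow{V^{l-1}}W_l\OO_X\xrightarrow{R}W_{l-1}\OO_X\to 0$ and $0\to W_{l-1}\OO_X\xrightarrow{V}W_l\OO_X\to\GG_a\to 0$ together with their long exact local-cohomology sequences (again $H^1_{\idealm}=0$ by $S_2$-ness). The natural candidate generators for $\varinjlim_m H^2_{\idealm}(X,W_l\OO_X)[F^m]$ are the Teichm\"uller lifts $f_j^{(l)}=[z/(xy^j)]$; from the Witt-vector formulas one reads off $R(f_j^{(l)})=f_j^{(l-1)}$ and, in the colimit, $V(f_j^{(l)})=f_j^{(l-1)}$, which is the source of the relations of the form $V(f_j^{(l)})=f_j^{(l-1)}$ in Table \ref{table: wittvectorcohomology}. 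The remaining issue is the value of $F(f_j^{(l)})$: one computes $F(f_j^{(l)})=[(z/(xy^j))^p]$, substitutes the defining equation, and re-expands the Teichm\"uller lift of the resulting sum as a sum of Teichm\"uller lifts via the Witt addition-with-carry formulas. Almost every term so produced either lies in $W_l(R_x)+W_l(R_y)$ or is killed by a power of $F$, and what remains is a single length-one class; in characteristic $2$ this gives precisely $F(f_j^{(l)})=f^{(1)}_{C(l,j)-((n/2)-r-1)}$ with $C(l,j)=2^{l-1}(2j-1)$, the factor $2^{l-1}$ arising from the iterated carry contributions.

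The last ingredient is which classes of $H^2_{\idealm}(X,W_{l-1}\OO_X)[F^\infty]$ lift to $H^2_{\idealm}(X,W_l\OO_X)[F^\infty]$. Here I would apply Lemma \ref{lem: extension}: a class $f_j^{(l-1)}$ fails to admit such a lift exactly when, for a chosen lift $\tilde f$ to $W_l$, the computation of $F(\tilde f)$ above yields an identity $F(\tilde f)-V^{l-1}(g)\in H^2_{\idealm}(X,W_l\OO_X)[F^\infty]$ with $g\notin F(H^2_{\idealm}(X,\OO_X))+H^2_{\idealm}(X,\OO_X)[F^\infty]$; and when no such obstruction occurs the Teichm\"uller lift $f_j^{(l)}$ itself furnishes a lift. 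This is what restricts the generating family to $\{f_j^{(l)}:C(l,j)\leq L_n^r\}$ for $D_n^r$ and pins down the lengths $1,2,3,4$ in the sporadic $E$-cases — the extra generator $f_2$ for $E_8^{0,1}$ in characteristic $2$ appearing because there $f_2$ already lies in $H^2_{\idealm}(X,\OO_X)[F^\infty]$ but is not accounted for by the $V$-tower over $f_1$. I expect this characteristic-$2$ bookkeeping, and above all the $D_n^r$ family, to be the main obstacle: controlling the Witt-vector carries as $z^2=x^2y+xy^m+zxy^{m-r}$ (resp.\ $z^2=x^2y+zy^m+zxy^{m-r+1/2}$ for $n$ odd) is substituted repeatedly under Frobenius, and verifying that the surviving $\Dieu$-module is exactly the one generated by the $f_j^{(l)}$ with $C(l,j)\leq L_n^r$, is where essentially all the computational work is concentrated.

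Finally, assembling generators and relations gives the $\Dieu$-module presentations of Table \ref{table: wittvectorcohomology}. Since each of these modules has finite length, $F$ and $V$ are nilpotent and Dieudonn\'e theory applies; the stated length is the $W(k)$-length, equal to the $k$-dimension, read off by listing the $V$-shifts of the generators as a $k$-basis. Matching each presentation against the examples collected in Section \ref{subsec: loclocCartierDieudonne} — e.g.\ $\Dieu/(F,V^l)\cong\Dieulocloc(\balpha_{p^l})$, $\Dieu/(F+V,V^l)\cong\Dieulocloc(\bM_l)$, and likewise for the $\bL_{l,m}[\,\cdot\,]$ and the $\bG_n^r$ occurring in Table \ref{table: torsors with Picloclocloc} — and invoking Corollary \ref{cor: Dieudonneofpicloc} then identifies $\Picloclocloc_{X/k}$ with the group schemes recorded in Table \ref{table: torsors with Picloclocloc}.
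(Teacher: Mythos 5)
Your strategy coincides with the paper's proof: the same \v{C}ech presentation $H^2_{\idealm}(X,W_l\OO_X)=W_lR[(xy)^{-1}]/(W_lR[x^{-1}]+W_lR[y^{-1}])$ on the cover $\{D(x),D(y)\}$, the same monomial basis $e_{i,j}=x^{-i}y^{-j}$, $f_{i,j}=x^{-i}y^{-j}z$ coming from $R$ being free of rank $2$ over $k[[x,y]]$, explicit computation of $F$ on Teichm\"uller lifts via Witt-vector carries, Lemma \ref{lem: extension} to obstruct lifts to higher Witt length, and induction on $l$ for the $D_n^r$ family with the injectivity of $C$ and of $j\mapsto 2j+r-n/2$ doing the bookkeeping. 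The caveat is that the proposition \emph{is} the table, and you defer every one of the case-by-case Frobenius computations that constitute the actual proof; moreover your heuristic that after discarding \v{C}ech-trivial and $F$-nilpotent terms ``what remains is a single length-one class'' is not literally what the computation yields --- e.g.\ for $E_8^3$ in $p=2$ one finds $F(f_1^{(4)})=e_{1,1}+[f_1]+f_2+[f_1^{(3)}]$ with \emph{two} surviving unbracketed terms, and the correct formulation (which is how the paper applies Lemma \ref{lem: extension}) is that the unbracketed residue $g$ must be checked to lie outside $F(H^2_{\idealm}(X,\OO_X))+H^2_{\idealm}(X,\OO_X)[F^{\infty}]$. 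With that correction, and granting the omitted calculations, your outline is exactly the paper's argument.
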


\begin{proof}
Write $X = \Spec ~R$ with $R = k[[x,y,z]]/(h)$, where $h$ is one of the equations in Table \ref{table: RDPEquations}.
Using \v{C}ech cohomology with respect to the cover $\{D(x),D(y)\}$, we obtain an identification
$$
H^2_{\idealm}(X,W_l \OO_X) \,=\, W_lR[(xy)^{-1}] \,/\,
\left( W_lR[x^{-1}] + W_lR[y^{-1}] \right).
$$
Since $h$ has degree $2$ in $z$ and contains a 
monomial equal to $z^2$, the space
$H^2_{\idealm}(X,\OO_X)$ has basis 
$\{ e_{i,j}, f_{i,j} \mid i,j \geq 1 \}$ 
with $e_{i,j} = x^{-i} y^{-j}$, $f_{i,j} = x^{-i} y^{-j} z$. 
In particular, the $f_j$ in the statement is equal to $f_{1,j}$.

First, assume $X$ is of type $E_n^r$. 
Straightforward but rather tedious calculations show 
the following (the brackets are explained below):

\begin{itemize}
    \item $E_8^0$ ($p=5$): $F(f_1^{(2)}) = -f_5$.
    \item $E_6^0$ ($p=3$): $F(f_1^{(2)}) = -f_{3,1}$.
    \item $E_7^0$ ($p=3$): $F(f_1^{(2)}) = -f_3$.
    \item $E_8^r$ ($p=3$): $F(f_1^{(3)}) = 
\begin{cases}
f_{3,2} - f_{4} -  f_{2,1} + [f_1^{(2)}] & (r = 1), \\ 
f_{3,2} & (r = 0).
\end{cases}$ 
    \item $E_6^0$ ($p=2$): $F(f_1^{(2)}) = f_2$.
    \item $E_7^2$ ($p=2$): $F(f_1^{(3)}) = f_2 + [f_1^{(2)}] $. 
    \item $E_7^r$ ($p=2$): $F(f_1^{(4)}) = \begin{cases}
e_{2,1} + f_3 + [f_1^{(2)}] & (r = 1), \\
e_{2,1} & (r = 0).
\end{cases}$ 
    \item $E_8^r$ ($p=2$): $F(f_1^{(4)}) = \begin{cases}
e_{1,1} + [f_1] + f_2 + [f_1^{(3)}] & (r = 3),\\
e_{1,1} + [f_1^{(2)}] & (r = 2),  \\
e_{1,1} & (r = 1,0). 
\end{cases}$
    \item $E_8^r$ ($p=2$): $F(f_2^{(2)}) = \begin{cases}
e_{1,3} + [f_1^{(2)}] & (r = 1), \\ 
e_{1,3} & (r = 0)
\end{cases}$ 
\end{itemize}

The above list shows two things: 

\begin{enumerate}
    \item Restricting to Witt vectors of shorter length, we see that the elements in the third column of Table \ref{table: wittvectorcohomology} are indeed contained in the space $\varinjlim_{m,l} H^2_{\idealm}(X,W_l\OO_X)[F^m]$ and that they satisfy the relations of the fourth column of Table \ref{table: wittvectorcohomology}. 
    To see that there are no other relations, it suffices to check that if $\sum_{j,l} [a_{j,l}] f_{j}^{(l)} = 0$ for $a_{j,l} \in k$ then $a_{j,l} = 0$, where $[-]$ is the Teichm\"uller lift and the index $(j,l)$ runs through the asserted generators and their $V$-descendants, but this is clear.
    
    \item \label{item: not extend} In each case, let $\tilde{f} := f_{i,j}^{(l+1)}$ be the element for which we calculate $F(\tilde{f})$ and let $g$ be the sum of the terms without bracket in $F(\tilde{f})$. Straightforward calculations in $H^2_{\idealm}(X, \OO_X)[F^\infty]$ show that $g \not \in F(H^2_{\idealm}(X, \OO_X))$. Moreover, by (1), the terms in brackets are killed by some power of $F$, hence $g$ satisfies the assumptions of Lemma \ref{lem: extension}.
    Thus, $f_{i,j}^{(l)}$ does not extend to $H^2_{\idealm}(X,W_{l+1}\OO_X)[F^{\infty}]$.  
\end{enumerate}
This finishes the proof in the $E_n^r$-cases. 
Indeed, we know that the asserted generators give a subspace of 
$\varinjlim_{m,l} H^2_{\idealm}(X,W_l\OO_X)[F^m]$.
If the latter was strictly larger,  then at least one of the asserted generators would
have to lift to a higher level, which contradicts (\ref{item: not extend}).

Now we consider the $D_n^r$ cases ($p = 2$). 
We argue by induction on $l$.

For $l=1$, that is, the $W_1$ part, we have 
\begin{equation}\label{eq: Dnr formula}
\begin{aligned} 
F\left(\sum_{i,j} a_{i,j} e_{i,j} + \sum_{i,j} b_{i,j} f_{i,j}\right) 
&= \sum_{i,j} a_{i,j}^2 e_{2i,2j} \,+\,   
\sum_{i,j} b_{i,j}^2 e_{2i-2,2j-1} \\
 & +  \sum_{i,j} b_{i,j}^2 f_{2i-1, 2j + r - n/2 } 
\\ & +   
 \begin{cases}
\sum_{i,j} b_{i,j}^2 e_{2i-1, 2j - n/2}  & (\text{ if } 2 \mid n) \\
\sum_{i,j} b_{i,j}^2 f_{2i, 2j - (n-1)/2} & (\text{ if } 2 \nmid n)
\end{cases}
\end{aligned}
\end{equation}
Checking parities of the indices, one easily sees 
that if the right hand side of Equation \eqref{eq: Dnr formula} is zero, then $a_{i,j} = 0$ for all $i$ and $j$ and 
$b_{i,j} = 0$ for all $i \geq 2$ and $j$.
Thus, every element of $H^2_{\idealm}(X, \OO_X)[F]$ is of the form $\sum_{j} b_{1,j} f_{1,j}$, and, by induction, so is 
every element of $H^2_{\idealm}(X, \OO_X)[F^{\infty}]$. The latter implies that $b_{1,j} = 0$ for $2j > \floor{n/2}$. Then, Equation \eqref{eq: Dnr formula} simplifies to
$$
F\left(\sum_{2j \leq \floor{n/2}} b_{1,j} f_{1,j}\right) =  \sum_{2j \leq \floor{n/2}} b_{1,j}^2 f_{1, 2j + r - n/2 }.
$$
The integer-valued function $\varphi: j \mapsto 2j + r - n/2$ is injective, so we have
$F^k\left(\sum_{2j \leq \floor{n/2}} b_{1,j} f_{1,j}\right) = 0$ for some $k \geq 0$ if and only if for every $j$ either $b_{1,j} = 0$ or $\varphi^k(j) \leq 0$ for $k \gg 0$.
The latter happens if and only if $j <  ((n/2) - r)$, or, equivalently, $2j \leq \floor{n - 2r - 1}$. Together with $2j \leq \floor{n/2}$, we obtain $C(1,j) \leq L_n^r$, hence $H^2_{\idealm}(X, \OO_X)[F^{\infty}]$ is as claimed. 

Next, consider the $W_l$ part for $l>1$. 
For a pair $(l,j)$ satisfying 
$C(l-1,j) \leq L_n^r$ (that is, we have $f_j^{(l-1)} \in H^2_{\idealm}(X, W_{l-1} \OO_X)[F^{\infty}]$), 
we have the equation
\[
 F\left(f_j^{(l)}\right) \,=\, 
 [f_{j_1}^{(2)}] \,+\, f_{j_3} \,+\, 
 \begin{cases}
 e_{j_2} & \text{ if } 2 \mid n,\\
 f_{2,j_2} & \text{ if } 2 \nmid n ,
 \end{cases}
\]
where $j_1 = C(l-1, j) - (n/2 - r - 1)$, 
$j_2 = C(l, j) - \floor{n/2} + 1$, 
and $j_3 = C(l, j) - (n/2 - r - 1)$
(cf.\ \cite[Proof of Proposition 4.5]{Matsumotoheight}).
Again, if $j_2 \leq 0$ and $2j_3-1 \leq L_n^r$ 
(or, equivalently, if $C(l,j) \leq L_n^r$), 
then $j_1 \leq 0$ and $f_j^{(l)}$ belongs 
to $H^2_{\idealm}(X, W_{l} \OO_X)[F^{\infty}]$.
Morever, using the injectivity of the function $C \colon \NN_{\geq 1} \times \NN_{\geq 1} \to \NN_{\geq 1}$,
we observe that no non-zero linear 
combination of $f_{j'}^{(l')}$ with $l' \leq l$ 
and $C(l',j') > L_n^r$ belongs to 
$H^2_{\idealm}(X, \OO_X)[F^{\infty}]$.
\end{proof}

\begin{Remark}
For useful formulas concerning calculations with
Witt vectors of short length, we refer the reader
to \cite[Section 2]{Matsumotoheight}.
Also, since the computations with Witt vectors 
can be described in terms of the so-called 
\emph{ghost polynomials}, we can reduce the 
computations to certain equalities in rings 
of characteristic zero.
For example, to show the formula for $F(f_1^{(4)})$ 
in $E_8^1$ ($p = 2$), it suffices to check 
$(z/xy)^{16} \equiv (1/xy)$ modulo the ideal generated 
by $(z/x^iy^j)^8$, $2(z/x^iy^j)^4$, $4(z/x^iy^j)^2$,
and $8(z/x^iy^j)$ with $i \leq 0$ or $j \leq 0$,  
in the ring $\tilde{B}[1/xy]$, where $\tilde{B} = \ZZ[x,y,z]/(z^2 + x^3 + y^5 + z x y^3) $.
\end{Remark}

\subsection{Local (loc,loc)-torsors}
\label{subsec loclocloc torsors}
By Proposition \ref{prop: torsfinab}, local torsors under finite abelian group 
schemes of $(\loc,\loc)$-type can be described via $\Picloc_{X/k}^{\loc,\loc}$.
As explained in Section \ref{subsec: abelian loc loc},
Dieudonn\'e theory and Corollary \ref{cor: Dieudonneofpicloc} imply that 
$\Picloc_{X/k}^{\loc,\loc}$ is determined by 
$\varinjlim_{m,n} H^2_{\idealm}(X,W_n\OO_X)[F^m]$, considered
as a left $\Dieu$-module.

In fact, we can be explicit about $\Picloc_{X/k}^{\loc,\loc}$
if $x\in X$ is a non-F-injective RDP.
For all but the $D_n^r$-singularities in $p=2$, 
Proposition \ref{prop: torsors loclocloc new} and the examples in 
Section \ref{subsec: loclocCartierDieudonne} immediately yield the answer and we
refer to Table \ref{table: torsors with Picloclocloc}.
Thus, let $x\in X$ be a $D_n^r$-singularity in $p=2$ and 
set
$$
\bG_n^r \,:=\,\Picloc_{X/k}^{\loc,\loc} \mbox{\quad and \quad}
L_n^r \,:=\, \floor{\min\{ n/2 - 1,n-2r-2\}} .
$$
By Proposition \ref{prop: torsors loclocloc new}, the 
$(\loc,\loc)$-group scheme $\bG_n^r$ is finite
of length $p^{L_n^r}$ with $p=2$.
We now establish some more information about it.

\begin{Proposition}
\label{prop: picloclocloc of DNr}
 Let $\bG_n^r$  be as before.
 \begin{enumerate}
 \item \label{item: GNr killed by p} $\bG_n^r$ is killed by $p$, that is, $\bG_n^r=\bG_n^r[p]$.
 \item \label{item: GNr BT1} If $n\leq 4r+3$, then $\Image(F)=\Ker(V)$, $\Image(V)=\Ker(F)$, 
 and $\bG_N^r$ is self-dual.
 
   If $n>4r+3$, then we have $\Image(F) \subsetneq \Ker(V)$, $\Image(V) \subsetneq \Ker(F)$, and $\bG_n^r$ is not self-dual in general.
 \item \label{item: GNr decomposition} There exist isomorphisms
 \[
   \bG_n^r[F]\,\cong\, \bigoplus_i \balpha_{p^{a_i}}^D \mbox{ \quad and \quad } 
   \bG_n^r/\bG_n^r[F] \,\cong\, \bigoplus_i \balpha_{p^{b_i}}
 \]
 where $1 \leq i \leq \floor{(n-2r)/4}$ and where the
 the $a_i$ and $b_i$ are determined by $n$ and $r$ (we give precise formulas in the proof).
 In particular, Frobenius gives rise to a short exact sequence
 \[
   0\,\to\,  \bigoplus_i  \balpha_{p^{a_i}}^D \,\to\, \bG_n^r\,\to\, \bigoplus_i \balpha_{p^{b_i}} \,\to\,0.
 \]

 \end{enumerate}
\end{Proposition}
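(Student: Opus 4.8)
The plan is to deduce everything from the explicit $\Dieu$-module presentation of $\Dieulocloc(\bG_n^r) = \varinjlim_{m,l} H^2_{\idealm}(X,W_l\OO_X)[F^m]$ obtained in Proposition \ref{prop: torsors loclocloc new}, together with the equivalence of categories from Theorem \ref{thm: Dieudonnemain}. Recall from that proposition that a $W(k)$-basis of $\Dieulocloc(\bG_n^r)$ is given by the Teichm\"uller lifts $f_j^{(l)}$ with $C(l,j) := 2^{l-1}(2j-1) \leq L_n^r$, that $V$ sends $f_j^{(l)}$ to $f_j^{(l-1)}$ (so $V(f_j^{(1)}) = 0$), and that $F(f_j^{(l)}) = f_{C(l,j)-((n/2)-r-1)}^{(1)}$, where a basis element with non-positive lower index is interpreted as $0$. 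All assertions of the present Proposition are statements about this finite-length $\Dieu$-module, which I will translate back to group schemes via Dieudonn\'e theory (remembering that $\Dieulocloc(F)$ is left multiplication by $V$ and $\Dieulocloc(V)$ is left multiplication by $F$, and Cartier duality corresponds to the module-theoretic duality interchanging $F$ and $V$).

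\textbf{Part (\ref{item: GNr killed by p}).} Multiplication by $p$ on a $\Dieu$-module is $FV = VF$. On the generator $f_j^{(l)}$ we compute $VF(f_j^{(l)}) = V(f_{C(l,j)-((n/2)-r-1)}^{(1)}) = 0$ since $V$ kills all level-$1$ classes. As the $f_j^{(l)}$ generate the module over $\Dieu$, and $p$ is central, $p$ annihilates $\Dieulocloc(\bG_n^r)$; hence $\bG_n^r = \bG_n^r[p]$.

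\textbf{Part (\ref{item: GNr decomposition}), and then (\ref{item: GNr BT1}).} I would first compute $\bG_n^r[F]$ and $\bG_n^r/\bG_n^r[F]$. On Dieudonn\'e modules, $\bG_n^r[F]$ corresponds to the cokernel of left multiplication by $V$ on $\Dieulocloc(\bG_n^r)$ — equivalently the image of $F$ on the group scheme is $V\cdot\Dieulocloc(\bG_n^r)$ — so I should identify the submodule $V\cdot\Dieulocloc(\bG_n^r)$ and the quotient. Since $V$ acts by $f_j^{(l)}\mapsto f_j^{(l-1)}$, for each odd ``root index'' $j$ the chain $f_j^{(1)}, f_j^{(2)},\dots$ truncated by the condition $C(l,j)\le L_n^r$ is a $V$-string; the quotient $\Dieulocloc(\bG_n^r)/V\Dieulocloc(\bG_n^r)$ is spanned by the top elements $f_j^{(l_j)}$ of these strings, and this gives a direct sum decomposition indexed by $1\le i\le\floor{(n-2r)/4}$ (the number of admissible odd roots), with the exponents $b_i$ read off from the string lengths $l_j$ via $\Dieulocloc(\balpha_{p^{b}}) = \Dieu/(F,V^b)$ — one has to check the $F$-action on each top element lands inside $V\Dieulocloc(\bG_n^r)$, which follows because $F(f_j^{(l)})$ is a level-$1$ class, hence a $V$-descendant. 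Dually, $\bG_n^r/\bG_n^r[F]$ corresponds to $\Ker(F$ on the module$)$... more precisely to the submodule on which $F$ acts, and a symmetric bookkeeping using the formula for $F$ and Cartier self-duality of the indexing combinatorics gives $\bigoplus_i\balpha_{p^{a_i}}$; explicit formulas for $a_i, b_i$ in terms of the arithmetic of $n,r$ (e.g. $a_i$ and $b_i$ determined by solving $C(l,j)=L_n^r$ along each string and by where the $F$-map $j\mapsto C(l,j)-((n/2)-r-1)$ lands) I would record but not belabor. For (\ref{item: GNr BT1}), once the $F$- and $V$-actions are pinned down combinatorially, the equalities $\Image(F)=\Ker(V)$ etc.\ are the statement that every relation among basis elements is forced (no ``extra'' kernel), which by the formula for $F$ reduces to the arithmetic inequality: the string-top indices $C(l_j,j)$ all satisfy $C(l_j,j)-((n/2)-r-1)\le 0$ precisely when $n\le 4r+3$; and self-duality of $\bG_n^r$ follows from observing that the $\Dieu$-module is isomorphic to its $F\leftrightarrow V$ dual under $n\le 4r+3$, while for $n>4r+3$ exhibiting one explicit $(n,r)$ where the multiset $\{a_i\}\ne\{b_i\}$ breaks self-duality.

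\textbf{Main obstacle.} The genuinely delicate part is the bookkeeping in (\ref{item: GNr decomposition})–(\ref{item: GNr BT1}): correctly enumerating the admissible pairs $(l,j)$, tracking how the $V$-strings interlock, and verifying that the $F$-map respects the claimed short exact sequence $0\to\bigoplus\balpha_{p^{a_i}}^D\to\bG_n^r\to\bigoplus\balpha_{p^{b_i}}\to 0$ — i.e.\ that $F$ has no unexpected kernel or image beyond what the string structure predicts, and pinning down the precise formulas for $a_i,b_i$ and the threshold $n=4r+3$. Everything else is formal consequence of Dieudonn\'e theory applied to the presentation already in hand.
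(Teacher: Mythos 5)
Your overall strategy --- read everything off the explicit $\Dieu$-module presentation from Proposition \ref{prop: torsors loclocloc new} --- is exactly the right one, and your proof of part (\ref{item: GNr killed by p}) is complete: $p=VF$ kills each generator because $F(f_j^{(l)})$ is a level-one class, which $V$ annihilates. The problems are in parts (\ref{item: GNr BT1}) and (\ref{item: GNr decomposition}), where your write-up contains both a translation error and a deferral of the actual content. On the translation: since the covariant equivalence $\Dieulocloc$ is exact and $\Dieulocloc(F)$ is left multiplication by $V$, the subgroup scheme $\bG_n^r[F]$ corresponds to the \emph{kernel} $M[V]$ of left multiplication by $V$ on $M:=\Dieulocloc(\bG_n^r)$ --- the span of the level-one classes $f_j^{(1)}$, on which $F$ acts and which therefore breaks into $F$-strings of lengths $a_i$ --- not to the cokernel $M/VM$; and $\bG_n^r/\bG_n^r[F]\cong\Image(F)$ corresponds to the submodule $VM$ (the non-top elements of the $V$-strings), not to $M/VM$ and not to $\Ker(F\cdot)$. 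Your paragraph conflates these: the "direct sum indexed by the string tops" you describe computes $\mathrm{coker}(F)$ rather than either of the two group schemes in the statement, so the exponents $b_i$ get attached to the wrong object.

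On the substance: the number of summands $\floor{(n-2r)/4}$, the formulas for $a_i,b_i$, and above all the threshold $n\le 4r+3$ are precisely the "bookkeeping" you defer, and the one concrete criterion you do propose for (\ref{item: GNr BT1}) --- that all string-top indices satisfy $C(l_j,j)-((n/2)-r-1)\le 0$ --- is not equivalent to $\Image(F)=\Ker(V)$ and $\Image(V)=\Ker(F)$: the first equality asks that every $V$-killed (i.e.\ level-one) class lie in the image of $F$, the second that every $F$-killed class be a $V$-image, and neither is a statement about string tops. The device that makes all of this tractable, and which you should adopt, is to transport the index set along $C(l,j)=2^{l-1}(2j-1)$ to $S=\{x\in\ZZ_{>0} : x<\min\{n/2,\,n-2r-1\}\}$; then $V$ becomes $x\mapsto x/2$ (zero on odd $x$) and $F$ becomes $x\mapsto 2x-(n-2r-1)$ (zero when nonpositive), and everything reduces to elementary arithmetic on $S$: the inequality $n\le 4r+3$ is exactly the condition that $S=\{x : x<n-2r-1\}$, in which case "odd" coincides with "in $\Image(F)$" and "$x<(n-2r-1)/2$" coincides with "$x\in\Image(V)$", while the $a_i$ and $b_i$ are the $F$- and $V$-orbit lengths. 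Without this (or an equivalent explicit computation), parts (\ref{item: GNr BT1}) and (\ref{item: GNr decomposition}) are not proved.
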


\begin{proof}
By Proposition \ref{prop: torsors loclocloc new},
$\Dieulocloc(\bG_n^r) = \varinjlim_{m,l} H^2_{\idealm}(X,W_{l}\OO_X)[F^m]$
is generated as a $\Dieu$-module by 
\[
 f_{j}^{(l)} \mbox{\quad with \quad }2^{l-1}(2j-1) \leq L_n^r.
\]
and the generators satisfy $p f_j^{(l)} = VF f_j^{(l)}= V f_{j'}^{(1)} = 0$ for some $j'$, which implies $\bG_n^r[p]=\bG_n^r$.
This shows (\ref{item: GNr killed by p}) and (\ref{item: GNr decomposition}) 
up to determining the parameters.

The set of the indices $(l,j)$ appearing in the basis corresponds bijectively to the set 
$S = \{ x \in \ZZ_{>0} \mid x < \min \{ n/2, n-2r-1 \} \} $
under the map $C$. 
Moreover, the action of $F$ and $V$ on the elements $f_j^{(l)}$ are described by the functions $F,V \colon S \to S \cup \{0\}$, 
\[
 F(x) = \begin{cases}
    2x - (n-2r-1) & (2x > n-2r-1) \\
    0             & (2x < n-2r-1),
    \end{cases}
    \qquad
 V(x) = \begin{cases}
    x/2 & (2 \mid x), \\
    0 & (2 \nmid x),
 \end{cases}
\]
where $0 \in S$ corresponds to $0 \in \Dieulocloc(\bG_n^r)$.
Hence the inequalities in (\ref{item: GNr BT1}) are reduced to the corresponding assertions on the maps $F,V$ on $S$.

First, suppose that $n \leq 4r+3$. 
Then, $S = \{ x \in \ZZ_{>0} \mid x < n-2r-1 \}$, (note that $n - 2r - 1$ is always odd,) and in this case we have, for $x \in S$, 
\begin{itemize}
    \item $F(x) = 0$ if and only if $x < (n-2r-1)/2$ if and only if $x = V(y)$ for some $y \in S$,
    \item $V(x) = 0$ if and only if $x$ is odd if and only if $x = F(y)$ for some $y \in S$,
\end{itemize}
thus the desired equalities hold.

Second, suppose that $n > 4r+3$. 
Then, the corresponding set is $S' = \{ x \in \ZZ_{>0} \mid x < n/2 - 1\} \subsetneq S$, 
and by looking at the maps $F,V \colon S' \to S' \cup \{0\}$ (defined by the same formulas),
we conclude the desired inequalities.
This shows (\ref{item: GNr BT1}).

In both cases, we can determine the parameters $a_i$ and $b_i$ in (\ref{item: GNr decomposition}) 
by counting the sizes of the $F$-orbits and the $V$-orbits.
Their precise values are as follows
\begin{eqnarray*}
a_i &=& \ceil{\log_2 \frac{n-2r-1}{2i-1}} - \ceil{\log_2 \max\{2, \frac{n-2r-1-(n/2)}{2i-1} \}} ,\\
b_i &=& \ceil{\log_2 \frac{\min\{n/2, n-2r-1\}}{2i-1}} - 1.
\end{eqnarray*}
Details are omitted (when considering $F$-orbits, $(n-2r-1)-x$ is a useful coordinate).
\end{proof}

\begin{Remark}
Finite abelian group schemes $G$ in characteristic $p>0$ that satisfy
$G[p]=G$, ${\rm Im}(F)=\Ker(V)$, and ${\rm Im}(V)=\Ker(F)$ are called 
\emph{truncated Barsotti--Tate groups of level 1} or $\mathrm{BT}_1$ 
for short.
They play an important r\^{o}le in the theory of $p$-divisible 
groups and have been classified by Oort \cite{OortSimple}.
The proposition  says that $\bG_n^r$ is $\mathrm{BT}_1$ 
if and only if $n\leq 4r+3$.

In the BT$_1$ case, the set $S$ constructed in the proof is decomposed into sequences 
$x_i = (x_{i,j})_{1 \leq j \leq e_i}$, where the index $j$ is considered cyclically, 
such that $2 x_{i,j} \equiv x_{i,j+1} \pmod{n-2r-1}$. 
This amounts to the decomposition of $\bG_n^r$ into indecomposable subgroup schemes.
For each $i$, there is a sequence in $\{F,V\}^{e_i}$ whose $j$-th letter is $F$ (resp. $V$) 
if and only if $x_{i,j+1} = F(x_{i,j})$ (resp.\ $V(x_{i,j+1}) = x_{i,j}$). 
This is the word corresponding to the indecomposable group scheme defined by Oort in 
\cite{OortSimple}.
\end{Remark}

\subsection{Finiteness}
By a theorem of Lipman \cite{Lipman} (Theorem \ref{thm: Lipman}),
rational surface singularities are characterised by having finite
class groups.

Next, local fundamental groups of F-regular singularities
are finite by \cite{CST}.
In particular, the local fundamental groups 
of F-regular RDPs are finite.
Moreover, by Artin's explicit computations \cite{ArtinRDP}, 
the local fundamental groups of all RDPs are finite.
More recently, and more generally, Carvajal-Rojas--Yasuda 
\cite[Theorem B]{Carvajal-RojasYasuda} proved that $2$-dimensional 
klt singularities have finite local fundamental group.

F-injective singularities have trivial $\Picloc^{\loc,\loc}_{X/k}$,
which applies in particular to F-injective RDPs.
As an application of our computations, we now establish the
following finiteness results.
In contrast to the local fundamental groups,
we are not aware of
an apriori reason for this finiteness.

\begin{Theorem}\label{thm: picloclocloc finite}
 Let $x\in X$ be an RDP in characteristic $p>0$.
 Then, the length of $\varinjlim_{m,n} H^2_{\idealm}(X,W_n\OO_X)[F^m]$
 as a $W(k$)-module is finite.
 In particular, $\Picloc_{X/k}^{\loc,\loc}$ 
 is a finite group scheme.
\end{Theorem}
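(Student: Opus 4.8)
The plan is to reduce everything to the explicit computation already carried out in Proposition \ref{prop: torsors loclocloc new}. By Corollary \ref{cor: Dieudonneofpicloc}, for a normal surface singularity $x\in X$ there is an isomorphism of $W(k)$-modules $\Dieulocloc(\Picloclocloc_{X/k}) \cong \varinjlim_{m,n} H^2_{\idealm}(X,W_n\OO_X)[F^m]$, and since an RDP is a normal surface singularity, Lemma \ref{lem: finitelength} applies: $F$, $V$, hence $p$, act nilpotently on this module, so ``finite length as a $W(k)$-module'' and ``finite as a $W(k)$-module'' coincide here. Thus, once we show the length is finite, Corollary \ref{cor: finiteifdieufinite} (applied to the fppf sheaf $G=\Picloc_{X/k}$, whose identity section is representable by a closed immersion by \cite[Th\'eor\`eme 6.6]{Boutot}) immediately gives that $\Picloclocloc_{X/k}$ is a finite group scheme. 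So it remains to bound the length of $\varinjlim_{m,n} H^2_{\idealm}(X,W_n\OO_X)[F^m]$ for every RDP.

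I would split into the F-injective and the non-F-injective RDPs. If $x\in X$ is F-injective, then by Corollary \ref{cor: picloclocloc trivial} (equivalently, Lemma \ref{lem: Finjective}, noting that an RDP is a Cohen--Macaulay surface singularity) we have $H^2_{\idealm}(X,W_n\OO_X)[F^m]=0$ for all $m,n$, so the length is $0$. Consulting Table \ref{table: RDPEquations}, every RDP in characteristic $p\geq 7$ is F-regular, hence F-injective, and in characteristics $2,3,5$ the RDPs that fail to be F-injective form exactly the list recorded there (a finite list, apart from the series $D_n^r$ in characteristic $2$). This reduces the theorem to the non-F-injective RDPs.

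For each of those, Proposition \ref{prop: torsors loclocloc new} together with Table \ref{table: wittvectorcohomology} exhibits a \emph{finite} set of generators of $\varinjlim_{m,n} H^2_{\idealm}(X,W_n\OO_X)[F^m]$ as a left $\Dieu$-module, together with relations that pin down the $W(k)$-length to the integer displayed in the last column; in every case this integer is finite. In particular, for a $D_n^r$-singularity with $p=2$ the length equals $L_n^r=\floor{\min\{n/2-1,\,n-2r-2\}}$, which is a finite integer for each fixed pair $(n,r)$; the theorem asserts finiteness for one singularity at a time, so the fact that there are infinitely many such singularities is irrelevant. Combining the two cases and invoking Corollary \ref{cor: finiteifdieufinite} once more completes the proof.

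The genuine content is hidden in Proposition \ref{prop: torsors loclocloc new}, so that is where the real obstacle lies: there seems to be no uniform, characteristic-free argument --- indeed Question \ref{qu: length of picloclocloc} shows that it is not even known whether the expected structural criterion (ordinarity of $(\Picloc_{X/k})^\circ_{\reduced}$) forces finiteness of $\Picloclocloc_{X/k}$ in dimension two --- so one is forced into explicit \v{C}ech computations of local Witt vector cohomology. Within those, the delicate point is bounding the length \emph{from above}: one must show that the Teichm\"uller lifts $f_j^{(l)}$ cease to lift to longer Witt vectors once their weight $2^{l-1}(2j-1)$ exceeds the stated threshold, and this non-liftability is precisely what Lemma \ref{lem: extension} is engineered to detect.
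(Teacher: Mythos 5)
Your proposal is correct and follows essentially the same route as the paper's proof: split into the F-injective case (where the module vanishes by Lemma \ref{lem: Finjective}/Corollary \ref{cor: picloclocloc trivial}) and the non-F-injective case (where finiteness is read off from the explicit computation in Proposition \ref{prop: torsors loclocloc new} and Table \ref{table: wittvectorcohomology}), then deduce finiteness of $\Picloclocloc_{X/k}$ via Corollary \ref{cor: finiteifdieufinite}. You have merely spelled out more of the intermediate references than the paper does, and your closing remark correctly locates the real work in Proposition \ref{prop: torsors loclocloc new} and Lemma \ref{lem: extension}.
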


\begin{proof}
If $x\in X$ is F-injective (this holds, for example, if $p>5$), then 
$\varinjlim_{m,n} H^2_{\idealm}(X,W_n\OO_X)[F^m]$ is even trivial.
If $x \in X$ is not F-injective, then this follows from 
Proposition \ref{prop: torsors loclocloc new}.
\end{proof}

\begin{Remark}
Let $x\in X$ be an RDP in characteristic $p>0$.
\begin{enumerate}
 \item It would be desirable to find common properties of the group schemes $\Picloc_{X/k}^{\loc,\loc}$ apart from their finiteness, but such properties seem to be hard to find: $\Picloc_{X/k}^{\loc,\loc}$ is killed by $p$ except 
 precisely if it is of type $E_8^3$ in $p = 2$.
 If it is killed by $p$, then there are
 examples where it is a  $\mathrm{BT}_1$ and there are examples where it is not.
 Moreover, there are examples where $\Picloclocloc_{X/k}$ is autodual and
 there are examples where it is not.
 \item If $p=2$, then the length of $\Picloc_{X/k}^{\loc,\loc}$ can be arbitrarily
 large as Proposition \ref{prop: picloclocloc of DNr} shows.
 Moreover:
\begin{enumerate}
 \item  If $x\in X$ is of type $D_{2^e + 2}^{1}$, then the element 
    $f_{1}^{(e)}$ (notation as in Proposition \ref{prop: torsors loclocloc new})
    is annihiliated by $F$ and $V^e$, but not by $V^{e-1}$.
 \item  We will see in Proposition \ref{prop: alpha2e} below that there exist
  RDPs that are $\balpha_{2^e}$-quotients, which induces an element annihilated by 
  $V$ and $F^e$, but not by $F^{e-1}$.
\end{enumerate}
\end{enumerate}
\end{Remark}

\subsection{A curious coincidence}\label{subsec: curious}
Recall that an RDP is called \emph{taut} if its formal isomorphism class is uniquely 
determined by its resolution graph $\Gamma$. From the explicit classification, 
it follows that an RDP is taut if and only if it is F-regular, 
but see also \cite{Tanaka}.
If $x\in X$ is an RDP of type $\Gamma$ that is not taut, let
$r$ be its Artin co-index as defined in \cite{ArtinRDP} with the modification described 
in Convention \ref{convention}.
If $x\in X$ is taut, we set $r:=0$. 
We note that the co-index is lower semicontinuous in families of RDPs of the same type. 

Let $r_{\max}(\Gamma)+1$ (resp.\ $r_{\max}(\Gamma) + 1/2$) be the number of analytic isomorphism 
classes of RDPs with Dynkin diagram $\Gamma$ in some fixed characteristic $p\geq0$ if 
$(p, \Gamma)$ is not (resp.\ is) $(2, D_{n})$ with $n$ odd. 
Equivalently, $r_{\max}(\Gamma)$ is the difference between the maximal and the minimal 
possible value of $r$.
Thus, $r_{\max}(\Gamma)=0$ if and only if RDPs of type $\Gamma$ are taut.

Next, let $\pi:Y\to X$ be the minimal resolution of singularities of an RDP
$x\in X$, let $E$ be the
exceptional divisor of $\pi$, and let $S:=\Theta_Y(-\log E)$ be the sheaf of
logarithmic derivations.
In characteristic zero, Wahl \cite{Wahl} showed that $H^1_E(Y,S)$ is zero,
and in characteristic $p>0$, Hirokado \cite{HirokadoWahl} computed
the dimensions of $H^1_E(Y,S)$ and $H^1_E(Y,S(E))$.

By \cite[Proposition 2.3]{HirokadoWahl}, we have $H^1_E(Y,S(E)) \cong H^1(Y,S)$ and the 
latter space is the tangent space to the functor of \emph{equisingular} deformations of $Y$, 
which are precisely those deformations of $Y$ where no component of $E$ disappears
(see \cite{Wahlequisingular}).

\begin{Proposition}
 Let $x\in X$ be an RDP of type $\Gamma$ over an algebraically closed
 field of characteristic $p>0$.
 Let $r$ be the Artin co-index of $x$ and let $r_{\max}=r_{\max}(\Gamma)$ as above.
 Then,
 \begin{enumerate}
     \item $\length\, \Dieulocloc(\Picloc_{X/k})$, 
     as a function of $r$, is decreasing (not always strictly decreasing) and attains the maximum value $\floor{r_{\max}}$ at $\floor{r} = 0$.
     \item 
 Let $\pi:Y\to X$ be the minimal resolution of singularities and
 let $E$ be the exceptional divisor of $\pi$. 
 Then,
 \begin{eqnarray*}
  \length\, (\Dieulocloc(\Picloc_{X/k}))[F] &=& 
  \dim H^1_E(Y,\Theta_Y(-\log E)(E))\\
   &=&r_{\max} - r\,.
 \end{eqnarray*}
 \end{enumerate}
\end{Proposition}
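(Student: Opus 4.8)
The plan is to deduce both statements from the explicit computations already in hand, by matching three lists case by case. By Corollary~\ref{cor: Dieudonneofpicloc} we have $M := \Dieulocloc(\Picloc_{X/k}) \cong \varinjlim_{m,n} H^2_{\idealm}(X,W_n\OO_X)[F^m]$ as $\Dieu$-modules, so $\length M$ and the presentation of $M$ can be read off from Table~\ref{table: wittvectorcohomology}; the co-index $r$ and the number $r_{\max}(\Gamma)$ are read off from Artin's classification in Table~\ref{table: RDPEquations}; and $\dim H^1_E(Y,\Theta_Y(-\log E))$ is read off from Hirokado's computation in \cite{Hirokado Wahl}. First I would dispose of the F-injective RDPs: there $M = 0$ by Lemma~\ref{lem: Finjective}, while a glance at Table~\ref{table: RDPEquations} shows that in every Dynkin type and every characteristic $p>0$ the F-injective RDP is the one of maximal Artin co-index, i.e.\ $r = r_{\max}$ holds for it (this includes the taut types, where $r = r_{\max} = 0$); since Hirokado's values for $H^1_E(Y,\Theta_Y(-\log E)(E))$ vanish there as well, both (1) and (2) hold trivially, and one is reduced to the non-F-injective RDPs, whose modules $M$ are listed in Proposition~\ref{prop: torsors loclocloc new}.

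For statement (1): the non-F-injective RDPs not of type $D_n^r$ in characteristic $2$ form, in each characteristic, a finite list, so I would simply read off $\length M$ from the last column of Table~\ref{table: wittvectorcohomology}, read off the finitely many co-indices $r$ of each Dynkin type from Table~\ref{table: RDPEquations}, and check in each case that $\length M$ is non-increasing along the list of co-indices of that type (non-increasing but, as the tables show, not always strictly decreasing) and equals $\floor{r_{\max}}$ for the minimal co-index, i.e.\ when $\floor{r} = 0$. For the family $D_n^r$ in characteristic $2$ I would use that $\length M = L_n^r = \floor{\min\{n/2-1,\,n-2r-2\}}$ by Proposition~\ref{prop: torsors loclocloc new} and that $r_{\max} = n/2 - 1$ for type $D_n$ in characteristic $2$ (uniformly for $n$ even and odd, from the convention on co-indices); then $n - 2r - 2 = 2(r_{\max} - r)$, hence $L_n^r = \min\{\floor{r_{\max}},\,2(r_{\max}-r)\}$, which is manifestly non-increasing in $r$, equal to $\floor{r_{\max}}$ for all $r \le r_{\max}/2$ — in particular once $\floor{r} = 0$ — and equal to $0$ exactly for $r = r_{\max}$.

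For the first equality in statement (2), I would read $(\Dieulocloc(\Picloc_{X/k}))[F]$ as the kernel of the operator $F$ on $M$. For the $E_n^r$-cases, again a finite list, $M$ is cyclic with generator $f_1^{(l)}$, except for $E_8^0$ and $E_8^1$ in characteristic $2$ where it has two generators; in each case I would extract $M[F]$ directly from the relations of Table~\ref{table: wittvectorcohomology} (using $FV = VF = p$, and that $p$ annihilates $M$ in all of these cases except $E_8^3$ in characteristic $2$) and check that its $W(k)$-length equals $r_{\max} - r$. For $D_n^r$ in characteristic $2$, I would use from Proposition~\ref{prop: torsors loclocloc new} that the $f_j^{(l)}$ with $C(l,j) \le L_n^r$ form a $W(k)$-basis of $M$, that $C$ restricts to a bijection of this index set onto $\{1,\dots,L_n^r\}$, and that $F(f_j^{(l)}) = f^{(1)}_{C(l,j)-(n/2-r-1)}$ with $n/2-r-1 = r_{\max}-r$ (a non-positive lower index meaning $0$); since $F$ carries basis vectors to basis vectors or to $0$ and the index map $x \mapsto 2x - (n-2r-1)$ is injective, $M[F]$ is spanned by the basis vectors with $C(l,j) \le r_{\max}-r$, so $\length M[F] = \min\{L_n^r,\,r_{\max}-r\} = r_{\max}-r$. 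For the second equality I would invoke \cite{Hirokado Wahl}: there $H^1_E(Y,\Theta_Y(-\log E)(E)) \cong H^1(Y,\Theta_Y(-\log E))$ (loc.\ cit., Proposition~2.3), whose dimension is tabulated for all RDPs, and a term-by-term comparison with the co-indices in Table~\ref{table: RDPEquations} gives $\dim H^1_E(Y,\Theta_Y(-\log E)(E)) = r_{\max} - r$, which together with the first equality finishes the proof.

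The main obstacle is the family $D_n^r$ in characteristic $2$: in contrast to the $E_n^r$-cases, which reduce to finite bookkeeping, here one must keep track simultaneously of the combinatorics of $L_n^r$, of the bijection $C(l,j) = 2^{l-1}(2j-1)$, and of the $F$-action on the basis, and then match this against Hirokado's own (non-trivial) formula for $\dim H^1(Y,\Theta_Y(-\log E))$ for the resolutions of the $D_n^r$-singularities. A secondary point requiring care is the handling of the two non-cyclic modules for $E_8^0$ and $E_8^1$ in characteristic $2$, together with the case $E_8^3$ in characteristic $2$, which is the only non-F-injective RDP whose module $M$ is not killed by $p$.
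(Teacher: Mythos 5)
Your proposal is correct and follows essentially the same route as the paper: the paper's proof simply reads the lengths off Proposition~\ref{prop: torsors loclocloc new} and Table~\ref{table: wittvectorcohomology} and quotes Hirokado's \cite[Theorem 1.1(ii)]{Hirokado Wahl} for the equality $\dim H^1_E(Y,\Theta_Y(-\log E)(E)) = r_{\max}-r$, which is exactly the case-by-case bookkeeping you spell out (your explicit kernel computations for the $E_n^r$-modules and the $C(l,j)$-combinatorics for $D_n^r$ are just the expanded version of that citation). Your resolution of the $D_n$-family via $L_n^r=\min\{\floor{r_{\max}},2(r_{\max}-r)\}$ with $r_{\max}=n/2-1$ is the correct reading of the convention and matches what the proposition asserts.
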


\begin{proof}
The assertions about the lengths follow immediately from Proposition \ref{prop: torsors loclocloc new} and
Table \ref{table: wittvectorcohomology}.
The equality $\dim H^1_E(Y,\Theta_Y(-\log E)(E))=r_{\max} - r$ is shown in 
\cite[Theorem 1.1(ii)]{HirokadoWahl}.
\end{proof} 

\begin{Remark}
This coincidence of numbers deserves further study.
It might point to connections between Frobenius actions on local Witt vector 
cohomology, deformations of surface singularities, and their tautness.
It might generalise Tanaka's  tautness (resp. partial tautness)
results \cite{Tanaka} for F-regular (resp. F-split) surface singularities.
\end{Remark}

\section{Detecting smoothness via local torsors}
\label{sec: Mumford}

In this section, we characterise smooth points on surfaces
in terms of local torsors.
The starting point is the following classical result.

\begin{Theorem}[Mumford \cite{Mumford}]
 Let $x\in X$ be a normal complex analytic surface germ.
 Then, $x\in X$ is smooth if and only if $\pitoploc(X)$ is trivial.
\end{Theorem}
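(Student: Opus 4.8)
The plan is to prove the two implications separately, the forward one being essentially formal and the reverse one carrying all the weight. If $x\in X$ is smooth, then analytically $(X,x)\cong(\CC^2,0)$, a punctured neighbourhood of $x$ is homotopy equivalent to $S^3$, and hence $\pitoploc(X)$ is trivial. For the converse I would argue by contraposition: assuming $x\in X$ is a genuine singular point, I would exhibit a nontrivial element of $\pitoploc(X)$.

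The setup is the minimal (good) resolution $\pi\colon\widetilde X\to X$, with reduced exceptional divisor $E=\bigcup_i E_i$, which is nonempty since $x$ is singular, whose components $E_i$ are smooth of genus $g_i$ and meet transversally according to a dual graph $\Gamma$. Here one uses two classical inputs: the intersection matrix $(E_i\cdot E_j)$ is negative definite (Mumford, Grauert), and minimality forces $E_i^2\le -2$. A regular neighbourhood $T\supseteq E$ in $\widetilde X$ is a plumbing of disc bundles, $\pi$ is an isomorphism over $U=X\setminus\{x\}$, and $T\setminus E$ deformation retracts onto its boundary $L=\partial T$, the link of the singularity. Thus $\pitoploc(X)\cong\pi_1(L)$, and $\pi_1(L)$ has an explicit \emph{plumbing presentation} read off from $\Gamma$: a generator for each vertex and for each handle of each $E_i$, with relations encoding adjacency, the genera, and the Euler numbers $E_i^2$.

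The first reduction is homological. From the plumbing presentation, the free rank of $H_1(L;\ZZ)$ equals $2\sum_i g_i + b_1(\Gamma)$ and its torsion subgroup is $\mathrm{coker}$ of the negative-definite matrix $(E_i\cdot E_j)$. Since $\pi_1(L)=1$ forces $H_1(L;\ZZ)=0$, we conclude $\sum_i g_i=0$, $b_1(\Gamma)=0$, and $\det(E_i\cdot E_j)=\pm1$: so $\Gamma$ is a tree of rational $(\le-2)$-curves and $L$ is an integral homology sphere. It remains to derive a contradiction from the mere nonemptiness of $\Gamma$.

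This last step is the hard part, and the main obstacle: a nonempty negative-definite plumbing tree of rational $(\le-2)$-curves must have $\pi_1(L)\neq 1$, even though its abelianisation can vanish — the standard warning example is the $E_8$ graph, whose link is the Poincar\'e homology sphere with fundamental group the binary icosahedral group. The homological invariant is therefore too coarse, and one must use the noncommutative structure. Following Mumford, I would isolate in the plumbing presentation the class $h\in\pi_1(L)$ of a generic $S^1$-fibre of one of the disc bundles; fibres over adjacent vertices are conjugate, so $h$ generates a cyclic subgroup which is central in the relevant Seifert-fibred pieces, and $\pi_1(L)/\langle h\rangle$ is identified with an orbifold fundamental group of the base. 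The negative-definiteness of $(E_i\cdot E_j)$ is exactly what makes the associated orbifold Euler number nonzero; this forces either $h$ to have infinite order (base orbifold Euclidean or hyperbolic) or the orbifold group to be a nontrivial finite group (the spherical case, giving the binary polyhedral groups), and in both cases $\pi_1(L)\neq 1$, contradicting the hypothesis. Making this rigorous amounts to Mumford's delicate bookkeeping with the weighted tree $\Gamma$ — controlling the fundamental cycle and the Euler number by the inequalities coming from negative-definiteness — but at the level of strategy the essential point is the passage from $H_1$ to the central fibre class.
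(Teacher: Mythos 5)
The paper does not prove this statement: it is quoted verbatim as a classical result with a citation to Mumford's 1961 paper, and serves only as motivation for the characteristic-$p$ analogues (Theorem \ref{thm: Mumford}, Theorem \ref{thm: Mumfordstheorem}) that the paper actually proves. So there is no in-paper proof to compare against, and your proposal has to be judged as a reconstruction of Mumford's original argument.

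As such a reconstruction it is essentially correct in outline. The forward direction is as you say. For the converse, your pipeline --- minimal good resolution, negative definiteness of $(E_i\cdot E_j)$, identification of $\pitoploc(X)$ with $\pi_1$ of the plumbed link $L$, the homological reduction showing $H_1(L;\ZZ)=0$ forces $\Gamma$ to be a tree of rational curves with unimodular intersection matrix, and the observation that the $E_8$ graph shows $H_1$ alone cannot finish the job --- is exactly Mumford's. Two caveats. First, a small ordering slip: minimality only gives $E_i^2\le -2$ for \emph{rational} components (Castelnuovo), so that inequality should be invoked after, not before, the homological step that kills the genera; as written the argument still goes through, since you only use $b_i\ge 2$ once rationality is known. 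Second, and more substantively, your final step is genuinely only a sketch, and the Seifert-fibred/orbifold-Euler-number argument you describe applies directly to \emph{star-shaped} trees (where $L$ is Seifert fibred over an orbifold and negative definiteness translates into nonvanishing of the orbifold Euler number). A general unimodular negative-definite tree of rational $(\le -2)$-curves yields a graph manifold that need not be globally Seifert fibred, so one either runs Mumford's original inductive bookkeeping on the weighted tree (showing the plumbing group surjects onto a nontrivial group built from a sub-branch) or appeals to the later Neumann-style decomposition of graph manifolds along the JSJ tori. You have correctly located where all the difficulty sits, but as written that step is an appeal to the literature rather than a proof.
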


Here, the local topological fundamental group of $x\in X$
is defined by choosing a sufficiently small $\varepsilon$-neighbourhood
$B_\varepsilon$ of $x\in X$, setting 
$U:=B_\varepsilon\setminus\{x\}$, choosing a point $u\in U$, 
and defining $\pitoploc(X):=\pitop(U,u)$.

We will generalise this result to arbitrary perfect fields. 
First, we need to define what we mean by a singularity over such a field, which generalises Definition \ref{def: singularity}.

\begin{Definition} \label{def: singularitynonclosed}
 Let $k$ be a perfect field.
 A \emph{normal ($d$-dimensional) singularity} over $k$ is a pair $x\in X$, where $X=\Spec R$, where $(R,\idealm)$ is an integral, normal, local, complete, and 
 Noetherian $k$-algebra of dimension $d \geq 2$ with residue field $k$,
and where $x$ is the closed point corresponding to $\idealm$. 
In this context, we will set $U:=X\backslash\{x\}$.
\end{Definition}

In particular, note that we are assuming that $x \in X$ is a $k$-rational point. 
This guarantees that $X$ is even geometrically integral. 
Since $k$ is perfect, $X$ is geometrically normal, so that $X_{\bar{k}}$ is 
$S_2$, hence $x \in X_{\bar{k}}$ is a singularity in the sense of 
Definition \ref{def: singularity}. 
Finally, again because $k$ is perfect, $X$ is regular if and only if it 
is geometrically regular if and only if it is smooth over $k$.

\subsection{Invariants over non-closed fields}
Let $k$ be a perfect field and let $x \in X$ be a normal singularity as in Definition \ref{def: singularitynonclosed}. When defining the objects parametrising local torsors in Section \ref{sec: local torsors}, we worked over an algebraically closed field, so let us take a moment to see how these objects behave over non-closed fields.

Define 
$$
 \pietloc(X) \,:=\, \Ker\left( \piet(U,\overline{\eta})\to\piet(X,\overline{\eta})\right)
$$
where $\overline{\eta}$ is the geometric generic point of 
$U$ and $X$.
We also define
$$
 \piNloc(X) \,:=\, \Ker\left( \piN(U,x)\to\piN(X,x)\right).
$$
The local Picard functor of $x \in X$ is defined in \cite{Boutot}. The next result, which is probably known to the experts, 
shows the geometric nature of some invariants of $x\in X$.

\begin{Lemma}\label{lem: piloc geometric}
 Let $x\in X$ be as above (recall that $k$ is assumed to be perfect) and let 
 $k\subseteq K$ be an algebraic field extension.
 \begin{enumerate}
     \item The natural homomorphism $\piet(X_K)\to\piet(X)$
 induces an isomorphism
 $$
   \pietloc(X_K) \,\to\,\pietloc(X).
 $$
 \item The natural homomorphism
 $\piN(X_K)\to\piN(X)$ induces an isomorphism
 $$
   \piNloc(X_K) \,\to\,\piNloc(X)\times_kK.
 $$
 \item There is an isomorphism $(\Picloc_{X/k})_{\reduced} \times K \cong (\Picloc_{X_K/K})_{\reduced}$.
 \item If $p>0$, then $x\in X$ is F-injective if and only if 
 $X_{K}$ is F-injective.
 \end{enumerate}
\end{Lemma}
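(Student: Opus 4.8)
The four statements are all of the form "an invariant is insensitive to algebraic base field extension (after possibly base-changing)", so the strategy is uniform: reduce everything to the case where $K/k$ is finite, then pass to the colimit over finite subextensions, and use that the relevant functors commute with filtered colimits of rings. Since $k$ is perfect, $K$ is perfect too, and $R\otimes_k K$ is still complete, local, Noetherian, normal, with residue field $K$; this keeps us inside the class of singularities from Definition \ref{def: singularitynonclosed}, and the punctured spectrum $U_K = U\times_k K$ is again smooth over $U$.

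\emph{Part (1).} Because $X$ is geometrically normal (here $k$ perfect is used) and $x\in X$ is $k$-rational, the morphism $X_K\to X$ is a universal homeomorphism on closed fibers only in the residue-field direction; more to the point, $\piet(X)$ is trivial by Section \ref{subsec: etale} (no nontrivial finite \'etale covers of a complete local ring with algebraically closed — here, perfect and $k$-rational — residue field; one invokes Hensel plus \cite[Proposition 18.8.1]{EGA4}, which only needs the residue field to have no nontrivial finite separable extensions, true for $K$ perfect as well). So $\pietloc(X_K)=\piet(U_K,\overline\eta)$ and $\pietloc(X)=\piet(U,\overline\eta)$, and these agree because the geometric generic point is the same: $\piet$ of a scheme and of its base change to a field extension, computed at a common geometric point, are canonically isomorphic provided there are no new "constant" covers, i.e. provided $K$ is separably closed in the function field — which holds since $\overline\eta$ is a \emph{geometric} point. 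More carefully, I would cite the homotopy exact sequence for $U_K\to U\to \Spec k$: since $k$ is perfect and $\piet(\Spec k)$'s contribution is killed by passing to the geometric point, one gets $\piet(U_{\bar k})\xrightarrow{\sim}\piet(U_K)$ for $K=\bar k$ and hence the isomorphism for all algebraic $K$.

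\emph{Parts (2) and (3).} For (3), this is essentially in \cite{Boutot}: the local Picard functor commutes with flat base change, and $(\Picloc_{X/k})_{\reduced}$ is a smooth group scheme locally of finite type whose formation commutes with the perfect (hence smooth) field extension $k\subset K$; one can either quote \cite[Corollaire V.1.8.1]{Boutot} together with base-change compatibility of $\Picloc$, or reduce to the statement for the minimal resolution $\widetilde X$ and use that $H^1(\widetilde X,\OO_{\widetilde X})$ and the Picard scheme of a proper scheme base-change well. For (2), one uses the definition $\piNloc(X)=\ker(\piN(U,x)\to\piN(X,x))$ and the known base-change behaviour of Nori's fundamental group scheme: for $K/k$ with $k$ perfect and $x$ a $k$-point, $\piN(U_K,x)\cong \piN(U,x)\times_k K$ (this requires $U$ to have a rational point and $k$ perfect; it is in Nori's work and the standard references on the fundamental group scheme). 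Taking kernels of the two base-changed maps gives the displayed isomorphism; the only subtlety is exactness of $-\times_k K$ on the relevant (pro-)group schemes, which is fine since $K/k$ is faithfully flat.

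\emph{Part (4).} F-injectivity is the injectivity of Frobenius on $H^i_\idealm(R)$ for all $i$. Since $k$ is perfect, $R\otimes_k K$ is obtained from $R$ by a \emph{faithfully flat} map, and $H^i_{\idealm K}(R\otimes_k K)\cong H^i_\idealm(R)\otimes_k K$ (local cohomology commutes with flat base change), compatibly with the Frobenius action because $(R\otimes_k K)^{(p)} = R^{(p)}\otimes_k K^{(p)}$ and $K$ perfect means $K^{(p)}=K$. Hence $F$ is injective on $H^i_\idealm(R)$ iff $F\otimes\mathrm{id}_K$ is injective on $H^i_\idealm(R)\otimes_k K$, i.e. iff $X_K$ is F-injective; faithful flatness handles the "only if" and flatness handles the "if".

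\emph{Main obstacle.} I expect the genuinely delicate point to be part (2): pinning down the precise base-change statement for $\piN$ over a \emph{non-closed perfect} base with a rational point, and checking that taking the kernel against $\piN(X,x)$ is compatible with this base change (i.e. that the sequence $1\to\piNloc(X_K)\to\piN(U_K,x)\to\piN(X_K,x)$ is obtained by applying $-\times_k K$ to the corresponding sequence over $k$, which uses left-exactness of base change on affine group schemes plus the fact that $\piN(U,x)\to\piN(X,x)$ is faithfully flat so its kernel is well-behaved). Parts (1), (3), (4) are, by contrast, standard descent/base-change arguments once one sets up the reduction to finite $K$ and then to $\bar k$.
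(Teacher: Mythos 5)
There is a genuine error in your treatment of part (1). You assert that $\piet(X)$ (and likewise $\piet(X_K)$) is trivial because the residue field is perfect, claiming that \cite[Proposition 18.8.1]{EGA4} ``only needs the residue field to have no nontrivial finite separable extensions, true for $K$ perfect as well.'' Perfect does not mean separably closed: a perfect field such as $\FF_p$ or $\QQ$ has plenty of finite separable extensions, and for a Henselian local ring with residue field $k$ one has $\piet(X)\cong\mathrm{Gal}(\bar k/k)$, which is nontrivial precisely in the situation of this lemma. Consequently your identification $\pietloc(X)=\piet(U,\bar\eta)$ is false (over a non-closed perfect field $\pietloc(X)$ is by definition the kernel of $\piet(U,\bar\eta)\to\piet(X,\bar\eta)$, a proper subgroup in general), and the rest of your argument for (1) — which only invokes the homotopy exact sequence for $U\to\Spec k$ and then concludes ``hence the isomorphism for all algebraic $K$'' — does not prove the statement about kernels. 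The paper's proof reduces to $K=\bar k$ and compares \emph{both} homotopy exact sequences
$1\to\piet(U_{\bar k})\to\piet(U)\to G_k\to 1$ and $1\to\piet(X_{\bar k})\to\piet(X)\to G_k\to 1$
(with the same quotient $G_k=\mathrm{Gal}(\bar k/k)$), and a short diagram chase identifies $\ker(\piet(U_{\bar k})\to\piet(X_{\bar k}))$ with $\ker(\piet(U)\to\piet(X))$; some such two-row comparison is what your write-up is missing. A smaller inaccuracy: $R\otimes_k K$ is \emph{not} complete when $K/k$ is infinite; the diagram-chase argument does not need this, but your framing relies on staying inside Definition \ref{def: singularitynonclosed}.

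For the remaining parts your route largely coincides with the paper's: (2) is exactly the citation of Nori's base-change result plus exactness of $-\times_k K$ on (pro-)affine group schemes, and (3) is the base-change compatibility built into Boutot's definition together with perfectness of $k$ to handle the restriction to normal/reduced test schemes. For (4) the paper simply cites Datta--Murayama, whereas you sketch a direct argument via flat base change of local cohomology; this can be made to work, but note that the induced map on $H^i_{\idealm}(R)\otimes_k K$ is $F\otimes F_K$ (neither $F\otimes\mathrm{id}_K$ nor $\mathrm{id}\otimes F_K$ is even well defined over $k$), and the ascent direction is \emph{not} a formal consequence of flatness: you must check that $p$-th powers of $k$-linearly independent elements of $K$ remain $k$-linearly independent (which uses perfectness of $k$) before concluding that injectivity of $F$ on $H^i_{\idealm}(R)$ forces injectivity of $F\otimes F_K$.
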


\begin{proof}
It suffices to treat the case where $K$ is an algebraic closure of $k$.
Let $G_k:={\rm Gal}(K/k)$ be the absolute Galois group.

Then, we have a commutative diagram 
\begin{equation} 
\begin{gathered}
\xymatrix{
&\pietloc(X_K) \ar[r]\ar[d] & \pietloc(X)\ar[d]\\
1 \ar[r] & \piet(U_K) \ar[r]\ar[d] & \piet(U) \ar[r]\ar[d] & G_k \ar[r]\ar[d]^{=} & 1\\
1 \ar[r] &\piet(X_K) \ar[r] & \piet(X) \ar[r] & G_k\ar[r] &1
}
\end{gathered}
\end{equation}
where the two bottom rows are short exact sequences. 
The claim about $\pietloc$ now follows from a simple diagram chase.

By \cite[Proposition 5]{Nori}, taking fundamental group schemes
commutes with algebraic separable field extensions, from
which the second assertion follows. 

For the claim on $\Picloc_{X/k}$, the assumption that $x \in X$ is a $k$-rational point 
implies that $X_K$ is still local and the complement of its closed point is $U_K$. 
Thus, from the definition of $\Picloc_{X/k}$ in \cite[Section II]{Boutot}, 
we have $\Picloc_{X/k} \times K \cong \Picloc_{X_K/K}$. 
Since $k$ is perfect, restricting the functors to normal schemes commutes 
with field extensions, hence we get the stated isomorphism.

Finally, if $x \in X$ is a $k$-rational point, then the morphism induced by $X_K \to X$ on coordinate rings is a local homomorphism, so the claim on F-injectivity follows from
\cite[Theorem 3.9 and Theorem A]{Datta}.
\end{proof}

\begin{Remark}
 The class group is \emph{not} geometric, as Salmon's singularity
 in Remark \ref{rem: salmon} and Proposition \ref{prop: Mumfordsproposition} below show.
\end{Remark}

\subsection{The theorems of Flenner and Lipman}
Over fields of characteristic $0$, we have the following
algebraic analogue of Mumford's theorem.

\begin{Theorem}[Flenner]\label{thm: Mumford}
 Let $x\in X$ be normal surface singularity over a field $k$
 of characteristic $0$. Then, $x\in X$ is smooth if and only if 
$\pietloc(X)$ is trivial.
\end{Theorem}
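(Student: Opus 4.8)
The plan is to reduce Flenner's theorem to Mumford's classical topological result by extending scalars from $k$ to $\CC$ and comparing \'etale and topological local fundamental groups. Since $x\in X$ is a normal surface singularity over a field $k$ of characteristic $0$, after replacing $k$ by a finitely generated subfield over which $X$ is defined and then embedding into $\CC$, Lemma \ref{lem: piloc geometric}(1) shows that $\pietloc(X)$ is unchanged under algebraic, hence under the relevant, field extensions; so I may as well assume $k=\CC$. The nontrivial direction is: if $\pietloc(X)$ is trivial, then $x\in X$ is smooth (the converse is purity of the branch locus / Zariski--Nagata, since a regular local ring is algebraically simply connected, as already recalled in Section \ref{subsec: etale}).

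First I would invoke resolution of singularities to produce the minimal resolution $\pi\colon \widetilde X\to X$ with exceptional divisor $E$, and recall the comparison between the local \'etale fundamental group and the local topological fundamental group over $\CC$: by Grothendieck's comparison theorem, $\pietloc(X)$ is the profinite completion of $\pitoploc(X)$. Hence triviality of $\pietloc(X)$ forces $\pitoploc(X)$ to have trivial profinite completion. Now the key structural input is that the local topological fundamental group of a normal surface singularity is \emph{not} just any group: by Mumford's analysis (and Grauert's work on the link), $\pitoploc(X)\cong \pi_1(L)$ where $L$ is the link three-manifold of the singularity, and $\pi_1(L)$ is residually finite — indeed three-manifold groups are residually finite, or more directly, the link is a graph manifold built from the dual graph of $E$ and one can see residual finiteness (even better, the group has a nontrivial finite quotient unless it is trivial). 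Therefore trivial profinite completion forces $\pitoploc(X)=1$.

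With $\pitoploc(X)=1$, Mumford's theorem (quoted as Theorem \ref{thm: Mumford}) applies directly: a normal complex analytic surface germ with trivial local topological fundamental group is smooth. This gives smoothness of $x\in X_{\CC}$ as an analytic germ, hence as an algebraic local ring, and then faithfully flat descent of regularity (or Lemma \ref{lem: piloc geometric}-style remarks on geometric regularity over perfect fields) brings us back to regularity of the original $R$. The main obstacle — and the only point requiring genuine care — is the residual finiteness step: one must know that a nontrivial local topological fundamental group of a normal surface singularity admits a nontrivial finite quotient, so that the profinite completion detects nontriviality; I would cite the description of the link as a plumbed/graph manifold together with residual finiteness of such fundamental groups (Hempel, or the explicit solvable/virtually-solvable structure coming from the dual graph), rather than reprove it. All other steps are standard: the comparison theorem, Mumford's theorem, and descent.

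Alternatively, and perhaps cleaner to write, I would bypass Mumford's topological theorem and argue algebraically: triviality of $\pietloc(X)$ implies (via the local Picard functor and the exponential/Kummer sequences on $\widetilde X$, or via Flenner's original cohomological argument) that the intersection form on the exceptional curves has trivial discriminant group and that $H^1(\widetilde X,\mathcal O_{\widetilde X})=0$, from which the dual graph must be empty. But I expect the reduction-to-Mumford route to be shortest given what the paper allows me to cite; the residually-finite input is the crux either way, since it is precisely what converts "no finite \'etale covers" into "no covers at all."
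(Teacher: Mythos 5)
Your proposal and the paper's proof diverge completely: the paper does not reprove Flenner's theorem at all. For $k=\overline{k}$ it simply cites Flenner's original paper, and for general $k$ of characteristic $0$ it applies Lemma \ref{lem: piloc geometric}(1) to pass to $\overline{k}$, invokes the cited result there, and descends regularity. Your route --- identifying $\pietloc(X)$ with the profinite completion of $\pitoploc(X)$ over $\CC$, using residual finiteness of the link group (a graph-manifold group), and then applying Mumford's topological theorem --- is a legitimate and well-known alternative proof of Flenner's result, and you correctly isolate residual finiteness as the step that converts ``no finite \'etale covers'' into ``trivial topological fundamental group''. Your alternative algebraic sketch in the last paragraph is in fact closer in spirit to Flenner's actual argument.

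The one genuine gap is the reduction to $k=\CC$. Lemma \ref{lem: piloc geometric} is stated only for \emph{algebraic} extensions of a perfect field, so your ``algebraic, hence the relevant, field extensions'' does not follow: passing from a general characteristic-$0$ field to $\CC$ involves a transcendental extension (and possibly a field of too large transcendence degree to embed in $\CC$ at all). Moreover, $X=\Spec R$ with $R$ complete is not literally defined over a finitely generated subfield of $k$; to spread out you must first algebraize the singularity (using that it is isolated, via Artin approximation), and then invoke invariance of $\piet$ of the punctured spectrum under extensions of algebraically closed fields of characteristic zero. Both steps are standard but neither is supplied by the lemma you cite, so as written the Lefschetz-principle reduction is unsupported. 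Given that the paper is content to quote Flenner as a black box, the shortest correct write-up is the paper's: reduce to $\overline{k}$ by Lemma \ref{lem: piloc geometric} and cite \cite{Flenner}.
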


\begin{proof}
If $k=\overline{k}$, then this is \cite{Flenner}.
If not, then Lemma \ref{lem: piloc geometric} shows that $\pietloc(X_{\overline{k}})$
is trivial.
Thus, $x_{\overline{k}}\in X_{\overline{k}}$ is smooth and 
hence, $x\in X$ is smooth.
\end{proof}

\begin{Remark}
In fact, Flenner \cite[Korollar (1.8)]{Flenner} proves that $x \in X$ is smooth 
if and only if $X$ is ``pure'' without assuming that $x \in X$ 
is a $k$-rational point. 
If $x \in X$ is a $k$-rational point, then purity of $X$ 
is the same as triviality of $\pietloc(X)$.
\end{Remark}

\begin{Remark}
This theorem is \emph{not} true in characteristic $p>0$: 
for example, if $x\in X$ is the rational double point of type $A_{p-1}$, then $\pietloc(X)$ is trivial. 
Moreover, in characteristic zero, the abelianisation of the local fundamental group is equal to the 
class group, see Proposition \ref{prop: torsfinet} and Proposition \ref{prop: torsfinabcl}.
Again, the $A_{p-1}$-singularity, which has class group $\C_p$, 
shows that this not true in characteristic $p>0$.
\end{Remark}

In a similar vein, for algebraically closed $k$ of arbitrary characteristic, there is the following relation 
between ${\rm Cl}(X)$ and the rationality of $x\in X$ due to Lipman \cite{Lipman}.

\begin{Theorem}[Lipman]\label{thm: Lipman}
Let $x\in X$ be normal surface singularity over an algebraically closed
field $k$.
 Then, $x\in X$ is rational if and only if ${\rm Cl}(X)$ is finite. 
 Moreover, if ${\rm Cl}(X)$ is trivial, then $x\in X$ is either smooth or 
 an $E_8$-singularity.
\end{Theorem}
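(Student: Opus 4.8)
The plan is to follow Lipman's original route \cite{Lipman}: reduce everything to intersection theory on the minimal resolution $\pi\colon\widetilde{X}\to X$ together with the theorem on formal functions. Write $E=\bigcup_{i=1}^{n}E_i$ for the reduced exceptional divisor and recall that the intersection matrix $M=(E_i\cdot E_j)$ is negative definite by Mumford \cite{Mumford}. Since $X$ is normal, ${\rm Cl}(X)={\rm Cl}(U)=\mathrm{Pic}(\widetilde{X}\setminus E)$, and since $\widetilde{X}$ is smooth there is an exact sequence $\bigoplus_{i}\ZZ\,[E_i]\to\mathrm{Pic}(\widetilde{X})\to{\rm Cl}(X)\to 0$. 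Introduce the degree homomorphism $\deg_E\colon\mathrm{Pic}(\widetilde{X})\to\ZZ^{n}$, $\mathcal{L}\mapsto(\mathcal{L}\cdot E_i)_i$, and set $P:=\ker(\deg_E)$. On $\bigoplus_i\ZZ[E_i]$ the map $\deg_E$ is given by $M$, which is injective with cokernel of order $\abs{\det M}$; hence $P\cap\bigoplus_i\ZZ[E_i]=0$, so $P$ injects into ${\rm Cl}(X)$ and ${\rm Cl}(X)/P$ embeds into $\ZZ^{n}/M\ZZ^{n}$. Therefore ${\rm Cl}(X)$ is finite if and only if $P$ is, and the first assertion reduces to two things: the equivalence ``$P$ finite $\iff$ $x\in X$ rational'', and the computation ${\rm Cl}(X)\cong\ZZ^{n}/M\ZZ^{n}$ in the rational case.

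The equivalence ``$P$ finite $\iff$ $x$ rational'' is the technical heart, and the place where characteristic $p$ must be handled by hand rather than via an exponential sequence. Let $\widetilde{X}_m\subseteq\widetilde{X}$ be the closed subscheme defined by $\mathcal{I}_E^{m}$. As $R$ is complete and $\pi$ is proper, the theorem on formal functions gives $\mathrm{Pic}(\widetilde{X})=\varprojlim_m\mathrm{Pic}(\widetilde{X}_m)$ and $H^1(\widetilde{X},\OO_{\widetilde{X}})=R^1\pi_*\OO_{\widetilde{X}}=\varprojlim_m H^1(\widetilde{X}_m,\OO_{\widetilde{X}_m})$, the latter vanishing being by definition rationality of $x\in X$. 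Both limits are computed step by step from the square-zero extensions $0\to\mathcal{I}_E^{m}/\mathcal{I}_E^{m+1}\to\OO_{\widetilde{X}_{m+1}}\to\OO_{\widetilde{X}_m}\to 0$ and their multiplicative analogues, whose cohomology lives on the curve $E$; in particular $H^2(E,\mathcal F)=0$ for every coherent $\mathcal F$. Tracking the associated long exact sequences — this bookkeeping is essentially all of Lipman's \cite{Lipman}, and is the step I expect to be the main obstacle — one shows that $P$ is finite precisely when $H^1(\widetilde{X},\OO_{\widetilde{X}})$ vanishes: if $x$ is not rational, then $H^1(\widetilde{X},\OO_{\widetilde{X}})\neq0$ produces a positive-dimensional smooth commutative group of line bundles numerically trivial on $E$ (smoothness of $\mathrm{Pic}_{E/k}$ and of the relevant infinitesimal lifting problems follows from $H^2(E,\OO_E)=0$), hence infinitely many $k$-points since $k$ is algebraically closed; conversely, if $x$ is rational these groups are trivial and $P=0$.

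For the second assertion, suppose ${\rm Cl}(X)=0$. By the first part $x\in X$ is rational; moreover $\omega_X$ is a reflexive sheaf of rank one, so its class in ${\rm Cl}(X)$ vanishes, $\omega_X\cong\OO_X$, and $x\in X$ is Gorenstein (being normal of dimension two, hence Cohen--Macaulay). A rational Gorenstein surface singularity is smooth or a rational double point by a classical theorem of Artin (see, e.g., \cite{Durfee}). Finally, for an RDP one checks, as part of the same analysis, that $P=0$ and that $\deg_E\colon\mathrm{Pic}(\widetilde{X})\to\ZZ^{n}$ is an isomorphism, so ${\rm Cl}(X)\cong\ZZ^{n}/M\ZZ^{n}$, the discriminant group of the negative-definite intersection lattice. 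This group has order $\abs{\det M}$, which equals $n+1$, $4$, $3$, $2$, $1$ for type $A_n$, $D_n$, $E_6$, $E_7$, $E_8$ respectively; it is trivial only in type $E_8$. Hence $x\in X$ is smooth or of type $E_8$.

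In summary, the only genuinely nontrivial input is Lipman's analysis of $P$ through the thickenings $\widetilde{X}_m$ — this is where one must argue directly in positive characteristic — while the reduction of ${\rm Cl}(X)$ to $P$, the Gorenstein step together with Artin's classification of rational Gorenstein surface singularities, and the determinant computation are all formal or classical and insensitive to the characteristic. Of course, one may instead simply invoke \cite{Lipman} for the entire statement.
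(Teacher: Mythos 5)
The paper does not prove this statement at all: it is quoted as a classical result and attributed to Lipman \cite{Lipman}, which is exactly the source your argument reconstructs (reduction of ${\rm Cl}(X)$ to the kernel of the degree map on $\Pic(\widetilde{X})$, the formal-functions analysis of the thickenings $\widetilde{X}_m$, and the discriminant-group computation for RDPs, with a clean $\omega_X$/Gorenstein/Artin route to the ``moreover'' part). Your outline is correct and, modulo the bookkeeping you explicitly defer to Lipman, matches the paper's approach, which is simply to invoke \cite{Lipman}.
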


\begin{Remark}\label{rem: salmon}
Lipman's theorem does not hold over non-closed fields, even in characteristic $0$.
The following example is due to Salmon \cite{Salmon}: 
for an arbitrary field $k$, 
the surface singularity $x \in X$ over $K:=k(u)$ defined by 
$K[[x,y,z]]/(z^2 + x^3 + uy^6)$ has trivial class group, but it is
an elliptic singularity, that is, if $\pi: \widetilde{X} \to X$ is the minimal resolution of singularities, 
then $R^1 \pi_* \mathcal{O}_{\widetilde{X}}$ is a $1$-dimensional vector space.
In particular, the class group of $X_{\overline{K}}$ is infinite.
\end{Remark}

The following is a corollary of Lemma \ref{lem: piloc geometric}
and Theorem \ref{thm: Lipman}.

\begin{Corollary}\label{cor: picloctwisted}
If $x\in X$ is a normal rational surface singularity over a perfect
field $k$, then $({\rm Picloc}_{X/k})_{\reduced}$ is a twisted form of the constant 
\'etale group scheme $\underline{{\rm Cl}(X)}$.
\end{Corollary}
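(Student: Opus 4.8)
The plan is to reduce the statement to the algebraically closed case, where it becomes a combination of Lipman's finiteness theorem and Boutot's results on the reduced local Picard scheme, and then to descend. Write $P := (\Picloc_{X/k})_{\reduced}$ and fix an algebraic closure $\overline{k}/k$. Since $k$ is perfect and $x$ is a $k$-rational point, $X_{\overline{k}}$ is again a normal singularity in the sense of Definition \ref{def: singularity} (as already observed after Definition \ref{def: singularitynonclosed}), and the first thing I would check is that it is again \emph{rational}: choosing a resolution $\pi\colon\widetilde{X}\to X$, the scheme $\widetilde{X}$ is regular, and as $k$ is perfect this is geometric, so $\widetilde{X}_{\overline{k}}\to X_{\overline{k}}$ is a resolution of $X_{\overline{k}}$; by flat base change along $\Spec\overline{k}\to\Spec k$ one gets $R^i(\pi_{\overline{k}})_*\OO_{\widetilde{X}_{\overline{k}}} = R^i\pi_*\OO_{\widetilde{X}}\otimes_k\overline{k}$, and in particular $H^1(\widetilde{X}_{\overline{k}},\OO_{\widetilde{X}_{\overline{k}}}) = H^1(\widetilde{X},\OO_{\widetilde{X}})\otimes_k\overline{k} = 0$, so $X_{\overline{k}}$ is rational.

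Next I would run the argument over $\overline{k}$. By \cite[Corollaire IV.2.9, Corollaire V.1.8.1]{Boutot}, $(\Picloc_{X_{\overline{k}}/\overline{k}})_{\reduced}$ is representable by a smooth group scheme, locally of finite type over $\overline{k}$, whose tangent space at the origin is $H^1$ of the structure sheaf of the minimal resolution of $X_{\overline{k}}$, hence $0$ by the previous step. A smooth group scheme over a field with vanishing tangent space is étale, and an étale group scheme over the algebraically closed field $\overline{k}$ is constant, so $(\Picloc_{X_{\overline{k}}/\overline{k}})_{\reduced}\cong\underline{{\rm Cl}(X_{\overline{k}})}$ (using \cite[Proposition II.3.2]{Boutot} to identify the group of $\overline{k}$-points with ${\rm Cl}(X_{\overline{k}})$). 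By Lipman's Theorem \ref{thm: Lipman}, applied over $\overline{k}$ and using that $X_{\overline{k}}$ is rational, the group ${\rm Cl}(X_{\overline{k}})$ is \emph{finite}, so this constant group scheme is finite étale over $\overline{k}$.

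Finally I would descend to $k$: by Lemma \ref{lem: piloc geometric}(3) we have $P\times_k\overline{k}\cong(\Picloc_{X_{\overline{k}}/\overline{k}})_{\reduced}$, which is finite and étale over $\overline{k}$ by the above. Since $\Spec\overline{k}\to\Spec k$ is faithfully flat and quasi-compact and both ``finite'' and ``étale'' are fpqc-local on the target, $P$ is finite and étale over $k$ (representability of $P$ over $k$ itself being \cite[Corollaire V.1.8.1]{Boutot}, or obtained from effectivity of fpqc descent for the affine scheme $P\times_k\overline{k}$). As $P$ becomes the constant étale group scheme $\underline{{\rm Cl}(X_{\overline{k}})}$ after base change to $\overline{k}$, it is by definition a $\overline{k}/k$-twisted form of $\underline{{\rm Cl}(X)}$. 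The only genuinely delicate points are the book-keeping of the base-change compatibilities — that $H^1$ of the resolution's structure sheaf both vanishes and commutes with $-\otimes_k\overline{k}$, and that rationality passes to $X_{\overline{k}}$ — and matching conventions for ``${\rm Cl}(X)$'' over the non-closed field $k$; here the relevant group is ${\rm Cl}(X_{\overline{k}})$ together with its Galois action, which is precisely the datum recorded by the twisted form $P$.
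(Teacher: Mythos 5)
Your proof is correct and follows essentially the same route as the paper: base change to $\overline{k}$, use that rationality is geometric so Lipman's theorem makes ${\rm Cl}(X_{\overline{k}})$ finite, identify this with the $\overline{k}$-points of $(\Picloc_{X/k})_{\reduced}\times\overline{k}$ via Lemma \ref{lem: piloc geometric}, and conclude. The only cosmetic difference is that you establish \'etaleness via the vanishing of the tangent space $H^1(\widetilde{X}_{\overline{k}},\OO)$, whereas the paper deduces it from smoothness plus finiteness of the $\overline{k}$-point set; both work, and your closing remark that the group underlying the twisted form is really ${\rm Cl}(X_{\overline{k}})$ with its Galois action is the right reading of the statement.
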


\begin{proof}
Since $x\in X$ is rational, so is $X_{\overline{k}}$, 
hence ${\rm Cl}(X_{\overline{k}})$ is finite by Theorem \ref{thm: Lipman}.
But ${\rm Cl}(X_{\overline{k}})$ is the group of $\overline{k}$-rational points of 
$({\rm Picloc}_{X/k})_{\reduced} \times \overline{k} \cong ({\rm Picloc}_{X_{\overline{k}}/\overline{k}})_{\reduced}$, 
hence $\underline{{\rm Cl}(X)} \cong ({\rm Picloc}_{X/k})_{\reduced} \times \overline{k}$ 
and the claim follows.
\end{proof}

\subsection{The Flenner--Mumford theorem in characteristic $\mathbf{p}$}
Now, assume that $k$ is perfect of characteristic $p > 0$.

\begin{Theorem}
 \label{thm: Mumfordstheorem}
 Let $x\in X$ be a normal surface singularity over a perfect field $k$ of characteristic $p > 0$.
     Then, the following are equivalent:
     \begin{enumerate}
  \item \label{thm: item: smooth} $x\in X$ is a smooth point.
  \item \label{thm: item: piNloc trivial} $\piNloc(X)$ is trivial.
  \item \label{thm: item: and F-injective}
  $(\Picloc_{X/k})_{\reduced}$ is trivial, $\pietloc(X)$ is trivial, and $X$ is F-injective,
    \end{enumerate}
    If $k$ is algebraically closed, then these are also equivalent to
    \begin{enumerate}
    \setcounter{enumi}{3}
        \item  \label{thm: item: algclosed} ${\rm Cl}(X)$ is trivial, $\pietloc(X)$ is trivial, and $X$ is F-injective.
    \end{enumerate}
\end{Theorem}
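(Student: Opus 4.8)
The plan is to prove the cycle of implications $(\ref{thm: item: smooth}) \Rightarrow (\ref{thm: item: piNloc trivial}) \Rightarrow (\ref{thm: item: and F-injective}) \Rightarrow (\ref{thm: item: smooth})$, and then deal with $(\ref{thm: item: algclosed})$ separately. The implication $(\ref{thm: item: smooth}) \Rightarrow (\ref{thm: item: piNloc trivial})$ is the easy direction: if $x \in X$ is smooth, then $R \cong k[[u_1,\dots,u_d]]$ and purity for torsors over a regular local ring of dimension $\geq 2$ shows that every $G$-torsor over $U$ extends uniquely to a $G$-torsor over $X$ for every finite group scheme $G$, so $\piNloc(X)$ is trivial by Corollary \ref{cor: noritrivial} (the argument is already sketched after that corollary). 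For $(\ref{thm: item: piNloc trivial}) \Rightarrow (\ref{thm: item: and F-injective})$: triviality of $\piNloc(X)$ forces $\Hfl{1}(U,G) = \Hfl{1}(X,G)$ for all finite $G$ by Corollary \ref{cor: noritrivial}; applying this to finite \'etale $G$ gives $\pietloc(X)$ trivial (via Proposition \ref{prop: torsfinet}, noting $\Hfl{1}(X,G)$ is trivial for \'etale $G$ since $X$ is henselian with residue field $k$); applying it to $G = \balpha_p$ gives, via Lemma \ref{lem: Finjective}, that $X$ is F-injective; and applying it to finite diagonalizable $G$, or directly from Proposition \ref{prop: torsfinab} together with Proposition \ref{prop: decompositionofpicloc}, gives that all of $\pietlocab(X)$, ${\rm Cl}(X)[p^n]$, ${\rm Cl}(X)[p']$, and $\Picloclocloc_{X/k}$ vanish; combined with F-injectivity (which kills $\Picloclocloc$ by Corollary \ref{cor: picloclocloc trivial}) and the fact that over a perfect field the reduced local Picard scheme is a twisted form of $\underline{{\rm Cl}(X)}$ once we know $X$ is rational, one concludes $(\Picloc_{X/k})_{\reduced}$ is trivial. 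Some care is needed here to rule out the case that $(\Picloc_{X/k})_{\reduced}$ has positive-dimensional part; but triviality of $\pietloc(X)$ plus F-injectivity forces $x \in X$ to be rational (using Theorem \ref{thm: Lipman} after base change to $\overline k$, since over $\overline k$ the only candidates for non-rational with trivial torsor data would be caught), and then Corollary \ref{cor: picloctwisted} applies.

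The heart of the proof is $(\ref{thm: item: and F-injective}) \Rightarrow (\ref{thm: item: smooth})$, and I expect this to be the main obstacle. By Lemma \ref{lem: piloc geometric}, each of the three conditions — triviality of $(\Picloc_{X/k})_{\reduced}$, triviality of $\pietloc(X)$, and F-injectivity — is preserved under base change to $\overline k$, so we may assume $k = \overline k$ and are reduced to statement $(\ref{thm: item: algclosed})$. (Strictly: over $\overline k$, triviality of $(\Picloc_{X/k})_{\reduced}$ is equivalent to ${\rm Cl}(X)$ being trivial \emph{and} the Picard variety $(\Picloc_{X/k})_{\reduced}^\circ$ being trivial, but we separately argue rationality below, which kills the positive-dimensional part.) So assume $k = \overline k$, ${\rm Cl}(X)$ trivial, $\pietloc(X)$ trivial, $X$ F-injective. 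By Lipman's Theorem \ref{thm: Lipman}, triviality of ${\rm Cl}(X)$ implies $x \in X$ is rational, and moreover that it is either smooth or an $E_8$-singularity. It therefore remains only to exclude the $E_8$-singularities, i.e.\ to show that every $E_8$-type RDP fails at least one of the three conditions. For this I would invoke Table \ref{table: torsors with Picloclocloc}: scanning the $E_8$ rows, every single one has either $\pietloc(X)$ nontrivial (e.g.\ $E_8$ in $p > 5$ has $\BI_{120}$; $E_8^1$ in $p = 5$ has $\C_5$; $E_8^2$ in $p = 3$ has $\BT_{24}$; $E_8^4, E_8^2$ in $p=2$ have $\Dic_{12}, \C_2$) or $\Picloclocloc_{X/k}$ nontrivial — equivalently $X$ not F-injective by Corollary \ref{cor: picloclocloc trivial} — (e.g.\ $E_8^0$ in $p=5$ has $\balpha_5$; $E_8^1, E_8^0$ in $p=3$ have $\bM_2, \balpha_9$; $E_8^3, E_8^2, E_8^1, E_8^0$ in $p=2$ have $\bM_3, \bL_{2,3}[V+F^2], \bL_{2,3}[p], \balpha_2\times\balpha_8$). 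Thus no $E_8$-singularity satisfies all of $(\ref{thm: item: algclosed})$, completing the contradiction and hence the proof that $x \in X$ is smooth. Finally, the equivalence of $(\ref{thm: item: algclosed})$ with $(\ref{thm: item: smooth})$–$(\ref{thm: item: and F-injective})$ over algebraically closed $k$ is immediate once we observe that, for $k = \overline k$, triviality of $(\Picloc_{X/k})_{\reduced}$ is equivalent to triviality of ${\rm Cl}(X)$ together with vanishing of the Picard variety, and that under the other two hypotheses (trivial $\pietloc$, F-injective) rationality of $x\in X$ — forced by Theorem \ref{thm: Lipman} — makes the Picard variety automatically trivial, so the two packages of conditions coincide.

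The one genuinely delicate point worth flagging is the passage between ``$(\Picloc_{X/k})_{\reduced}$ trivial'' and ``${\rm Cl}(X)$ trivial'': a priori the former is stronger (it also kills the Picard variety $(\Picloc_{X/k})_{\reduced}^\circ$), so in $(\ref{thm: item: and F-injective}) \Rightarrow (\ref{thm: item: algclosed})$ this direction is free, while in the converse $(\ref{thm: item: algclosed}) \Rightarrow (\ref{thm: item: and F-injective})$ one must produce triviality of $(\Picloc_{X/k})_{\reduced}$ from triviality of ${\rm Cl}(X)$ alone (plus the other hypotheses). This is exactly where Lipman's theorem is used twice: ${\rm Cl}(X)$ trivial $\Rightarrow$ $x\in X$ rational $\Rightarrow$ (Corollary \ref{cor: picloctwisted}) $(\Picloc_{X/k})_{\reduced}$ is a twisted form of $\underline{{\rm Cl}(X)} = 0$, hence trivial. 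I would present the cycle $(\ref{thm: item: smooth}) \Rightarrow (\ref{thm: item: piNloc trivial}) \Rightarrow (\ref{thm: item: and F-injective}) \Rightarrow (\ref{thm: item: smooth})$ first, and then append the short paragraph handling $(\ref{thm: item: algclosed})$ as a corollary of Lipman plus Lemma \ref{lem: piloc geometric}, since once the equivalence over $\overline k$ is established the reduction of the general perfect-field case to the algebraically closed case is purely formal via Lemma \ref{lem: piloc geometric}.
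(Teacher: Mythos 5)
Your plan reproduces the paper's proof almost step for step: reduction to $k=\overline{k}$ via Lemma \ref{lem: piloc geometric}, purity for (\ref{thm: item: smooth}) $\Rightarrow$ (\ref{thm: item: piNloc trivial}), extraction of triviality of $\pietloc(X)$, F-injectivity and the vanishing of all finite subgroup schemes of $\Picloc_{X/k}$ from Corollary \ref{cor: noritrivial}, Proposition \ref{prop: torsfinab} and Lemma \ref{lem: Finjective}, and the closing of the cycle by Lipman's Theorem \ref{thm: Lipman} together with the $E_8$ rows of Table \ref{table: torsors with Picloclocloc}.

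The one step whose justification does not hold up as written is the rationality claim inside (\ref{thm: item: piNloc trivial}) $\Rightarrow$ (\ref{thm: item: and F-injective}). You assert that triviality of $\pietloc(X)$ plus F-injectivity forces $x\in X$ to be rational ``using Theorem \ref{thm: Lipman}'', but Lipman's criterion takes \emph{finiteness of ${\rm Cl}(X)$} as input, and at that stage you only know that ${\rm Cl}(X)$ is torsion-free; Lipman's theorem by itself cannot rule out an infinite class group coming from a positive-dimensional $(\Picloc_{X/k})^{\circ}_{\reduced}$. The paper closes this differently: hypothesis (\ref{thm: item: piNloc trivial}) kills $\Hom(H,\Picloc_{X/k})$ for \emph{every} finite abelian $H$, so $(\Picloc_{X/k})_{\reduced}[F]$ is trivial, hence the smooth group scheme $(\Picloc_{X/k})_{\reduced}$ is discrete; its tangent space is $H^1(\widetilde{X},\OO_{\widetilde{X}})$, so the singularity is rational, and only then do Theorem \ref{thm: Lipman}, Corollary \ref{cor: picloctwisted} and Proposition \ref{prop: torsfinabcl} give triviality of $(\Picloc_{X/k})_{\reduced}$. (Alternatively, the torsion-freeness of ${\rm Cl}(X)$ that you do establish already kills $(\Picloc_{X/k})^{\circ}_{\reduced}$ over $\overline{k}$, since every positive-dimensional smooth connected commutative group has torsion in its $\overline{k}$-points; but some such argument must be supplied, and Lipman alone is not it.) The remainder of your proposal, including the handling of (\ref{thm: item: algclosed}) and the exclusion of the $E_8$-singularities via the table, matches the paper.
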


\begin{proof}
By Lemma \ref{lem: piloc geometric}, Assertions (\ref{thm: item: smooth}), (\ref{thm: item: piNloc trivial}), and (\ref{thm: item: and F-injective}) are invariant under passing to $\overline{k}$, so we can and will assume that $k$ is algebraically closed.

By Corollary \ref{cor: noritrivial} and Lemma \ref{lem: piloc geometric}, 
the implication (\ref{thm: item: smooth}) $\Rightarrow$ (\ref{thm: item: piNloc trivial}) follows immediately from purity for $X_{\overline{k}}$.

Next, we prove (\ref{thm: item: piNloc trivial}) $\Rightarrow$ (\ref{thm: item: and F-injective}). By Corollary \ref{cor: noritrivial}, we know that $\overline{\Hfl{1}(U,G)}$ is trivial for every finite and abelian or finite and \'etale group scheme $G$. 
In particular, $\pietloc(X)$ is trivial by Lemma \ref{lem: piloc geometric} and $X$ is F-injective by 
Lemma \ref{lem: Finjective}. 
By Proposition \ref{prop: torsfinab}, we also know that 
$(\Picloc_{X/k})_{\reduced}[F]$ is trivial, hence $(\Picloc_{X/k})_{{\reduced}}$ is discrete, that is, its tangent space is trivial. 
This tangent space is nothing but $H^1(\widetilde{X},\mathcal{O}_{\widetilde{X}})$, where $\widetilde{X}$ is the minimal resolution of $X$. 
Hence, $X$ is rational, so $(\Picloc_{X/k})_{\reduced}$ is finite by Theorem \ref{thm: Lipman} and Corollary \ref{cor: picloctwisted}, 
and thus we can apply Proposition \ref{prop: torsfinabcl} to the finite group scheme $(\Picloc_{X/k})_{\reduced}$ to deduce that it is in fact trivial. 
Hence, (\ref{thm: item: piNloc trivial}) $\Rightarrow$ (\ref{thm: item: and F-injective}).

The implication (\ref{thm: item: and F-injective}) $\Rightarrow $ (\ref{thm: item: algclosed}) is clear because ${\rm Cl}(X) = (\Picloc_{X/k})_{\reduced}(k)$.

Finally, we prove (\ref{thm: item: algclosed})  $\Rightarrow $ (\ref{thm: item: smooth}). Since ${\rm Cl}(X)$ is trivial, Theorem \ref{thm: Lipman} implies that $X$ is either smooth or an RDP of type $E_8$. But by Table \ref{table: RDPEquations} and Table \ref{table: torsors with Picloclocloc}, there is no $F$-injective RDP $X$ of type $E_8$ such that $\pietloc(X)$ is trivial. Hence, $X$ must be smooth, which finishes the proof.
\end{proof}

Over non-closed perfect fields, we can classify all normal surface singularities that satisfy Assertion (\ref{thm: item: algclosed}) of Theorem \ref{thm: Mumfordstheorem}.

\begin{Proposition} \label{prop: Mumfordsproposition}
 Let $x\in X$ be a normal surface singularity over a perfect field $k$ of characteristic $p > 0$. 
 Assume that $x\in X$ is not smooth. 
 Then, the following are equivalent:
 \begin{enumerate}
     \item \label{prop: item: and F-injective} ${\rm Cl}(X)$ is trivial, $\pietloc(X)$ is trivial, 
     and $X$ is F-injective.
     \item \label{prop: item: counterexample} There exist $n \geq 0$ and 
     $a,b \in k[[x,y,z]]^{\times}$ such that $-ab$ is not a square modulo $(x,y,z)$ 
     and such that $X \cong \Spec k[[x,y,z]]/(z^{p^n} + ax^2 + by^2)$. 
     (This is an RDP of type $B_{(p^n-1)/2}$ in Lipman's terminology \cite{Lipman}.)
 \end{enumerate}
In particular, if $p = 2$, then (\ref{thm: item: algclosed}) 
implies (\ref{thm: item: smooth}) in Theorem \ref{thm: Mumfordstheorem}.
\end{Proposition}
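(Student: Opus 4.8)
The plan is to prove the equivalence $(1)\Leftrightarrow(2)$, from which the last assertion follows at once: for $p=2$ and perfect $k$ no singularity can satisfy $(2)$.

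For $(2)\Rightarrow(1)$ I would reduce everything to $\overline{k}$. Only $p$ odd matters here, and $n\geq1$ since $X$ is not smooth. Over $\overline{k}$ every unit of $\overline{k}[[x,y,z]]$ is a square (Hensel's lemma, $p\neq 2$) and $-ab$ is a square, so $ax^2+by^2$ factors as a product of two power series with linearly independent linear parts; hence $X_{\overline{k}}\cong\Spec\overline{k}[[u,v,z]]/(z^{p^n}+uv)$ is the RDP $A_{p^n-1}$, which is F-regular with $\pietloc=\C_{(p^n)'}=\{e\}$. As F-injectivity and triviality of $\pietloc$ are geometric (Lemma \ref{lem: piloc geometric}), the same holds for $X$; in particular $X$ is rational, so $(\Picloc_{X/k})_{\reduced}$ is finite \'etale, a $k$-form of $\underline{{\rm Cl}(X_{\overline{k}})}=\underline{\C_{p^n}}$ by Corollary \ref{cor: picloctwisted}, and ${\rm Cl}(X)={\rm Cl}(X_{\overline{k}})^{G_k}$ with $G_k={\rm Gal}(\overline{k}/k)$. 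For the two rulings $L_1,L_2$ of $X_{\overline{k}}$ one has $[L_1]+[L_2]=[{\rm div}(z)]=0$, and $[L_1]$ generates ${\rm Cl}(X_{\overline{k}})\cong\C_{p^n}$; since $-ab$ is not a square in $k$, the form $ax^2+by^2$ is irreducible in $k[[x,y]]$, so $\{z=0\}\subseteq X$ is irreducible over $k$ and $G_k$ permutes $\{L_1,L_2\}$ transitively. Hence $G_k$ acts on $\C_{p^n}$ by $-1$, and ${\rm Cl}(X)=\C_{p^n}[2]=0$ because $p^n$ is odd.

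For $(1)\Rightarrow(2)$ I would first use F-injectivity to get rationality: exactly as in the proof of Theorem \ref{thm: Mumfordstheorem}, $\overline{\Hfl{1}(U,\balpha_p)}=0$ (Lemma \ref{lem: Finjective}) makes $(\Picloc_{X/k})_{\reduced}$ \'etale (Proposition \ref{prop: torsfinab}), so its tangent space $H^1(\widetilde{X},\OO_{\widetilde{X}})$ vanishes and $X$ is rational. Then I would invoke Lipman's classification \cite{Lipman} of the rational complete normal two-dimensional local algebras over a perfect field with trivial class group (rationality is essential: Salmon's singularity in Remark \ref{rem: salmon} has trivial class group but is elliptic; cf.\ Theorem \ref{thm: Lipman} over $\overline{k}$). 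Apart from the regular case, which is excluded because $X$ is not smooth, each entry has base change to $\overline{k}$ an RDP of type $A$, $D$ or $E$. Triviality of $\pietloc(X)$ forces triviality of $\pietloc(X_{\overline{k}})$ (Lemma \ref{lem: piloc geometric}), hence this RDP has trivial local fundamental group; by Artin's computation \cite{ArtinRDP} the $D$- and $E$-type RDPs with trivial local fundamental group are precisely certain non-F-injective small-characteristic ones (Tables \ref{table: RDPEquations}, \ref{table: torsors with Picloclocloc}), which F-injectivity of $X_{\overline{k}}$ excludes. So $X_{\overline{k}}$ is of type $A$ with trivial $\pietloc$, i.e.\ $X_{\overline{k}}\cong A_{p^n-1}$ for some $n\geq1$, and the corresponding Lipman type is $B_{(p^n-1)/2}$, given over $k$ by an equation $z^{p^n}+ax^2+by^2$ with $a,b$ units and $-ab$ not a square modulo $(x,y,z)$. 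This is $(2)$.

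Finally, if $p=2$ and $k$ is perfect, Frobenius is surjective on $k$, so for any units $a,b$ the image of $-ab$ in $k[[x,y,z]]/(x,y,z)\cong k$ is a square; thus no $X$ satisfies $(2)$, and by the equivalence a non-smooth $X$ over such a field cannot satisfy $(1)$ — equivalently, in characteristic $2$ over any perfect field, triviality of ${\rm Cl}(X)$ and $\pietloc(X)$ plus F-injectivity implies smoothness, i.e.\ implication (\ref{thm: item: algclosed}) $\Rightarrow$ (\ref{thm: item: smooth}) of Theorem \ref{thm: Mumfordstheorem}. The main obstacle will be the appeal to Lipman's classification of factorial rational surface singularities over non-closed perfect fields and the identification of the abstract type $B_m$ with the explicit hypersurface equation; the descent computation of ${\rm Cl}(X)$ from ${\rm Cl}(X_{\overline{k}})$ and the pruning of Lipman's list via the geometric invariant $\pietloc$ and F-injectivity should be routine given the tables.
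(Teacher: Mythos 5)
Your argument is essentially the paper's for $p$ odd --- the same reduction to Lipman's Remark~25.3 after using F-injectivity and triviality of $\pietloc$ to force $X_{\overline{k}}$ to be of type $A_{p^n-1}$ --- and your explicit Galois computation of ${\rm Cl}(X)$ in the direction $(2)\Rightarrow(1)$ (the two rulings are swapped, Galois acts by $-1$ on $\C_{p^n}$, which has no $2$-torsion) is a pleasant self-contained substitute for the paper's citation of Lipman. But there is a genuine gap at $p=2$, which is exactly the case the ``in particular'' is about. Lipman's Remark~25.3 classifies factorial rational surface singularities only in residue characteristic $\neq 2$: the normal forms $z^{p^n}+ax^2+by^2$ with $-ab$ a non-square are anisotropic binary quadratic forms, a notion that collapses over a perfect field of characteristic $2$, where $ax^2+by^2$ is a perfect square. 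So your proof of $(1)\Rightarrow(2)$ does not apply when $p=2$, and your final paragraph --- ``no $X$ satisfies $(2)$, so by the equivalence no non-smooth $X$ satisfies $(1)$'' --- is circular: it invokes the implication $(1)\Rightarrow(2)$ in characteristic $2$, which is precisely what the Lipman citation fails to deliver there. The paper closes this case by a separate descent argument: in characteristic $2$, triviality of $\pietloc$ forces ${\rm Cl}(X_{\overline{k}})$ to be a (cyclic) $2$-group, $(\Picloc_{X/k})_{\reduced}$ is a Galois twist of the constant group $\underline{{\rm Cl}(X_{\overline{k}})}$ by Corollary \ref{cor: picloctwisted}, and the unique element of order $2$ is Galois-invariant, hence descends to a nonzero class in ${\rm Cl}(X)$, contradicting factoriality over $k$. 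You need some such argument.

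A secondary imprecision: $\overline{\Hfl{1}(U,\balpha_p)}=0$ alone does not make $(\Picloc_{X/k})_{\reduced}$ \'etale --- it only rules out copies of $\balpha_p$, and an ordinary abelian variety or a torus inside $(\Picloc_{X/k})^{\circ}_{\reduced}$ would survive this test. To kill the tangent space you must also rule out $\bmu_p\subseteq\Picloc_{X/k}$, which here follows from $\overline{\Hfl{1}(U,\C_p)}=\Hom(\pietloc(X),\C_p)=0$; alternatively, the paper obtains rationality without using F-injectivity at all, from unipotence of $(\Picloc_{X/k})^{\circ}_{\reduced}$ (triviality of $\pietloc$ kills all prime-to-$p$ torsion of ${\rm Cl}(X_{\overline{k}})$) together with $(\Picloc_{X/k})^{\circ}_{\reduced}(k)\subseteq{\rm Cl}(X)=0$ over the perfect field $k$.
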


\begin{proof}
The implication (\ref{prop: item: counterexample}) $\Rightarrow $ (\ref{prop: item: and F-injective})
follows from \cite[Remark 25.3]{Lipman}. 
Indeed, Lipman proves that ${\rm Cl}(X)$ is trivial and since $X_{\overline{k}}$ is obviously 
an RDP of type $A_{p^n-1}$, it follows from Table \ref{table: RDPEquations} and 
Table \ref{table: torsors with Picloclocloc} that $X$ is F-injective with trivial $\pietloc(X)$.

For the converse, we first claim that $X$ is rational. 
Indeed, since $\pietloc(X_{\overline{k}})$ is trivial, the torsion subgroup 
of ${\rm Cl}(X_{\overline{k}})$ is $p$-torsion. 
Therefore, the local Picard variety $(\Picloc_{X_{\overline{k}}/\overline{k}})^{\circ}_{\reduced}$ 
is unipotent, hence so is 
$(\Picloc_{X/k})^{\circ}_{\reduced}$, again by 
Lemma \ref{lem: piloc geometric}. 
Since $k$ is perfect, $(\Picloc_{X/k})^{\circ}_{\reduced}$ is split, that is, 
it is either trivial or contains $\mathbb{G}_{a,k}$. 
The latter case cannot occur, since $(\Picloc_{X/k})^{\circ}_{\reduced}(k) \subseteq {\rm Cl}(X)$, 
and the latter is trivial by assumption.
Hence, $(\Picloc_{X/k})^{\circ}_{\reduced}$ is trivial and thus, $X$ is rational.

If $p \neq 2$, the rational factorial normal surface singularities have been classified in 
\cite[Remark 25.3]{Lipman}. 
From the classification, it follows that $X_{\overline{k}}$ is an RDP of type $D_4,E_6,E_8$ or $A_n$. 
Since we assume that $\pietloc(X)$ is trivial and $X$ is F-injective, we can use 
Table \ref{table: RDPEquations} and Table \ref{table: torsors with Picloclocloc} 
to deduce that $X_{\overline{k}}$ is of type $A_{p^n-1}$. 
Then, again by \cite[Remark 25.3]{Lipman}, $X$ is given by an equation as in 
Assertion (\ref{prop: item: counterexample}).

If $p = 2$, then we use that $(\Picloc_{X/k})_{\reduced}$ is a twisted form of 
the constant group scheme 
associated to the cyclic $2$-group ${\rm Cl}(X_{\overline{k}})$ by Corollary \ref{cor: picloctwisted}. 
Since ${\rm Cl}(X_{\overline{k}})$ contains a unique non-trivial element of order $2$, this element is fixed by the 
Galois action, hence descends to a non-zero element in ${\rm Cl}(X)$, contradicting our assumption 
that ${\rm Cl}(X)$ is trivial. 
This finishes the proof.
\end{proof}
 
\begin{Remark}
We note that a slightly weaker version of Theorem \ref{thm: Mumfordstheorem} and over
algebraically closed fields was already obtained by Esnault and Viehweg 
\cite[Corollary 4.3]{EsnaultViehweg}. 
Our detailed study of rational double points in small characteristics allows 
us to treat the cases that were left open by them.
\end{Remark}

If $p\geq7$, then we may drop the condition on F-injectivity 
in (\ref{thm: item: and F-injective}) of the theorem.

\begin{Corollary}
 \label{cor: mumford}
 Let $x\in X$ be a normal surface singularity over a perfect
 field of characteristic $p\geq7$. 
 Then, the following are equivalent:
  \begin{enumerate}
  \item \label{cor: item: smooth}$x\in X$ is a smooth point.
   \item \label{cor: item: piNloc trivial} $\piNloc(X)$ is trivial.
  \item \label{cor: item: Cl and pietloc trivial}   $(\Picloc_{X/k})_{\reduced}$ is trivial and $\pietloc(X)$ is trivial.
  \end{enumerate}
      If $k$ is algebraically closed, then these are also equivalent to
    \begin{enumerate}
    \setcounter{enumi}{3}
        \item  \label{cor: item: algclosed} ${\rm Cl}(X)$ is trivial and $\pietloc(X)$ is trivial.
    \end{enumerate}
\end{Corollary}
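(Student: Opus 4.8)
The plan is to deduce Corollary~\ref{cor: mumford} from Theorem~\ref{thm: Mumfordstheorem}: the only difference between the two statements is the extra hypothesis ``$X$ is F-injective'' appearing in conditions (3) and (4) of the theorem, and the content here is that for $p\geq 7$ this hypothesis is redundant. The reason is that in characteristic $p\geq 7$ every RDP is F-regular, hence F-injective (see Table~\ref{table: RDPEquations}); in particular there is a \emph{unique} RDP of type $E_8$, namely $z^2+x^3+y^5$, and its local fundamental group is $\BI_{120}$, which is non-trivial. So, unlike in small characteristic, there are no ``exotic'' $E_8$-singularities with trivial $\pietloc$ whose exclusion would require F-injectivity. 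By Lemma~\ref{lem: piloc geometric} the three conditions (1), (2), (3) are insensitive to the base change $X\rightsquigarrow X_{\overline{k}}$, so I would reduce immediately to the case $k=\overline{k}$.

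For the implication (1)$\Rightarrow$(2) I would simply invoke purity of the branch locus, exactly as in the proof of Theorem~\ref{thm: Mumfordstheorem}. For (2)$\Rightarrow$(3): by Corollary~\ref{cor: noritrivial}, triviality of $\piNloc(X)$ means $\Hfl{1}(U,G)=\Hfl{1}(X,G)$ for every finite group scheme $G$; applying this to finite \'etale $G$ gives $\pietloc(X)$ trivial, while Proposition~\ref{prop: torsfinab} applied to finite abelian $G$ forces $(\Picloc_{X/k})_{\reduced}[F]=0$, so $(\Picloc_{X/k})_{\reduced}$ has trivial tangent space $H^1(\widetilde{X},\OO_{\widetilde{X}})$, hence $x\in X$ is rational, hence $(\Picloc_{X/k})_{\reduced}$ is finite by Theorem~\ref{thm: Lipman} and Corollary~\ref{cor: picloctwisted}, hence trivial by Proposition~\ref{prop: torsfinabcl}. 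This is verbatim the argument in Theorem~\ref{thm: Mumfordstheorem}, with the (here unneeded) conclusion on F-injectivity simply omitted.

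The heart of the matter is (3)$\Rightarrow$(1). From $(\Picloc_{X/k})_{\reduced}$ trivial one gets $H^1(\widetilde{X},\OO_{\widetilde{X}})=0$, so $x\in X$ is a rational surface singularity with $\mathrm{Cl}(X)=(\Picloc_{X/k})_{\reduced}(k)$ trivial; by Theorem~\ref{thm: Lipman} it is therefore either smooth or an $E_8$-singularity. In the latter case, since $p\geq 7$ the only possibility is the ordinary $E_8$, whose local fundamental group $\BI_{120}$ is non-trivial, contradicting the triviality of $\pietloc(X)$. Hence $X$ is smooth. For the statement over algebraically closed $k$, one uses $\mathrm{Cl}(X)=(\Picloc_{X/k})_{\reduced}(k)$ together with the fact that a finite reduced group scheme over $k=\overline{k}$ is determined by its $k$-points: finiteness of $\mathrm{Cl}(X)$ forces $X$ rational, so $(\Picloc_{X/k})_{\reduced}$ is finite and reduced, and triviality of $\mathrm{Cl}(X)$ then gives $(\Picloc_{X/k})_{\reduced}$ trivial, reducing condition (4) to (3) (and conversely, smoothness from (1) makes $X$ F-injective with $\mathrm{Cl}(X)=0$). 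I do not expect a deep obstacle here: the only real subtleties are checking that the descent statements of Lemma~\ref{lem: piloc geometric} cover exactly the invariants in play, and verifying against Artin's classification (Table~\ref{table: RDPEquations}) and Table~\ref{table: torsors with Picloclocloc} that in characteristic $p\geq 7$ the $E_8$-singularity really is the unique non-smooth normal surface singularity that could satisfy the other hypotheses, so that F-injectivity may indeed be dropped.
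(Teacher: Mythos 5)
Your treatment of the equivalence of (\ref{cor: item: smooth}), (\ref{cor: item: piNloc trivial}), and (\ref{cor: item: Cl and pietloc trivial}) is correct and essentially the paper's argument: reduce to $k=\overline{k}$ via Lemma \ref{lem: piloc geometric}, quote purity and the proof of Theorem \ref{thm: Mumfordstheorem} for (\ref{cor: item: smooth})$\Rightarrow$(\ref{cor: item: piNloc trivial})$\Rightarrow$(\ref{cor: item: Cl and pietloc trivial}), and then use triviality of $(\Picloc_{X/k})_{\reduced}$ to get rationality, Lipman's theorem to reduce to the $E_8$ alternative, and Artin's computation $\pietloc(E_8)=\BI_{120}$ for $p\geq 7$ to exclude it. (The paper instead routes (\ref{cor: item: Cl and pietloc trivial})$\Rightarrow$(\ref{cor: item: algclosed})$\Rightarrow$(\ref{cor: item: smooth}) through assertion (4) of Theorem \ref{thm: Mumfordstheorem}, using that $p\geq7$ makes F-injectivity automatic; your direct route is equivalent.)

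The gap is in your handling of assertion (\ref{cor: item: algclosed}). You ``reduce condition (4) to (3)'' by showing that ${\rm Cl}(X)=0$ forces $(\Picloc_{X/k})_{\reduced}=0$, but condition (\ref{cor: item: Cl and pietloc trivial}) also demands that $\pietloc(X)$ be trivial, and nothing in your argument (nor in the hypotheses ``${\rm Cl}(X)=0$ and $X$ F-injective'') produces that: the $E_8$-singularity $z^2+x^3+y^5$ in characteristic $p\geq7$ is F-regular, hence F-injective, has trivial class group, and is not smooth, so assertion (\ref{cor: item: algclosed}) as literally printed cannot imply (\ref{cor: item: smooth}). The statement you should be proving --- consistent with the sentence preceding the corollary (``the condition on F-injectivity can be dropped'') and with the paper's own proof, which invokes (4)$\Rightarrow$(1) of Theorem \ref{thm: Mumfordstheorem} --- is that (\ref{cor: item: algclosed}) reads ``${\rm Cl}(X)$ is trivial and $\pietloc(X)$ is trivial''. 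With that reading the implication to (\ref{cor: item: smooth}) follows at once from Lipman plus $\pietloc(E_8)=\BI_{120}$, exactly as in your (\ref{cor: item: Cl and pietloc trivial})$\Rightarrow$(\ref{cor: item: smooth}) step; as you wrote it, the step is unjustified and indeed false.
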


\begin{proof}
As in the proof of Theorem \ref{thm: Mumfordstheorem}, we may assume that $k$ is algebraically closed.
The implications (\ref{cor: item: smooth}) $\Rightarrow$ (\ref{cor: item: piNloc trivial}) and 
(\ref{cor: item: piNloc trivial}) $\Rightarrow$ (\ref{cor: item: Cl and pietloc trivial}) 
have been shown in Theorem \ref{thm: Mumfordstheorem}. 
The implication (\ref{cor: item: Cl and pietloc trivial}) $\Rightarrow$ (\ref{cor: item: algclosed}) 
is clear.
Now, Table \ref{table: RDPEquations} shows that $p \geq 7$ implies that $X$ is 
automatically F-injective, so that (\ref{cor: item: algclosed}) 
$\Rightarrow$ (\ref{cor: item: smooth}) holds as well.
\end{proof}

\begin{Remark}
The equivalence (\ref{cor: item: smooth}) $\Leftrightarrow$ (\ref{cor: item: piNloc trivial}) 
over algebraically closed fields of characteristic $p\geq7$ is due to 
Esnault and Viehweg \cite{EsnaultViehweg}.
\end{Remark}

\begin{Remark}
Both conditions in (\ref{cor: item: Cl and pietloc trivial}) of 
Corollary \ref{cor: mumford} are needed for 
the implication (\ref{cor: item: Cl and pietloc trivial}) $\Rightarrow$ (\ref{cor: item: smooth}) to hold, even in arbitrarily large characteristics: 
indeed, the $A_{p-1}$-singularity has trivial local 
\'etale fundamental group and a non-trivial class group, 
whereas the $E_8$-singularity has trivial class group and a 
non-trivial local \'etale fundamental group.

Finally, if $p \leq 5$, then all three conditions in (\ref{thm: item: and F-injective}) of Theorem \ref{thm: Mumfordstheorem} 
are needed: this can be checked immediately using
Table \ref{table: RDPEquations}
and Table \ref{table: torsors with Picloclocloc}. 
\end{Remark}

\begin{Remark}
We do not know whether Theorem \ref{thm: Mumfordstheorem} holds over non-perfect fields (with smoothness replaced by regularity). 
The proof suggests that in order find a counterexample, one should look at 
normal surface singularities that are F-injective but not geometrically F-injective. 
By \cite[Proposition 4.2]{Enescu}, geometric F-injectivity does not necessarily 
imply F-injectivity over non-perfect fields. 
However, it seems to be unknown whether normal surface singularities that are 
geometrically F-injective but not F-injective exist.
\end{Remark}

\section{Very small actions and quotient singularities}
\label{sec: Nori}
From now on, $k$ is an algebraically closed field. 
Let $x \in X$ be a singularity as in Definition \ref{def: singularity}. 
In analogy with \cite[Definition 6.1 and Definition 6.2]{LRQ}, 
we define very small actions of finite group schemes and use this notion to 
define quotient singularities.

\begin{Definition}
 Let $G$ be a finite group scheme over $k$. 
 A \emph{very small action} of $G$ on $X$ is a $G$-action that is free on 
 $U = X \setminus \{x\}$ and such that $x \in X^{G}$, 
 where $X^G$ denotes the fixed locus.
\end{Definition}

\begin{Definition}
 \label{def: quotient singularity}
 A \emph{quotient singularity} $x\in X$ by a finite group scheme $G$ over $k$ is a 
 singularity $x \in X$ such that
 $$
   X \,\cong\, \widehat{\Aff}^d/G \,=\, \Spec ~ k[[u_1,...,u_d]]^G,
 $$
 where $G$ acts on $\widehat{\Aff}^d := \Spec k[[u_1,...,u_d]]$ via a very small action.
\end{Definition}

We refer to \cite[Remark 6.3]{LRQ} for a small discussion of this definition.

How can one decide whether a given singularity $x \in X$ is a quotient singularity 
by a finite group scheme $G$? 
As mentioned in Remark \ref{rem: quotientisnormal}, the singularity $x \in X$ has to be normal. Moreover, a representation of $X$ as a quotient singularity of 
$Y := \widehat{\Aff}^d = \Spec k[[u_1,...,u_d]]$ 
by $G$ gives rise to a local $G/N$-torsor over $X$ for every normal subgroup 
scheme $N \subseteq G$.
Since these local torsors are again quotient singularities, these local torsors must 
have normal total spaces.
In particular, Example \ref{ex: non-normal} shows that \emph{not} 
every non-trivial local torsor over $X$ arises in this way. 
In the following lemma, we collect some properties that local torsors of the form
$Y/N \to X$ have to satisfy.

\begin{Lemma} \label{lem: quotient induces local torsor}
Let $G$ be a finite group scheme over $k$ with a very small action on 
$Y = \widehat{\Aff}^d = \Spec k[[u_1,...,u_d]]$. Then, the following holds:
\begin{enumerate}
    \item For every subgroup scheme $H \subseteq G$, the quotient $Y/H$ is normal.
    \item For every subgroup scheme $1 \neq H \subseteq G$, the morphism $Y \to Y/H$ is a local $H$-torsor over $Y/H$ that does not extend to a global $H$-torsor.
    \item \label{item: quotient induces local torsor} For every normal subgroup scheme $N \subsetneq G$, the morphism $Y/N \to X$ 
    is a local $G/N$-torsor over $X$ that does not extend to a global $G/N$-torsor.
\end{enumerate}
\end{Lemma}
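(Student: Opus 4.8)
The plan is to deduce all three statements from the single fact that a very small action of a finite group scheme $G$ on the regular local ring $Y = \widehat{\Aff}^d$ restricts to a \emph{free} action on $U_Y := Y \setminus \{y\}$, where $y$ is the closed point, together with the observation that $Y$ is normal and $S_2$. First I would record that for any subgroup scheme $H \subseteq G$, the $H$-action on $Y$ is again very small: it is free on $U_Y$ because the $G$-action is, and it fixes $y$. Hence the quotient $Y/H$ is normal, being a quotient of a normal ring by a finite group scheme action (Remark \ref{rem: quotientisnormal}); moreover, since $H$ acts freely on $U_Y$, the quotient map $U_Y \to U_Y/H$ is an $H$-torsor, which is the content of the discussion in Section \ref{subsec: intro local torsors} and of the Example following Lemma \ref{lem: carvajal-rojas}. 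This already gives (1), and gives the torsor part of (2) once one checks that $U_Y/H$ is the punctured spectrum of $Y/H$: since the $H$-action fixes only $y$ and is free elsewhere, $y$ is the unique point of $Y$ lying over the closed point of $Y/H$, so $U_Y/H = (Y/H) \setminus \{\text{closed point}\}$.

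The key remaining point, shared by (2) and (3), is that these torsors do \emph{not} extend to global torsors over the base. For (2): if $Y \to Y/H$ extended to an $H$-torsor $\widetilde{Y} \to Y/H$ over all of $Y/H$, then by the injectivity of $\imath^*$ (Lemma \ref{lem: carvajal-rojas}, i.e.\ the map $\Hfl{1}(Y/H, H) \hookrightarrow \Hfl{1}(U_{Y/H}, H)$) the torsor $\widetilde{Y}$ would have to agree with $Y$ on the punctured spectrum, and then by the $S_2$-property and the fact that both are integral closures of $Y/H$ in the same object over $U_{Y/H}$, we would get $\widetilde{Y} \cong Y$ as schemes over $Y/H$. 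But an $H$-torsor over the local scheme $Y/H$ that is \emph{trivial} as a scheme forces $H$ to act freely on all of $Y/H$; since $H \neq 1$ and the closed point of $Y/H$ is fixed (it is the image of $y$, which is $H$-fixed), this is a contradiction. More directly: a nontrivial torsor over a complete local ring under a nontrivial finite group scheme cannot have $Y \to Y/H$ an honest quotient-by-free-action map globally, because the total space $Y$ is local while a free $H$-action on a local scheme is impossible unless $H=1$. This is the step I expect to be the main obstacle — one has to be careful about what ``extends to a global torsor'' means and to invoke the right combination of injectivity of $\imath^*$ and $S_2$-ness, rather than just hand-waving.

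For (3) the argument is the same with $H$ replaced by $G/N$ and $Y$ replaced by $Y/N$: by (1), $Y/N$ is normal, hence a singularity in the sense of Definition \ref{def: singularity} (it is integral, complete, local, Noetherian, and $S_2$); the residual $G/N$-action on it is very small, since $N \subsetneq G$ is normal so the quotient action makes sense, it is free on the punctured spectrum because already the $G$-action downstairs on $U$ is free and the fibers of $Y/N \to X$ over $U$ are $G/N$-torsors, and the closed point is fixed. Therefore $Y/N \to X$ is a local $G/N$-torsor by the Example after Lemma \ref{lem: carvajal-rojas}, and it does not extend to a global $G/N$-torsor over $X$ by exactly the argument of the previous paragraph applied to the base $X$ and total space $Y/N$ (again using that $Y/N$ is local and $G/N \neq 1$, so a free $G/N$-action on $Y/N$ is impossible while a global torsor with total space $Y/N$ would force one). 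I would conclude by remarking that this is precisely the phenomenon identified in Section \ref{subsec: intro local torsors}: the quotient presentation $Y \to X$ exhibits $\pi: Y \setminus\{y\} \to X \setminus\{x\}$ as a $G$-torsor that does not extend over $x$, and the same holds for every intermediate quotient.
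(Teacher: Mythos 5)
Your overall route is the same as the paper's, just with the implicit steps spelled out: normality of $Y/H$ as a quotient of a normal ring (the paper phrases (1) as $S^H=\Frac(S^H)\cap S$), freeness of the restricted action on the punctured spectrum giving the torsor over the punctured base, and, for the non-extension, the identification (via $S_2$-ness, uniqueness of the integral closure, and uniqueness of the extended action from Lemma \ref{lem: carvajal-rojas}) of any hypothetical extending torsor with $Y \to Y/H$ (resp.\ $Y/N \to X$) itself, which then cannot be a torsor. The paper's proof is exactly this last point in compressed form: a very small action fixes the closed point, hence is not free, hence the finite quotient map cannot be a global torsor.

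One concrete correction, though: your ``more directly'' justification --- that a free $H$-action on a local scheme is impossible unless $H=1$ --- is false for infinitesimal $H$, and infinitesimal group schemes are the main case of interest here. For example, $\Spec k[[t,u_2,\dots,u_d]] \to \Spec k[[t^p,u_2,\dots,u_d]]$ is a nontrivial global $\balpha_p$-torsor whose total space is local (even regular); the paper itself exploits such torsors with local total space, e.g.\ in the proof of Lemma \ref{lem: cokerneltoolarge}. The locality argument would only be valid for \'etale $H$. So the non-extension step must rest, as in your first formulation and in the paper, on the fixed point: if $Y\to Y/H$ were a torsor, the $H$-action on $Y$ would be free, contradicting $y\in Y^{H}$ and $H\neq 1$ (and likewise for $G/N$ acting on $Y/N$ in part (3)). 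Relatedly, your sentence ``forces $H$ to act freely on all of $Y/H$; since \dots the closed point of $Y/H$ is fixed'' conflates total space and base: the relevant action and fixed point are on $Y$ (resp.\ on $Y/N$), not on $Y/H$ (resp.\ $X$). With the false shortcut deleted and that sentence corrected, the argument is complete and agrees with the paper's.
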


\begin{proof}
Set $S := k[[u_1,...,u_d]]$. Then we have $Y/H = \Spec S^H$, and since $S^H = {\Frac}(S^H) \cap S$, it is normal.

As for Claim (2), use that the $H$-action on $Y$ is very small, hence non-free, 
and therefore $Y \to Y/H$ cannot be a torsor, unless $H$ is trivial.

Finally, consider Claim (3). 
Since $G$ fixes the closed point of $Y$, $G/N$ fixes the closed point of $Y/N$,
hence $Y/N \to X$ cannot be a torsor.
\end{proof}
 
We also recall that only very special finite and connected group schemes 
admit very small actions.
 
\begin{Proposition}[{\cite[Proposition 6.9]{LRQ}}] \label{prop: linearlyreductive or unipotent}
 Let $G$ be a finite group scheme admitting a very small action on $X = \widehat{\Aff}^d$.
 Then the scheme underlying $G^{\circ}$ is isomorphic to $\Spec k[t]/(t^{p^n})$ 
 for some $n \geq 0$.
 Moreover, if $n\geq1$, then
 \begin{enumerate}
     \item \label{item: linearly reductive height n} either $G^{\circ}$ is linearly reductive of height $n$, hence isomorphic to $\bmu_{p^n}$,
     \item \label{item: unipotent height n} or else $G^{\circ}$ is unipotent of height $n$, hence $G^{\circ}[F^{i+1}]/G^{\circ}[F^{i}] \cong \balpha_p$ for all $0 \leq i < n$.
 \end{enumerate}
\end{Proposition}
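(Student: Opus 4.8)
The plan is to pass to the identity component $G^{\circ}$ — which still acts very smally on $Y:=\widehat{\Aff}^d=\Spec S$, $S=k[[u_1,\dots,u_d]]$, since a subgroup of a free action is free and $G^{\circ}$ still fixes $x$ — and to control it through its restricted Lie algebra $\mathfrak{g}:=\mathrm{Lie}(G^{\circ})$. The action of $G^{\circ}$ is faithful: its kernel acts trivially on $Y$, hence fixes all of $U$, but the action on $U$ is free, so the kernel is trivial. Applying $\mathrm{Lie}$ to $G^{\circ}\hookrightarrow\underline{\Aut}(Y)$ thus yields an inclusion $\mathfrak{g}\hookrightarrow\Der_k(S)$, and since $G^{\circ}$ fixes the closed point we even have $\mathfrak{g}\subseteq\idealm\cdot\Der_k(S)$, where $\idealm=(u_1,\dots,u_d)$. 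The whole statement then reduces to the claim $\dim_k\mathfrak{g}\le1$: since $\dim_k\mathfrak{g}$ is the embedding dimension of the finite local $k$-algebra $\OO_{G^{\circ}}$, this gives $\OO_{G^{\circ}}\cong k[t]/(t^N)$, and as the length of $G^{\circ}$ is a power of $p$ we get $N=p^n$ with $n$ the height of $G^{\circ}$; a standard computation then shows $G^{\circ}[F^i]$ has coordinate ring $k[t]/(t^{p^i})$, so each graded piece $G^{\circ}[F^{i+1}]/G^{\circ}[F^i]$ has length $p$ and height one.

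The core of the proof is therefore the inequality $\dim_k\mathfrak{g}\le1$. Suppose there were $k$-linearly independent $\delta_1,\delta_2\in\mathfrak{g}$, say $\delta_i=\sum_{j=1}^{d}a_{ij}\partial_{u_j}$ with $a_{ij}\in\idealm$. For any point $\idealp\in U$, freeness of the action means the stabilizer of $\idealp$ is trivial, so the orbit morphism $(G^{\circ})_{\kappa(\idealp)}\to Y_{\kappa(\idealp)}$ is a monomorphism of finite schemes, hence a closed immersion; thus the orbit of $\idealp$ is isomorphic to $(G^{\circ})_{\kappa(\idealp)}$, and its tangent space at $\idealp$ — the image of the evaluation map $\mathfrak{g}\otimes_k\kappa(\idealp)\to T_{Y,\idealp}$ — has dimension $\dim_k\mathfrak{g}\ge2$. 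So the $2\times d$ matrix $(a_{ij})$ has rank $\ge2$ modulo $\idealp$ for every $\idealp\in U$, which says precisely that the ideal $I\subseteq S$ generated by the $2\times2$ minors of $(a_{ij})$ satisfies $V(I)\subseteq\{x\}$; since the $a_{ij}$ lie in $\idealm$, the ideal $I$ is proper, so $\sqrt{I}=\idealm$ and $\mathrm{ht}(I)=d$. But $S$ is regular, hence Cohen--Macaulay, so the classical height bound for determinantal ideals gives $\mathrm{ht}(I)\le(2-2+1)(d-2+1)=d-1$ — a contradiction. (When $d=2$ this is just the remark that the single determinant $a_{11}a_{22}-a_{12}a_{21}\in\idealm^2$ cannot vanish only at $x$.) Hence $\dim_k\mathfrak{g}\le1$, and for $n\ge1$ the group $G^{\circ}$ is nontrivial, so $\mathfrak{g}\ne0$ and $\dim_k\mathfrak{g}=1$.

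It remains to distinguish the two cases. The one-dimensional restricted Lie algebra $\mathfrak{g}$ has trivial or nontrivial $p$-operation, equivalently the height-one length-$p$ group scheme $G^{\circ}[F]$ is $\balpha_p$ or $\bmu_p$. If $G^{\circ}[F]\cong\bmu_p$, then $G^{\circ}$ contains $\bmu_p$ but contains no copy of $\balpha_p$ (such a copy has height one, hence would lie inside $G^{\circ}[F]\cong\bmu_p$) and no copy of $\C_p$ (it is infinitesimal), so by \cite[Lemma 2.1]{LiedtkeMartinMatsumotoLRQ} it is linearly reductive, hence diagonalisable over the algebraically closed field $k$, hence $\cong\bmu_{p^n}$ because it is monogenic. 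If $G^{\circ}[F]\cong\balpha_p$, then $G^{\circ}$ contains $\balpha_p$, so it is not linearly reductive; moreover, since every subgroup scheme of the monogenic group $G^{\circ}$ is again monogenic with Frobenius kernel contained in $G^{\circ}[F]\cong\balpha_p$, the group $G^{\circ}$ has no $\bmu_p$ among its subquotients and is therefore unipotent. It is of height $n$ by construction, and each graded piece $G^{\circ}[F^{i+1}]/G^{\circ}[F^i]$, being of height one and length $p$, is $\balpha_p$ or $\bmu_p$; the latter, being a multiplicative subquotient of the unipotent group $G^{\circ}$, is excluded. This yields alternatives (1) and (2).

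The step I expect to be the genuine obstacle is the inequality $\dim_k\mathfrak{g}\le1$: one has to convert infinitesimal freeness on $U$ into pointwise linear independence of the associated vector fields (through the identification of $G^{\circ}$-orbits with $G^{\circ}$ itself) and then play this off against the codimension bound for determinantal loci, concluding that a two-dimensional family of vector fields all vanishing at $x$ has no room on a $d$-dimensional regular local ring. The closing case distinction is comparatively routine, relying only on the linear-reductivity criterion recalled in Section~\ref{sec: height} and the simplicity of $\balpha_p$.
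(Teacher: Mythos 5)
The paper does not actually prove this proposition --- it quotes it from the companion paper \cite[Proposition 6.8]{LiedtkeMartinMatsumotoLRQ} --- so your argument has to stand on its own, and most of it does. The faithfulness of the $G^{\circ}$-action, the inclusion $\mathfrak{g}\hookrightarrow\idealm\cdot\Der_k(S)$, and above all the core inequality $\dim_k\mathfrak{g}\le 1$ are correct: freeness at $\idealp\in U$ makes the orbit morphism a monomorphism, hence injective on $\kappa(\idealp)[\epsilon]$-points, which is exactly full rank of $(a_{ij})$ at $\idealp$; together with $a_{ij}\in\idealm$ and the Eagon--Northcott bound $\mathrm{ht}\,I\le d-1$ this is a genuine contradiction. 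The passage to $\OO_{G^{\circ}}\cong k[t]/(t^{p^n})$, the computation of the Frobenius kernels, and case (1) are also fine.

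The gap is in case (2), at the sentence deducing that $G^{\circ}$ ``has no $\bmu_p$ among its subquotients and is therefore unipotent.'' Your justification --- every subgroup scheme of $G^{\circ}$ is monogenic with Frobenius kernel inside $G^{\circ}[F]\cong\balpha_p$ --- correctly rules out $\bmu_p$ as a \emph{subgroup}, but says nothing about subquotients: a quotient $H/N$ has coordinate ring $\OO_H^N$, a Hopf \emph{sub}algebra rather than a quotient of $\OO_H$, so it need not be monogenic, and its Frobenius kernel is not the image of $H[F]$ (for $\bM_2\twoheadrightarrow\bM_2/\bM_2[F]\cong\balpha_p$ the induced map $H[F]\to(H/N)[F]$ is zero). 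What conclusion (2) asserts is precisely that the quotients $Q_i:=G^{\circ}[F^{i+1}]/G^{\circ}[F^{i}]$ for $i\ge 1$ --- which are not subgroups --- are $\balpha_p$ rather than $\bmu_p$, and your last sentence excludes $\bmu_p$ there by invoking the unipotence you have not yet established, so as written the step is circular. The true statement hiding underneath is that an extension of $\bmu_p$ by a unipotent finite group scheme contains a copy of $\bmu_p$ (splitting of extensions of multiplicative type by unipotent group schemes); with that, ``no $\bmu_p$ subgroup'' does give unipotence, but it is a nontrivial theorem, not a consequence of monogenicity.

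There is a short direct repair: for $i\ge 1$ the relative Frobenius induces a homomorphism $G^{\circ}[F^{i+1}]\to G^{\circ\,(p)}[F^{i}]\to Q_{i-1}^{(p)}$ whose kernel is exactly $G^{\circ}[F^{i}]$, hence a monomorphism $Q_i\hookrightarrow Q_{i-1}^{(p)}$. Since each $Q_i$ has order $p$ (which you already know from $\OO_{G^{\circ}[F^{i}]}\cong k[t]/(t^{p^{i}})$) and $Q_0=G^{\circ}[F]\cong\balpha_p$, induction gives $Q_i\cong\balpha_p$ for all $i$, and $G^{\circ}$ is then unipotent as an iterated extension of $\balpha_p$'s. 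With this substitution your proof is complete.
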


\subsection{The local Nori fundamental group scheme}
A natural guess is that $\piNloc(U,X,x) \cong G$ for a quotient singularity $X=Y/G$.
First, we note that in this setting $G$ is always a quotient of $\piNloc(U,X,x)$.

\begin{Lemma}\label{lem: quotientsofpinloc}
 Let $x\in X := Y/G$ be a quotient singularity by a finite group scheme $G$ over $k$
 with $Y=\Spec k[[u_1,...,u_d]]$ as in Definition \ref{def: quotient singularity}.
 Then,
 \begin{enumerate}
     \item \label{item: quotientNori-reduced} $f : V \to U$ is Nori-reduced.
     \item \label{item: quotientsofpinloc2}
     
     The composition  
     $$
   \piNloc(U,X,x) \,\to\, \piNloc(U,x) \,\to\, G
    $$
    is surjective.
    \item \label{item: quotientsofpinloc etale} The induced morphisms on maximal \'etale quotients
    $$
   \piNloc(U,X,x)^{\et} \,\to\, \piNloc(U,x)^{\et} \,\to\, G^{\et}
    $$
    are isomorphisms.
 \end{enumerate}
\end{Lemma}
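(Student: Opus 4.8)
The plan is to prove the three assertions in the order stated, using the geometry of the finite quotient map $Y\to X$ (here $Y=\Spec k[[u_1,\dots,u_d]]$ with closed point $0$, and $V:=Y\setminus\{0\}$ is the $G$-torsor over $U$ arising from the presentation) together with the $\pi^N$-formalism recalled in Section~\ref{sec: local torsors}. For part~(\ref{item: quotientNori-reduced}) I would show that $V\to U$ admits no reduction of structure group to a strict subgroup scheme $H\subsetneq G$. Such a reduction (Remark~\ref{rem: reductionofstructure}) consists of an $H$-torsor $W\to U$ and an $H$-equivariant $U$-morphism $j\colon W\to V$; the induced map of $G$-torsors $W\times^H G\to V$ over $U$ is then automatically an isomorphism, so $V\cong W\times^H G$, and $W\hookrightarrow V$ is a closed immersion, being the preimage of the closed point of $H\backslash G$ under the natural map $V=W\times^H G\to H\backslash G$. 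Now $W\to U$ is a torsor under a finite flat group scheme, hence finite flat, so $\dim W=d$; but $V=Y\setminus\{0\}$ is integral of dimension $d$ (the punctured spectrum of the domain $k[[u_1,\dots,u_d]]$, $d\ge2$), so a closed subscheme of $V$ of dimension $d$ is all of $V$. Thus $W=V$, so the single finite flat morphism $f\colon V\to U$ is simultaneously a $G$-torsor and an $H$-torsor with $H\subseteq G$ acting by restriction; comparing the ranks of $f_*\OO_V$ forces $\length(H)=\length(G)$, hence $H=G$, a contradiction. (The classical ``$V$ connected'' criterion is useless for infinitesimal $G$; it is integrality of $V$ that does the work.)

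For part~(\ref{item: quotientsofpinloc2}), let $\phi\colon\piNloc(U,x)\to G$ be the homomorphism classifying $V\to U$ (Proposition~\ref{prop: torspinloc}); by part~(\ref{item: quotientNori-reduced}) it is faithfully flat. Let $G'\subseteq G$ be the scheme-theoretic image of $\piNloc(U,X,x)=\Ker(\piNloc(U,x)\to\pi^N(X,x))$ under $\phi$; since $\phi$ is faithfully flat and $\piNloc(U,X,x)$ is normal, $G'$ is a normal subgroup scheme of $G$. Suppose $G'\subsetneq G$. Then $\phi$ composed with $G\to G/G'$ kills $\piNloc(U,X,x)$, which is the kernel of the faithfully flat map $\piNloc(U,x)\to\pi^N(X,x)$, so $\phi\bmod G'$ factors through $\pi^N(X,x)$. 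By Proposition~\ref{prop: torspinloc} this means that the $G/G'$-torsor over $U$ obtained from $V\to U$ by extension of structure group along $G\to G/G'$ — which equals $(Y/G')\setminus\{x'\}\to U$ — extends to a $G/G'$-torsor over $X$, contradicting Lemma~\ref{lem: quotient induces local torsor}(\ref{item: quotient induces local torsor}) applied to the normal subgroup scheme $G'\subsetneq G$. Hence $G'=G$, i.e.\ $\piNloc(U,X,x)\to G$ is surjective.

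For part~(\ref{item: quotientsofpinloc etale}) I treat the two maps separately. The second map, $\piNloc(U,x)^{\et}=\pietloc(X)\to G^{\et}$, classifies the $G^{\et}$-torsor $V/G^\circ\to U$ (extension of structure along $G\to G^{\et}$); its total space $V/G^\circ$ is integral (the punctured spectrum of the ring of invariants of a domain) and simply connected: $V\to V/G^\circ$ is a $G^\circ$-torsor with $G^\circ$ infinitesimal, hence a finite surjective radicial morphism — a universal homeomorphism — so $\piet(V/G^\circ)\cong\piet(V)$, and $\piet(V)=1$ by Zariski--Nagata purity, since $V$ is the regular punctured spectrum of a Henselian local ring with algebraically closed residue field. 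Therefore $V/G^\circ\to U$ is the universal finite étale cover of $U$ and $\pietloc(X)\xrightarrow{\ \sim\ }G^{\et}$. For the first map, apply the maximal-pro-étale-quotient functor to $1\to\piNloc(U,X,x)\to\piNloc(U,x)\to\pi^N(X,x)\to1$ (right exact because $\piNloc(U,x)\to\pi^N(X,x)$ is faithfully flat): since $\pi^N(X,x)^{\et}=\piet(X)=1$ (as $X=\Spec R$ with $R$ complete local and residue field algebraically closed), $\pi^N(X,x)$ is pro-infinitesimal, so the étale subgroup scheme $\piNloc(U,x)^{\et}\hookrightarrow\piNloc(U,x)$ maps trivially to $\pi^N(X,x)$ and hence is contained in $\piNloc(U,X,x)$; using the splitting of the connected-étale sequence one then gets $\piNloc(U,X,x)^{\et}\xrightarrow{\ \sim\ }\piNloc(U,x)^{\et}$, and composing with the second map gives the stated isomorphisms.

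I expect part~(\ref{item: quotientsofpinloc2}) to be the delicate step: it relies on the full strength of Proposition~\ref{prop: torspinloc} (the identification of $\Hfl{1}(X,-)$ inside $\Hfl{1}(U,-)$, and the faithful flatness of $\piNloc(U,x)\to\pi^N(X,x)$) together with the descent-of-quotients statement Lemma~\ref{lem: quotient induces local torsor}, and one must take care that $G'$ need not lie inside any proper \emph{étale} subgroup scheme of $G$, so the contradiction genuinely has to be produced with the possibly-infinitesimal quotient $G/G'$. More generally, the recurring subtlety is that arguments which are routine for the topological or étale fundamental group (connectedness, universal covers, Galois closures) must be replaced by scheme-theoretic ones valid for infinitesimal group schemes.
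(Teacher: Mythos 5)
Your proposal is correct and follows essentially the same route as the paper: part (\ref{item: quotientsofpinloc2}) is the paper's argument almost verbatim (take the image $G'\subseteq G$ of $\piNloc(U,X,x)$, note it is normal, and if it were proper derive a contradiction with Lemma \ref{lem: quotient induces local torsor}(\ref{item: quotient induces local torsor}) because the $G/G'$-torsor $(Y/G')\setminus\{x'\}\to U$ would extend to $X$), and part (\ref{item: quotientsofpinloc etale}) supplies exactly the details the paper leaves to ``Grothendieck's theory'' and to the triviality of $\piet(V)$. For part (\ref{item: quotientNori-reduced}) the paper argues slightly more quickly by comparing degrees along $\OO_{U,\eta_U}\to\OO_{V,\eta_V}\to\OO_{W,\eta_W}$ to get $|H|\geq|G|$, whereas you show $W=V$ via a closed immersion and a dimension count; both hinge on the integrality of $V$ and are equally valid.
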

\begin{proof}
Let us first prove (\ref{item: quotientNori-reduced}). 
Assume that $f$ admits a reduction of structure to a subgroup $H \subseteq G$, 
that is, there exists an $H$-torsor $g: W \to U$ together with an 
$H$-equivariant morphism $\psi: W \to V$. 
Since $V$ and $U$ are integral, the induced maps 
$\mathcal{O}_{U,\eta_U} \to \mathcal{O}_{V,\eta_V} \to \mathcal{O}_{W,\eta_W}$
on the local rings at the respective generic points are injective, 
hence $|H| \geq |G|$. 
Since $H$ is a subgroup of $G$, this implies that $H = G$, 
hence $f_V$ is Nori-reduced. 

As for (\ref{item: quotientsofpinloc2}), since $f$ is Nori-reduced, there is an associated
surjective homomorphism $\piNloc(U,x)\to G$.
Let $\psi:\piNloc(U,X,x)\to\piNloc(U,x)\to G$ be the composition
and let $H:=\psi(\piNloc(U,X,x))$ be the image, which is a normal
subgroup scheme of $G$.
Thus, we obtain a commutative diagram of profinite group schemes, 
all of whose rows are exact and all of whose vertical arrows 
are surjective
\begin{equation} 
\begin{gathered}
\xymatrix{
1 \ar[r] & \piNloc(U,X,x) \ar[r]\ar[d] & \piNloc(U,x) \ar[r]\ar[d] & \piN(X,x) \ar[r]\ar[d] & 1\\
1 \ar[r] &H \ar[r] &G \ar[r] & G/H\ar[r] &1
}
\end{gathered}
\end{equation}
Now, if $\psi$ was not surjective, then $G/H$ would be non-trivial, which means
that the $G/H$-torsor $V/H\to U$ would extend to a $G/H$-torsor over $X$, contradicting Lemma \ref{lem: quotient induces local torsor}.

Finally, Claim (\ref{item: quotientsofpinloc etale}) is a consequence of the triviality of
$\piet(Y)$ and $\pi(V)$, as well as Grothendieck's theory
of the \'etale fundamental group.
\end{proof}

In particular, there exists a natural surjective morphism $\piNloc(U,X,x)\to G$, 
which identifies the maximal \'etale quotients of both group schemes.

\begin{Question}\label{ques: pinloc}
If $x\in X=Y/G$ is a quotient singularity by a finite $k$-group scheme $G$,
is the surjective morphism $\piNloc(U,X,x) \to G$ an isomorphism?
\end{Question}

Quite surprisingly, the following examples show that this is \emph{not} true,
not even for rational double points.

\begin{Example} \label{ex: weirdpinlocs}
In the following examples taken from Table \ref{table: torsors with Picloclocloc}, 
$x\in X$ is a quotient singularity 
by a finite group scheme $G$ and there exists a simple finite group scheme $H$ and a local $H$-torsor over $x\in X$ that does not extend to $X$ and such that $H$ is not a quotient of $G$. 
\begin{enumerate}
    \item $p = 3$, $X = E_6^1$, $G = \C_3$, $H = \bmu_3$.
    \item $p = 3$, $X = E_6^0$, $G = \balpha_3$, $H = \bmu_3$.
    \item $p = 2$, $X = E_8^2$, $G = \C_2$, $H = \balpha_2$.
\end{enumerate}
In all cases, the local $H$-torsor is necessarily Nori-reduced, since $H$ is simple. 
Thus, by the same argument as in the proof of 
Lemma \ref{lem: quotientsofpinloc} \eqref{item: quotientsofpinloc2}, 
the induced homomorphism $\piNloc(U,X,x) \to G$ is surjective, but since $H$ is not a quotient 
of $G$, we obtain $\piNloc(U,X,x)\not\cong G$.
\end{Example}

All these examples share the  property that $G$ is not linearly reductive, which leads
the following modified question:

\begin{Question}\label{ques: pinloc modified}
Does Question \ref{ques: pinloc} have a positive answer if $G$ is linearly 
reductive?
\end{Question}

\begin{Remark}
By \cite[Theorem 8.1]{LRQ}, the quotient presentation $X=Y/G$
of an lrq singularity $x\in X$ is unique. 
This could be viewed as evidence for a positive answer to 
Question \ref{ques: pinloc modified}.
However, we warn the reader that this uniqueness does \emph{not} imply that 
$\piNloc(U,X,x) \cong G$, since there might be other strictly local and non-smooth torsors 
over $X$, nor does the F-regularity of $X$ imply that $\piNloc(U,X,x)$ 
admits no unipotent quotients, since the map $\Hom(\piNloc(U,x),G) \to \Hom(\piNloc(U,X,x),G)$ 
in Proposition \ref{prop: torspinloc} might not be surjective. 
For example, we will prove uniqueness of the presentation $Y=X/G$ in the examples
of Example \ref{ex: weirdpinlocs} by Theorem \ref{thm: quotient} below, yet
Question \ref{ques: pinloc} has a negative answer in these cases.
\end{Remark}

\subsection{Some illuminating computations}
The fact that Question \ref{ques: pinloc} has a negative answer in general is quite remarkable:
\begin{enumerate}
    \item the examples of Example \ref{ex: weirdpinlocs} admit more local torsors
    than can be ``explained'' by the group scheme $G$.
    \item Actually, it could also be possible that $\piNloc(U,X,x)$ has quotients that 
    do not correspond to local torsors, see Proposition \ref{prop: torspinloc}.
\end{enumerate}
In order to get at least some idea and a glimpse at what is going on, we now study the 
surjective homomorphism $\piNloc(U,X,x)\to G$ from Lemma \ref{lem: quotientsofpinloc}
in detail.
In particular, we will see that a tempting approach to establishing
Question \ref{ques: pinloc} does not work, but leads to interesting insights.

Let $x\in X = Y/G$ be a quotient singularity as in Definition \ref{def: quotient singularity}.
With $U=X\setminus\{x\}$ and $V=Y\setminus\{y\}$ as usual,
we have a commutative diagram with exact rows, 
where the vertical maps are induced by pullbacks of torsors
\begin{equation} 
\label{eq: Noridiagram}
\begin{gathered}
\xymatrix{
1 \ar[r] & \piNloc(V,Y,y) \ar[r]\ar[d] & \piNloc(V,y) \ar[r]\ar[d]^{\pi_1} & \piN(Y,y) \ar[r]\ar[d]^{\pi_2} & 1\\
1 \ar[r] & \piNloc(U,X,x) \ar[r] & \piNloc(U,x) \ar[r] & \piN(X,x) \ar[r] & 1\\
}
\end{gathered}
\end{equation}
Since $y\in Y$ is a smooth point, we have that $\piNloc(V,Y,y)$ is trivial. 
If we assume that the images of the $\pi_i$ are normal, then the non-commutative
snake lemma will yield an exact sequence 
\begin{equation}
\label{eq: snake piloc}
\begin{split}
\xymatrix{
 & 1 \ar[r] & \Ker(\pi_1) \ar[r] & \Ker(\pi_2)   \ar@{->} `r[d] `[l] `^d[lll] `[dll]  [dll]  & \\
 & \piNloc(U,X,x) \ar[r] & \Coker(\pi_1) \ar[r] & \Coker(\pi_2) \ar[r] & 1
}
\end{split}
\end{equation}

By Lemma \ref{lem: quotientsofpinloc}, we would get that 
$\Coker(\pi_1)$ admits a surjection onto $G$.
Thus, in order to approach Question \ref{ques: pinloc} and Question \ref{ques: pinloc modified},
one might be tempted to believe that $\Ker(\pi_2)$ is trivial and that
$\Coker(\pi_1)\cong G$: then, $\Ker(\pi_1)$ would be trivial, $\Coker(\pi_2)$ would be trivial, 
and $\piNloc(U,X,x)\cong G$.
We note that this is true for the maximal \'etale quotients.

In the following, we will show that the above approach already fails in the 
simplest possible non-\'etale case, namely if $G = \bmu_p$. Below, we will consider the following example.

\begin{Example}
\label{ex: Ap-1 counterexample}
 Let $G:=\bmu_p$ act diagonally on $Y := \Spec k[[u_1,u_2]]$ with weights $(1,-1)$
 and set $X:=Y/G$, that is, $x\in X$ is a rational double point of type $A_{p-1}$.
\end{Example}

First, we note that $\Coker(\pi_1)$ is larger than $G$ in Example \ref{ex: Ap-1 counterexample}.

\begin{Lemma} \label{lem: cokerneltoolarge}
In the situation of Example \ref{ex: Ap-1 counterexample}, we have $\Coker(\pi_1) \not \cong G$, that is, the surjection from Lemma \ref{lem: quotientsofpinloc}
$$
 \Coker(\pi_1) \,\to\, G
$$
is not an isomorphism.
\end{Lemma}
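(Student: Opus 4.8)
The plan is to exhibit, besides the tautological $\bmu_p$-torsor $V\to U$, a second Nori-reduced local torsor over $X$ whose structure group is \emph{not} a quotient of $G=\bmu_p$ but which is nonetheless killed by $\pi_1$. This forces $\Coker(\pi_1)$ to surject onto two simple group schemes admitting no nonzero homomorphisms between them, so $\Coker(\pi_1)$ cannot be $\bmu_p$. The torsor I would use is an $\balpha_p$-torsor.

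Write $Y=\Spec k[[u_1,u_2]]$, so that $R:=\OO(X)=k[[u_1,u_2]]^{\bmu_p}=k[[u_1^p,u_2^p,u_1u_2]]$ with $x=u_1^p$, $y=u_2^p$, $z=u_1u_2$ in the notation of Table \ref{table: RDPEquations}. Let $T\to X$ be the $\balpha_p$-torsor $\Spec\bigl(R[t]/(t^p-x)\bigr)$, with $\balpha_p$ acting by $t\mapsto t+\epsilon$, and let $T|_U$ be its restriction to $U$. Its class is the image of the coset of $x$ under the map $\Hfl{1}(X,\balpha_p)=\coker\bigl(F\colon R\to R\bigr)\xrightarrow{\imath^*}\Hfl{1}(U,\balpha_p)$, which is injective by Lemma \ref{lem: carvajal-rojas}. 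The key elementary point is that $x=u_1^p$ is \emph{not} a $p$-th power in $R$: if $u_1^p=r^p$ with $r\in R\subseteq k[[u_1,u_2]]$, then $r=\zeta u_1$ with $\zeta^p=1$, and $\zeta u_1$ is not $\bmu_p$-invariant, a contradiction. Hence $T|_U$ is a nontrivial $\balpha_p$-torsor over $U$; since $\balpha_p$ is simple, $T|_U$ is Nori-reduced, i.e.\ it corresponds to a surjection $\piNloc(U,x)\twoheadrightarrow\balpha_p$.

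Next I would check that the pullback of $T|_U$ along the quotient map $V\to U$ is trivial. Over $Y$ (hence over $V$) we have $x=u_1^p=F(u_1)$, so the pullback is $\Spec\bigl(\OO_V[t]/((t-u_1)^p)\bigr)$, which becomes the trivial $\balpha_p$-torsor after $t\mapsto t+u_1$. Therefore the composite $\piNloc(V,y)\xrightarrow{\pi_1}\piNloc(U,x)\twoheadrightarrow\balpha_p$ is trivial, so the surjection onto $\balpha_p$ factors through a surjection $\Coker(\pi_1)\twoheadrightarrow\balpha_p$. Combining this with the surjection $\Coker(\pi_1)\twoheadrightarrow G=\bmu_p$ of Lemma \ref{lem: quotientsofpinloc}: were $\Coker(\pi_1)\to G$ an isomorphism, then $\bmu_p$ would surject onto $\balpha_p$, which is absurd since $\Hom(\bmu_p,\balpha_p)=0$ ($\bmu_p$ is of multiplicative type while $\balpha_p$ is unipotent). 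Hence $\Coker(\pi_1)\to G$ is not an isomorphism.

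The only real work is the observation that $u_1^p$ is not a $p$-th power in the invariant ring $R$ (immediate) together with the bookkeeping of which torsors are Nori-reduced and which die after pullback along $\pi_1$; the group-scheme computation $\Hom(\bmu_p,\balpha_p)=0$ then closes the argument, so I do not anticipate a genuine obstacle. I would also point out that the argument is insensitive to the weights $(1,-1)$ and works for any diagonal $\bmu_p$-action, so the mismatch $\Coker(\pi_1)\neq G$ is not special to the $A_{p-1}$-singularity.
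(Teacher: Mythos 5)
Your proof is correct and is essentially the paper's own argument: the paper also adjoins a $p$-th root of $u_1^p$ to get the nontrivial (hence Nori-reduced) $\balpha_p$-torsor over $X$ whose pullback along $Y\to X$ is trivial, and deduces a surjection $\Coker(\pi_1)\twoheadrightarrow\balpha_p$. Your extra details (that $u_1^p\notin F(R)$, and that $\Hom(\bmu_p,\balpha_p)=0$ closes the argument) are just explicit versions of steps the paper leaves implicit.
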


Here, $\Coker(\pi_1)$ denotes the categorical cokernel of $\pi_1$, which can be described as the quotient of the target by the normal closure of the image of
$\pi_1$.

\begin{proof}
The quotient map $Y \to X$ corresponds to the
inclusion of rings $k[[u_1^p,u_2^p,u_1u_2]] \subseteq k[[u_1,u_2]]$.
Consider the $\balpha_p$-torsor $Z \to X$ given by adjoining 
a $p$-th root of $u_1^{p}$. Then, $Z = \Spec k[[u_1,u_2^{p},u_1u_2]]$ and $Y \to X$ factors through $Z$. 
Thus, the $\balpha_p$-torsor $Z \times_X Y \to Y$ is trivial.
In terms of Nori fundamental group schemes, this means that the Nori-reduced 
$\balpha_p$-torsor $Z \to X$ corresponds to a quotient map $\piNloc(U,x) \to \balpha_p$
that becomes trivial when precomposed with the natural homomorphism 
$\piNloc(V,y) \to \piNloc(U,x)$.
By the universal property of $\Coker(\pi_1)$, this means that $\piNloc(U,x) \to \balpha_p$ factors through a quotient map $\Coker(\pi_1) \to \balpha_p$. 
In particular, $\Coker(\pi_1)$ is larger than $G = \bmu_p$.
\end{proof}

\begin{Remark}
Note that the $\balpha_p$-torsor that we used in the above proof has a
non-normal total space. 
It is this non-normality that allows us to ``sandwich'' 
it between $X$ and $Y$. 
\end{Remark}

\begin{Remark}
\label{rem: cokerneltoolarge}
There are at least two ways of ``reading'' this lemma.
Namely, each of the the following two statements is equivalent
to Lemma \ref{lem: cokerneltoolarge}:
\begin{enumerate}
    \item There exist local torsors over $X$ that are not of the form $Y/H$ for some $H \subseteq G$, but that
    are nevertheless trivialised by pullback along $Y \to X$.
    \item There exist Nori-reduced $H$-torsors $W \to U$ that dominate $V \to U$, but whose
     induced $\Ker(H \to G)$-torsor $W \to V$ is not Nori-reduced.
\end{enumerate}

Note that neither of these two phenomena can occur in the setting of \'etale fundamental groups,
see, for example, \cite[Proposition 5.3.8]{Szamuely}.
\end{Remark}

If $Y\to X$ is a morphism between pointed schemes $(Y,y)$ and $(X,x)$,
then the induced morphism $\piet(Y,y) \to \piet(X,x)$  on \'etale fundamental groups 
is injective if and only if every finite \'etale torsor over $Y$ 
is dominated by the pullback of a finite \'etale torsor over $X$, 
see \cite[Corollary 5.5.9]{Szamuely}. 
In particular, if $Y \to X$ is \'etale and $Z \to Y$ is a Galois cover, 
then the tower $Z \to Y \to X$ admits a Galois closure, 
hence the map $\piet(Y,y) \to \piet(X,x)$ is injective.

Thus, assuming for a moment that \cite[Corollary 5.5.9]{Szamuely} also holds in the 
setting of local Nori fundamental group schemes, 
one is tempted to try to apply the approach of the previous paragraph to prove the 
injectivity of $\piNloc(V,y) \to \piNloc(U,x)$. 
The subtle question whether Galois closures exist in the setting of torsors under 
finite group schemes has been studied by Antei et. al. \cite{ABETZ}, where they refine
unsuccessful attempts from \cite{ABE} (withdrawn) and \cite{Garuti}.
In particular, there are examples of towers of torsors that do \emph{not} admit
a Galois closure \cite[Example 5.4]{ABETZ}. 
It turns out that this also happens in the setting of Example \ref{ex: Ap-1 counterexample}, 
as we will show below. 
In particular, it is unclear whether the natural map $\piNloc(V,y) \to \piNloc(U,x)$ 
is injective.

\begin{Lemma}
\label{lemma example 2}
 In the situaton of Example \ref{ex: Ap-1 counterexample}, there is a $\bmu_p$-torsor $Z \to Y$ such that the composition $Z \to X$ cannot be dominated by a local $H$-torsor over $X$ for any finite group scheme $H$.
\end{Lemma}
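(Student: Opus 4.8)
Write $R = k[[a,b,c]]/(ab-c^p)$ and $S = k[[u_1,u_2]]$, with $a = u_1^p$, $b = u_2^p$, $c = u_1u_2$, so that $\bmu_p$ acts on $Y = \Spec S$ by $u_1 \mapsto \zeta u_1$, $u_2 \mapsto \zeta^{-1}u_2$ and $X = \Spec R = Y/\bmu_p$. The plan is to take for $Z \to Y$ the nontrivial $\bmu_p$-torsor whose class in $\Hfl{1}(Y,\bmu_p) = \Gamma(Y,\OO_Y^\times)/p$ is $[1+u_1]$; setting $v := \tau - 1$, where $\tau^p = 1+u_1$, one has $v^p = u_1$, so the total space is the regular ring $Z = \Spec k[[v,u_2]]$ and $Z \to X$ is given by $a \mapsto v^{p^2}$, $b \mapsto u_2^p$, $c \mapsto v^p u_2$. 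Two features of this choice will be used: (i) the torsor $Z \to Y$ does \emph{not} descend to a torsor over $X$, since $1+u_1$ is not $\bmu_p$-invariant --- concretely, pulling its class back along the action $\bmu_p \times Y \to Y$ yields $[1+u_1+\delta u_1]$ in $\Gamma(\bmu_p\times Y,\OO^\times)/p$, with $\delta$ the coordinate on $\bmu_p = \Spec k[\delta]/\delta^p$, and this differs from $[1+u_1]$ because $1+\delta u_1/(1+u_1)$ is not a $p$-th power in $k[[u_1,u_2]][\delta]/\delta^p$ (a $p$-th power there lies in the subring without $\delta$, as $\delta^p=0$); and (ii) at the generic point $Z \to X$ is the $\bmu_{p^2}$-torsor $\Spec K(a^{1/p^2}) \to \Spec K$, $K := \Frac R$, where $a \notin K^p$ since the group of $p$-th roots of unity is trivial in characteristic $p$ and $u_1 \notin R$.

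Then I would argue by contradiction: suppose $W \to X$ is a local $H$-torsor admitting a morphism $\psi \colon W \to Z$ over $X$. First reduce $H$: since $x\in X$ is of type $A_{p-1}$, $\pietloc(X)$ is trivial (its order is the prime-to-$p$ part of $p$), so $U$ admits no nontrivial finite \'etale covers, and via the connected--\'etale sequence one may replace $W$ by a connected component and $H$ by $H^\circ$ without affecting the existence of $\psi$; so assume $H$ is infinitesimal. Next pull back along the quotient $\varpi\colon Y\to X$, which over $U$ is the $\bmu_p$-torsor $V\to U$: then $\overline W := W\times_X Y$ restricts over $V$ to an $H$-torsor $\overline W|_V \to V$ carrying its canonical $\bmu_p$-equivariant descent datum with $\overline W|_V/\bmu_p \cong W|_U$, and $\psi$ pulls back to a morphism $\overline W|_V \to Z^\circ$ over $X$, where $Z^\circ := Z\setminus\{\text{closed point}\}$. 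One is thereby reduced to showing that no $H$-torsor over $V$ equipped with such a $\bmu_p$-descent datum can admit a morphism to $Z^\circ$ over $X$.

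This last step is the heart of the matter, and it is the geometric incarnation of the failure of Galois closures for towers of torsors under finite group schemes (cf.\ \cite{ABETZ}). The mechanism is that $Z\to Y$ is a ``genuinely new'' $\bmu_p$-torsor over $Y$ --- its class is not $\bmu_p$-invariant by feature (i) --- whereas the $\bmu_p$-equivariant structure forces the class of $\overline W$ over $V$ to be $\bmu_p$-invariant; tracking $\psi$ and the $\bmu_p$-coaction on $\Gamma(\overline W|_V,\OO)$ through the identification $\overline W|_V/\bmu_p \cong W|_U$ realizes $Z^\circ\to V$ as dominated, over $V$ up to a $\bmu_p$-valued twisting cocycle, by $\overline W|_V\to V$, and the contradiction is then extracted by comparing $p$-th-power (Witt vector) structures: a $p$-th power in characteristic $p$ cannot acquire the nonzero first-order term in $\delta$ that feature (i) produces, once one has correctly accounted for how the twisting cocycle interacts with the coaction. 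I expect precisely this bookkeeping --- isolating the invariant (living in the cohomology of an infinitesimal thickening of $Y$, equivalently in the structure of the purely inseparable extension $K(a^{1/p^2})/K$ relative to the $\bmu_p$-action) that simultaneously rules out \emph{all} finite infinitesimal $H$ --- to be the main obstacle; the reduction steps above are routine.
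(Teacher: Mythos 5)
Your setup coincides with the paper's (the $\bmu_p$-torsor $Z\to Y$ obtained by extracting a $p$-th root of $1+u_1$, with $Z$ regular and $k(Z)=K(u_1^{1/p})=K(a^{1/p^2})$ for $K=\Frac R$), and your feature (ii) records exactly the invariant that the paper's proof ultimately exploits. But the proof has a genuine gap: the contradiction is never derived. You explicitly defer ``the heart of the matter'' to unperformed bookkeeping, and the mechanism you sketch for it --- that the class $[1+u_1]$ is not $\bmu_p$-invariant, so $Z\to Y$ cannot be dominated by anything pulled back from $X$ --- is not an argument that closes. Non-descent of $Z\to Y$ to $X$ is consistent with $Z\to X$ being dominated by a local $H$-torsor: domination only requires an equivariant factorisation $Z'\to Z\to X$ through some surjection of group schemes, and the paper itself documents (Lemma \ref{lem: cokerneltoolarge}, Remark \ref{rem: cokerneltoolarge}) that in this very example torsors can be ``sandwiched'' between $X$ and $Y$ in ways that defeat naive invariance/descent reasoning. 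So the non-invariance computation in your feature (i), while correct, is not known to be an obstruction, and no argument is given that it is one.

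What actually works, and what is missing from your proposal, is a structural analysis of the hypothetical Galois closure $Z'\to X$ combined with your feature (ii). One shows: (a) the $\Ker(\alpha)$-torsor $Z'|_V\to V$ (where $\alpha\colon H\to\bmu_p$ corresponds to $Y\to X$) extends over the \emph{smooth} scheme $Y$ by purity, so $\Ker(\alpha)$ acts freely on $Z'$ and hence meets the (necessarily nontrivial) stabilizer $\Stab(z')$ of the closed point trivially; since $\bmu_p$ is simple this splits $H\cong\Ker(\alpha)\rtimes\bmu_p$; (b) consequently the quotient $Z'/\Ker(\beta)$ (with $\beta\colon\Ker(\alpha)\to\bmu_p$ corresponding to $Z\to Y$) has geometric generic fiber $H_{\overline K}/\Ker(\beta)_{\overline K}\cong\Spec\overline K[x,y]/(x^p,y^p)$, a local Artinian algebra needing two generators; (c) this contradicts $Z_{\overline K}\cong\Spec\overline K[x]/(x^{p^2})$, which is generated by one element. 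Your observation that $k(Z)=K(a^{1/p^2})$ is precisely ingredient (c); the missing ideas are the purity/stabilizer splitting in (a) and the passage to the geometric generic fiber in (b), neither of which appears in your outline. I would also flag that your reduction to infinitesimal $H$ and the pullback of $\psi$ to a $V$-morphism are asserted rather than checked ($\psi$ is only an $X$-morphism, so $\overline W|_V\to Z^\circ$ need not live over $V$), but these are secondary to the absent main step.
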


\begin{proof}
Consider the composition of the two maps 
\begin{eqnarray*}
Z = \Spec~k[[u_1^{1/p},u_2]] &\to& Y = \Spec~k[[u_1,u_2]] \\
Y &\to& X = \Spec~k[[u_1^p,u_2^p,u_1u_2]]
\end{eqnarray*}
as a composition of a $\bmu_p$-torsor $Z \to Y$ obtained by taking a $p$-th root of $1 + u_1$ 
and a local $\bmu_p$-torsor $Y \to X$ as in the previous example.

Assume that there exists a Galois closure for the tower $Z \to Y \to X$, that is, a local $H$-torsor $Z' \to X$ that factors equivariantly through $Z$. 
In particular, there are surjective homomorphisms $\alpha: H \to \bmu_p$ and 
$\beta: \Ker(\alpha) \to \bmu_p$.
Note that $Z' \to X$ cannot be a global torsor, hence the stabiliser $\Stab(z')$ 
of the closed point in $Z'$ is non-trivial. 
Since $Y$ is smooth, $\Ker(\alpha)$ acts freely on $Z'$ by purity. So, $\Stab(z') \cap \Ker(\alpha) = \{{\rm id}\}$, hence $\alpha$ restricted to 
$\Stab(z')$ is injective. Since $\bmu_p$ is simple, this implies that $\alpha$ induces an isomorphism $\Stab(z') \to \bmu_p$. 
In particular, we have $H \cong \Ker(\alpha) \rtimes \bmu_p$. 

Since $H \cong \Ker(\alpha) \rtimes \bmu_p$ and $\Ker(\alpha)/\Ker(\beta) \cong \bmu_p$, 
we have an isomorphism of schemes (but not necessarily of group schemes, since $\Ker(\beta)$ might not be normal in $H$)  
$$
H/\Ker(\beta) \,\cong\, \Ker(\alpha)/\Ker(\beta) \times \Spec k[x]/(x^p) \,\cong\, 
\Spec k[x,y]/(x^p,y^p).
$$
In particular, letting $\overline{K} := \overline{k(X)}$
be the geometric generic point of 
$X$, we have
\begin{equation}\label{eq: counterexampleforgaloisclosures}
Z_{\overline{K}} \,\cong\, Z'_{\overline{K}}/\Ker(\beta)_{\overline{K}} 
\,\cong\, H_{\overline{K}}/\Ker(\beta)_{\overline{K}} 
\,\cong\, \Spec \overline{K}[x,y]/(x^p,y^p).
\end{equation}
For the second isomorphism, we have used that the $H$-torsor 
$Z'_{\overline{K}} \to \Spec \overline{K}$ is trivial, since $\overline{K}$ is algebraically closed.
For the third isomorphism we have used that
$H/\Ker(\beta)$ is a universal geometric quotient, since $\Ker(\beta)$ acts freely on $H$.

On the other hand, our example satisfies $k(Z) \cong k(X)[\sqrt[p^2]{u_1^p}]$. This implies
$Z_{\overline{K}} \cong \Spec \overline{K}[x]/(x^{p^2})$, which contradicts 
Equation \eqref{eq: counterexampleforgaloisclosures}. 
Hence, the local $H$-torsor $Z' \to X$ cannot exist.
\end{proof}

\begin{Remark}
We note that if $G$ is \'etale, then Galois closures exist by \cite[Theorem 3.9]{ABETZ}. Nevertheless, even if $G$ is \'etale, then
it is not always true that $\Ker(\pi_2)$ is trivial, that $\Image(\pi_1)$ and $\Image(\pi_2)$ are normal, and that
$\Coker(\pi_1) = G$. 
If it was, then $\piNloc(U,X,x)^{\et} = G^{\et}$ would imply $\piNloc(U,X,x) = G$ 
if $G$ is \'etale,  but this fails for the examples (1) and (3) of 
Example \ref{ex: weirdpinlocs}.
\end{Remark}

\section{Quotient and non-quotient RDPs} \label{sec: rdp}

Using the results of Section \ref{sec: torsors over rdp} on local torsors over RDPs, 
we will now study the question whether a given RDP can be realised as a quotient singularity 
and if so, whether the finite group scheme realising it 
as a quotient singularity is unique.
Table \ref{table: Quotient- and non-quotient RDPs} gives our results, 
where $\BD_{n}, \BT_{24}, \BO_{48}, \BI_{120}$ are the binary polyhedral group schemes, 
which are \'etale except for $\BD_{n}$ with $p \mid n$ (see \cite[p.8]{LiedtkeSatriano} for the precise definition). 
The group $\Dic_{12}$ is the dicyclic group of order $12$. In this section, we will prove the following two theorems.

\begin{Theorem} \label{thm: quotient}
Let $x\in X$ be an RDP in characteristic $p>0$
marked with $\checkmark$ in Table \ref{table: Quotient- and non-quotient RDPs}. 
Then, the following holds:
\begin{enumerate}
    \item $x\in X$ is a quotient singularity by the indicated group scheme $G$.
    \item  For every realisation $X \cong \widehat{\Aff}^d/H$ of $X$ as a quotient singularity by some finite group scheme $H$, we have $H \cong G$.
\end{enumerate}
\end{Theorem}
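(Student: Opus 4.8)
The plan is to prove the two assertions separately: existence of a quotient presentation by an explicit, case-by-case construction, and uniqueness of the group scheme by combining Proposition~\ref{prop: linearlyreductive or unipotent} and Lemma~\ref{lem: quotient induces local torsor} with the complete description of local torsors over RDPs from Section~\ref{sec: torsors over rdp}.

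For part (1), the F-regular RDPs (in particular all RDPs when $p\geq7$) are already covered: by \cite[Theorem~11.2]{LiedtkeMartinMatsumotoLRQ} such an RDP is a linearly reductive quotient singularity, and the group scheme is the one determined by $\pietloc(X)$ and ${\rm Cl}(X)$ via \cite[Proposition~7.1]{LiedtkeMartinMatsumotoLRQ}. So the new work concerns the RDPs marked with $\checkmark$ in Table~\ref{table: Quotient- and non-quotient RDPs} that are not F-regular. For each of these I would exhibit a very small action of the listed group scheme $G$ on $\widehat{\Aff}^2=\Spec k[[u_1,u_2]]$ and check that $k[[u_1,u_2]]^G$ is isomorphic to the corresponding equation in Table~\ref{table: RDPEquations}. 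Concretely: a diagonalisable factor $\bmu_n$ acts linearly; each infinitesimal $(\loc,\loc)$-factor ($\balpha_p$, $\balpha_{p^m}$, $\bM_n$, $\bL_{n,m}[\cdot]$, or $\bG_n^r$ as in Proposition~\ref{prop: picloclocloc of DNr}) is produced from a suitable $p$-closed derivation $D$ of $k[[u_1,u_2]]$ whose ring of constants is the prescribed RDP and whose vanishing locus is the closed point; and the binary polyhedral group schemes $\BD_n$, $\BT_{24}$, $\BO_{48}$, $\BI_{120}$, $\Dic_{12}$ are obtained by reducing Klein's classical construction modulo $p$, adjusting the semidirect extension data as in \cite{LiedtkeSatriano} and mixing with the infinitesimal part when needed. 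Freeness of the action on $U$ is automatic once $\widehat{\Aff}^2\to X$ is shown to be a $G$-torsor over $U$, and $x\in X^G$ is visible from the construction; the substantive point is the identification of the invariant ring, which one may verify either by a direct computation or, once the candidate total space $Y$ is seen to be normal, by checking that ${\rm Cl}(Y)$ and $\pietloc(Y)$ are trivial and $Y$ is F-injective and then invoking the Flenner--Mumford criterion of Theorem~\ref{thm: Mumfordstheorem} to conclude $Y\cong\widehat{\Aff}^2$.

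For part (2), let $X\cong\widehat{\Aff}^2/H$ be an arbitrary realisation as a quotient singularity. The factorisation $\widehat{\Aff}^2\to\widehat{\Aff}^2/H^{\circ}\to X$ together with the fact that $\widehat{\Aff}^2$ is algebraically simply connected identifies $H^{\et}$ canonically with $\pietloc(X)$, Artin's group in Table~\ref{table: torsors with Picloclocloc}. If $H$ is linearly reductive, then $X$ is an lrq singularity and uniqueness is \cite[Theorem~8.1]{LiedtkeMartinMatsumotoLRQ}. Otherwise $H$ contains $\balpha_p$ or $\C_p$, and by Proposition~\ref{prop: linearlyreductive or unipotent} the connected part $H^{\circ}$ is either $\bmu_{p^n}$ or unipotent with $H^{\circ}[F^{i+1}]/H^{\circ}[F^{i}]\cong\balpha_p$ for all $i$. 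For such $H$ I would apply Lemma~\ref{lem: quotient induces local torsor}: each $H^{\circ}[F^{i}]$ is characteristic in $H^{\circ}$, hence normal in $H$, so $\widehat{\Aff}^2/H^{\circ}[F^{i}]\to X$ is a local torsor with \emph{normal} total space that does not extend to $X$, and similarly for $\widehat{\Aff}^2/N\to X$ for every normal $N\subsetneq H$. Comparing this tower of intermediate torsors with the classified primitive local torsors over $X$ (Table~\ref{table: torsors with Picloclocloc} and the structure of $\bG_n^r$ in Proposition~\ref{prop: picloclocloc of DNr}), subject to the normality constraint, bounds the length of $H^{\circ}$, and the $\Dieu$-module structure of $\varinjlim_{m,n} H^2_{\idealm}(X,W_n\OO_X)[F^m]$ then determines $H^{\circ}$ and its extension by $\pietloc(X)$ up to isomorphism. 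The normality requirement is indispensable here: as Example~\ref{ex: non-normal} and Lemma~\ref{lem: cokerneltoolarge} show, there are local torsors over these RDPs that come from no quotient presentation, so one cannot simply read $H$ off from the list of all local torsors.

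The main obstacle is twofold. In part (1), the laborious step is the regularity verification for the infinitesimal and mixed cases: choosing the derivation or the semidirect-product action so that the ring of invariants is precisely the prescribed equation, and not some other singularity of the same Dynkin type, which is most delicate for the $D_n^r$-, $E_7$-, and $E_8$-families in characteristics $2$ and $3$. In part (2), the subtle point is to rule out ``exotic'' $H$ --- non-abelian infinitesimal group schemes, or unipotent $H^{\circ}$ of larger length than predicted --- since these are not governed by Dieudonné theory; one must argue directly from the $S_2$ and normality behaviour of the tower of intermediate quotients and from Proposition~\ref{prop: linearlyreductive or unipotent}, rather than purely from $\Dieulocloc(\Picloc_{X/k})$.
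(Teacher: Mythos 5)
Your proposal follows the same skeleton as the paper's proof: explicit very small actions for existence, and for uniqueness the identification $H^{\et}\cong\pietloc(X)$ (Proposition \ref{prop: etale quotients unique}), the constraint of Proposition \ref{prop: linearlyreductive or unipotent} on $H^{\circ}$, and the comparison of abelian quotients of $H$ with the classified primitive local torsors. Part (1) is fine in outline; the paper likewise delegates the invariant-ring identifications to explicit derivations and to \cite{Matsumoto alpha} and \cite{Matsumoto inseparable}. But in part (2), the sentence in which you claim that the normality constraint plus the $\Dieu$-module structure ``determines $H^{\circ}$ and its extension by $\pietloc(X)$ up to isomorphism'' hides two genuine gaps. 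The first concerns the extension: when $\pietloc(X)\neq 1$ and $H^{\circ}\cong\balpha_p$ (the cases $E_7^0$ in $p=3$ and $E_6^0$ in $p=2$, where the answer is $\balpha_3\times\C_2$, resp.\ $\balpha_2\times\C_3$), no bookkeeping with classes of local torsors distinguishes the direct product from a non-trivial semidirect product $\balpha_p\rtimes\C_n$. The paper needs a separate mechanism (Lemmas \ref{lem: comparison of automorphisms} and \ref{lem: possible argument uniqueness}): the character through which $\pietloc(X)$ acts on $H^{\circ}\cong\balpha_p$ is read off from its action on the class $e\in H^2_{\idealm}(\OO_Y)[F]$ of the torsor $\widehat{\Aff}^2\to Y$ over the universal \'etale cover $Y$, and this action is shown to be trivial by dualising against the (trivial, since $Y$ is an RDP) action on $\Gamma(V,\Omega^2)$. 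Nothing in your proposal produces this ingredient.

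The second gap is that ``subject to the normality constraint'' does not control the tower $\widehat{\Aff}^2/H[F^i]\to X$. A single primitive class in $\overline{\Hfl{1}(U,\balpha_p)}$ can have infinitely many pairwise non-isomorphic \emph{normal} representatives (Proposition \ref{prop: non-uniqueness of torsors}), so knowing the class and knowing that the total space is normal does not identify the intermediate quotient. The paper computes the integral closures explicitly (Lemmas \ref{lem: alphap-torsor char 3}, \ref{lem: alphap-torsor char 2}, \ref{lem: alphap-torsor char 2 Dn}) to see, for instance, that the primitive $\balpha_3$-cover of $E_8^1$ in $p=3$ is an $E_6^0$-singularity, and then recurses using the uniqueness already established for $E_6^0$. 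Similarly, your worry about non-abelian or long unipotent $H^{\circ}$ is resolved not by vague ``$S_2$ and normality behaviour'' but by the classifications in Propositions \ref{prop: simple infinitesimal} and \ref{prop: groupschemesoforderp3}: any unipotent $H^{\circ}$ of height $\geq 2$ with one-dimensional tangent space has $H^{\circ}/H^{\circ}[F^{n-2}]\in\{\balpha_{p^2},\bM_2\}$, which is abelian and hence detected by $\overline{\Hfl{1}(U,-)}_{\prim}$ via Proposition \ref{prop: primitive class} --- and even this does not suffice for $E_8^1$ in $p=2$, where an additional Tyurina-ideal computation on the intermediate cover is required. A smaller slip: the group schemes $\bG_n^r$ and $\bL_{n,m}[\cdot]$ you list never occur as the $G$ of a checkmarked row; $\Picloclocloc_{X/k}$ classifies local $(\loc,\loc)$-torsors and is in general quite different from the group scheme realising $X$ as a quotient (e.g.\ $E_8^0$ in $p=2$ has $\Picloclocloc_{X/k}\cong\balpha_2\times\balpha_8$ but $G=\balpha_2$).
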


\begin{Theorem} \label{thm: not quotient}
Let $x\in X$ be an RDP in characteristic $p>0$ marked with $\times$ in Table \ref{table: Quotient- and non-quotient RDPs}.
Then, $x \in X$ is not a quotient singularity.
\end{Theorem}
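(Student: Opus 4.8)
The plan is to argue by contradiction, case by case over the RDPs $x\in X$ marked with $\times$ in Table \ref{table: Quotient- and non-quotient RDPs}. So suppose such an $X$ were a quotient singularity, say $X\cong\widehat{\Aff}^2/G$, where a finite group scheme $G$ acts very smally on $Y:=\widehat{\Aff}^2=\Spec k[[u_1,u_2]]$. The first step is to pin down $G$ using the invariants already computed. By Lemma \ref{lem: quotientsofpinloc}, the canonical surjection $\piNloc(U,X,x)\to G$ is an isomorphism on maximal \'etale quotients, so $G^\et\cong\pietloc(X)$ is read off from Table \ref{table: torsors with Picloclocloc}. By Proposition \ref{prop: linearlyreductive or unipotent}, the scheme underlying $G^\circ$ is $\Spec k[t]/(t^{p^n})$; hence either $G^\circ\cong\bmu_{p^n}$ (and then $G$ is linearly reductive provided $p\nmid|G^\et|$), or $G^\circ$ is unipotent of height $n$, in which case all composition factors of $G^\circ$ are $\balpha_p$, so $G^\circ$ is of $(\loc,\loc)$-type and, having $1$-dimensional tangent space, the abelian such group schemes of small order are classified by Proposition \ref{prop: simple infinitesimal} and Proposition \ref{prop: groupschemesoforderp3}. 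Finally, $G\cong G^\circ\rtimes G^\et$.

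The second step eliminates diagonalisable infinitesimal parts. If $G$ is linearly reductive, then $x\in X$ is a linearly reductive quotient singularity, hence F-regular by \cite[Theorem 11.2]{LiedtkeMartinMatsumotoLRQ}, contradicting Table \ref{table: RDPEquations}, which shows that every $\times$-RDP fails to be F-regular. More generally, whenever $G^\circ\cong\bmu_{p^n}$ with $n\geq1$, passing to the maximal multiplicative-type quotient of $G^{\rm ab}$ and applying Lemma \ref{lem: quotient induces local torsor} together with Proposition \ref{prop: torsfinabcl} forces ${\rm Cl}(X)$ to contain a cyclic $p$-group of order $p^n$; comparing with the ${\rm Cl}(X)$-column of Table \ref{table: torsors with Picloclocloc} then either bounds $n$ or excludes this case outright. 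After these reductions, $G^\circ$ is a non-trivial unipotent infinitesimal group scheme of height $n\geq1$ with $1$-dimensional tangent space, in particular of $(\loc,\loc)$-type, so that $\Dieulocloc(G^\circ)$ is a cyclic $\Dieu$-module of length $n$.

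The third, and main, step is to rule out, for each remaining candidate $G$, the existence of a very small action on $\widehat{\Aff}^2$ with quotient $X$. Here one exploits that a quotient presentation $X=Y/G$ realises $Y\to X$ as a tower of local $G/N$-torsors $Y/N\to X$ attached to normal subgroup schemes $N\subsetneq G$, which are \emph{Nori-reduced} and whose total spaces $Y/N$ are \emph{normal} (Lemma \ref{lem: quotient induces local torsor}), the whole tower terminating in the regular ring $\OO_Y$. Reading the $\Dieu$-module $\Picloclocloc_{X/k}\cong\varinjlim_{m,n}H^2_\idealm(X,W_n\OO_X)[F^m]$ off Table \ref{table: wittvectorcohomology}, one sees exactly which infinitesimal torsors are available over $X$ and with which $F$- and $V$-behaviour. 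For each $\times$-case one then checks, using the explicit equations of Table \ref{table: RDPEquations}, the explicit description of local $\balpha_p$-torsors over RDPs from Example \ref{ex: endomorphismsofalphap}, and the computations behind Proposition \ref{prop: torsors loclocloc new}, that either (i) no $G$ with $G^\et\cong\pietloc(X)$ has $(\loc,\loc)$-invariants compatible with $\Picloclocloc_{X/k}$, or (ii) each intermediate quotient $Y/N$ one would need has total space that is not normal (cf.\ Remark \ref{rem: quotientisnormal} and Example \ref{ex: non-normal}) or not regular, and hence cannot itself be a quotient of $\widehat{\Aff}^2$. The case $E_8^0$ in $p=3$ and the cases $E_7^2,E_7^1,E_7^0,E_8^3,E_8^1$ in $p=2$ are handled using the length and cyclicity of $\Picloclocloc_{X/k}$ against ${\rm Cl}(X)$ and $\pietloc(X)$; the remaining $D_n^r$-cases in $p=2$ require in addition controlling the possibly non-cyclic, arbitrarily long group schemes $\bG_n^r$ of Proposition \ref{prop: picloclocloc of DNr}.

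The hard part is step three: matching the abstract invariants $\pietloc(X)$, ${\rm Cl}(X)$, and $\Picloclocloc_{X/k}$ is not sufficient — as $E_8^0$ in characteristic $3$ shows, a local $\balpha_p$-torsor of the expected type does exist abstractly — so one must track the \emph{geometry} of the torsors, i.e.\ the normality and regularity of the total spaces of the intermediate quotients, and whether they assemble into a tower ending in a regular two-dimensional local ring. Making this precise for unipotent $G^\circ$ of height $\geq2$, and keeping track of the semidirect-product structure when $G^\et$ acts non-trivially on $G^\circ$, is where the real work lies; the $D_n^r$-family in characteristic $2$ is the most delicate, since there $\Picloclocloc_{X/k}$ can be arbitrarily large and is not cyclic, so the bookkeeping of which intermediate infinitesimal torsors admit a regular total space becomes substantial.
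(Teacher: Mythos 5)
Your general framework -- constrain $G$ via $\pietloc(X)$ and Proposition \ref{prop: linearlyreductive or unipotent}, then obstruct the quotient presentation using primitive classes of local torsors and the computation of $\Picloclocloc_{X/k}$ -- is indeed the skeleton of the paper's argument. But two of your three mechanisms fail at exactly the points where the real work happens.

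First, for $D_n^r$ with $4r>n$ in characteristic $2$ the infinitesimal part is \emph{multiplicative}: the tower forced by the universal \'etale local torsor makes $G$ an extension of $\C_2$ by $\bmu_{8r-2n}$, so $G$ contains $\bmu_2\times\C_2$. Your proposed class-group test gives no contradiction here, since ${\rm Cl}(X)$ contains $\C_2$ (and in any case the conjugation action of $G^{\et}$ on $G^\circ$ can shrink the multiplicative part of $G^{\rm ab}$, so $\bmu_{p^n}\subseteq G^\circ$ does not force $\C_{p^n}\subseteq{\rm Cl}(X)$). What is actually needed is a separate statement of a different flavour: $\bmu_2\times\C_2$ admits \emph{no} very small action on $\widehat{\Aff}^2$ at all (Lemma \ref{Lemma: quotientsinchar2}, proved by linearising the $\bmu_2$-action, noting that the wild $\C_2$-action is trivial on the tangent space, and deriving a parity contradiction). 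Your proposal contains no substitute for this.

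Second, for the unipotent cases your alternative (ii) -- detecting non-normal intermediate quotients -- cannot succeed: in a genuine quotient presentation every $Y/N$ is automatically normal (Lemma \ref{lem: quotient induces local torsor}), and conversely every primitive class in $\overline{\Hfl{1}(U,\balpha_p)}$ over an RDP admits a normal representative (Theorem \ref{thm: (no) normal representative}(3)), so non-normality is never the obstruction. The vanishing of $\overline{\Hfl{1}(U,\balpha_{p^2})}_{\prim}$ and $\overline{\Hfl{1}(U,\bM_2)}_{\prim}$ (your (i)) only excludes $G^\circ$ of height $\geq 2$; to exclude height $1$ one must show that the existing primitive $\balpha_p$-torsor has \emph{non-regular} total space, i.e.\ that $X$ is not an $\balpha_p$-quotient. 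This is not a statement about torsor classes but requires an independent criterion (the Tyurina-ideal/Frobenius-power test, resp.\ \cite[Lemma 3.6]{Matsumoto alpha}). Moreover, for $E_7^2$, $E_8^3$, and especially $E_8^1$ in characteristic $2$, one must explicitly identify the normal total space $X_1$ of the primitive $\balpha_2$-torsor (an RDP of type $D_5^{1/2}$ or $E_7^2$, resp.\ a non-rational normal singularity) and then run the obstruction \emph{on $X_1$} -- for $E_8^1$ this means computing $H^2_{\idealm}(X_1,\OO_{X_1})[F^2]$ on a non-rational singularity and invoking Proposition \ref{prop: groupschemesoforderp3} to exclude the non-abelian length-$p^3$ candidates. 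None of this inductive step appears in your outline, so as it stands the proposal does not close any of the characteristic-$2$ cases.
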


This leaves open the case of the $D_n^r$-singularities with $2 < 4r < n$ in characteristic $2$. Some of them are quotient singularities (Proposition \ref{prop: alpha2e}) while others are not (Proposition \ref{prop: notquotient dnr}) and finite group schemes of arbitrarily large length occur in quotient realisations of these $D_n^r$-singularities (Proposition \ref{prop: alpha2e}). 
It would be interesting to find the pattern that underlies these phenomena. As a partial result in this direction, we will determine the finite abelian group schemes that can occur in quotient realisations of these $D_n^r$-singularities.

\begin{Theorem} \label{thm: D_n in p=2}
Let $x \in X$ be an RDP of type $D_n^r$ with $4r < n$ in characteristic $2$.
Let $X = \widehat{\Aff}^d/G$ be a realisation of $X$ as a quotient singularity. 
Then, $G^{\ab} = \balpha_{2^e}$ for some $e \geq 1$ or $G^{\ab} = \bL_{2,e}[V-F^{e-1}]$ 
for some $e \geq 2$.
Conversely, each of these group schemes appears as $G$ for some RDP 
of type $D_n^r$ with $4r < n$.
\end{Theorem}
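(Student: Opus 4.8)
The plan is to analyse the structure of the group scheme $G$ in a quotient realisation $X = \widehat{\Aff}^2/G$ of a $D_n^r$-singularity with $4r < n$ in characteristic $2$, using the classification of local torsors from Table \ref{table: torsors with Picloclocloc} together with the structural results of Proposition \ref{prop: picloclocloc of DNr}. First I would record the basic constraints: by Proposition \ref{prop: linearlyreductive or unipotent}, since $G$ admits a very small action on $\widehat{\Aff}^2$, its connected component $G^\circ$ has underlying scheme $\Spec k[t]/(t^{2^n})$ and is either $\bmu_{2^n}$ (linearly reductive) or unipotent with all height-one subquotients $\cong \balpha_2$. Since $4r < n$, Table \ref{table: torsors with Picloclocloc} tells us $\pietloc(X) = 0$, so $G^\et = 0$ by Lemma \ref{lem: quotientsofpinloc}\eqref{item: quotientsofpinloc etale}; hence $G = G^\circ$ is infinitesimal. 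Moreover ${\rm Cl}(X)$ is $\C_2^2$ or $\C_4$, which is nontrivial, so $G$ cannot be linearly reductive (a diagonalisable $\bmu_{2^n}$ would force $X$ to be of type $A_{2^n-1}$ by Proposition \ref{prop: torsfinabcl} and the class group computation). Therefore $G$ is unipotent and infinitesimal, and in particular $G^{\ab}$ is a nontrivial finite abelian infinitesimal unipotent — hence $(\loc,\loc)$ — group scheme.

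Next I would pin down $G^{\ab}$. The abelianisation $G^{\ab}$ is a quotient of $G$, so by Lemma \ref{lem: quotient induces local torsor}\eqref{item: quotient induces local torsor} applied to the normal subgroup scheme $N = [G,G]$, we obtain a local $G^{\ab}$-torsor over $X$ that does not extend globally. By Proposition \ref{prop: torsfinab} this gives a nonzero homomorphism $(G^{\ab})^D \to \Picloc_{X/k}$, and since $(G^{\ab})^D$ is again $(\loc,\loc)$ it lands in $\Picloclocloc_{X/k} = \bG_n^r$. So $(G^{\ab})^D$ — equivalently $G^{\ab}$ — embeds into $\bG_n^r$ (the map must be injective: $G^{\ab}$ maps onto a subgroup scheme and we should argue that the quotient realisation forces primitivity of the class, or at least that any proper quotient would contradict minimality — this is where I'd be careful). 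The key structural input is then Proposition \ref{prop: picloclocloc of DNr}: $\bG_n^r$ is killed by $p=2$, so $G^{\ab}[2] = G^{\ab}$; and the Dieudonné module $\Dieulocloc(\bG_n^r) = \varinjlim H^2_\idealm(X,W_l\OO_X)[F^m]$ has the explicit generator-relation description of Table \ref{table: wittvectorcohomology}, with $V$ acting by the "shift" $f_j^{(l)} \mapsto f_j^{(l-1)}$ and $F$ by the index map $j \mapsto C(l,j) - (n/2 - r - 1)$. A cyclic $\Dieu$-submodule of this, on which $F$ and $V$ are nilpotent and which is generated by a single element (as $\Dieulocloc(G^{\ab})$ must be for $G^{\ab}$ cyclic, and one reduces to this case) is of the form $\Dieu/\Dieu(F^a, V^b, \dots)$; tracing through which relations are realised by the $f_j^{(l)}$ generators and their $F,V$-orbits shows the only possibilities are $\Dieu/(F^e, V) \cong \Dieulocloc(\balpha_{2^e}^D)$ — wait, one must be careful with the covariant convention — giving $G^{\ab} \cong \balpha_{2^e}$, or the "diagonal-type" module $\Dieu/\Dieu(V^e, V - F^{e-1})$ giving $G^{\ab} \cong \bL_{2,e}[V - F^{e-1}]$. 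I would extract this by a direct orbit count on the explicit $S = \{x \in \ZZ_{>0} : x < \min\{n/2, n-2r-1\}\}$ with its maps $F,V$ from the proof of Proposition \ref{prop: picloclocloc of DNr}: a cyclic piece corresponds to an $F$-or-$V$ chain, and the relation $V = F^{e-1}$ versus $V = 0$ corresponds to whether the chain closes up via a mixed word or terminates.

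For the converse — that each $\balpha_{2^e}$ and each $\bL_{2,e}[V-F^{e-1}]$ actually occurs as some $G$ for some $D_n^r$ with $4r < n$ — I would appeal to the explicit quotient constructions elsewhere in the paper: Proposition \ref{prop: alpha2e} (referenced in Section \ref{subsec: dnr}) already exhibits $D_n^r$-singularities that are $\balpha_{2^e}$-quotients, and a parallel construction, or the observation that the $D_{2^e+2}^1$-type example with generator $f_1^{(e)}$ annihilated by $F$ and $V^e$ but not $V^{e-1}$ (noted in the Remark after Theorem \ref{thm: picloclocloc finite}) realises the $\bL_{2,e}[V-F^{e-1}]$ abelianisation. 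Concretely I would write down the invariant ring $k[[u_1,u_2]]^{\balpha_{2^e}}$ for the natural action given by a suitable additive vector field and identify it with the $D_n^r$ equation in Table \ref{table: RDPEquations} for the appropriate $n,r$, and similarly handle $\bL_{2,e}[V-F^{e-1}]$ by taking a non-abelian extension whose abelianisation is this group scheme (the action need not be linearisable, matching the paper's warnings).

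The main obstacle I anticipate is the injectivity and "cyclic reduction" step in the middle paragraph: a priori $G^{\ab}$ need not be cyclic, and $\Picloclocloc_{X/k} = \bG_n^r$ can be a large non-cyclic group scheme (as Proposition \ref{prop: picloclocloc of DNr}\eqref{item: GNr decomposition} shows), so I must argue why the quotient realisation forces $G^{\ab}$ to map isomorphically onto a \emph{cyclic} $\Dieu$-submodule of $\bG_n^r$ rather than onto an arbitrary subgroup scheme — presumably by combining the primitivity of the associated torsor class (Definition \ref{def: primitive}, Proposition \ref{prop: primitive class}) with the fact that the $\Dieu$-module $\Dieulocloc(\bG_n^r)$, although not cyclic, has the property that its primitive (i.e. not-a-proper-sum) cyclic submodules are exactly the two listed types. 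Getting this comparison between "primitive as a torsor class" and "generated by a single non-decomposable element of the Dieudonné module" precise, and checking it against the explicit combinatorics of $S$, is the delicate part; everything else is bookkeeping with the tables.
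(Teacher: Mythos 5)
Your outline assembles the right ingredients, and your treatment of the converse (citing Proposition \ref{prop: alpha2e}, whose second part already realises $\bL_{2,e}[V-F^{e-1}]$ directly as $G$ for $D_{2^{e+1}-2}^{2^{e-1}-1}$, so no auxiliary non-abelian extension is needed) matches the paper. But the step you yourself flag as delicate --- why $\Dieulocloc(G^{\ab})$ must be generated by a single element --- is a genuine gap, and the route you propose for closing it would not suffice. Primitivity of the class of $\widehat{\Aff}^2/N \to X$ (Proposition \ref{prop: primitive class}) gives only that $(G^{\ab})^D \to \Picloclocloc_{X/k}$ is a monomorphism, i.e.\ that $\Dieulocloc(G^{\ab})$ injects into $\Dieulocloc(\bG_n^r)$; nothing in that argument prevents the image from being a non-cyclic submodule, for instance a direct sum of two indecomposable pieces, since $\bG_n^r$ itself is far from indecomposable by Proposition \ref{prop: picloclocloc of DNr}(\ref{item: GNr decomposition}). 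So ``primitive as a torsor class'' does not translate into ``cyclic as a $\Dieu$-module'', and the orbit combinatorics of the set $S$ cannot repair this on its own.

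The missing input is one you already cited in your first paragraph but did not use at the crucial moment: Proposition \ref{prop: linearlyreductive or unipotent}(\ref{item: unipotent height n}) says the very small action forces $G[F^{i+1}]/G[F^{i}]\cong\balpha_2$ for every $i$ below the height, and this property passes to the quotient $G^{\ab}$ (its tangent space, and each higher Frobenius-graded piece, is a quotient of the corresponding one-dimensional piece of $G$). Since $\Dieulocloc(F)$ is left multiplication by $V$, this says exactly that $M:=\Dieulocloc(G^{\ab})$ satisfies $\dim_k M[V^{i+1}]/M[V^{i}]=1$ for all relevant $i$; hence $M$ is uniserial for $V$ and is generated by one element $f$ with $V^{e}f=0\neq V^{e-1}f$. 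The only place the explicit computation of $\bG_n^r$ is then needed is to conclude that $M$ is killed by $2$ (because $\Dieulocloc(\bG_n^r)$ is, by Proposition \ref{prop: torsors loclocloc new}). Together with $FV=2=0$ this forces $F(f)\in M[V]=k\cdot V^{e-1}f$, so $F(f)=cV^{e-1}(f)$; the case $c=0$ gives $\balpha_{2^e}$ and the case $c\neq 0$ (rescale to $c=1$) gives $\bL_{2,e}[V-F^{e-1}]$, with $e\geq 2$ because $G^{\ab}$ is of $(\loc,\loc)$-type. No classification of submodules of $\bG_n^r$ is required.
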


 \begin{table}[!h]
 $$
 \begin{array}{|c|c|c|c|}
 \hline
 \text{char.} & \text{RDP}\, X & \text{Quot. sing.?} & G \\ \hline \hline 
\begin{array}{c}
 \mbox{all} \\
 > 2 \\
 > 3 \\
 > 3 \\
 > 5
 \end{array}&
 \begin{array}{c}
 A_n \\
 D_n \\
 E_6 \\
 E_7 \\
 E_8
 \end{array} &
 \begin{array}{c}
 \checkmark \\
 \checkmark \\
 \checkmark \\
 \checkmark \\
 \checkmark 
 \end{array} & 
  \begin{array}{c}
 \bmu_{n+1} \\
 \BD_{4(n-2)} \\
 \BT_{24} \\
 \BO_{48} \\
 \BI_{120} 
 \end{array}
  \\ 
 \hline
 
5 & 
 \begin{array}{c}
 E_8^1 \\
 E_8^0
 \end{array}
 &
  \begin{array}{c}
 \checkmark \\
 \checkmark
 \end{array}
 &
  \begin{array}{c}
 \C_5 \\
 \balpha_5 
 \end{array}
 \\ \hline
 
3 &
\begin{array}{c}
E_6^1 \\
E_6^0 \\
E_7^1 \\
E_7^0 \\
E_8^2 \\
E_8^1 \\
E_8^0
\end{array}
&
\begin{array}{c}
\checkmark \\
\checkmark \\
\checkmark \\
\checkmark \\
\checkmark \\
\checkmark \\
\times
\end{array}
&
\begin{array}{c}
\C_3 \\
\balpha_3 \\
\C_6 \\
\balpha_3 \times \C_2 \\
\BT_{24} \\
\bM_2 \\
\times
\end{array}
\\ \hline

  2
  &
  \begin{array}{c}
D_n^{r} (4r > n) \\
D_n^{r} (4r = n) \\
D_{n}^{r} (2 < 4r < n) \\
D_{4n}^0 \\
D_{4n+2}^0 \\
D_{2n+1}^{1/2} \\
E_6^1 \\
E_6^0 \\
E_7^3 \\
E_7^2 \\
E_7^1 \\
E_7^0 \\
E_8^4 \\
E_8^3 \\
E_8^2 \\
E_8^1 \\
E_8^0 \\
  \end{array}
 &
   \begin{array}{c}
\times \\
\checkmark \\
? \\
\checkmark \\
 \times \\
  \times \\
\checkmark \\
\checkmark \\
\checkmark \\
\times \\
\times \\
\times \\
\checkmark \\
\times \\
\checkmark \\
\times \\
\checkmark
  \end{array}
   &
   \begin{array}{c}
  \times \\
  \C_2 \\
  ? \\
  \balpha_2 \\
  \times \\
  \times \\
  \C_6 \\
  \balpha_2 \times \C_3 \\
  \C_4 \\
  \times \\
  \times \\
  \times \\
  \Dic_{12} \\
  \times \\
  \C_2 \\
  \times \\
  \balpha_2 
  \end{array}
  \\ \hline
  \end{array}
   $$
 \caption{Quotient- and non-quotient RDPs}
 \label{table: Quotient- and non-quotient RDPs}
 \end{table}

\begin{Remark}
Note that Theorem \ref{thm: quotient} does not imply that given two realisations of $X$ 
as a quotient singularity by $G$, the two actions of $G$ on $\widehat{\Aff}^d$ are conjugate. 
If $G$ is linearly reductive, this is true by \cite[Theorem 8.1]{LRQ}. 
The main problem in the non-linearly reductive case is that the $G$-action cannot be 
linearised, making it very hard to find ``standard'' representatives in a 
conjugacy class of $G$-actions.
\end{Remark}

\subsection{Technical preliminaries}
In order to prove Theorem \ref{thm: quotient} and Theorem \ref{thm: not quotient}, 
we need explicit descriptions of local $G$-torsors over RDPs, 
at least in the cases where $G$ is not too complicated. 
In this section, we collect results of this kind, as well as some criteria 
that we can use to prove that a given RDP is \emph{not} a quotient singularity.

\subsubsection{The universal \'etale local torsor}
First, we observe that $\pietloc(X)$ contributes to every quotient realisation of a quotient singularity $x \in X$:

\begin{Proposition}
\label{prop: etale quotients unique}
Let $x\in X$ be a quotient singularity over $k$ with a quotient presentation
$$
 \pi \,:\, \widehat{\Aff}^d \,\to\, X\,=\,\widehat{\Aff}^d/G
$$
with respect to a very small action of a finite group scheme $G$ over $k$ on $\widehat{\Aff}^d$. 
Then, the following hold:
\begin{enumerate}
    \item $\pi$ factors through the integral closure $\pi^{\et}: Y \to X$ of the universal \'etale cover of $U$.
    \item If $Y$ is regular, then $\pi = \pi^{\et}$ and $G= \pietloc(X)$.
\end{enumerate}
\end{Proposition}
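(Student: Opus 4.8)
\textbf{Proof plan for Proposition \ref{prop: etale quotients unique}.}

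The plan is to exploit the factorisation of the quotient map through the connected-\'etale sequence of $G$. Write $1 \to G^{\circ} \to G \to G^{\et} \to 1$ and set $Z := \widehat{\Aff}^d/G^{\circ}$, so that $\pi$ factors as $\widehat{\Aff}^d \to Z \to X$, where $Z \to X$ is a $G^{\et}$-torsor over $U$ by Lemma \ref{lem: quotient induces local torsor}(\ref{item: quotient induces local torsor}). Since $G^{\et}$ is an \'etale group scheme (hence a constant group scheme, as $k = \bar k$), restricting $Z \to X$ to $U$ gives a finite \'etale cover $Z|_U \to U$. The \'etale cover of $U$ underlying $\pi^{\et}: Y \to X$ is by construction the universal (pro-)\'etale cover of $U$; in particular it dominates every finite \'etale cover of $U$, so there is a morphism $Y|_U \to Z|_U$ over $U$. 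Taking integral closures of $X$ in these covers — and using that $Y$ and $Z$ are both integral closures of $X$ in the respective covers of $U$ — we obtain a factorisation $Y \to Z \to X$, and composing with $\widehat{\Aff}^d \to Z$ (which is the integral closure of $X$ in... no: we must be slightly careful here) we instead compose $\widehat{\Aff}^d \to Z$ with the universal property in the other direction. The cleanest route: the map $\widehat{\Aff}^d \to U$ factors through $Y|_U \to U$ because $\widehat{\Aff}^d \setminus\{0\}$ is a connected (pro-)\'etale-trivial cover of its own \'etale fundamental group is handled by $\piet(\widehat{\Aff}^d) = \{e\}$ (Section \ref{subsec: etale}, since $\widehat{\Aff}^d$ is regular complete local), so $\widehat{\Aff}^d \setminus \{0\} \to U$ is dominated by the universal \'etale cover, i.e.\ factors through $Y|_U$. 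Passing to integral closures of $X$ (using \cite[Corollaire 6.3.2, Proposition 6.3.4]{EGA2} and the $S_2$-property via Lemma \ref{lem: carvajal-rojas}) gives the desired factorisation $\widehat{\Aff}^d \to Y \to X$, proving (1).

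For (2), assume $Y$ is regular. Then $Y \to X$ is itself (the integral closure of) a local torsor, and since $Y$ is the integral closure of $X$ in the universal \'etale cover of $U$, the Galois group of $Y|_U \to U$ is $\pietloc(X)$, which acts very smallly on $Y$ (it is free on $Y|_U = V_Y$ and fixes the closed point, the latter because $Y \to X$ is not a global torsor — $X$ being singular, or more simply because the cover is ramified only over $x$). Thus $Y \to X$ realises $X$ as a quotient singularity $X = Y/\pietloc(X)$. Now consider the factorisation $\widehat{\Aff}^d \to Y \to X$ from (1). The composite corresponds to the inclusion of invariant rings $H^0(X,\OO_X) = H^0(\widehat{\Aff}^d,\OO)^{G} \subseteq H^0(Y,\OO_Y) \subseteq H^0(\widehat{\Aff}^d,\OO)$. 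Comparing generic fibres (function fields), $k(\widehat{\Aff}^d)$ is Galois over $k(X)$ of degree $|G|$ (since the $G$-action on $\widehat{\Aff}^d$ is generically free, as it is free on $U$'s preimage), while $k(Y)/k(X)$ is the maximal \'etale-in-codimension-one... rather, $k(Y)/k(X)$ corresponds to $\pietloc(X)$ and $k(\widehat{\Aff}^d)/k(Y)$ corresponds to $G^{\circ}$, which has function-field degree $|G^{\circ}|$. If $Y$ is regular then $\widehat{\Aff}^d \to Y$ is a $G^{\circ}$-torsor over $V_Y = Y \setminus \{y_0\}$ with $G^{\circ}$ infinitesimal acting very smallly; but a very small action of an infinitesimal group scheme on a regular complete local ring whose quotient is again regular forces $G^{\circ}$ to be trivial — indeed, by Proposition \ref{prop: linearlyreductive or unipotent} the scheme underlying $G^{\circ}$ is $\Spec k[t]/(t^{p^n})$, and if $n \geq 1$ then $X = \widehat{\Aff}^d/G^{\circ}$ is singular (it has an infinitesimal torsor over it that does not extend, so $\piNloc \neq 1$, so $X$ is not smooth by Theorem \ref{thm: Mumfordstheorem}), contradicting regularity of $Y$. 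Hence $G^{\circ} = 1$, so $G = G^{\et}$ is \'etale, $\widehat{\Aff}^d = Y$, and $G \cong \Gal(Y|_U/U) = \pietloc(X)$.

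The main obstacle I anticipate is making the very first factorisation rigorous: one must carefully argue that $\widehat{\Aff}^d \setminus \{0\} \to U$ factors through the universal \'etale cover of $U$. This uses that $\widehat{\Aff}^d$ is regular and complete local — hence algebraically simply connected by \cite[Proposition 18.8.1]{EGA4} together with Hensel's lemma (as recalled in Section \ref{subsec: etale}) — so that its punctured spectrum has the same property in the relevant range, and then the universal property of the universal cover of $U$ applies; one then transports this up to the integral closures over $X$, invoking Lemma \ref{lem: carvajal-rojas} to see that the $\pietloc(X)$-action extends to $Y$ and that $Y$ is $S_2$. A secondary subtlety in part (2) is ruling out a nontrivial infinitesimal part $G^{\circ}$ when $Y$ is regular; the argument via Proposition \ref{prop: linearlyreductive or unipotent} and Theorem \ref{thm: Mumfordstheorem} should close this, though one could alternatively argue directly that a quotient of $k[[u_1,\dots,u_d]]$ by a nontrivial very small infinitesimal action is never regular by a tangent-space dimension count.
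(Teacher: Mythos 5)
Your part (1) is essentially the paper's argument: the paper simply observes that $(\widehat{\Aff}^d\setminus\{0\})/G^\circ$ is \'etale simply connected (quotients by infinitesimal group schemes are universal homeomorphisms, and $\widehat{\Aff}^d\setminus\{0\}$ is simply connected by purity) and \'etale over $U$, hence \emph{is} the universal \'etale cover of $U$, so that $Y=\widehat{\Aff}^d/G^\circ$ on the nose. Your detour through the lifting property of the simply connected $\widehat{\Aff}^d\setminus\{0\}$ against the universal cover, followed by integral closures, lands in the same place; note that your assertion ``$k(\widehat{\Aff}^d)/k(Y)$ corresponds to $G^\circ$'' is exactly this identification $Y=\widehat{\Aff}^d/G^\circ$ and deserves the one-line justification above rather than a degree count.

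In part (2) one step does not work as written. The proposition concerns quotient singularities of arbitrary dimension $d\geq 2$, but to rule out $G^\circ\neq 1$ you invoke Theorem \ref{thm: Mumfordstheorem}, which is a statement about normal \emph{surface} singularities; for $d\geq 3$ it says nothing. The direction you actually need --- regular implies $\piNloc$ trivial --- is just purity for torsors over regular local rings, which the paper records (right after Corollary \ref{cor: noritrivial}) in every dimension and which is far weaker than the Flenner--Mumford theorem. The correct argument, and the one the paper gives, is: if $Y$ is regular, purity forces the local $G^\circ$-torsor $\widehat{\Aff}^d\setminus\{0\}\to Y\setminus\{y_0\}$ to extend to a global $G^\circ$-torsor $\widehat{\Aff}^d\to Y$, which contradicts Lemma \ref{lem: quotient induces local torsor} (2) unless $G^\circ=1$. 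This also supersedes your proposed ``tangent-space dimension count'', which is not obviously easier to make rigorous. Once $G^\circ=1$, the conclusion $G=G^{\et}=\pietloc(X)$ follows as you say, since $Y|_U$ is the universal cover. Finally, the ``$X=\widehat{\Aff}^d/G^\circ$'' in your parenthetical should read $Y$.
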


\begin{proof}
Since $(\widehat{\Aff}^d - \{0\})/G^\circ$ is \'etale simply connected and \'etale over $U$, the morphism $\pi^{\et}: Y := \widehat{\Aff}^d/G^\circ \to X$ is the integral closure of the universal \'etale cover of $U$. This shows Claim (1).

Now, if $Y$ is regular, then $\widehat{\Aff}^d \to Y$ is a torsor by purity, hence $G^\circ$ is trivial by Lemma \ref{lem: quotient induces local torsor}. Thus, $G$ is \'etale and $\pi = \pi^{\et}$, hence $G = \pietloc(X)$.
\end{proof}

In particular, $G \cong G^{\circ} \rtimes \pietloc(X)$. 
If $G^{\circ}$ is linearly reductive, then we can determine the semidirect product by 
looking at the action of $\pietloc(X)$ on $\mathrm{Cl}(X)$. 
If $G^{\circ}$ is not linearly reductive, then a first step to determining the 
semidirect product structure is to look at the action of $\pietloc(X)$ on
$H^d_{\idealm}(X, \OO)[\idealm]$. 
To determine this action, we use the following lemma. 
Here, for a module $M$ over a local ring $(R,\idealm)$, 
we denote by $M[\idealm]$ the submodule annihilated by the maximal
ideal $\idealm$.

\begin{Lemma} \label{lem: comparison of automorphisms}
 Let $x\in X = \Spec k[[x_1, \dots, x_{d+1}]/(f)$ be an isolated hypersurface singularity in 
 characteristic $p\geq0$.
 Suppose that a finite group $G$ acts on $X$.
 Let 
 $$
 \begin{array}{cccclcl}
 \chi &\colon& G &\to& \Aut(\Gamma(U, \Omega^d) \otimes k) &=& k^{\times}\\
 \psi &\colon& G &\to& \Aut(H^d_{\idealm}(X, \OO)[\idealm]) &=& k^{\times}
 \end{array}
 $$
 be the induced $1$-dimensional representations, where $U \subseteq X$ denotes as usual 
 the complement of the closed point.
 Then, $\chi = \psi^{-1}$.
\end{Lemma}

\begin{proof}
Let $g \in G$ be an element. 
To establish the lemma, we may assume that the order of $g$ is either prime to $p$ 
or a power of $p$. 
In the latter case, its actions on $1$-dimensional vector spaces are trivial. 
Assume the former. 
Then, we can lift the action to $k[[x_1, \dots, x_{d+1}]]$ and diagonalise it, 
so that $g(x_i) = a_i x_i$ and $g(f) = b f$ for some $a_i, b \in k^{\times}$.

The space $\Gamma(U, \Omega^d)$ is generated by a $d$-form $\omega$ satisfying 
$f_{x_i} \omega = (-1)^i dx_1 \wedge \dots \wedge \widehat{dx_i} \wedge \dots \wedge dx_{d+1}$,
hence $\chi(g) = b^{-1} \cdot \prod_i a_i$.
For the second space, write $f = \sum x_i h_i$ for some $h_i \in k[[x_1, \dots, x_{d+1}]]$ satisfying $g(h_i) = a_i^{-1} b h_i$.
Then, $H^d_{\idealm}(X, \OO)[\idealm] \cong H^{d-1}(U, \OO)[\idealm]$ is generated by 
the $(d-1)$-\v{C}ech cocycle $((-1)^i h_i / \prod_{j \neq i} x_j)$ with respect to the open 
covering $(D(x_i))_{i = 1}^{d+1}$ of $U$. 
Therefore, we have $\psi(g) = b \cdot (\prod_i a_i)^{-1}$, hence $\chi = \psi^{-1}$.
\end{proof}

\subsubsection{Diagonalisable torsors}
Next, we recall that, in general, $\bmu_{p^e}$-actions correspond to $\ZZ/p^e\ZZ$-gradings. 
In the following, we explain when such a grading corresponds to a local torsor.

\begin{Lemma} \label{lem: mu_p-torsor} 
Suppose $\Spec S \to \Spec R$ is a local $\bmu_{p^e}$-torsor of primitive class.
Then, the $\bmu_{p^e}$-action on $S$ corresponds to a $\ZZ/p^e\ZZ$-grading, 
that is, a decomposition 
$$
 S \,=\, \bigoplus_{i \,\in\, \ZZ/p^e\ZZ} S_i
$$
of $k$-vector spaces that satisfies
\begin{enumerate}
    \item $1 \,\in\, S_0$,
    \item $S_i \cdot S_j \subseteq S_{i+j}$,
    \item $S_0 = R$, and
    \item the ideal generated by 
$\bigoplus_{i \in \ZZ/p^e\ZZ, i \neq 0} S_i$ is $\idealn$-primary.
\end{enumerate}
Conversely, any local $R$-algebra $S$ with a $\ZZ/p^e\ZZ$-grading of the above form 
is a $\bmu_{p^e}$-torsor of primitive class over $\Spec R$.
\end{Lemma}

\begin{proof}
This is proved in \cite[Section 2.2 and Proposition 2.8]{Matsumotomu}. 
\end{proof}

More generally, local $\bmu_{p^{e_1}} \times \dots \times \bmu_{p^{e_m}}$-torsors 
are described by using
$(\ZZ/p^{e_1}\ZZ) \times \dots \times (\ZZ/p^{e_m}\ZZ)$-gradings.

\subsubsection{Explicit $\balpha_p$-torsors and quotients}
Having discussed \'etale and linearly reductive torsors, we now turn to an explicit description of local $\balpha_p$-torsors.
The key result is Lemma \ref{lem: description of local alpha_p-torsor} below, 
for which we need the following easy observation.

\begin{Lemma} \label{lem: alpha_p-torsor}
Let $R$ be a $k$-algebra.
If $Y = \Spec S \to X = \Spec R$ is an $\balpha_p$-torsor
and $D$ is the corresponding derivation on $S$,
then there exists $t \in S$ such that $D(t) = 1$ and $S = R[t] / (t^p - f)$ for some $f \in R$. 
\end{Lemma}

\begin{proof}
Since $D$ defines an $\balpha_p$-torsor structure, the image of 
$D^{p-1} \colon \OO_Y \to \OO_X$ is equal to $\OO_X$. 
Since $X$ is affine, this means that $D^{p-1} \colon S \to R$ is surjective. 
In particular, there exists $t \in S$ such that $D(t) = 1$.
Since $D(t^p) = 0$, we have $f := t^p \in R$. 
We observe that $R[T] / (T^p - f) \to S \colon T \to t$ is injective, 
and by comparing the rank we observe that it is an isomorphism.
\end{proof}

If $x \in X = \Spec R$ is a normal surface singularity and $x_1,x_2 \in R$ is a regular sequence such that $U = X \setminus \{x\} = \Spec R[x_1^{-1}] \cup \Spec R[x_2^{-1}]$, then 
\begin{equation}
\label{eq: local cohomology group}
R[(x_1 x_2)^{-1}] / \left( R[x_1^{-1}] + R[x_2^{-1}] \right) 
\,\cong\, H^2_{\idealm}(X, \OO_X)
\end{equation}
as seen in the proof of Proposition \ref{prop: torsors loclocloc new}. 
An element $e \in R[(x_1 x_2)^{-1}]$ inducing a class in $H^2_{\idealm}(X, \OO_X)[F]$ yields a local $\balpha_p$-torsor and we can use $e$ to give an explicit 
description of this local torsor. 

\begin{Lemma} \label{lem: description of local alpha_p-torsor}
Let $x \in X = \Spec R$ be a normal surface singularity and let
$\pi \colon Y \to X$ a local $\balpha_p$-torsor of primitive class.
Let $x_1, x_2 \in R$ be a regular sequence such that 
$U = X \setminus \{x\} =  \Spec R[x_1^{-1}] \cup \Spec R[x_2^{-1}]$, 
let $e \in R[(x_1 x_2)^{-1}]$ be an element representing
$$
[Y \to X] \quad\in\quad \overline{\Hfl{1}(U, \balpha_p)}  
\,\cong\, H^2_{\idealm}(X, \OO_X)[F] 
$$ 
via the isomorphism \eqref{eq: local cohomology group}.
Let $e_i \in R[x_i^{-1}]$ be elements satisfying $e^p = e_1 - e_2$. 
Then, the following hold:
\begin{enumerate}
    \item \label{lem: description of local alpha_p-torsor:local}
There exist $t_i \in \OO_{Y_U}[x_i^{-1}]$ and $b \in R$ such that 
$$
\OO_{Y_U}[x_i^{-1}] \,\cong\, R[x_i^{-1}][t_i] / (t_i^p - (e_i + b))
$$
as $\balpha_p$-torsors over $R[x_i^{-1}]$, where the torsor structure on the right-hand side is given by the derivation $d/dt_i$.
\item \label{lem: description of local alpha_p-torsor:global}
If $S \subseteq \Frac \OO_{Y_U}$ is an $R$-subalgebra satisfying
\begin{enumerate}
    \item $\OO_{Y_U}[x_i^{-1}] = S[x_i^{-1}]$, and \label{item: equal over U}
    \item $S$ satisfies $S_2$, \label{item: S is S_2}
\end{enumerate}
then $Y = \Spec S$.
\item \label{lem: description of local alpha_p-torsor:u_i}
If $n_1, n_2$  are non-negative integers such that $e = {x_1^{-n_1} x_2^{-n_2}}{h}$ with $h \in R$,
    and $u_i = x_i^{n_i} t_i$, then $S' := R[u_1, u_2]$ satisfies (\ref{item: equal over U}).
\end{enumerate}
\end{Lemma}
\begin{proof}
Recall that $Y$ is affine and satisfies $S_2$, by Lemma \ref{lem: carvajal-rojas}.
Since $Y$ does not extend to a global $\balpha_p$-torsor, $Y$ is a domain.

(\ref{lem: description of local alpha_p-torsor:local})
Since $Y \times_X \Spec R[x_i^{-1}] \to \Spec R[x_i^{-1}]$ is an $\balpha_p$-torsor over an affine scheme, by Lemma \ref{lem: alpha_p-torsor} it is of the form  
$\Spec R[x_i^{-1}][t_i] / (t_i^p - f_i)$
for some $f_i \in R[x_i^{-1}]$.
The class $[e]$ of the local $\balpha_p$-torsor is defined so that $e = t_1 - t_2$ in $R[(x_1 x_2)^{-1}] / \left( R[x_1^{-1}] + R[x_2^{-1}] \right)$.
By replacing $t_i$ with $t_i + a_i$ for appropriate $a_i \in R[x_i^{-1}]$, we may assume $e = t_1 - t_2$ in $R[(x_1 x_2)^{-1}]$,
and then we have $e_1 - e_2 = e^p = f_1 - f_2$.
Hence $b := f_1 - e_1 = f_2 - e_2$ belongs to $R[x_1^{-1}] \cap R[x_2^{-1}] = R$.

(\ref{lem: description of local alpha_p-torsor:global})
We have $\OO_S = \OO_Y$ over $U$ by (\ref{item: equal over U}). 
By taking the sections over $U$ of these sheaves we obtain $S = \Gamma(Y, \OO_Y)$
since both $S$ and $Y$ satisfy $S_2$ by (\ref{item: S is S_2}) and Lemma \ref{lem: carvajal-rojas} respectively,

(\ref{lem: description of local alpha_p-torsor:u_i})
    %(\ref{item: not contained in R}) is clear.
    For (\ref{item: equal over U}), $t_i = x_i^{-n_i} u_i \in S'[x_i^{-1}]$ and $u_i = x_i^{n_i} t_i \in R[x_i^{-1}][t_i] / (\dots)$ are clear, and we have $u_1 = x_1^{n_1} t_1 = x_1^{n_1} t_2 + {x_2^{-n_2}}{h} \in R[x_2^{-1}][t_2] / (\dots)$.
\end{proof}

Occasionally, we need to show that certain singularities are not $\balpha_2$-quotient singularities. 
For RDPs, we can use the classification of $\balpha_p$-quotient RDPs given in \cite[Lemma 3.6]{Matsumotoalpha}. 
However, we also have to deal with some non-RDP singularities, in which case we use the following lemma.
Recall that the Tyurina ideal $I$ of $R = k[[x_1,x_2,x_3]] / (h)$ is defined to be the ideal 
$I = (h, h_{x_1}, h_{x_2}, h_{x_3}) \subseteq k[[x_1, x_2, x_3]]$ generated by $h$ 
and its partial derivatives.

\begin{Lemma} \label{lem: alpha_2 Tyurina}
Suppose $R = k[[x_1,x_2,x_3]] / (h)$ is an $\balpha_2$-quotient singularity. 
Then the Tyurina ideal $I$ is the Frobenius power $\ideala^{(2)}$ of some ideal $\ideala$. 
Moreover, for any ideal $\idealb$, $I + \idealb^{(2)}$ is also the Frobenius power 
of some ideal and hence, it is stable under taking partial derivatives. 
\end{Lemma}
\begin{proof}
The former part follows from we use the description of $\balpha_2$-quotients given in \cite[Theorem 3.8]{Matsumotoalpha} (with $T = 0$).
The latter part follows from the former part.
\end{proof}

\subsubsection{Criteria for uniqueness and non-existence of quotient presentations} 

Our main tool to show that certain RDPs cannot be quotient singularities 
will be the following, which is an immediate consequence of 
Lemma \ref{lem: quotient induces local torsor}:

\begin{Proposition} \label{prop: primitive class}
Let $x \in X$ be a quotient singularity by a finite group scheme $G$ and let 
$G' = G/N$ be an abelian quotient of $G$.
Then, 
$$
[\widehat{\Aff}^d/N \to X] \quad\in\quad 
\overline{\Hfl{1}(U, G')}_{\prim}.
$$
\end{Proposition}

\begin{proof}
For every $H \subsetneq G'$, the morphism $(\widehat{\Aff}^d/N)/H \to X$ is a 
local $G/H$-torsor that does not extend to a global $G/H$-torsor by the same argument as in 
Lemma \ref{lem: quotient induces local torsor} (\ref{item: quotient induces local torsor}). 
Hence, the class in $\overline{\Hfl{1}(U, G')}$ of the local $G'$-torsor 
$\widehat{\Aff}^d/N \to X$ is non-zero in $\overline{\Hfl{1}(U, G'/H)}$, 
which means precisely that the class does not come from $\overline{\Hfl{1}(U, H)}$ (here, we use left-exactness of the functor $\overline{\Hfl{1}(U,-)}$).
\end{proof}

\begin{Corollary} \label{cor: no primitive class}
Let $x \in X$ be a singularity and let $G'$ be an abelian group scheme with $\overline{H^1(U, G')}_{\prim} = \emptyset$. Then, $x \in X$ is not a quotient singularity by any finite group scheme $G$ having $G'$ as a quotient.
\end{Corollary}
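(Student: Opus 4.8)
The plan is to derive this immediately as the contrapositive of Proposition \ref{prop: primitive class}. Suppose, for contradiction, that $x \in X$ is a quotient singularity by some finite group scheme $G$ that admits $G'$ as a quotient, say $G' = G/N$ for a normal subgroup scheme $N \subseteq G$. Since $G'$ is abelian by hypothesis, Proposition \ref{prop: primitive class} applies and yields that the class of the local $G'$-torsor $\widehat{\Aff}^d/N \to X$ lies in $\overline{\Hfl{1}(U,G')}_{\prim}$.

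The only point to check is that $N$ is a \emph{proper} normal subgroup scheme, i.e.\ that the quotient presentation $X \cong \widehat{\Aff}^d/G$ forces $G$ to be nontrivial (otherwise $N = G$ would be allowed and the argument would be vacuous). This is clear: if $G$ were trivial, then $X \cong \widehat{\Aff}^d$ would be regular, contradicting that $x \in X$ is a singularity in the sense of Definition \ref{def: singularity} together with the fact that a singularity is, by our conventions, typically non-smooth — or, more carefully, one simply observes that if $G' = G/N$ is to be a quotient of $G$ at all there is nothing to prove unless $G$ is nontrivial, and Proposition \ref{prop: primitive class} is stated precisely for this situation. In particular $\overline{\Hfl{1}(U,G')}_{\prim}$ is nonempty, contradicting the hypothesis $\overline{\Hfl{1}(U,G')}_{\prim} = \emptyset$.

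I do not anticipate any real obstacle here: the corollary is a direct logical restatement of Proposition \ref{prop: primitive class}, and the proof is a single sentence once that proposition is in hand. The only mild subtlety worth a remark is the notational identification $\overline{H^1(U,G')}_{\prim} = \overline{\Hfl{1}(U,G')}_{\prim}$ from Definition \ref{def: primitive}, which is purely cosmetic. Thus the proof will read, in essence: ``Immediate from Proposition \ref{prop: primitive class}: a quotient presentation $X \cong \widehat{\Aff}^d/G$ with $G' = G/N$ a quotient would produce an element of $\overline{\Hfl{1}(U,G')}_{\prim}$, contradicting the assumption.''
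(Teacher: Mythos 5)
Your proof is correct and takes exactly the approach the paper intends: the corollary is stated there without proof, as an immediate contrapositive of Proposition \ref{prop: primitive class}. Your digression about $N$ being a proper subgroup scheme is unnecessary (the proposition applies to any abelian quotient $G'=G/N$, and the resulting element of $\overline{\Hfl{1}(U,G')}_{\prim}$ already contradicts the emptiness hypothesis), but it does no harm.
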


Here is an application of these results to narrow 
down the possibilities for a given singularity 
to be a quotient singularity.

\begin{Lemma} \label{lem: possible argument}
Assume that $x \in X$ is a normal singularity that is not F-regular and with trivial
$\pietloc(X)$.
If $\overline{H^1(U, \balpha_{p^2})}_{\prim} = \overline{H^1(U, \bM_2)}_{\prim} = \emptyset$ and if $X$ is a quotient singularity, then it is a quotient singularity by $\balpha_p$. 
In particular, if $x\in X$ is not a quotient singularity 
by $\balpha_p$, then $X$ is no quotient singularity at all.
\end{Lemma}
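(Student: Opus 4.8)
The plan is to use the classification of very small actions of finite group schemes from Proposition~\ref{prop: linearlyreductive or unipotent} together with the structure theory of group schemes of small length recalled in Section~\ref{sec: group schemes}. Suppose $x\in X$ is a quotient singularity, say $X\cong\widehat{\Aff}^2/G$ for a finite group scheme $G$ acting very smally. Since $\pietloc(X)$ is trivial, Lemma~\ref{lem: quotientsofpinloc}\,(\ref{item: quotientsofpinloc etale}) forces $G^{\et}$ to be trivial, so $G=G^\circ$ is infinitesimal. By Proposition~\ref{prop: linearlyreductive or unipotent}, the scheme underlying $G$ is $\Spec k[t]/(t^{p^n})$ for some $n\ge 0$, and since $X$ is not F-regular, $G$ cannot be linearly reductive (an F-regular RDP would result otherwise — indeed, if $G$ were linearly reductive then $X$ would be a linearly reductive quotient singularity, hence F-regular by \cite[Theorem 11.2]{LiedtkeMartinMatsumotoLRQ}), so we are in case (\ref{item: unipotent height n}) of Proposition~\ref{prop: linearlyreductive or unipotent}: $G$ is unipotent of height $n$ with $G[F^{i+1}]/G[F^i]\cong\balpha_p$ for all $0\le i<n$, and in particular $n\ge 1$. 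If $n=1$, then $G\cong\balpha_p$ and we are done.

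So I would argue by contradiction, assuming $n\ge 2$. The key point is to produce a nontrivial abelian quotient $G'$ of $G$ of length $p^2$ that is of $(\loc,\loc)$-type, and then invoke Proposition~\ref{prop: primitive class}: the class $[\widehat{\Aff}^2/N\to X]$ (where $N=\ker(G\to G')$) would lie in $\overline{\Hfl{1}(U,G')}_{\prim}$, contradicting the hypothesis once we know that $\overline{H^1(U,G')}_{\prim}=\emptyset$. To get such a $G'$: since $G$ is unipotent of height $n\ge 2$ with $1$-dimensional tangent space, the abelianisation $G^{\ab}=G/[G,G]$ is a nontrivial abelian infinitesimal group scheme which is still a quotient of $G$ (hence also has cyclic, i.e. at most $1$-dimensional, tangent space, being a quotient of $\Spec k[t]/(t^{p^n})$), and it is unipotent because $G$ is. An abelian unipotent infinitesimal group scheme with $1$-dimensional tangent space is, by the classification, of the form $\balpha_{p^m}$, $\balpha_{p^m}^D$, or $\bM_m$ (or more generally one of the $\bL_{a,b}[F^c-V^d]$ of $(\loc,\loc)$-type — here "unipotent" rules out $\bmu_{p^m}$); and since $G$ has height $n\ge 2$ and $G[F]$ is a proper normal subgroup, $G^{\ab}$ has length $\ge p$, and in any case $G$ admits a quotient of length exactly $p^2$: namely, either $G^{\ab}$ itself if $\length(G^{\ab})=p^2$, or a length-$p^2$ quotient of $G^{\ab}$ if $\length(G^{\ab})>p^2$ (using that $\mathcal{C}$ = finite abelian $(\loc,\loc)$-group schemes is closed under quotients, Section~\ref{sec: maximalsubsheaves}). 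Such a length-$p^2$ abelian $(\loc,\loc)$-quotient is isomorphic to $\balpha_{p^2}$, $\balpha_{p^2}^D$, or $\bM_2$ by Proposition~\ref{prop: simple infinitesimal}. The cases $\balpha_{p^2}$ and $\bM_2$ are directly excluded by hypothesis. For $\balpha_{p^2}^D$, I would note that $\balpha_{p^2}^D\cong\bL_{2,1}$ surjects onto $\bL_{1,1}=\balpha_p$, but this does not immediately help; instead I would observe that $\balpha_{p^2}^D$ cannot be a quotient of a unipotent group scheme with $1$-dimensional tangent space of the shape $\Spec k[t]/(t^{p^n})$ — its Cartier dual $\balpha_{p^2}$ being infinitesimal means $\balpha_{p^2}^D$ is \emph{not} of height $1$; more to the point, $\balpha_{p^2}^D=\bW_2[F]$ has a two-dimensional tangent space, so it cannot be a quotient of $\Spec k[t]/(t^{p^n})$, which has a one-dimensional tangent space — a contradiction. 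Hence the length-$p^2$ quotient must be $\balpha_{p^2}$ or $\bM_2$, and we reach a contradiction with the vanishing of the primitive sets.

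Therefore $n=1$ and $G\cong\balpha_p$, proving the first assertion. The second assertion, ``if $x\in X$ is not a quotient singularity by $\balpha_p$, then $X$ is no quotient singularity at all,'' is then the contrapositive of what we just proved: any quotient realisation of $X$ is forced to be by $\balpha_p$.

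I expect the main obstacle to be the careful bookkeeping around tangent spaces and Cartier duals needed to rule out the $\balpha_{p^2}^D$ case cleanly — one must be sure that a $(\loc,\loc)$-quotient of a group scheme with underlying scheme $\Spec k[t]/(t^{p^n})$ cannot have $\balpha_{p^2}^D$ as a further quotient, which ultimately rests on the fact that quotients of $\Spec k[t]/(t^{p^n})$ (as a scheme) have $1$-dimensional Zariski tangent space at the origin while $\balpha_{p^2}^D=\bW_2[F]\cong\Spec k[a_0,a_1]/(a_0^p,a_1^p)$ has $2$-dimensional tangent space. A secondary subtlety is justifying that $G$ admits an abelian $(\loc,\loc)$-quotient of length \emph{exactly} $p^2$: this follows since $G$ is a nontrivial infinitesimal unipotent group scheme, so $G^{\ab}\ne 1$, and any nontrivial finite group scheme has a quotient of length $p$ and, when its length is $\ge p^2$, also one of length $p^2$; when $\length(G^{\ab})=p$ we instead have $G^{\ab}=\balpha_p$, but then height considerations (Proposition~\ref{prop: linearlyreductive or unipotent}, case (\ref{item: unipotent height n})) combined with $n\ge2$ give a slightly different but analogous contradiction via $\balpha_{p^2}$ sitting inside $G$ as $G[F^2]$, and one applies Proposition~\ref{prop: primitive class} to the subquotient $G[F^2]\to G[F^2]/\text{(image of }[G,G]\cap G[F^2])$ — here I would simply remark that in the case $G^{\ab}=\balpha_p$, $G$ is non-abelian of length $p^3$ with $1$-dimensional tangent space, so by Proposition~\ref{prop: groupschemesoforderp3}\,(\ref{item: non-commutative}) we have $G/G[F]\cong\balpha_{p^2}$, giving the desired $\balpha_{p^2}$-quotient directly.
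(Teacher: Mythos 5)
Your skeleton is the same as the paper's: reduce to $G$ infinitesimal and unipotent with $G[F^{i+1}]/G[F^{i}]\cong\balpha_p$ via Proposition \ref{prop: linearlyreductive or unipotent}, and for height $n\geq 2$ produce a quotient of $G$ isomorphic to $\balpha_{p^2}$ or $\bM_2$, contradicting the hypothesis through Proposition \ref{prop: primitive class} (Corollary \ref{cor: no primitive class}). However, the way you manufacture the length-$p^2$ quotient has two real soft spots. First, to rule out $\balpha_p\times\balpha_p$ (which you silently drop from the list of Proposition \ref{prop: simple infinitesimal}\,(1)) and $\balpha_{p^2}^D$, you appeal to the principle that quotients of a group scheme with underlying scheme $\Spec k[t]/(t^{p^n})$ again have one-dimensional tangent space, justified by the remark that ``quotients of $\Spec k[t]/(t^{p^n})$ as a scheme'' have one-dimensional tangent space. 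As stated this is not a valid argument: the Hopf algebra of a quotient group scheme is an invariant \emph{sub}algebra of $k[t]/(t^{p^n})$, and subalgebras can have two-dimensional cotangent space (e.g.\ $k\{1,t^2,t^3\}\subset k[t]/(t^4)$). The fact you need is true but requires a proof, for instance: $k[a,b]/(a^p,b^p)$ admits no $k$-algebra embedding into $k[t]/(t^{p^n})$, because any element of the maximal ideal whose $p$-th power vanishes has $t$-adic valuation at least $p^{n-1}$, so $a^{p-1}b^{p-1}$ would have valuation at least $2(p-1)p^{n-1}\geq p^n$ and hence be zero. Second, your treatment of the case $\length(G^{\ab})=p$ assumes that $G$ has length $p^3$ in order to invoke Proposition \ref{prop: groupschemesoforderp3}\,(\ref{item: non-commutative}); nothing in your argument bounds the length of $G$, so that step does not go through as written.

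Both issues evaporate if, instead of passing through the abelianisation and an arbitrary colength-two quotient, you use the canonical quotient $G/G[F^{n-2}]$, which is exactly what the paper does. Since the graded pieces of the Frobenius filtration are $\balpha_p$, this quotient has length $p^2$, and its height is exactly $2$ (height $1$ would force $G=G[F^{n-1}]$). By Proposition \ref{prop: simple infinitesimal}\,(3), an infinitesimal group scheme of length $p^2$ and height $2$ is $\balpha_{p^2}$, $\bmu_{p^2}$, or $\bM_2$ --- the troublesome candidates $\balpha_p\times\balpha_p$, $\balpha_{p^2}^D$, and the non-abelian one all have height $1$ and are excluded for free --- and $\bmu_{p^2}$ is impossible since $G$ is unipotent. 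This also shows a posteriori that $G/G[F^{n-2}]$ is abelian, hence $\length(G^{\ab})\geq p^2$ and your problematic case is in fact vacuous. With this replacement your argument coincides with the paper's proof.
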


\begin{proof}
If $x\in X$ were a quotient singularity by a very 
small action of a finite group scheme $G$ 
of height $n \geq 2$, then $G$ cannot be linearly reductive, as $X$ is not F-regular. 
Hence, we have 
$G/G[F^{n-2}] \in \{ \balpha_{p^2}, \bM_2\}$ 
by Proposition \ref{prop:  simple infinitesimal}, contradicting 
Corollary \ref{cor: no primitive class}.
\end{proof}

As an application of Lemma \ref{lem: comparison of automorphisms}, we can extend the uniqueness result in Lemma \ref{lem: possible argument} to extensions of \'etale group schemes by $\balpha_p$.   

\begin{Lemma} \label{lem: possible argument uniqueness}
 Let $x \in X$ be an RDP that is a quotient singularity 
 $X\cong\widehat{\Aff}^2/G$.
 Assume that the universal \'etale local torsor 
 $y \in Y$ over $X$ is an RDP that is a quotient singularity by $\balpha_p$ (and only by $\balpha_p$) and satisfies $\dim H^2_{\idealn}(\OO_Y)[F] = 1$,
 where $\idealn$ is the maximal ideal at $y$.
 Then, $G \cong \balpha_p \times \pietloc(X)$.
\end{Lemma}

\begin{proof}
Let $Z := \widehat{\Aff}^2 = \Spec k[[x_1,x_2]]$. 
By Proposition \ref{prop: etale quotients unique}, 
we have $G^{\et} = \pietloc(X)$, so $G$ is a semidirect
product $\balpha_p \rtimes \pietloc(X)$ corresponding to a
homomorphism 
$\phi \colon \pietloc(X) \to \Aut(\balpha_p) = k^{\times}$. 
It remains to show that $\phi$ is trivial. 
Since $X$ and $Y$ are RDPs, the action of 
$\pietloc(X)$ on $H^0(Y \setminus \idealn, \Omega_Y^2) \otimes k$ is trivial.
Thus, by Lemma \ref{lem: comparison of automorphisms}, 
also the action of $\pietloc(X)$ on $H^2_{\idealn}(\OO_Y)[\idealn] = H^2_{\idealn}(\OO_Y)[F]$
is trivial.

Let $x_1,x_2$ be a $\pietloc(X)$-regular sequence on $Y$ and let $D$ be the derivation corresponding to the $\balpha_p$-action on $Z$ (see \ref{ex: integratevectorfield}).
Write $U_i \subseteq Y$ for the open subscheme where $x_i$ is invertible and write $\pi$ for the morphism $Z \to Y$. 
Since $\pi|_{U_i}$ is an $\balpha_p$-torsor and $U_i$ is affine, by Lemma \ref{lem: alpha_p-torsor} we find $t_i \in \OO_Z[\frac{1}{x_i}]$ such that $D(t_i) = 1$ and such that $\OO_Z[\frac{1}{x_i}] = \OO_Y[\frac{1}{x_i},t_i]$.

%Since $\pi|_{U_i}$ is an $\balpha_p$-torsor, the morphism $D^{p-1} \colon \pi_* \OO_Z |_{U_i} \to \OO_Y |_{U_i}$ of quasi-coherent sheaves on $U_i$ is surjective, since it is surjective at every point by \cite[Lemma 2.9]{Matsumotoalpha}. Since $U_i$ is affine, this implies that the morphism $D^{p-1} \colon \OO_Z[\frac{1}{x_i}] \to \OO_Y[\frac{1}{x_i}]$ of $\OO_Y$-algebras is surjective. In particular, we find $t_i \in \OO_Z[\frac{1}{x_i}]$ such that $D(t_i) = 1$ and such that $\OO_Z[\frac{1}{x_i}] = \OO_Y[\frac{1}{x_i},t_i]$.

Then, we have $D(t_1 - t_2) = 0$, hence $e := [t_1 - t_2] \in H^1(U,\OO_U)$. In fact, since $Z|_{U_i} \to U_i$ is obtained by restricting the $\alpha_p$-torsor $Z|_U \to U$, the class $e^p = t_1^p - t_2^p$ is trivial, hence $e \in H^2_{\idealn}(\OO_Y)[F]$. Under the isomorphism of Proposition \ref{prop: torslocloc}, $e$ corresponds to the class of the local torsor $\pi$, hence $e$ is non-trivial. For all $g \in \pietloc(X)$, we have the relation $g^{-1} D g = \phi(g^{-1}) \cdot D$. Plugging in $t_i$ and using that $g$ acts trivially on $k$, we get $D(g(t_i)) ) = \phi(g^{-1})$,
hence $D(g(t_i) - \phi(g^{-1}) \cdot t_i) = 0$, and so $g(t_i) \equiv \phi(g^{-1}) \cdot  t_i \pmod{\OO_Y[\frac{1}{x_i}]}$.
Hence, we have $g(e) = \phi(g^{-1}) \cdot e$. But $g$ acts trivially on $H^2_{\idealn}(\OO_Y)[F]$, hence $e = \phi(g^{-1}) e$, and so $\phi$ is trivial, which is what we wanted to show.
\end{proof}

\subsection{Strategy of proof} \label{subsec: quotient preliminary}
To prove Theorem \ref{thm: quotient} and Theorem \ref{thm: not quotient} for a given RDP 
$x \in X$, we take the following naive approach:
\begin{enumerate}
    \item Try to give an explicit realisation of $X$ as a quotient singularity by a finite group scheme $G$.
    \item If (1) works, show uniqueness of $G$.
    \item If (1) does not work, show that $X$ is no quotient singularity.
\end{enumerate}

Every RDP $x \in X$ is contained in at least one of the following four classes, 
depending on the properties of the universal \'etale local torsor $Y$ over $X$:
\begin{enumerate}
    \item[(A)] $Y$ is regular,
    \item[(B)] $Y$ is F-regular and $p \nmid |\pietloc(X)|$,
    \item[(C)] $Y$ is F-regular and $p \mid |\pietloc(X)|$,
    \item[(D)] $Y$ is not F-regular.
\end{enumerate}
Clearly, the classes (B), (C), and (D) are disjoint and every element of (A) is contained
in either (B) or (C).
The RDPs of class (A) have been classified by Artin \cite{ArtinRDP} and
RDPs in class (B) have been studied in \cite{LRQ}. 
We note that the RDPs in class (B) are precisely the F-regular RDPs.
The following statement follows from \cite{ArtinRDP} and 
\cite[Theorem 8.1, Theorem 11.1, Theorem 11.2]{LRQ}.

\begin{Theorem} \label{thm: quotient class A and B}
Let $x \in X$ be an RDP and let $Y$ be the universal \'etale 
local torsor over $X$. 
If $X$ is F-regular or $Y$ is regular, 
then Theorem \ref{thm: quotient} and Theorem \ref{thm: not quotient} hold for $X$. 
In particular, they hold in characteristic $\geq 7$.
\end{Theorem}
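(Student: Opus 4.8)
The statement to prove is Theorem \ref{thm: quotient class A and B}: for an RDP $x\in X$ with universal étale local torsor $Y$, if $X$ is F-regular or $Y$ is regular, then Theorems \ref{thm: quotient} and \ref{thm: not quotient} hold, and in particular they hold in characteristic $\geq 7$.

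\textbf{Plan of proof.} The statement is really a matter of collecting together what is already in the literature and organising it correctly around the two classes of RDPs at issue. First I would handle the case where $Y$ is regular, i.e.\ class (A) in the terminology just introduced. By Proposition \ref{prop: etale quotients unique}(2), if $Y$ is regular and $x\in X$ is \emph{any} quotient singularity with presentation $\widehat{\Aff}^2\to X = \widehat{\Aff}^2/G$, then automatically $\widehat{\Aff}^2\to Y$ is a torsor by purity, so $G^\circ$ is trivial and $G\cong\pietloc(X)$, which is étale. Conversely, the regularity of $Y$ exactly says that $Y\cong\widehat{\Aff}^2$ (a regular complete local two-dimensional $k$-algebra with residue field $k$ is a power series ring), so $X = Y/\pietloc(X)$ realises $X$ as a quotient singularity by the étale group $\pietloc(X)$. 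Artin \cite{ArtinRDP} classified exactly which RDPs have regular universal étale cover and computed $\pietloc(X)$ in each case; matching this list against Table \ref{table: Quotient- and non-quotient RDPs} shows that these are precisely the entries marked $\checkmark$ with an étale group scheme $G = \pietloc(X)$, and that uniqueness of $G$ holds. So Theorem \ref{thm: quotient} holds for these; Theorem \ref{thm: not quotient} is vacuous here since all class (A) RDPs are quotient singularities.

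Next I would treat the case where $X$ is F-regular, i.e.\ class (B). Here the key input is \cite[Theorem 11.2]{LiedtkeMartinMatsumotoLRQ}: a normal surface singularity is F-regular (resp.\ F-regular and Gorenstein) if and only if it is a linearly reductive quotient singularity by a finite linearly reductive subgroup scheme of $\GL_{2,k}$ (resp.\ $\SL_{2,k}$). Since RDPs are Gorenstein, every F-regular RDP is $\widehat{\Aff}^2/G$ for a finite linearly reductive $G\subseteq\SL_{2,k}$, proving the existence half of Theorem \ref{thm: quotient}(1), with $G$ one of the binary polyhedral group schemes $\BD_n,\BT_{24},\BO_{48},\BI_{120}$ (or $\bmu_{n+1}$ for $A_n$) as recorded in Table \ref{table: Quotient- and non-quotient RDPs}. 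For uniqueness (Theorem \ref{thm: quotient}(2)) I would invoke \cite[Theorem 8.1]{LiedtkeMartinMatsumotoLRQ}, which gives uniqueness of the quotient presentation for linearly reductive quotient singularities — but one must be careful: that theorem asserts uniqueness \emph{among linearly reductive presentations}, whereas Theorem \ref{thm: quotient}(2) asks that \emph{every} presentation $X\cong\widehat{\Aff}^2/H$ (with $H$ a priori arbitrary finite) has $H\cong G$. So the genuine content here is ruling out non-linearly-reductive presentations of an F-regular RDP. This is where Proposition \ref{prop: etale quotients unique} enters again: if $X$ is F-regular, then (being F-regular) $X$ is $F$-rational, hence its universal étale local torsor $Y$ is again F-regular with prime-to-$p$ fundamental group (class (B) ⊆ the F-regular case), and one argues that a presentation $\widehat{\Aff}^2\to X=\widehat{\Aff}^2/H$ must factor through $\pi^{\et}:Y\to X$; then $\widehat{\Aff}^2\to Y$ realises the F-regular (indeed lrq, with \emph{regular} total space once we reach the top of the tower $\widehat{\Aff}^2$) singularity $Y$ as a quotient, forcing $Y$ regular by Proposition \ref{prop: etale quotients unique}(2) applied to $Y$, so $H^\circ$ is trivial, $H$ is étale, and $H\cong\pietloc(X)=G^{\et}=G$. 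I should double-check that the class (B) RDPs are exactly those whose $Y$ is regular \emph{and} $p\nmid|\pietloc(X)|$, so this reduction is legitimate; this is \cite[Theorem 11.1]{LiedtkeMartinMatsumotoLRQ}. Again Theorem \ref{thm: not quotient} is vacuous on class (B).

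Finally, for the ``in particular'' clause: by Table \ref{table: RDPEquations}, in characteristic $p\geq 7$ every RDP is one of $A_n, D_n, E_6, E_7, E_8$ with its characteristic-zero equation and is F-regular, so the F-regular case applies to all of them. Thus Theorems \ref{thm: quotient} and \ref{thm: not quotient} hold in characteristic $\geq 7$. I expect the main obstacle to be the uniqueness argument in class (B): carefully showing that an arbitrary (possibly non-linearly-reductive, possibly infinitesimal) finite group scheme presentation of an F-regular RDP must be the étale one, rather than merely quoting \cite[Theorem 8.1]{LiedtkeMartinMatsumotoLRQ} which only compares linearly reductive presentations. The factorisation-through-$\pi^{\et}$ step and the subsequent reduction to regularity of $Y$ via Proposition \ref{prop: etale quotients unique} is the crux; everything else is bookkeeping against Artin's and Lipman's tables and the cited theorems.
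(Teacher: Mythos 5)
Your handling of the case where $Y$ is regular is correct and matches the paper's intent (Proposition \ref{prop: etale quotients unique}(2) gives both existence and uniqueness there), and the ``in particular'' clause is indeed just the observation that all RDPs in characteristic $\geq 7$ are F-regular. The paper itself gives no written proof of this theorem --- it is stated as following from \cite{ArtinRDP} and \cite[Theorems 8.1, 11.1, 11.2]{LiedtkeMartinMatsumotoLRQ} --- so the real question is whether your attempt to fill in the F-regular case is sound. It is not.

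The uniqueness argument you give for F-regular $X$ proves too much and is in fact false: you conclude that any presentation $X\cong\widehat{\Aff}^2/H$ has $H^\circ$ trivial, hence $H\cong\pietloc(X)$ \emph{\'etale}. But the $A_{p-1}$-singularity is F-regular, has trivial $\pietloc$, and is the quotient of $\widehat{\Aff}^2$ by $\bmu_p$, which is infinitesimal; your argument would force it to be smooth. The error is in the step ``forcing $Y$ regular by Proposition \ref{prop: etale quotients unique}(2) applied to $Y$'': that proposition takes regularity of the universal \'etale local torsor as a \emph{hypothesis} and cannot be used to deduce it. After factoring $\widehat{\Aff}^2\to Y=\widehat{\Aff}^2/H^\circ\to X$, the intermediate space $Y$ is a quotient by a connected group scheme and has trivial $\pietloc$, but it is simply not regular in general (for $A_{p-1}$ one has $Y=X$). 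The concern you raise --- that \cite[Theorem 8.1]{LiedtkeMartinMatsumotoLRQ} might only compare linearly reductive presentations --- is a reasonable one to flag, but the correct way to rule out non-linearly-reductive competitors is not via \'etaleness. Rather: if $X$ is F-regular and $X\cong\widehat{\Aff}^2/H$, then the local $H$-torsor over $U$ is Nori-reduced (Lemma \ref{lem: quotientsofpinloc}), so $H$ is a quotient of the local Nori fundamental group scheme, which for (strongly) F-regular singularities is finite and \emph{linearly reductive} by Carvajal-Rojas \cite{Carvajal-Rojas}; hence $H$ is linearly reductive, and only then does the uniqueness statement of \cite[Theorem 8.1]{LiedtkeMartinMatsumotoLRQ} (together with \cite[Theorems 11.1, 11.2]{LiedtkeMartinMatsumotoLRQ} identifying $G$) finish the argument. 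As written, your class (B) step would fail for every F-regular RDP whose attached group scheme is not \'etale, e.g.\ $A_n$ with $p\mid n+1$ or $D_n$ with $p\mid 4(n-2)$.
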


Thus, it remains to study RDPs of class (C) and (D). 
It is natural to study the RDPs with trivial local \'etale fundamental group first.
By Proposition \ref{prop: linearlyreductive or unipotent}, if such an RDP is a quotient singularity, then either by $\bmu_{p^{n}}$, hence it is of type $A_{p^n-1}$, or by an 
iterated extension of $\balpha_p$ by itself with underlying scheme $\Spec ~ k[t]/(t^{p^n})$.
Thus, to understand an RDP $x\in X$ of class (D), we have to understand local torsors over it
under extensions of $\balpha_p$.
This is where our results on $\Picloclocloc_{X/k}$ will be of fundamental importance.

To prove uniqueness of the group scheme $G$ by which a quotient RDP is a quotient singularity, 
we will have to classify the possible total spaces of local $\balpha_p$-torsors over some of RDPs of class (D) with trivial $\pietloc(X)$. For this we use Lemma \ref{lem: description of local alpha_p-torsor}.
% (see Lemma \ref{lem: alphap-torsor char 3}, Lemma \ref{lem: alphap-torsor char 2}, and Lemma \ref{lem: alphap-torsor char 2 Dn}). 

In Sections \ref{subsec: rdp 5}--\ref{subsec: rdp 2}, we will prove Theorem \ref{thm: quotient} and 
Theorem \ref{thm: not quotient} in characteristics $5$, $3$, and $2$, respectively.

In Section \ref{subsec: dnr} we prove Theorem \ref{thm: D_n in p=2}.

% In Section \ref{subsec: generalised quotient singularities} we discuss several broader notions of quotient singularities.

\subsection{Characteristic 5} \label{subsec: rdp 5}
Assume that $k$ is an algebraically closed field of characteristic $5$. 
By Table \ref{table: RDPEquations} and Theorem \ref{thm: quotient class A and B}, 
it suffices to study the $E_8$-singularities. 
The $E_8^1$-singularity has a regular universal \'etale local torsor by \cite{ArtinRDP}
and thus, Proposition \ref{prop: etale quotients unique} applies.
Hence, again by Theorem \ref{thm: quotient class A and B}, we only have to study 
the $E_8^0$-singularity.

\begin{Lemma} \label{lem: main char 5}
Theorem \ref{thm: quotient} and Theorem \ref{thm: not quotient} hold for the 
$E_8^0$-singularity in characteristic $5$. 
In particular, $E_8^0$ is a quotient singularity.
\end{Lemma}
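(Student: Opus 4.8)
The statement bundles two assertions: that the $E_8^0$-singularity in characteristic $5$ is a quotient singularity by $\balpha_5$ (the content of Theorem~\ref{thm: quotient}(1)), and that $\balpha_5$ is the unique finite group scheme occurring in any such presentation (Theorem~\ref{thm: quotient}(2)); Theorem~\ref{thm: not quotient} is vacuous here, since $E_8^0$ is not on the list of non-quotient RDPs. First I would collect the relevant data about $X := E_8^0$ in characteristic $5$: by Artin~\cite{ArtinRDP} (or Table~\ref{table: torsors with Picloclocloc}) both $\pietloc(X)$ and ${\rm Cl}(X)$ are trivial; by Table~\ref{table: RDPEquations} the singularity is not F-regular (in fact not F-injective); and by Proposition~\ref{prop: torsors loclocloc new} (Table~\ref{table: wittvectorcohomology}) the $\Dieu$-module $\varinjlim_{m,n} H^2_\idealm(X,W_n\OO_X)[F^m]$ is one-dimensional over $k$, spanned by $f_1 = z/(xy)$, with $F$ and $V$ acting as $0$. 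In particular $H^2_\idealm(X,\OO_X)[F] = H^2_\idealm(X,\OO_X)[F^2] = k\cdot f_1$, so $\overline{\Hfl{1}(U,\balpha_5)}$ is nonzero and one-dimensional, while $\overline{\Hfl{1}(U,\balpha_{25})}_{\prim} = \overline{\Hfl{1}(U,\bM_2)}_{\prim} = \emptyset$ (each of these $\overline{\Hfl{1}}$-groups coincides with the image of $\overline{\Hfl{1}(U,\balpha_5)}$ under the inclusion of $\balpha_5$).

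For the existence of a quotient presentation I would exhibit an explicit very small action of $\balpha_5$ on $\widehat{\Aff}^2 = \Spec k[[u,v]]$ — equivalently, an $\idealm$-primary derivation $D$ of $k[[u,v]]$ with $D^5 = 0$ — whose ring of constants $k[[u,v]]^D = \ker D$ is isomorphic to $k[[x,y,z]]/(z^2 + x^3 + y^5)$; constructing such a $D$ by a suitable ansatz (compare Artin's coverings in \cite{ArtinRDP} and the $\balpha_p$-techniques of \cite{Matsumoto alpha}) and then verifying that its ring of constants is the asserted RDP is a direct, if slightly tedious, computation with Witt vectors and derivations. Alternatively one can argue torsor-theoretically: take the nontrivial $\balpha_5$-torsor $V \to U$ corresponding to the class $f_1$, form its integral closure $Y \to X$ with the $\balpha_5$-action supplied by Lemma~\ref{lem: carvajal-rojas}, note that $\balpha_5$ then acts on the complete local $S_2$-surface $Y$ freely away from, and fixing, the closed point (since $\balpha_5$ is infinitesimal), and check that $Y$ is in fact regular — either by computing the total space directly from the \v{C}ech description of $f_1$, or by showing $Y$ is normal with trivial $\pietloc$, trivial ${\rm Cl}$, and F-injective and applying Theorem~\ref{thm: Mumfordstheorem}. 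Either route yields $X \cong \widehat{\Aff}^2/\balpha_5$.

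For uniqueness, suppose $X \cong \widehat{\Aff}^d/H$; necessarily $d = \dim X = 2$. Since $X$ is not F-regular, $H$ is not linearly reductive (an lrq singularity is F-regular by \cite[Theorem~11.2]{LiedtkeMartinMatsumotoLRQ}), and since $\pietloc(X) \cong H^\et$ is trivial, $H = H^\circ$ is infinitesimal; by Proposition~\ref{prop: linearlyreductive or unipotent} it is unipotent of some height $n \geq 1$ with underlying scheme $\Spec k[t]/(t^{5^n})$, and $H$ is nontrivial (otherwise $X$ would be smooth). If $n \geq 2$, then $H/H[F^{n-2}]$ is an infinitesimal unipotent group scheme of length $25$ and height $2$, hence isomorphic to $\balpha_{25}$ or $\bM_2$ by Proposition~\ref{prop: simple infinitesimal}; then Proposition~\ref{prop: primitive class} (applied with the normal subgroup scheme $N = H[F^{n-2}]$) produces a primitive class in $\overline{\Hfl{1}(U,\balpha_{25})}$ or in $\overline{\Hfl{1}(U,\bM_2)}$, contradicting the emptiness of both primitive sets recorded above. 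Hence $n = 1$, and then $H \cong \balpha_5$ by Proposition~\ref{prop: linearlyreductive or unipotent}. This is exactly the mechanism of Lemma~\ref{lem: possible argument} together with the height-filtration bookkeeping of its proof. Combining this with the existence construction gives Theorem~\ref{thm: quotient} for $E_8^0$ in characteristic $5$.

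The main obstacle is the existence half: writing down the explicit $\balpha_5$-action — or, in the torsor formulation, verifying that the integral closure of the $f_1$-torsor is regular rather than merely a normal RDP. Once this is in hand, the uniqueness half is essentially formal, being driven entirely by the one-dimensional Dieudonn\'e module of Table~\ref{table: wittvectorcohomology} and the classification of small infinitesimal group schemes.
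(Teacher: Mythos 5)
Your proposal is correct and follows essentially the same route as the paper: existence is the content of \cite[Lemma 3.6]{Matsumoto alpha}, which the paper simply cites (that reference carries out exactly the explicit $\balpha_5$-derivation construction you sketch), and your uniqueness argument via the triviality of $\pietloc(X)$, Proposition \ref{prop: linearlyreductive or unipotent}, Proposition \ref{prop: simple infinitesimal}, and the emptiness of $\overline{\Hfl{1}(U,G')}_{\prim}$ for $G'\in\{\balpha_{25},\bM_2\}$ is precisely the mechanism of Lemma \ref{lem: possible argument} invoked in the paper. The only cosmetic difference is your optional torsor-theoretic alternative for existence, which the paper does not pursue.
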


\begin{proof}
Let $x \in X$ be a singularity of type $E_8^0$. 
By \cite[Lemma 3.6]{Matsumotoalpha}, it is a quotient singularity by $\balpha_5$. 

Since we know that $\overline{H^1(U, G')}_{\prim} = \emptyset$ for 
$G' \in \{ \balpha_{25}, \bM_2 \}$ by 
Proposition \ref{prop: torsors loclocloc new} (Table \ref{table: torsors with Picloclocloc}), Lemma \ref{lem: possible argument} shows that $\balpha_5$ 
is also the only possible finite group scheme that can realise $x\in X$ 
as a quotient singularity. 
\end{proof}

\subsection{Characteristic 3} \label{subsec: rdp 3}
Assume that $k$ is an algebraically closed field of characteristic $3$. 
By Table \ref{table: RDPEquations} and Theorem \ref{thm: quotient class A and B}, 
it suffices to study the $E_n$-singularities.  
By \cite{ArtinRDP}, the RDPs of type $E_6^1$, $E_7^1$, and $E_8^2$ have a regular 
universal \'etale local torsor, hence we are left with the four RDPs of type 
$E_6^0, E_7^0, E_8^1,$ and $E_8^0$.

We need the following lemma on $\balpha_3$-torsors over some of these RDPs.
What makes this result non-trivial is that, a priori, a class in $\overline{H^1(U,\balpha_3)}$
could be representable by many torsors with non-isomorphic total spaces.
We will see examples of this phenomenon in Proposition \ref{prop: non-uniqueness of torsors} below.

\begin{Lemma} \label{lem: alphap-torsor char 3}
Let $x \in X$ be an RDP of type $E_6^0$ (resp. $E_8^1$) and let $Y \to X$ be a 
local $\balpha_3$-torsor of primitive class over $X$. 
Then, $Y$ is of type $A_0$ (resp. $E_6^0$).
\end{Lemma}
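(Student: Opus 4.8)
The strategy is to make the $\balpha_3$-torsor completely explicit. By Proposition \ref{prop: torslocloc} (or Corollary \ref{cor: torslocloc}), a local $\balpha_3$-torsor of primitive class over $X$ corresponds to a non-zero class in $H^2_{\idealm}(X,\OO_X)[F]$, and by Proposition \ref{prop: torsors loclocloc new} this group is one-dimensional, spanned by $f_1 = z/(xy)$. Moreover, by Example \ref{ex: endomorphismsofalphap}, two classes differing by a unit in $R$ give torsors differing by an automorphism of $\balpha_{3,X}$, so up to such an automorphism (which does not change the isomorphism type of the total space) we may assume the torsor is the one attached to the class $z/(xy)$ itself. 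First I would write down this torsor concretely: on the two charts $D(x)$ and $D(y)$ of $U$ it is obtained by adjoining functions $t_x, t_y$ with $t_x - t_y = z/(xy)$ and $t_x^3 = a_x$, $t_y^3 = a_y$ for suitable $a_x \in R[1/x]$, $a_y \in R[1/y]$; equivalently, following the recipe in the proof of Lemma \ref{lemma example 2} and Lemma \ref{lem: possible argument uniqueness}, the total space $Y$ is the integral closure of $X$ in the function field $k(X)(t)$ where $t$ is a cube root of an explicit element determined by the equation of $X$ in Table \ref{table: RDPEquations}.

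\textbf{Key steps.} The computation then proceeds case by case. For $X$ of type $E_6^0$, with equation $z^2 + x^3 + y^2z = 0$ (characteristic $2$ is excluded, so this is the $p=3$ case $z^2+x^3+y^4=0$; I would use the $p=3$ equation from Table \ref{table: RDPEquations}), I would identify the cube root extension explicitly, compute generators and a single relation for $H^0(Y,\OO_Y)$ as an $R$-algebra, and recognise the resulting complete local ring as regular — i.e. $Y$ is of type $A_0$, meaning smooth. Concretely this amounts to exhibiting two power series $u_1, u_2$ in the generators of $H^0(Y,\OO_Y)$ such that $H^0(Y,\OO_Y) \cong k[[u_1,u_2]]$; the $\balpha_3$-action is then the one studied in \cite[Lemma 3.6]{Matsumoto alpha} or \cite[Lemma 2.9]{Matsumoto alpha}, which realises $E_6^0$ as $\widehat{\Aff}^2/\balpha_3$. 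For $X$ of type $E_8^1$ with equation $z^2+x^3+y^5+x^2y^3 = 0$, the same procedure gives $H^0(Y,\OO_Y)$ as an explicit hypersurface ring in three variables, and I would match it against the equations in Table \ref{table: RDPEquations} to see it is of type $E_6^0$. A useful consistency check throughout: $H^2_{\idealm}(Y,\OO_Y)[F]$ computed on the candidate answer must be compatible with the relation $F(f_1^{(2)}) = -f_3$-type formulas listed in the proof of Proposition \ref{prop: torsors loclocloc new}, i.e. the $\Dieu$-module data of $X$ must be the one predicted by pulling back along a $\balpha_3$-torsor with regular (resp.\ $E_6^0$) total space.

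\textbf{Main obstacle.} The hard part will be the honest computation of the normalisation: cube-root extensions of a singular hypersurface ring are delicate, and one must be careful that $Y$ is the \emph{integral closure} (the raw extension $k(X)(t)$-algebra $R[t]$ need not be normal — see Remark \ref{rem: quotientisnormal} and Example \ref{ex: non-normal}), so some work is needed to add the missing integral elements and then to simplify the presentation of the normal ring obtained. A clean way to organise this is to go the other direction: rather than normalising from below, guess the candidate regular (resp.\ $E_6^0$) ring $S = k[[u_1,u_2]]$ (resp.\ the explicit $E_6^0$-hypersurface), write down an explicit $\balpha_3 = \Spec k[\tau]/(\tau^3)$-action on it (a derivation $D$ with $D^3 = 0$), check that the action is free on the punctured spectrum and fixes the closed point, compute the invariant ring $S^{\balpha_3} = \Ker D$, and verify $S^{\balpha_3} \cong R$ by matching equations. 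Then $S \to S^{\balpha_3} = R$ restricts over $U$ to an $\balpha_3$-torsor whose class in $H^2_{\idealm}(X,\OO_X)[F]$ is non-zero (primitive) by Proposition \ref{prop: primitive class}, hence — since that group is one-dimensional — it is, up to an automorphism of $\balpha_{3,X}$, the torsor $Y \to X$ in the statement, and uniqueness of the total space follows. This reduces the problem to a finite, if somewhat intricate, invariant-theory calculation in two power series variables, which I would carry out separately for $E_6^0$ and for $E_8^1$.
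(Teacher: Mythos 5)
Your overall strategy (make the torsor explicit on the two charts $D(x_1),D(x_2)$, adjoin the relevant elements, and identify the integral closure with a concrete hypersurface) is the same explicit-normalisation computation the paper carries out. But there is a genuine gap at the step where you pass from the class to the torsor: you reduce to ``the torsor attached to the class $z/(xy)$ itself'' and later conclude that, because $H^2_{\idealm}(X,\OO_X)[F]$ is one-dimensional, the torsor $Y\to X$ in the statement is, up to an automorphism of $\balpha_{3,X}$, the one you constructed, ``and uniqueness of the total space follows.'' This is exactly the point that makes the lemma non-trivial. A class in $\overline{\Hfl{1}(U,\balpha_3)}\cong H^2_{\idealm}(X,\OO_X)[F]$ determines the torsor over $U$ only up to twisting by an element of $\Hfl{1}(X,\balpha_3)\cong R/F(R)$, which is huge, and equivalent local torsors can have non-isomorphic total spaces: this is precisely the phenomenon recorded in Proposition \ref{prop: non-uniqueness of torsors} (a single equivalence class represented by infinitely many pairwise non-isomorphic RDPs) and flagged in the sentence immediately preceding Lemma \ref{lem: alphap-torsor char 3}. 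The lemma asserts the type of $Y$ for \emph{every} local $\balpha_3$-torsor of primitive class, so an argument that fixes one representative — whether by your forward reduction or by your ``reverse'' construction of a derivation on a guessed cover with invariant ring $R$ — proves strictly less than the statement. Your consistency check via the $\Dieu$-module of $X$ does not repair this, since it again only constrains the equivalence class.

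The paper closes this gap by carrying the ambiguity through the computation: the torsor over $U$ is written with local equations $t_1^3 = x_1^{-3}(\cdots)+b$, $t_2^3 = x_2^{-3}(\cdots)+b$, $t_1-t_2 = a\,x_1^{-1}x_2^{-1}(\cdots)$ with an \emph{arbitrary} $b\in R$ (and $a\in k^\times$), the candidate ring $S=R[u_1,u_2,\dots]$ is built from $u_i = x_i t_i$, and after eliminating variables one checks that all $b$-dependent terms land in brackets that do not affect the singularity, so the normalisation is of type $A_0$ (resp.\ $E_6^0$) for every representative. To fix your write-up you would need to do the same: keep the lift of the class to $\Hfl{1}(U,\balpha_3)$ arbitrary (i.e.\ keep the global twist $b$), verify that your adjoined elements are still integral over $R$ for every $b$, and show the resulting hypersurface equation has the claimed type independently of $b$. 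Without this, the argument establishes only that \emph{some} primitive-class torsor has smooth (resp.\ $E_6^0$) total space, which is insufficient for the applications (e.g.\ the uniqueness arguments in Theorem \ref{thm: quotient}) that quantify over all such torsors.
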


\begin{proof}
%Write $X = \Spec R$ and let $x_1, x_2$ be a regular sequence of $R$, so that 
%$U = \Spec R[x_1^{-1}] \cup \Spec R[x_2^{-1}]$.
%Using explicit equations, we will find a subring $S \subseteq \Gamma(Y_U, \OO_{Y_U})$ 
%satisfying the following:
%\begin{enumerate}
%\item $\Frac S \supsetneq \Frac R$ and hence $\Frac \OO_{Y_U} = \Frac S$,
%\item $\OO_{Y_U}[x_i^{-1}] \supset S[x_i^{-1}]$, and
%\item $\Spec ~ S$ is an RDP of type $A_0$ or $E_6^0$, respectively. In particular, $S$ is normal.
%\end{enumerate}
%Once this is established, it follows that $S$ is the integral closure of $R$ in $Y$, 
%$Y_U = (\Spec S) \rvert_{U}$, $Y = \Spec S$, and the lemma follows.
We use Lemma \ref{lem: description of local alpha_p-torsor}.
Write $R = k[[x,y,z]] / (f)$ with $f = z^2 + x^3 + y^3 g$ 
and $g = y$ (resp. $g = x^2 + y^2$) for $E_6^0$ 
(resp. $E_8^1$).
We use the regular sequence $(x_1, x_2) = (y, z)$.
As $\dim H^2_{\idealm}(\OO_X)[F] = 1$, the local $\balpha_3$-torsor $Y_U \to U$ corresponds to the 
class $a [y^{-1} z^{-1} x^2]$ (note that $[y^{-1} z^{-1} x^2]$ is equal to the class $f_1$ that appears in Proposition \ref{prop: torsors loclocloc new}, but with respect to a different affine cover of $U$) for some $a \in k^\times$, and, as explained in Example \ref{ex: endomorphismsofalphap}, we may assume $a = 1$.
We have 
$$
 \begin{array}{lcll}
  \OO_{Y_U}[y^{-1}] &=& R[y^{-1}][t_1] \,/\, (t_1^3 - (y^{-3} z + b)) & \mbox{ \quad and }\\
  \OO_{Y_U}[z^{-1}] &=& R[z^{-1}][t_2] \,/\, (t_2^3 - (z^{-3}g(z^2 - y^3 g) + b))
 \end{array}
$$
for some $b \in R$ and $t_1 - t_2 = y^{-1} z^{-1} x^2$.
Let $u_1 = y t_1$, $u_2 = z t_2$, $\xi = -y^{-1}({x + u_1^2}) = z^{-2}(z u_1 u_2 + y u_2^2 + x y^2 g)$, and set 
$S = R[\xi, u_1, u_2]$.
Then, $u_1, u_2, \xi$ are integral over $R$ since 
$\xi^3 = g + b z - b^2 y^3 \in R$.
Moreover, we have $\OO_{Y_U}[y^{-1}] \supset S[y^{-1}]$ and $\OO_{Y_U}[z^{-1}] \supset S[z^{-1}]$.
Also, $z = u_1^3 - b y^3$, $x = - u_1^2 - \xi y$, and 
$u_2 = y^{-1} (z u_1 - x^2) = y^{-1}((u_1^3 - b y^3) u_1 - (y \xi + u_1^2)^2)
= \xi u_1^2 - y \xi^2 - b y^2 u_1$.
Therefore, $S = R[u_1, u_2, \xi]$ is equal to $k[[y, u_1, \xi]] / (h)$ with
$h = -\xi^3 + g + b u_1^3 + b^2 y^3$.
In particular $S$, being a hypersurface, satisfies $S_2$, and hence by Lemma \ref{lem: description of local alpha_p-torsor} we have $Y = \Spec S$.
\begin{enumerate}
\item If $g = y$, then $h = y + [b^2y^3 + bu_1^3 - \xi^3]$.
\item If $g = x^2 + y^2$, we have $h = -\xi^3 + y^2 + u_1^4 + [b u_1^3 + b^2 y^3 + y^2 \xi^2 + 2 u_1^2 y \xi]$,
\end{enumerate}
In each case, the terms in brackets 
do not affect the singularity, hence $\Spec ~ S$ 
is an RDP of type $A_0$ or $E_6^0$ respectively.
\end{proof}

\begin{Lemma} \label{lem: main char 3}
 Theorem \ref{thm: quotient} and Theorem \ref{thm: not quotient} hold for the $E_6^0, E_7^0, E_8^1$, 
 and $E_8^0$-singularity in characteristic $3$. 
 In particular, $E_6^0$, $E_7^0$, and $E_8^1$ are quotient singularities, 
 but $E_8^0$ is not a quotient singularity.
\end{Lemma}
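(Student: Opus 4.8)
The plan is to handle the four singularities $E_6^0$, $E_7^0$, $E_8^1$, $E_8^0$ one by one, combining the computation of $\Picloclocloc_{X/k}$ from Table \ref{table: torsors with Picloclocloc} (that is, Proposition \ref{prop: torsors loclocloc new} and Table \ref{table: wittvectorcohomology}) with Lemma \ref{lem: possible argument}, Lemma \ref{lem: possible argument uniqueness}, and the classification of $\balpha_3$-torsors in Lemma \ref{lem: alphap-torsor char 3}. All four RDPs are non-F-regular, so by Proposition \ref{prop: etale quotients unique} and Proposition \ref{prop: linearlyreductive or unipotent} any quotient presentation $X\cong\widehat{\Aff}^2/G$ has $G\cong G^\circ\rtimes\pietloc(X)$ with $G^\circ$ unipotent of some height $n$ and with $1$-dimensional tangent space; in particular $G^\circ$ is an iterated self-extension of $\balpha_3$ and $\Spec k[t]/(t^{3^n})$ underlies it.

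The first step is to cut down the possibilities for $G^\circ$ via the primitive-class criterion. If $G^\circ$ has height $n\geq2$, then $G^\circ/G^\circ[F^{n-2}]$ is an infinitesimal, unipotent, abelian quotient of $G$ of length $9$ with $1$-dimensional tangent space, so by Proposition \ref{prop: simple infinitesimal} it is $\balpha_9$ or $\bM_2$; if $n\geq3$, then $G^\circ/G^\circ[F^{n-3}]$ is a length-$27$ abelian quotient with $1$-dimensional tangent space whose Frobenius-kernel quotient is the group just found, which by Proposition \ref{prop: groupschemesoforderp3} forces it to be $\bM_3$. By Proposition \ref{prop: primitive class}, each such abelian quotient must carry a primitive local torsor class over $X$, so everything reduces to computing $\overline{\Hfl{1}(U,\balpha_9)}_{\prim}$, $\overline{\Hfl{1}(U,\bM_2)}_{\prim}$, and $\overline{\Hfl{1}(U,\bM_3)}_{\prim}$; by Corollary \ref{cor: torsloclocwithdieu} these become short $\Hom$-computations between the explicit Dieudonn\'e modules $\Dieu/(F,V)$, $\Dieu/(F,V^2)$, $\Dieu/\Dieu(F^2,F-V)$, $\Dieu/\Dieu(F^3,F-V)$. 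For $E_6^0$ (where $\Picloclocloc_{X/k}\cong\balpha_3$) and for $E_8^0$ (where $\Picloclocloc_{X/k}\cong\balpha_9$) all three sets turn out empty, so Lemma \ref{lem: possible argument} shows that a quotient presentation, if it exists, is by $\balpha_3$; for $E_8^1$ (where $\Picloclocloc_{X/k}\cong\bM_2$) only $\overline{\Hfl{1}(U,\bM_2)}_{\prim}$ is nonempty, so the reduction leaves $G=G^\circ\in\{\balpha_3,\bM_2\}$ (here $\pietloc(E_8^1)=0$).

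Now I feed in Lemma \ref{lem: alphap-torsor char 3}. Since $\overline{\Hfl{1}(U,\balpha_3)}\cong H^2_{\idealm}(X,\OO_X)[F]$ is one-dimensional and nonzero for $E_6^0$ and $E_8^1$ (these are not F-injective by Table \ref{table: RDPEquations}, and the dimension follows from Proposition \ref{prop: torsors loclocloc new}), a local $\balpha_3$-torsor of primitive class exists, and by Lemma \ref{lem: alphap-torsor char 3} its total space is of type $A_0$ for $X=E_6^0$ and of type $E_6^0$ for $X=E_8^1$. For $E_6^0$ the total space is a regular local ring $\widehat{\Aff}^2$ carrying (Lemma \ref{lem: carvajal-rojas}) a very small $\balpha_3$-action with quotient $E_6^0$, so $E_6^0\cong\widehat{\Aff}^2/\balpha_3$; with the previous step this proves Theorem \ref{thm: quotient} for $E_6^0$. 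For $E_7^0$, whose universal \'etale local torsor is the RDP $E_6^0$ by \cite{ArtinRDP}, Lemma \ref{lem: possible argument uniqueness} applies (as $E_6^0$ is a quotient singularity only by $\balpha_3$, with $\dim H^2_{\idealm}(\OO_{E_6^0})[F]=1$) and gives $G\cong\balpha_3\times\C_2$; for existence one checks, using Lemma \ref{lem: comparison of automorphisms} and Lemma \ref{lem: alphap-torsor char 3}, that the $\C_2=\pietloc(E_7^0)$-action on $E_6^0=\widehat{\Aff}^2/\balpha_3$ lifts to $\widehat{\Aff}^2$, yielding $E_7^0\cong\widehat{\Aff}^2/(\balpha_3\rtimes\C_2)$, the semidirect product being trivial by (the proof of) Lemma \ref{lem: possible argument uniqueness}. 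For $E_8^1$, the primitive $\balpha_3$-torsor has total space $E_6^0$, which is not regular, so $E_8^1$ is not an $\balpha_3$-quotient; together with the previous step this forces $G\cong\bM_2$, and existence is obtained by exhibiting an explicit $\bM_2$-action on $\widehat{\Aff}^2$ with quotient $E_8^1$ (for instance by lifting the derivation defining the local $\balpha_3$-torsor $E_6^0\to E_8^1$ through $\widehat{\Aff}^2\to E_6^0$ after suitably adjusting the torsor, compare \cite{Matsumoto height}).

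It remains to treat $E_8^0$. By the second step, if $E_8^0$ were a quotient singularity it would have to be $\widehat{\Aff}^2/\balpha_3$, i.e.\ some local $\balpha_3$-torsor over $E_8^0$ would have regular total space. I would rule this out by an explicit hypersurface computation in the style of the proof of Lemma \ref{lem: alphap-torsor char 3}: starting from $R=k[[x,y,z]]/(z^2+x^3+y^5)$ in characteristic $3$ and a regular sequence (say $(y,z)$), one determines the integral closure $S$ of $R$ in an arbitrary local $\balpha_3$-torsor --- carrying along the indeterminate $b\in R$ that encodes the non-uniqueness of torsors within a class, exactly as in that proof --- and checks that $S$ is always an RDP strictly worse than $A_0$. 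This last verification is the main obstacle, being the only genuinely new hypersurface calculation here (recall that $E_8^0$ is excluded from Lemma \ref{lem: alphap-torsor char 3}); everything else is Dieudonn\'e-module bookkeeping together with the structural lemmas. Alternatively one may invoke Matsumoto's classification of $\balpha_p$-quotient RDPs \cite{Matsumoto alpha}, in which $E_8^0$ does not occur in characteristic $3$. Granting this, $E_8^0$ is not a quotient singularity by Lemma \ref{lem: possible argument}, which completes Theorem \ref{thm: not quotient} for it and finishes the lemma.
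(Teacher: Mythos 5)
Your proposal follows the paper's proof almost step for step: the same reduction of $G^\circ$ via Proposition \ref{prop: primitive class}, Proposition \ref{prop: simple infinitesimal}, Proposition \ref{prop: groupschemesoforderp3} and Lemma \ref{lem: possible argument}; the same use of Lemma \ref{lem: alphap-torsor char 3} to identify total spaces of primitive local $\balpha_3$-torsors; Lemma \ref{lem: possible argument uniqueness} for $E_7^0$; and for the two computational base facts you rightly isolate (that $E_6^0$ is an $\balpha_3$-quotient and that $E_8^0$ is not an $\balpha_3$-quotient) your fallback of citing \cite[Lemma 3.6]{Matsumoto alpha} is exactly what the paper does. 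The Dieudonn\'e-module bookkeeping ruling out $\balpha_9$, $\bM_2$, $\bM_3$ where needed is correct.

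The one place where your argument is genuinely thinner than the paper's is the existence of the quotient presentations for $E_7^0$ and $E_8^1$. For $E_7^0$ you assert that the $\C_2$-action on $E_6^0=\widehat{\Aff}^2/\balpha_3$ ``lifts'' to $\widehat{\Aff}^2$, citing Lemmas \ref{lem: comparison of automorphisms} and \ref{lem: alphap-torsor char 3}; this is not a formal consequence of those lemmas. The subtlety is that $\C_2$ a priori only preserves the class of the torsor in $\overline{\Hfl{1}(U,\balpha_3)}$ (up to the character controlled by Lemma \ref{lem: comparison of automorphisms}), and could move the honest class in $\Hfl{1}(U,\balpha_3)$ by an element of $\Hfl{1}(X,\balpha_3)=R/F(R)$; one must first produce a strictly $\C_2$-equivariant representative (possible since $2$ is invertible in characteristic $3$, so one can average over $\C_2$) and then check the action lifts to the total space (here one uses $\balpha_3(U)=0$ to see the lift is unique once the class is equivariant). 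The paper sidesteps all of this by simply writing down the action $(D(x),D(y))=(y,x^3)$, $(g(x),g(y))=(-x,-y)$ on $k[[x,y]]$ and verifying that the quotient is $E_7^0$. Similarly for $E_8^1$: rather than ``exhibiting an explicit $\bM_2$-action on $\widehat{\Aff}^2$,'' the paper takes the primitive local $\bM_2$-torsor guaranteed by Table \ref{table: torsors with Picloclocloc}, factors it into two primitive local $\balpha_3$-torsors, and applies Lemma \ref{lem: alphap-torsor char 3} twice to conclude the top of the tower is smooth --- all ingredients you already have assembled, so you should chain them instead of appealing to a new explicit construction.
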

\begin{proof}
First, assume that $x \in X$ is of type $E_6^0$. 
By \cite[Lemma 3.6]{Matsumotoalpha}, $X$ is a 
quotient singularity by $\balpha_3$.
By the same argument as in Lemma \ref{lem: main char 5},
Theorem \ref{thm: quotient} holds for $X$.

Next, assume that $x \in X$ is of type $E_7^0$. 
Then, $X$ is a quotient singularity by 
$\balpha_3 \times \C_2$. 
Indeed, the quotient by the action on $k[[x,y]]$ defined by $(D(x), D(y)) = (y, x^3)$ and 
$(g(x), g(y)) = (-x, -y)$ is $E_7^0$. 
Lemma \ref{lem: possible argument uniqueness} shows that $\balpha_3 \times \C_2$ 
is the only finite group scheme by which $X$ can be a quotient singularity.

Now, assume that $x \in X$ is of type $E_8^1$. 
By Table \ref{table: torsors with Picloclocloc}, there exists a local $\bM_2$-torsor 
$Z \overset{\balpha_3}{\to} Y \overset{\balpha_3}{\to} X$,
such that neither $Z \to Y$ nor $Y \to X$ are $\balpha_3$-torsors. 
Then, by Lemma \ref{lem: alphap-torsor char 3}, 
$Y$ is of type $E_6^0$ and $Z$ is smooth.
To show the uniqueness of the group scheme, suppose that
$X = Z/G$ is a presentation of $X$ as a quotient singularity. 
Since $\pietloc(X)$ is trivial, we may assume that $G$ is of finite height, 
say $n$, and since $X$ is not F-regular, 
we have $G / G[F^{n-1}] = \balpha_3$. 
The total space $Y$ of the local $\balpha_3$-torsor $Y = Z/G[F^{n-1}] \to X$ is of type $E_6^0$ 
by Lemma \ref{lem: alphap-torsor char 3} and hence, $G[F^{n-1}] = \balpha_3$ by the uniqueness of the group scheme for $E_6^0$.
Hence $G$ is either $\balpha_{9}$ or $\bM_2$.
Now, it suffices to note that by 
Table \ref{table: torsors with Picloclocloc}, 
$G$ cannot be $\balpha_{9}$.
 
Finally, assume that $x \in X$ is of type $E_8^0$. 
Assume that $X$ is a quotient singularity by a very small action 
of a finite group scheme $G$. 
Since $\pietloc(X)$ is trivial and $X$ is not F-regular,
$G$ is an iterated extension of $\balpha_3$'s, say of height $n$. 
By \cite[Lemma 3.6]{Matsumotoalpha}, we have 
$G \neq \balpha_3$. 
Thus, by Proposition \ref{prop: simple infinitesimal}, 
we have $G/G[F^{n-2}] \in \{ \balpha_9 , \bM_2\}$. 
But by Table \ref{table: torsors with Picloclocloc}, we have 
$\Picloclocloc_{X/k} \cong \balpha_9$, hence it follows from 
Proposition \ref{prop: torsfinab} that $\overline{H^1(U, \balpha_{p^2})}_{\prim} = \overline{H^1(U, \bM_2)}_{\prim} = \emptyset$. 
Therefore, $X$ is not a quotient singularity by Lemma \ref{lem: possible argument}.
\end{proof}

\subsection{Characteristic 2} \label{subsec: rdp 2}
Assume that $k$ is an algebraically closed field of characteristic $2$. 
By Table \ref{table: RDPEquations} and Theorem \ref{thm: quotient class A and B}, it remains to study the 
$D_n^r, E_6^1, E_6^0, E_7^3, E_7^2, E_7^1, E_7^0, E_8^4, E_8^3, E_8^2, E_8^1$, 
and $E_8^0$-singularities. 
Of these, the $D_n^r$ with $4r = n$, the $E_6^1$, the $E_7^3$, the $E_8^4$, 
and the $E_8^2$-singularity have a regular universal \'etale local torsor by \cite{ArtinRDP}, 
hence we have to study the $D_n^r$ with $4r \neq n$, $E_6^0$, $E_7^2$, $E_7^1$, $E_7^0$, $E_8^3$, $E_8^1$, 
and $E_8^0$-singularity.

First, we study those of the remaining RDPs, whose universal \'etale local torsor is F-regular. 
By \cite{ArtinRDP}, these are precisely the $D_n^r$ 
with $4r > n$. 
It turns out that these are no quotient singularities. 
The key observation is the following.

\begin{Lemma}\label{Lemma: quotientsinchar2}
Let $k$ be an algebraically closed field of characteristic $2$. 
Then, $\bmu_2 \times \C_2$ does not admit a very small action on $\widehat{\Aff}^2$.
\end{Lemma}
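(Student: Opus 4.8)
The plan is to suppose for contradiction that $G=\bmu_2\times\C_2$ admits a very small action on $Z:=\widehat{\Aff}^2=\Spec k[[u_1,u_2]]$ and to reduce the statement to the non-existence of a suitable involution of the $A_1$-singularity. First I would restrict the action to the subgroup scheme $\bmu_2$, which again acts very smally; since $\bmu_2$ is linearly reductive this action is linearisable (as recalled in the introduction, citing \cite{LiedtkeMartinMatsumotoLRQ}), so after a coordinate change $\bmu_2$ acts diagonally with some weights $(w_1,w_2)\in(\ZZ/2)^2$. A short check shows that the only weight vector for which the $\bmu_2$-action is free on $U=Z\setminus\{0\}$ is $(1,1)$: for $(0,0)$, $(1,0)$, $(0,1)$ the action is trivial or has a one-dimensional (reduced) fixed locus. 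Hence $W:=Z/\bmu_2$ is the $A_1$-singularity (in characteristic $2$, $\Spec k[[x,y,z]]/(z^2+xy)$ with $x=u_1^2$, $y=u_2^2$, $z=u_1u_2$), and since $\bmu_2$ is infinitesimal the quotient map $Z\to W$ is a universal homeomorphism.

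Next I would transport the $\C_2$-action down to $W$. As $G=\bmu_2\times\C_2$ is a direct product, the generator $\sigma$ of $\C_2$ commutes with $\bmu_2$, hence descends to an automorphism $\bar\sigma$ of $W$ of order dividing $2$ fixing the singular point. The only subgroup schemes of $G$ of order $2$ are $\bmu_2$ and $\C_2$, so a point of $U$ has nontrivial $G$-stabiliser precisely when it is fixed by $\bmu_2$ or by $\sigma$; since $\bmu_2$ already acts freely on $U$, the action of $G$ on $U$ is free if and only if $\sigma$ acts freely on $U$, and via the homeomorphism $Z\to W$ this holds if and only if $\bar\sigma$ acts freely on $U_W:=W\setminus\{\text{sing.\ pt}\}$. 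Thus it suffices to show that the $A_1$-singularity in characteristic $2$ admits no nontrivial automorphism of order dividing $2$ fixing the singular point that acts freely on $U_W$.

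To prove this I would pass to the minimal resolution $\pi\colon\widetilde W\to W$, whose exceptional locus $E$ is a $\PP^1$ with $E^2=-2$ and $N_{E/\widetilde W}\cong\OO_{\PP^1}(-2)$; note $U_W\cong\widetilde W\setminus E$, so $\bar\sigma$ acts freely on $U_W$ iff its unique lift to $\widetilde W$ acts freely on $\widetilde W\setminus E$. If $\bar\sigma$ induces the identity on $E$, then — since $\widetilde W$ is the total space of $\OO_{\PP^1}(-2)$, $H^0(\PP^1,\OO(-2))=0$, and the lift covers $\id_{\PP^1}$ — $\bar\sigma$ is a fibrewise scaling by some $\lambda\in k^{\times}$ with $\lambda^2=1$, hence $\lambda=1$ and $\bar\sigma=\id$. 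If $\bar\sigma$ acts nontrivially on $E$, then $\bar\sigma|_E$ is an order-$2$ automorphism of $\PP^1$ in characteristic $2$, hence unipotent, hence conjugate to $t\mapsto t+1$; since $\Aut(W)$ surjects onto $\Aut(E)=\mathrm{PGL}_2(k)$ and all order-$2$ elements of $\mathrm{PGL}_2(k)$ are conjugate in characteristic $2$, all such $\bar\sigma$ are conjugate in $\Aut(W)$, so I may replace $\bar\sigma$ by the automorphism of $W=\Spec k[[x,y,z]]/(z^2+xy)$ induced by $(x,y,z)\mapsto(x,\,y+x,\,z+x)$. One checks directly that this is a well-defined involution whose fixed locus is $V(x)\cap W=\Spec k[[y,z]]/(z^2)$, which is one-dimensional; hence $\bar\sigma$ does not act freely on $U_W$. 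Combining the two cases, $\bar\sigma=\id$, and then $\sigma$ fixes every point of $W$, hence every point of $Z$, so $\sigma$ — and a fortiori $G$ — does not act freely on $U$, a contradiction.

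The main obstacle is this last step: showing that the $A_1$-singularity carries no free-on-$U_W$ involution. The delicate points there are (i) the claim that every order-dividing-$2$ automorphism of $A_1$ fixing the singular point is trivial or conjugate to the displayed linear involution, which I would justify via the resolution together with the corresponding fact for $\mathrm{PGL}_2$ in characteristic $2$ (alternatively via the orthogonal group of the ``strange conic'' $z^2=xy$), and (ii) being careful that the automorphisms of $\widetilde W$ fixing $E$ pointwise are exactly the fibrewise scalings, so that the ``identity on $E$'' case genuinely forces $\bar\sigma=\id$ rather than producing a spurious free involution. Everything else is routine.
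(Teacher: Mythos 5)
Your reduction steps are sound: restricting to $\bmu_2$, linearising it to the weight-$(1,1)$ action, descending $\sigma$ to an involution $\bar\sigma$ of $W=A_1$, and observing that very-smallness of the $G$-action is equivalent to $\bar\sigma$ being a nontrivial involution acting freely on $U_W$. The fatal problem is that the statement you reduce to is \emph{false}: the $A_1$-singularity in characteristic $2$ does admit involutions fixing the singular point and acting freely on the punctured spectrum. This is already implicit in the paper: for $X$ of type $D_n^r$ with $4r>n$ one has $\pietloc(X)\cong\bD_{2(4r-n)'}$ and the universal \'etale local torsor factors through a local $\C_2$-torsor $Y\to X$ with $Y$ of type $A_{8r-2n-1}$ (see the proof of Corollary \ref{cor: non-unipotent D}, quoting \cite{ArtinRDP}); for $(n,r)=(5,3/2)$ this is a local $\C_2$-torsor $A_1\to D_5^{3/2}$, i.e.\ a free involution on the punctured $A_1$. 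Explicitly, for $X=\Spec k[[s,t,w]]/(w^2+s^2t+wt^2+wst)$ the class $[w/(st)]\in H^2_{\mm}(X,\OO_X)$ satisfies $F(e)=e$, and the associated Artin--Schreier double cover is $S'=k[[s,t,\eta,\xi]]/(\eta^2+s\eta+t\eta+s\xi,\;\xi^2+t\xi+t)$; eliminating $t=\xi^2(1+\xi)^{-1}$ exhibits $S'$ as an $A_1$-singularity with leading term $\eta^2+s(\eta+\xi)$, and the deck involution $s\mapsto s$, $\eta\mapsto\eta+s$, $\xi\mapsto\xi+t$ has fixed ideal $(s,\xi^2)$, whose radical is the maximal ideal. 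So free involutions on $U_W$ exist, and your classification of involutions of $A_1$ must be wrong.

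Concretely, both cases of that classification have gaps. In Case 1, automorphisms of the formal neighbourhood of $E$ inducing the identity on $E$ are not only the fibrewise scalings (they form an infinite-dimensional group, allowing terms of higher order in the fibre direction). In Case 2, conjugacy of the images in $\Aut(E)=\mathrm{PGL}_2(k)$ does not imply conjugacy in $\Aut(W)$: the kernel of $\Aut(W)\to\mathrm{PGL}_2(k)$ is enormous, and indeed the free involution above and your non-free involution $(x,y,z)\mapsto(x,y+x,z+x)$ both induce the (unique up to conjugacy) nontrivial unipotent involution of $E$, yet are not conjugate. Structurally, your reduction discards the one hypothesis that makes the lemma true: $\bar\sigma$ is not an arbitrary involution of $A_1$ but one admitting a lift to an involution of $k[[u_1,u_2]]$ commuting with the diagonal $\bmu_2$-action. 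The paper's proof works entirely upstairs and uses exactly this: by \cite{ArtinWild} a wild involution free outside the origin acts trivially on the tangent space, so $u_i\mapsto u_i+a_i$ with the $a_i$ of order $\geq 2$; commutation with the diagonal $\bmu_2$ forces all monomials of the $a_i$ to have odd degree; Artin's description $R^{\C_2}=k[[u^2+au,v^2+bv,bu+av]]$ then shows every $\C_2$-invariant has only even-degree monomials, so $\bmu_2$ acts trivially on $R^{\C_2}$, contradicting freeness of $G$ on $U$. The free involutions of the punctured $A_1$ that do exist are precisely those admitting no such equivariant lift, which is why the quotients $D_n^r$ with $4r>n$ fail to be quotient singularities even though they have $A$-type double covers.
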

 
\begin{proof}
Let $G = \bmu_2 \times \C_2$ and assume that $G$ acts freely outside the closed point on $\Spec R$ 
where $R = k[[u,v]]$. 
We can linearise the $\bmu_2$-action so that it acts 
via the diagonal action $u \mapsto \epsilon u, v \mapsto \epsilon v$ with $\epsilon^2 = 1$. 
Since $\C_2$ acts freely outside the closed point, it acts as the identity on the tangent space at $0$ by \cite{ArtinWild}, that is, the group $\C_2$ acts as 
$u \mapsto u + a, v \mapsto v + b$, 
where $a,b \in k[[u,v]]$ have degree at least $2$. 
By assumption, the actions of $\bmu_2$ and $\C_2$ commute and thus,
all monomials appearing in $a$ and $b$ have odd degree.
Now, by \cite{ArtinWild}, we have
$$
R^{\C_2} \,=\, k[[u^2 + au, v^2 + bv, bu + av]] 
\,\subseteq\, k[[u,v]].
$$
In particular, if $f \in R^{\C_2}$, then all monomials appearing in $f$ have even degree and hence $\bmu_2$ acts trivially on $R^{\C_2}$. This contradicts our assumption that $G$ acts freely outside the closed point.
\end{proof} 
 
\begin{Corollary} \label{cor: non-unipotent D}
Theorem \ref{thm: quotient} and Theorem \ref{thm: not quotient} hold for the 
$D_n^r$-singularities with $4r > n$
in characteristic $2$.
In particular, they are not quotient singularities.
\end{Corollary}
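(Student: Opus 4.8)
The plan is to argue by contradiction and to extract from a hypothetical quotient presentation a very small action of $\bmu_2\times\C_2$ on $\widehat{\Aff}^2$, which is forbidden by Lemma \ref{Lemma: quotientsinchar2}. So suppose $x\in X=\widehat{\Aff}^2/G$ for a very small action of a finite group scheme $G$. By Table \ref{table: torsors with Picloclocloc} we have $\pietloc(X)\cong\bD_{2k}$ with $k:=(4r-n)'$, and by Proposition \ref{prop: etale quotients unique} the morphism $Y:=\widehat{\Aff}^2/G^\circ\to X$ is the universal \'etale local torsor, with $G^\et\cong\pietloc(X)\cong\bD_{2k}$. By \cite{ArtinRDP} the RDP $Y$ is F-regular; if $Y$ happens to be regular, the assertion for $X$ already follows from Theorem \ref{thm: quotient class A and B}, so I may assume $Y$ is singular. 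Then $G^\circ\neq 1$, and by Proposition \ref{prop: linearlyreductive or unipotent} the scheme $G^\circ$ has height $j\geq 1$ and is either $\bmu_{2^j}$ (linearly reductive) or unipotent.

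The next step is to rule out the unipotent case. If $G^\circ$ is unipotent of height $j$, set $M:=G^\circ[F^{j-1}]$, a proper subgroup scheme of $G^\circ$ which is characteristic in $G$ (hence normal), with $G^\circ/M\cong\balpha_2$ by Proposition \ref{prop: linearlyreductive or unipotent}. Then Lemma \ref{lem: quotient induces local torsor} shows that $\widehat{\Aff}^2/M\to Y$ is a local $\balpha_2$-torsor over $Y$ that does not extend to a global $\balpha_2$-torsor. But $Y$ is F-regular, hence F-injective, and being an RDP it is a Cohen--Macaulay surface singularity, so Lemma \ref{lem: Finjective} forces every local $\balpha_2$-torsor over $Y$ to extend globally — a contradiction. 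Therefore $G^\circ\cong\bmu_{2^j}$ for some $j\geq 1$.

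Finally, over the perfect field $k$ the connected-\'etale sequence splits, so $\bD_{2k}\cong G^\et$ embeds into $G$ as a complement of $G^\circ=\bmu_{2^j}$. Pick any reflection $\sigma\in\bD_{2k}\subseteq G$, i.e.\ an element of order $2$. Since $\sigma$ normalises $G^\circ=\bmu_{2^j}$, it preserves the unique subgroup scheme $\bmu_2\subseteq\bmu_{2^j}$, and as $\Aut(\bmu_2)$ is trivial it centralises $\bmu_2$; hence $\langle\bmu_2,\sigma\rangle\cong\bmu_2\times\C_2$ is a subgroup scheme of $G$. Restricting the very small $G$-action to this subgroup scheme yields an action on $\widehat{\Aff}^2$ fixing the origin and free on the complement, i.e.\ a very small action of $\bmu_2\times\C_2$ on $\widehat{\Aff}^2$, contradicting Lemma \ref{Lemma: quotientsinchar2}. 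Thus $X$ is not a quotient singularity, which is the content of Theorem \ref{thm: not quotient} for these $X$; since these $X$ are not marked with $\checkmark$ in Table \ref{table: Quotient- and non-quotient RDPs}, Theorem \ref{thm: quotient} holds for them vacuously.

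I expect the main obstacle to be the two ``input'' points that make the structural argument go through: (i) knowing that the universal \'etale local torsor $Y$ of these $D_n^r$ is F-regular (so that Lemma \ref{lem: Finjective} applies to it and kills the unipotent case), which is where one genuinely uses Artin's classification; and (ii) the bookkeeping that $M=G^\circ[F^{j-1}]$ is normal in all of $G$ and not merely in $G^\circ$, so that Lemma \ref{lem: quotient induces local torsor} can be invoked to produce the non-extending $\balpha_2$-torsor over $Y$.
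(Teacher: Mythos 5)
Your proof is correct and ends exactly where the paper's does --- by exhibiting a very small action of $\bmu_2\times\C_2$ on $\widehat{\Aff}^2$ and invoking Lemma \ref{Lemma: quotientsinchar2} --- but the reduction to that point is genuinely different. The paper uses Artin's explicit tower: $Z\to X$ factors through a local $\C_2$-torsor $Y'\to X$ with $Y'$ of type $A_{8r-2n-1}$; since $Y'$ is F-regular, uniqueness of lrq presentations forces $Z\to Y'$ to be a $\bmu_{8r-2n}$-torsor, so $G$ is an extension of $\C_2$ by $\bmu_{8r-2n}$ and hence contains $\bmu_2\times\C_2$. You instead work with the connected-\'etale splitting $G\cong G^\circ\rtimes G^{\et}$ directly: $G^{\et}\cong\bD_{2(4r-n)'}$ supplies the order-two element, and you pin down $G^\circ$ as $\bmu_{2^j}$ by eliminating the unipotent alternative of Proposition \ref{prop: linearlyreductive or unipotent} --- the quotient $G^\circ/G^\circ[F^{j-1}]\cong\balpha_2$ would produce, via Lemma \ref{lem: quotient induces local torsor}, a non-extending local $\balpha_2$-torsor over the F-regular (hence F-injective) universal \'etale local torsor, contradicting Lemma \ref{lem: Finjective}. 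Your route avoids both the identification of the intermediate cover and the lrq uniqueness theorem, at the cost of the extra F-injectivity step; both arguments still need Artin's computation as input (you use that the universal \'etale local torsor is F-regular and singular, and that $\pietloc(X)$ has even order). One small caveat: your fallback ``if $Y$ is regular, Theorem \ref{thm: quotient class A and B} applies'' would, in that branch, make $X$ an honest quotient singularity and therefore would not deliver the ``not a quotient singularity'' conclusion; you should instead observe that this branch never occurs, since by Artin the universal \'etale local torsor of a $D_n^r$ with $4r>n$ is an $A_m$-singularity with $m\geq 1$.
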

 
\begin{proof}
Seeking a contradiction, assume that
$X = Z/G$,  where the finite group scheme $G$ acts on
$Z = \widehat{\Aff}^2$ via a very small action.
We have $G/G^\circ \cong \pietloc(X)$ by Proposition \ref{prop: etale quotients unique} and 
$\pi \colon Z \to X$ factors through the universal
\'etale local torsor over $X$. 
In particular, by \cite{ArtinRDP}, there are morphisms 
$f \colon Z \to Y$ and $g \colon Y \to X$, such that
$Y$ is of type $A_{8r-2n-1}$ and 
$g$ is a local $\C_2$-torsor. 
Note that $Y$ is F-regular and thus $f$ is a local $\bmu_{8r-2n}$-torsor 
by Table \ref{table: Quotient- and non-quotient RDPs}. 
Therefore, the group scheme $G$ sits in a short exact sequence
$$
0 \,\to\, \bmu_{8r-2n} \,\to\, G \,\to\, \C_2 \,\to\, 0
$$
In particular, $G$ admits a subgroup scheme isomorphic 
to $\bmu_2 \times \C_2$. 
But by Lemma \ref{Lemma: quotientsinchar2}, 
the action of $\bmu_2 \times \C_2$ on $Z$ cannot be  
free outside the closed point, 
contradicting our assumption. 
Hence, $X$ is not a quotient singularity.
\end{proof}

This leaves us with $D_n^r$ with $4r < n$, $E_6^0$, $E_7^2$, $E_7^1$, $E_7^0$, $E_8^3$, $E_8^1$, and $E_8^0$.
Similar to Lemma \ref{lem: alphap-torsor char 3}
in characteristic $3$, we need some 
information on $\balpha_2$-torsors over some of these 
in order to proceed.

\begin{Lemma} \label{lem: alphap-torsor char 2}
Let $x \in X$ be an RDP of type $E_7^2, E_7^1$, or $E_8^3$
and let $Y \to X$ be a local $\balpha_2$-torsor of primitive class over $X$. 
Then, $Y$ is of type $D_5^{1/2}$, $D_7^{1/2}$, or $E_7^2$, respectively. 
\end{Lemma}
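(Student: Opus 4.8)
The plan is to imitate the proof of Lemma \ref{lem: alphap-torsor char 3} from characteristic $3$, which amounts to an explicit computation: exhibit the integral closure of $R$ in the total space of the primitive local $\balpha_2$-torsor as an explicit hypersurface ring and read off the RDP type. First I would fix equations $R = k[[x,y,z]]/(f)$ from Table \ref{table: RDPEquations}: $f = z^2 + x^3 + xy^3 + y^3 z$ for $E_7^2$, $f = z^2 + x^3 + xy^3 + x^2 y z$ for $E_7^1$, and $f = z^2 + x^3 + y^5 + y^3 z$ for $E_8^3$. By Table \ref{table: torsors with Picloclocloc} we have $\dim H^2_{\idealm}(\OO_X)[F] = 1$ in each case (the $\Dieu$-module is generated by a single $f_1^{(l)}$), so by Corollary \ref{cor: torslocloc} and Proposition \ref{prop: torslocloc} every primitive class in $\overline{H^1(U,\balpha_2)}$ is $a \cdot [x^{-1}y^{-1}z]$ (in the \v{C}ech description with respect to the cover $\{D(y),D(z)\}$) for some $a \in k^\times$; by Example \ref{ex: endomorphismsofalphap}, all such classes are related by an automorphism of $\balpha_{2,X}$, so it suffices to treat one representative, and we may absorb $a$.

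Next I would write down the torsor explicitly. Using the regular sequence $(x_1,x_2) = (y,z)$, the total space $Y_U \to U$ is obtained by gluing $\OO_{Y_U}[y^{-1}] = R[y^{-1}][t_1]/(t_1^2 - (g_1 + b))$ and $\OO_{Y_U}[z^{-1}] = R[z^{-1}][t_2]/(t_2^2 - (g_2 + b))$ for suitable $g_i \in R[x_i^{-1}]$ and $b \in R$, with $t_1 - t_2 = x^{-1}y^{-1}z$ representing the chosen class (here one exploits $z^2 = f - (x^3 + xy^3 + \dots)$ to rewrite $z$-denominators). Then I would introduce new integral variables, e.g. $u_1 = y t_1$, $u_2 = z t_2$, and a third element $\xi$ chosen (as in the $E_6^0$/$E_8^1$ computation) so that $x$ becomes a polynomial in $u_1, \xi$ and $y$; the relations $u_1^2 \in R$-combination, $u_2^2 \in R$-combination, and $\xi^2 \in R$ force $S := R[u_1,u_2,\xi]$ to be module-finite over $R$, to satisfy $\Frac S \supsetneq \Frac R$ and $\OO_{Y_U}[x_i^{-1}] \supseteq S[x_i^{-1}]$, and to be of the form $k[[y,u_1,\xi]]/(h)$. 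A short Newton-polygon / lowest-order-term inspection of $h$ — discarding the higher-order terms coming from the auxiliary element $b$, exactly as in Lemma \ref{lem: alphap-torsor char 3} — then identifies $\Spec S$ as $D_5^{1/2}$, $D_7^{1/2}$, or $E_7^2$ respectively (one checks the leading form matches the $D_{2m+1}^{1/2}$ equation $z^2 + x^2 y + z y^m$ of Convention \ref{convention} for the first two, and the $E_7^2$ equation for the last). Since $S$ is then normal, it is the integral closure of $R$ in $Y$, so $Y = \Spec S$ and the Lemma follows.

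The main obstacle I anticipate is the bookkeeping in characteristic $2$: unlike the $\balpha_3$-case, the squares $t_i^2$ interact with the cross-term $y^3 z$ (or $x^2 yz$) in $f$, so the "correct" choice of the auxiliary variable $\xi$ and of the element $b$ is more delicate, and one must be careful that the purely formal substitutions are valid over the complete local ring (no convergence issues, but one needs $u_1, u_2, \xi$ to actually generate and to recover $x, z$ as claimed). The second subtlety is verifying, for $E_7^1$, that the primitive $\balpha_2$-torsor really lands in $D_7^{1/2}$ and not some other $D_n^r$: this is pinned down by matching the Artin invariant, i.e. by tracking which monomials survive in the leading form of $h$, and cross-checked against the fact (Table \ref{table: torsors with Picloclocloc}) that $\Picloclocloc$ of $D_7^{1/2}$ must be compatible with $D_7^{1/2}$ sitting one $\balpha_2$-level below $E_7^1$ — indeed $\bL_{2,3}[V+F^2]$ for $E_7^1$ reduces modulo $V$ to the $\bG_7^{1/2}$ of $D_7^{1/2}$. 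Once the three substitutions are set up correctly, the remaining computations are routine and parallel to the characteristic-$3$ lemma already in the text.
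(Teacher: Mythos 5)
Your overall strategy coincides with the paper's: reduce to a single class using $\dim H^2_{\idealm}(X,\OO_X)[F]=1$ together with the $\Aut(\balpha_{2,X})$-action of Example \ref{ex: endomorphismsofalphap}, then exhibit an explicit subring $S\subset\Gamma(Y_U,\OO_{Y_U})$ satisfying the three conditions (strictly larger fraction field, $\OO_{Y_U}[x_i^{-1}]\supset S[x_i^{-1}]$, and $\Spec S$ a normal RDP of the asserted type), so that $S$ is forced to be the integral closure. That skeleton is exactly right, and your sanity check against $\Picloclocloc$ of $D_7^{1/2}$ is a sensible consistency test, though not a substitute for the computation.

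As written, however, the proposal has a concrete defect in the setup and leaves the actual content unestablished. You propose the cover $\{D(y),D(z)\}$ but represent the class by $[x^{-1}y^{-1}z]$; that element does not lie in $R[(yz)^{-1}]$, so it is not a \v{C}ech cocycle for that cover (in the characteristic-$3$ lemma the paper does use $\{D(y),D(z)\}$, but with the representative $[y^{-1}z^{-1}x^2]$). The paper's characteristic-$2$ proof instead takes the regular sequence $(x,y)$ and first puts the equations in the form $f=z^2+x^3+y^2g$ with $g=(x+z)y$, $(x+yz)y$, $(y^2+z)y$ (applying the coordinate change $x\mapsto x+yz$ in the $E_7^1$ case rather than using the Table \ref{table: RDPEquations} equation with cross term $x^2yz$ directly); this makes $(z/(xy))^2=y^{-2}x+x^{-2}g$ split into the two local pieces $t_2^2=y^{-2}x+b$ and $t_1^2=x^{-2}g+b$, and then $S=R[u_1,u_2]$ with $u_1=xt_1$, $u_2=yt_2$ already closes up — no third element $\xi$ is needed, since $x=u_2^2-by^2$ and $z=yu_1-xu_2$ lie in $R[u_1,u_2]$, giving $S=k[[y,u_1,u_2]]/(h)$ with $h=u_1^2+g+x^2b$ and the types read off from the leading terms. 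Your alternative setup (cover $\{D(y),D(z)\}$, unmodified $E_7^1$ equation, auxiliary $\xi$) is not shown to produce an integral, normal $S$, and in a lemma whose entire content is this computation, deferring it as ``routine'' leaves the proof incomplete.
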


\begin{proof}
%Write $X = \Spec R$ and let $x_1, x_2$ be a regular sequence of $R$, so that 
%$U = \Spec R[x_1^{-1}] \cup \Spec R[x_2^{-1}]$.
%Using explicit equations, we will find 
%a subring $S \subseteq \Gamma(Y_U, \OO_{Y_U})$,
%such that the following hold
%\begin{enumerate}
%\item $\Frac S \supsetneq \Frac R$ and hence $\Frac \OO_{Y_U} = \Frac S$,
%\item $\OO_{Y_U}[x_i^{-1}] \supset S[x_i^{-1}]$, and
%\item $\Spec S$ is an RDP of type $D_5^{1/2}$, $D_7^{1/2}$, or $E_7^2$, respectively.
%In particular $S$ is normal.
%\end{enumerate}
%Once this is established, it follows that $Y_U = (\Spec S) \rvert_{U}$, 
%$Y = \Spec S$, and the lemma follows.
%
We use Lemma \ref{lem: description of local alpha_p-torsor}.
Write $R = k[[x,y,z]] / (f)$ with $f = z^2 + x^3 + y^2 g$ and with 
$g = (x+z)y$, $(x+yz)y$, $(y^2 + z)y$ for $E_7^2$, $E_7^1$, and $E_8^3$, respectively.
(In the $E_7^1$ case, we apply the coordinate change $x \mapsto x + yz$ and neglect terms of high degree to obtain the equation in Table \ref{table: RDPEquations}.)
We use the regular sequence $(x_1, x_2) = (x, y)$.
Since $\dim H^2_{\idealm}(\OO_X)[F] = 1$ by Table \ref{table: torsors with Picloclocloc},
the $\balpha_2$-torsor $Y_U \to U$ corresponds to the class 
$a [x^{-1} y^{-1} z]$ for some $a \in k^\times$. Again we may assume $a = 1$.
We have 
$$
\begin{array}{lcll}
 \OO_{Y_U}[x^{-1}] &=& R[x^{-1}][t_1] \,/\, (t_1^2 - (x^{-2}g + b)) &\mbox{ \quad  and}\\
 \OO_{Y_U}[y^{-1}] &=& R[y^{-1}][t_2] \,/\, (t_2^2 - (y^{-2} x + b)) 
\end{array}
$$ 
for some $b \in R$ and $t_1 - t_2 = x^{-1} y^{-1} z$.
Set
$$
 S \,:=\, R[u_1, u_2] \mbox{ \quad  where\quad} u_1 = x t_1 \mbox{  and  } u_2 = y t_2.
$$
%Then $\OO_{Y_U}[x^{-1}] = S[x^{-1}]$ since $u_2 = x^{-1}z + y t_1 \in \OO_{Y_U}[x^{-1}]$ 
%and similarly, we have $\OO_{Y_U}[y^{-1}] = S[y^{-1}]$.
Since $x = u_2^2 - b y^2$ and $z = y u_1 - x u_2 = y u_1 + u_2^3 + y^2 u_2 b$, 
we have $S = k[[y,u_1,u_2]] / (h)$, where $h = h(y,u_1,u_2) = u_1^2 + g + x^2 b$.
By Lemma \ref{lem: description of local alpha_p-torsor} we have $Y = \Spec S$.
We have the following three cases depending on the singularity $x\in X$ we started with:
\begin{enumerate}
 \item If $g = (x+z)y$, then 
 
 $h = u_1^2 + u_2^2 y + u_1 y^2 + [u_2^3 y + y^3 b (1 + u_2) + u_2^4 b + y^4 b^3]$.
 \item If $g = (x+yz)y$, then 
 
 $h = u_1^2 + u_2^2 y + u_1 y^3 + [u_2^3 y^2 + y^3 b (1 + u_2 y) + u_2^4 b + y^4 b^3]$.
 \item If $g = (y^2 + z)y$,  then 
 
 $h = u_1^2 + y^3 + y^2 u_1 + y u_2^3 + [u_2 y^3 b + u_2^4 b + y^4 b^3]$.
\end{enumerate}
In each case, the terms in brackets do not affect the singularity, 
and $S$ is an RDP of of type $D_5^{1/2}$, $D_7^{1/2}$, or $E_7^2$, respectively.
\end{proof}

Many of the remaining RDPs turn out not to be quotient singularities. 
In the following, we first treat some that are.

\begin{Lemma}
Theorem \ref{thm: quotient} and Theorem \ref{thm: not quotient} hold for the 
$D_{4n}^0, E_6^0$ and $E_8^0$-singularities in characteristic $2$. 
In particular, all of them are quotient singularities.
\end{Lemma}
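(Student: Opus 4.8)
The group schemes predicted by Table~\ref{table: Quotient- and non-quotient RDPs} are $\balpha_2$ for $D_{4n}^0$ and $E_8^0$, and $\balpha_2\times\C_3$ for $E_6^0$. For \emph{existence} the plan is: for $D_{4n}^0$ and $E_8^0$ one exhibits an explicit very small $\balpha_2$-action on $\widehat{\Aff}^2=\Spec k[[u,v]]$, i.e.\ a derivation $D$ of $k[[u,v]]$ with $D^2=0$ such that $k[[u,v]]^D$ is the corresponding hypersurface of Table~\ref{table: RDPEquations}; this is supplied by (the proof of) \cite[Lemma~3.6]{Matsumoto alpha}, or written down and checked by a short direct computation. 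For $E_6^0$ one uses the classical relation $E_6\cong D_4/\C_3$: the RDP $D_4^0=D_{4\cdot 1}^0$, which by the previous sentence is $\widehat{\Aff}^2/\balpha_2$, carries a very small $\C_3$-action lifting the triality of the $D_4$-configuration, with $D_4^0/\C_3\cong E_6^0$; composing gives $E_6^0\cong\widehat{\Aff}^2/G$ with $G$ an extension $1\to\balpha_2\to G\to\C_3\to1$, necessarily split since $k$ is perfect. Because $D_4^0\to E_6^0$ is étale in codimension one and has degree $3=|\pietloc(E_6^0)|$ (by \cite{ArtinRDP}), Proposition~\ref{prop: etale quotients unique} shows that $D_4^0$ is \emph{the} universal étale local torsor over $E_6^0$.

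For \emph{uniqueness} in the cases $D_{4n}^0$ and $E_8^0$, let $X\cong\widehat{\Aff}^2/H$. By Table~\ref{table: RDPEquations} these RDPs are not F-regular, and $\pietloc(X)=0$ by \cite{ArtinRDP}; hence $H$ is infinitesimal and, not being linearly reductive, is an iterated extension of copies of $\balpha_2$ by Proposition~\ref{prop: linearlyreductive or unipotent}. The key input is that $\Picloclocloc_{X/k}$ is a product of copies of $\balpha_{2^a}$ in both cases: for $D_{4n}^0$ the operator $F$ vanishes on $\bG_{4n}^0$ (for a generator $f_j^{(l)}$ Proposition~\ref{prop: torsors loclocloc new} gives $F(f_j^{(l)})=f^{(1)}_{C(l,j)-(2n-1)}$ with $C(l,j)\le L_{4n}^0=2n-1$, hence $F(f_j^{(l)})=0$), and for $E_8^0$ this is visible in Table~\ref{table: wittvectorcohomology}. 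Translating through Proposition~\ref{prop: torsfinab}, Proposition~\ref{prop: torslocloc} and Corollary~\ref{cor: torslocloc}, every homomorphism $\balpha_4^D\to\Picloclocloc_{X/k}$, and every homomorphism $\bM_2^D\to\Picloclocloc_{X/k}$, then factors through the unique maximal proper quotient; equivalently $\overline{\Hfl{1}(U,\balpha_4)}_{\prim}=\overline{\Hfl{1}(U,\bM_2)}_{\prim}=\emptyset$. Lemma~\ref{lem: possible argument} now forces $H\cong\balpha_2$.

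For \emph{uniqueness} for $E_6^0$, let $X\cong\widehat{\Aff}^2/H$. By Proposition~\ref{prop: etale quotients unique}, $\widehat{\Aff}^2\to X$ factors through the universal étale local torsor $Y\to X$, and $Y=\widehat{\Aff}^2/H^\circ$ is again a quotient singularity; by the existence step $Y\cong D_4^0$. The hypotheses of Lemma~\ref{lem: possible argument uniqueness} are then met: $Y\cong D_4^0$ is an RDP that is a quotient singularity by $\balpha_2$ and by nothing else (the paragraph above, applied to $D_{4\cdot 1}^0$), and $\dim H^2_{\idealm}(\OO_Y)[F]=1$ since $\Picloclocloc_{Y/k}\cong\balpha_2$. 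Hence $H\cong\balpha_2\times\pietloc(X)=\balpha_2\times\C_3$; in particular the extension $1\to\balpha_2\to G\to\C_3\to1$ produced in the existence step is the trivial one.

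The step I expect to cost the most is \emph{existence}: producing the $\balpha_2$-quotient presentations of $D_{4n}^0$ and $E_8^0$ (if one does not simply cite \cite{Matsumoto alpha}) and, above all, constructing the very small $\C_3$-action on $D_4^0$ and verifying that its quotient is exactly $E_6^0$ --- these are invariant-theoretic computations in characteristic $2$ in the spirit of the proofs of Lemma~\ref{lem: alphap-torsor char 3} and Lemma~\ref{lem: alphap-torsor char 2}. The uniqueness statements, by contrast, are essentially formal once the description of $\Picloclocloc_{X/k}$ from Section~\ref{sec: torsors over rdp} is in hand; the only genuinely computational point there is the vanishing of $\overline{\Hfl{1}(U,\balpha_4)}_{\prim}$ and $\overline{\Hfl{1}(U,\bM_2)}_{\prim}$, which rests on $\Picloclocloc_{X/k}$ being a product of $\balpha_{2^a}$'s. (For these three $\checkmark$-cases Theorem~\ref{thm: not quotient} carries no content.)
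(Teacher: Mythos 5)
Your handling of $D_{4n}^0$ and $E_8^0$ coincides with the paper's: existence is cited from \cite[Lemma 3.6]{Matsumoto alpha}, and uniqueness follows because $\Picloclocloc_{X/k}$ is a product of copies of $\balpha_{2^a}$ (so Verschiebung vanishes on it, equivalently $F$ kills the Dieudonn\'e module, as you checked), whence no monomorphism from $\balpha_4^D$ or $\bM_2$ into it exists, $\overline{\Hfl{1}(U,\balpha_4)}_{\prim}=\overline{\Hfl{1}(U,\bM_2)}_{\prim}=\emptyset$, and Lemma \ref{lem: possible argument} (the paper invokes Corollary \ref{cor: no primitive class} directly) forces $H\cong\balpha_2$. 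Your uniqueness argument for $E_6^0$ via Lemma \ref{lem: possible argument uniqueness} is also exactly the paper's.

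The gap is in your existence step for $E_6^0$. First, the very small $\C_3$-action on $D_4^0$ ``lifting triality'' with quotient of type exactly $E_6^0$ (rather than $E_6^1$, or a non-RDP) is asserted rather than constructed; in characteristic $2$ this requires an actual verification. Second, and more seriously, even granting such an action, composing the $\balpha_2$-quotient $\widehat{\Aff}^2\to D_4^0$ with the $\C_3$-quotient $D_4^0\to E_6^0$ does not by itself produce a finite group scheme $G$ acting on $\widehat{\Aff}^2$ with quotient $E_6^0$: you would need to lift the $\C_3$-action to $\widehat{\Aff}^2$ so that it normalizes the $\balpha_2$-action (equivalently, choose the defining derivation $\C_3$-semi-invariantly), and since the quotient map $\widehat{\Aff}^2\to D_4^0$ is not known to be unique (only the group scheme is), there is no canonical such lift. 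A tower of local torsors only exhibits $E_6^0$ as a generalised quotient in the sense of Definition \ref{def: generalised quotient singularity} (\ref{item: iterated}), which in positive characteristic is strictly weaker than Definition \ref{def: quotient singularity} (Theorem \ref{thm: generalisedquotient}); this is the failure-of-Galois-closure phenomenon the paper stresses (Lemma \ref{lemma example 2}). The paper sidesteps both issues by writing down a commuting pair on $k[[x,y]]$ directly, namely $D(x)=y^2$, $D(y)=x^2$ and $g(x)=\omega x$, $g(y)=\omega^2 y$ with $\omega$ a primitive third root of unity, which is an honest very small $\balpha_2\times\C_3$-action with quotient $E_6^0$ (its intermediate $\balpha_2$-quotient being the $D_4^0$ you want). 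Your argument goes through once the composition step is replaced by such a simultaneous (or equivariantly lifted) action.
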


\begin{proof}
If $X$ is of type $D_{4n}^0$ or $E_8^0$, then it is a
quotient singularity by $\balpha_2$, 
see \cite[Lemma 3.6]{Matsumotoalpha}. 
Since we know that $\overline{H^1(U, G')}_{\prim} = \emptyset$ 
for $G' \in \{ \balpha_{4}, \bM_2 \}$ by 
Proposition \ref{prop: torsors loclocloc new} (Table \ref{table: torsors with Picloclocloc}), 
we can use Corollary \ref{cor: no primitive class} 
to show that $\balpha_2$ is also the only finite group scheme 
that can realise $X$ as a quotient singularity.

If $x\in X$ is of type $E_6^0$, then it  
is a quotient singularity by $\balpha_2 \times \C_3$. 
Indeed, the quotient by the action on $k[[x,y]]$ defined by $(D(x), D(y)) = (y^2, x^2)$
and $(g(x), g(y)) = (\omega x, \omega^2 y)$, where $\omega$ is a primitive $3$-rd root of $1$, 
is $E_6^0$. 
By Lemma \ref{lem: possible argument uniqueness}, 
$G$ is unique.
\end{proof}

This leaves us with RDPs of type $D_n^r$ with $0 < 4r < n$, $D_{4n+2}^0$, 
$E_7^2$, $E_7^1$, $E_7^0$, $E_8^3$, and $E_8^1$.

\begin{Lemma} \label{lem: nonquotient easycases}
Theorem \ref{thm: quotient} and Theorem \ref{thm: not quotient} hold for the $D_{2n+1}^{1/2}$, 
$D_{4n+2}^0$, $E_7^2$, $E_7^1$, $E_7^0$, and $E_8^3$,
in characteristic $2$. 
In particular, they are not quotient singularities.
\end{Lemma}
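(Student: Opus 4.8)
All six RDPs have $\pietloc(X)=0$ by Artin \cite{ArtinRDP} (Table \ref{table: torsors with Picloclocloc}) and none of them is F-regular by Table \ref{table: RDPEquations}. So if $x\in X\cong\widehat{\Aff}^2/G$ were a quotient singularity, then $G^{\et}\cong\pietloc(X)$ would be trivial, hence $G$ is connected, and since $X$ is not F-regular, $G$ cannot be linearly reductive; by Proposition \ref{prop: linearlyreductive or unipotent} it is therefore unipotent of some height $h\geq1$ with $G[F^{i+1}]/G[F^i]\cong\balpha_2$ for $0\leq i<h$. In particular $G/G[F^{h-1}]\cong\balpha_2$, so by Proposition \ref{prop: primitive class} the map $(Z/G[F^{h-1}])\setminus\{pt\}\to U$ (with $Z=\widehat{\Aff}^2$) is a primitive local $\balpha_2$-torsor, whose total space in the sense of our convention is $Z/G[F^{h-1}]$ (it is normal of dimension $2$ by Lemma \ref{lem: quotient induces local torsor}, so equals $\mathrm{Spec}\,H^0$ of the torsor) and is itself a quotient singularity. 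This is the object we will play against the structure of $\Picloclocloc_{X/k}$ read off from Table \ref{table: wittvectorcohomology}.

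\textbf{The cases $E_7^0$, $E_7^1$, $D_{2n+1}^{1/2}$, $D_{4n+2}^0$.} Writing $M:=\Dieulocloc(\Picloclocloc_{X/k})$, Corollary \ref{cor: torsloclocwithdieu} together with the description of primitive classes gives $\overline{\Hfl{1}(U,\balpha_4)}_{\prim}=\{m\in M: Vm=F^2m=0,\ Fm\neq0\}$, while $\overline{\Hfl{1}(U,\bM_2)}_{\prim}\neq\emptyset$ if and only if $\bM_2$ embeds into $\Picloclocloc_{X/k}$, i.e.\ if and only if there is $m\in M$ with $F^2m=0$ and $0\neq Fm=Vm$. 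I would check both sets are empty here: for $E_7^0$ one has $M\cong\Dieu/(F,V^3)$, and for $D_{2n+1}^{1/2}$, $D_{4n+2}^0$ the index shift $C(l,j)-(n/2-r-1)$ in Table \ref{table: wittvectorcohomology} is non-positive on every generator while $p=0$ on $M$ by Proposition \ref{prop: picloclocloc of DNr}, so in all these cases $F\equiv0$ on $M$; for $E_7^1$, $M$ is generated by $f_1^{(3)}$ with $Ff_1^{(3)}=V^2f_1^{(3)}$, $V^3f_1^{(3)}=0$, and a one-line computation on the basis $\{f_1^{(3)},Vf_1^{(3)},V^2f_1^{(3)}\}$ shows $Fm=0$ whenever $Vm=0$ or $Fm=Vm$. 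Hence by Lemma \ref{lem: possible argument}, if any of these four were a quotient singularity it would be a quotient by $\balpha_2$, so the (essentially unique, by Example \ref{ex: endomorphismsofalphap}) primitive local $\balpha_2$-torsor over it would have regular total space $\widehat{\Aff}^2$. For $E_7^1$ this contradicts Lemma \ref{lem: alphap-torsor char 2}, which identifies the total space with the RDP $D_7^{1/2}$; for $E_7^0$, $D_{2n+1}^{1/2}$, $D_{4n+2}^0$ it contradicts the classification of RDPs that are $\balpha_p$-quotient singularities \cite{Matsumoto alpha} (equivalently, a direct computation as in Lemma \ref{lem: alphap-torsor char 2}, now ending in the non-normal phenomenon of Example \ref{ex: non-normal}, shows $\mathrm{Spec}\,H^0(V,\OO_V)$ is not even normal). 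Thus none of these four is a quotient singularity.

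\textbf{The cases $E_7^2$ and $E_8^3$.} Now $\Picloclocloc_{X/k}$ is $\bM_2$, resp.\ $\bM_3$, each containing $\bM_2$, so Lemma \ref{lem: possible argument} does not apply and one argues via the Plan directly: if $X=Z/G$ with $G$ unipotent of height $h$, then $Z/G[F^{h-1}]\to X$ is a primitive local $\balpha_2$-torsor whose (normal) total space is a quotient singularity, and by Lemma \ref{lem: alphap-torsor char 2} that total space is an RDP of type $D_5^{1/2}$ if $X=E_7^2$, and of type $E_7^2$ if $X=E_8^3$. The first possibility contradicts the fact, just established, that $D_5^{1/2}=D_{2\cdot2+1}^{1/2}$ is not a quotient singularity, and the second contradicts the fact that $E_7^2$ is not a quotient singularity. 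Hence $E_7^2$ and $E_8^3$ are not quotient singularities either, completing the proof (Theorem \ref{thm: quotient} being vacuous for all six, which are $\times$-marked in Table \ref{table: Quotient- and non-quotient RDPs}).

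\textbf{Main obstacle.} The technical heart is the explicit identification of the total spaces of the primitive local $\balpha_2$-torsors: this is Lemma \ref{lem: alphap-torsor char 2} for $E_7^1,E_7^2,E_8^3$, and for $E_7^0$ and the two $D$-families it rests on the classification of $\balpha_p$-quotient RDPs and on the non-normality phenomenon of Example \ref{ex: non-normal}. The remaining ingredient — the vanishing of $\overline{\Hfl{1}(U,\balpha_4)}_{\prim}$ and $\overline{\Hfl{1}(U,\bM_2)}_{\prim}$ — is routine but requires careful bookkeeping with the covariant Dieudonné modules of Table \ref{table: wittvectorcohomology}; everything else is a formal consequence of Proposition \ref{prop: primitive class}, Lemma \ref{lem: possible argument}, and the ordering $D_{2n+1}^{1/2}\Rightarrow E_7^2\Rightarrow E_8^3$ of the reductions.
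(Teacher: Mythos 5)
Your proof is correct and follows essentially the same route as the paper's: for $D_{2n+1}^{1/2}$, $D_{4n+2}^0$, $E_7^1$, $E_7^0$ one checks from the Dieudonn\'e modules that $\overline{\Hfl{1}(U,\balpha_4)}_{\prim}=\overline{\Hfl{1}(U,\bM_2)}_{\prim}=\emptyset$ and combines Lemma \ref{lem: possible argument} with the fact (ultimately \cite[Lemma 3.6]{Matsumoto alpha}) that none of these is an $\balpha_2$-quotient, and then $E_7^2$ and $E_8^3$ are reduced to $D_5^{1/2}$ and $E_7^2$ via Lemma \ref{lem: alphap-torsor char 2} exactly as in the paper. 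The only inaccuracy is the parenthetical claim that the primitive $\balpha_2$-class is essentially unique, which fails for the $D$-families (where $\dim\overline{\Hfl{1}(U,\balpha_2)}$ grows with $n$), but your argument never actually relies on it since you fall back on the classification of $\balpha_2$-quotient RDPs there.
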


\begin{proof}
Let $x \in X$ be one of the RDPs in the statement. 
We know that it is not an $\balpha_2$-quotient singularity 
by \cite[Lemma 3.6]{Matsumotoalpha}.

Now, if $x\in X$ is of type $D_{2n+1}^{1/2}$, $D_{4n+2}^0$, $E_7^1$ or $E_7^0$, 
then it follows from the description of $\Picloclocloc_{X/k}$ 
in Table \ref{table: torsors with Picloclocloc} 
that $\overline{H^1(U, G')}_{\prim} = \emptyset$ for $G' \in  \{ \balpha_{p^2}, \bM_2 \}$.
Hence, by Lemma \ref{lem: possible argument}, these are not quotient singularities.

Next, assume that $x\in X$ is of type $E_7^2$ or $E_8^3$.  
Seeking a contradiction, suppose that $X =  \widehat{\Aff}^2 /G$ is quotient singularity,
and let $\dots \to X_1 \to X$ be a factorisation of $Y \to X$ into local 
$\balpha_2$-torsors.
By Lemma \ref{lem: alphap-torsor char 2}, $X_1$ is a rational double point of type 
$D_5^{1/2}$ or $E_7^2$ respectively. 
By the previous paragraph, $D_5^{1/2}$ is not a quotient singularity.
Hence, we arrive at a contradiction. 
Therefore, $X$ is not a quotient singularity. 
\end{proof}

In the following, we prove Theorem \ref{thm: quotient} and 
Theorem \ref{thm: not quotient} for 
the $E_8^1$-singularity.

\begin{Lemma} \label{lem: E81main}
Theorem \ref{thm: quotient} and Theorem \ref{thm: not quotient} hold for the 
$E_8^1$-singularity in characteristic $2$. 
In particular, it is not a quotient singularity.
\end{Lemma}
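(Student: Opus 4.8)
The plan is to follow the same strategy that worked for the other non-F-regular, non-quotient RDPs in characteristic $2$, namely to use $\Picloclocloc_{X/k}$ to pin down the possible abelian quotients of a hypothetical quotient group scheme $G$ and then trace a tower of $\balpha_2$-torsors to reach a singularity already known not to be a quotient singularity. From Table \ref{table: torsors with Picloclocloc} we have $\pietloc(E_8^1)=0$ and $\Picloclocloc_{X/k}\cong \bL_{2,3}[p]$, which is a $\Dieu$-module of length $4$ with presentation generated by $f_1^{(3)}$ and $f_2$ subject to $F(f_1^{(3)})=V^3(f_1^{(3)})=0$ and $F(f_2)-V^2(f_1^{(3)})=V(f_2)=0$. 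Suppose $X\cong\widehat{\Aff}^2/G$ is a quotient singularity. Since $X$ is not F-regular and $\pietloc(X)$ is trivial, Proposition \ref{prop: linearlyreductive or unipotent} forces $G$ to be a unipotent infinitesimal group scheme of some height $n$, which is an iterated extension of $\balpha_2$'s; in particular $G\neq\balpha_2$ by \cite[Lemma 3.6]{Matsumoto alpha}, so $n\geq 2$.

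First I would analyze the abelian quotient $G^{\ab}$. By Proposition \ref{prop: primitive class}, for any normal subgroup scheme $N\subsetneq G$ with abelian quotient $G'=G/N$, the torsor $[\widehat{\Aff}^2/N\to X]$ defines a primitive class in $\overline{\Hfl{1}(U,G')}$, hence (via Proposition \ref{prop: torsfinab} and Corollary \ref{cor: Dieudonneofpicloc}) a $\Dieu$-submodule quotient of $\Dieulocloc(\Picloc_{X/k})=\Dieulocloc(\bL_{2,3}[p])$ that does not come from any proper subgroup scheme of $G'$. In particular, taking $N=G[F^{n-2}]$, the quotient $\overline{G}:=G/G[F^{n-2}]$ has length $p^2$ and, being unipotent of $(\loc,\loc)$-type, equals $\balpha_{4}$ or $\bM_2$ by Proposition \ref{prop: simple infinitesimal}. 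But $\overline{H^1(U,\balpha_4)}_{\prim}$ and $\overline{H^1(U,\bM_2)}_{\prim}$ can both be read off from the $\Dieu$-module $\bL_{2,3}[p]$: a primitive $\balpha_4$-torsor would correspond to an element killed by $F^2$ but not $F$, and a primitive $\bM_2$-torsor to one killed by $F-V$ (or $F+V$ in $p=2$), and neither of these produces a class not already supported on a proper subgroup — more precisely, every element of $\Dieulocloc(\bL_{2,3}[p])$ killed by $F^2$ lies in the $\balpha_2$-part, and there is no element killed by $F+V$ generating a length-$2$ submodule not coming from $\balpha_2$. Hence by Lemma \ref{lem: possible argument}, $X$ could only be an $\balpha_2$-quotient singularity, contradicting \cite[Lemma 3.6]{Matsumoto alpha}. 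Therefore $E_8^1$ is not a quotient singularity.

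For the uniqueness/existence assertions of Theorem \ref{thm: quotient}, there is nothing to prove for $E_8^1$ since it is marked $\times$: Theorem \ref{thm: not quotient} is the only content. So the proof reduces to: (i) rule out $\balpha_2$ by the cited lemma of Matsumoto, (ii) observe $G$ must be unipotent infinitesimal of height $\geq 2$ by Proposition \ref{prop: linearlyreductive or unipotent} and triviality of $\pietloc(X)$, (iii) compute $\overline{H^1(U,\balpha_4)}_{\prim}=\overline{H^1(U,\bM_2)}_{\prim}=\emptyset$ from Table \ref{table: torsors with Picloclocloc} (i.e.\ from the explicit $\Dieu$-module $\bL_{2,3}[p]$), and (iv) invoke Lemma \ref{lem: possible argument}. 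The step I expect to be the main obstacle is (iii): unlike the $E_8^0$ case where $\Picloclocloc_{X/k}\cong\balpha_2\times\balpha_8$ is a product so the emptiness of the primitive sets is immediate, here $\bL_{2,3}[p]$ is an indecomposable length-$4$ module with a nontrivial relation linking $f_2$ and $f_1^{(3)}$, and I must carefully check that no quotient isomorphic to $\Dieulocloc(\balpha_4)=\Dieu/(F^2,V)$ or $\Dieulocloc(\bM_2)=\Dieu/(F^2,F-V)$ arises as a primitive class, i.e.\ that every such quotient factors through a $\balpha_2$-quotient. This is a finite computation with the explicit generators and relations in Table \ref{table: wittvectorcohomology}, but it is the crux of the argument; alternatively one can phrase it as: the only simple quotient of $G$ of $(\loc,\loc)$-type is $\balpha_2$, and any length-$2$ infinitesimal unipotent quotient $\overline G$ of $G$ that is not $\balpha_2\times\balpha_2$ would force a primitive $\balpha_4$- or $\bM_2$-torsor, which $\bL_{2,3}[p]$ does not admit.
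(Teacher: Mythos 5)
Your strategy breaks down at exactly the step you flagged as the crux: the computation in (iii) does not come out empty. Read off Table \ref{table: wittvectorcohomology} for $E_8^1$ in $p=2$: the relation $F(f_2)=V^2(f_1^{(3)})=f_1\neq 0$ together with $F(f_1)=F(V^2(f_1^{(3)}))=0$ shows that $f_2$ lies in $H^2_{\idealm}(X,\OO_X)[F^2]$ but not in $H^2_{\idealm}(X,\OO_X)[F]$, so $f_2$ \emph{is} a primitive class in $\overline{\Hfl{1}(U,\balpha_4)}$. Likewise $f_1^{(2)}+f_2$ (up to scalars) satisfies $F=V$ on it and is not killed by $V$, so $\overline{\Hfl{1}(U,\bM_2)}_{\prim}\neq\emptyset$ as well. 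This is precisely the difference from $E_8^0$, where $F$ kills both generators: the nontrivial relation linking $f_2$ to $f_1^{(3)}$ in $\bL_{2,3}[p]$ is what \emph{creates} primitive $\balpha_4$- and $\bM_2$-classes rather than excluding them. Consequently Lemma \ref{lem: possible argument} is simply not applicable to $E_8^1$, and your argument proves nothing beyond $n\geq 2$.

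The paper's proof has to work much harder, and in three genuinely different ways. First, it explicitly computes the total space $X_1$ of the primitive local $\balpha_2$-torsor over $X$ (a normal but non-rational hypersurface singularity) and shows, via the criterion that the Tyurina ideal of an $\balpha_2$-quotient must be a Frobenius power of an ideal and hence stable under partial derivatives, that $X_1$ is \emph{not} an $\balpha_2$-quotient singularity; this forces $n>2$. Second, it rules out $\balpha_4$ not on $X$ but on $X_1$: a direct \v{C}ech computation on $X_1$ shows $H^2_{\idealm}(X_1,\OO_{X_1})[F^2]=H^2_{\idealm}(X_1,\OO_{X_1})[F]$, so the subquotient $G[F^{n-1}]/G[F^{n-3}]$ acting over $X_1$ cannot be $\balpha_4$. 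Third, $\bM_2$ is excluded only indirectly: one checks $\overline{\Hfl{1}(U,\bM_3)}_{\prim}=\emptyset$ on $X$ (which does hold, since any element of the Dieudonn\'e module involving $f_1^{(3)}$ fails $F=V$), and then Proposition \ref{prop: groupschemesoforderp3} shows that a height-$3$ quotient of $G$ with $G[F^2]\cong\bM_2$ would have to be $\bM_3$. None of these three steps can be replaced by a computation of primitive classes over $X$ alone, so your proposal is missing the essential content of the argument.
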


\begin{proof}
Seeking a contradiction, suppose that $X = \widehat{\Aff}^2 / G$ 
with respect to a very small action of a finite group scheme $G$.
Since $\piet(X)$ is trivial, $G$ is local, say of height $n$.
Since $X$ is not F-regular, $G$ is not $\bmu_{2^n}$, hence by 
Proposition \ref{prop: linearlyreductive or unipotent}, $G$ is an $n$-fold 
extension of $\balpha_2$'s.
We have $n > 1$ by \cite[Lemma 3.6]{Matsumotoalpha}. 
In order to prove the lemma, it suffices to establish the 
following claims:
\begin{enumerate}
    \item \label{item: not n=2} $X_1 = \widehat{\Aff}^2 / G[F^{n-1}]$ is not an $\balpha_2$-quotient singularity. 
    Hence, $n > 2$, and the quotient $G[F^{n-1}] / G[F^{n-3}]$ 
    is isomorphic to $\balpha_4$ or $\bM_2$.
    \item \label{item: not alpha_4} $\overline{H^1(U_1, \balpha_4)}_{\prim} = \emptyset$, where $U_1 \subseteq X_1$ is the complement of the closed point. 
    Hence $G[F^{n-1}] / G[F^{n-3}] \not\cong \balpha_4$ by Corollary \ref{cor: no primitive class}.
    \item \label{item: not M_2} 
    $\overline{H^1(U, \bM_3)}_{\prim} = \emptyset$. 
    Hence,  $G/G[F^{n-3}] \not\cong \bM_3$ by Corollary \ref{cor: no primitive class}. 
    By Proposition \ref{prop: groupschemesoforderp3}, this implies $G[F^{n-1}]/G[F^{n-3}] \not \cong \bM_2$.
\end{enumerate}

First, we shall find an equation for $X_1$.
We use Lemma \ref{lem: description of local alpha_p-torsor}.
Write $X = \Spec R$ with $R = k[[x,y,z]] / (f)$ and $f = z^2 + x^3 + y^2 (y^3 + zxy)$.
Note that 
\[
\bigl( \frac{z}{xy} \bigr)^2 
 \,=\, \frac{y^3 + zxy}{x^2} \,+\, \frac{x}{y^2}.
\]
Since $H^2_{\idealm}(\OO_X)[F] = \langle [x^{-1} y^{-1} z] \rangle$ (see Proposition \ref{prop: torsors loclocloc new}),
the $\balpha_2$-torsor $U_1 := (X_1)_U \to U$ corresponds to a class $a [x^{-1} y^{-1} z]$
with $a \in k^\times$. Again we may assume $a = 1$.
We have 
\begin{align*}
\OO_{U_1}[x^{-1}] &= R[x^{-1}][t_1] / (t_1^2 - (x^{-2} (y^3 + zxy) + b)), \\
\OO_{U_1}[y^{-1}] &= R[y^{-1}][t_2] / (t_2^2 - (y^{-2} x + b))
\end{align*}
for some $b \in R$ and where
$t_1 - t_2 = x^{-1} y^{-1} z$.
Here, the $\balpha_2$-action is given by a derivation $D$ with 
$D(t_i) = 1$ for $i = 1,2$.
After possibly adding a constant to the $t_i$, we may assume that $b \in \idealm_{R}$.
Let $u_1 = x t_1$ and $u_2 = y t_2$.
We have 
\begin{equation} \label{eq: E81main,eq1}
u_1^2 + y^3 + zxy + x^2 b \,=\, u_2^2 + x + y^2 b 
\,=\, z + y u_1 + x u_2 \,=\, 0.
\end{equation}
Let $S = R[u_1,u_2]$. 
Solving Equation \eqref{eq: E81main,eq1} for $x$ and $z$, we 
have $S = k[[y,u_1,u_2]] / (g)$, with 
$$ 
g \,=\, u_1^2 + y^3 + u_2^4 b + y^4 b^3 + y u_2^5 + y^5 u_2 b^2 + u_1 y^2 u_2^2 + u_1 y^4 b. 
$$
By Lemma \ref{lem: description of local alpha_p-torsor}, we have $Y = \Spec ~ S$.

To show Claim (\ref{item: not n=2}), we show that the Tyurina ideal of $S$ does \emph{not} satisfy the property stated in the latter part of Lemma \ref{lem: alpha_2 Tyurina}.
% We have $S = k[[y,u_1,u_2]] / (g)$ with
% $$ 
% g \,=\, u_1^2 + y^3 + u_2^4 b + y^4 b^3 + y u_2^5 + y^5 u_2 b^2 + u_1 y^2 u_2^2 + u_1 y^4 b. 
% $$
% Let $I = (g, g_y, g_{u_1}, g_{u_2})$ be the Tyurina ideal of $S$. 
Since $x \equiv 0$ and $z \equiv yu_1 \pmod{\idealm_{S}^{(2)}}$, we have 
$b \equiv \beta_1 y + \beta_2 y u_1 \pmod{\idealm_{S}^{(2)}}$ for some 
$\beta_1, \beta_2 \in k$. 
This implies $b_{u_2} \in \idealm_{S}^{(2)}$. 
Now, consider the ideal
$$
I' \,:=\, I + (y^2, u_1^2, y u_2^2, u_1 u_2^2, u_2^3)^{(2)} \,=\, 
I + (y^4, u_1^4, y^2 u_2^4, u_1^2 u_2^4, u_2^6). 
$$
We have
$$
yu_2^4 \,=\, g_{u_2} + u_2^4 b_{u_2} + y^4 (b^2 b_{u_2} + y b^2 + u_1 b_{u_2}) \,\in\, I'.
$$
But $(\partial / \partial y) (y u_2^4) = u_2^4 \notin I'$. 
By Lemma \ref{lem: alpha_2 Tyurina}, this shows that $X_1$ is not an $\balpha_2$-quotient singularity.
%As explained in the previous paragraph, this shows that $X_1$ is not an 
%$\balpha_2$-quotient singularity.

Next, we prove Claim (\ref{item: not alpha_4}).
Recall from Equation \eqref{eq: E81main,eq1} that 
$$
S \,=\, k[[x,y,u_1,u_2]] / (u_1^2 + y^3 + (yu_1 + xu_2)xy + x^2 b, u_2^2 + x + y^2 b),
$$ 
with $b \in R = k[[x,y,z]] = k[[x, y, y u_1 + x u_2]]$. 
This presentation of $S$ shows that it is a free $k[[x,y]]$-module with basis 
$u_1^{k_1} u_2^{k_2}$ ($k_1, k_2 \in \{0,1\}$), so we have a basis 
$e_{i,j}^{k_1,k_2} = [x^{-i} y^{-j} u_1^{k_1} u_2^{k_2}]$ 
($i,j \geq 1$, $k_1, k_2 \in \{0,1\}$)
of the $k$-vector space $H^2_{\idealm}(S)$.
We claim that $\overline{H^1(U, \balpha_{p^2})}_{\prim} = \emptyset$ and for this, 
it suffices to show that $H^2_{\idealm}(S)[F^2] = H^2_{\idealm}(S)[F] = \langle e_{1,1}^{1,0}, e_{1,1}^{1,1} \rangle =: W$.
The inclusions $H^2_{\idealm}(S)[F^2] \supset H^2_{\idealm}(S)[F] \supset W$ are clear 
and it remains to show that $F^{-1}(W) \subseteq W$.
It suffices to show that 
$\langle e_{1,1}^{0,1}, e_{2,1}^{1,1}, e_{1,2}^{1,1} \rangle \cap F^{-1}(W) = 0$. 
We have
$$
\begin{array}{cclcl}
    F(e_{1,1}^{0,1}) &=& \biggl[ \biggl( \dfrac{    u_2}{x y}   \biggr)^2 \biggr] &=& 
    \biggl[ \dfrac{1}{x y^2} \biggr], \\
    F(e_{2,1}^{1,1}) &=& \biggl[ \biggl( \dfrac{u_1 u_2}{x^2 y} \biggr)^2 \biggr] &=& 
    \biggl[ \dfrac{u_2}{x y} + \dfrac{b}{x y^2} \biggr], \\
    F(e_{1,2}^{1,1}) &=& \biggl[ \biggl( \dfrac{u_1 u_2}{x y^2} \biggr)^2 \biggr] &=& 
    \biggl[ \dfrac{1}{x y}  \biggr], 
\end{array}$$
and the first terms of the right-hand side are $k$-linearly independent.

Finally, for Claim (\ref{item: not M_2}), we use that
$\overline{H^1(U, \bM_3)}_{\prim} = \emptyset$ by Proposition \ref{prop: torsors loclocloc new} (Table \ref{table: torsors with Picloclocloc}). 
Hence,  $G/G[F^{n-3}] \not\cong \bM_3$ follows from Corollary \ref{cor: no primitive class}. 
\end{proof}

\begin{Remark} \label{rem: E81remark}
The proof of Lemma \ref{lem: E81main} shows that the total space of every local 
$\balpha_2$-torsor of primitive class over the $E_8^1$-singularity is a normal but \emph{non-rational} surface singularity. 
Thus, in order to understand local torsors of length $p^3$ --- which may very well 
have non-abelian structure group by Proposition \ref{prop: groupschemesoforderp3} --- 
over the $E_8^1$-singularity, we would have to calculate the (loc,loc)-part of the local 
Picard sheaf for all the (possibly non-isomorphic) non-rational singularities that occur.

This means that even to understand local torsors over the simplest class of surface singularities,
the RDPs, we have to study local torsors over some classes of non-rational singularities. 
This might give a taste of what awaits us when studying non-F-regular singularities 
of the MMP in higher dimensions.
\end{Remark}

This finishes the proof of Theorem \ref{thm: quotient} and Theorem \ref{thm: not quotient}. It remains to study the remaining $D_n^r$-singularities in characteristic $2$.

\subsection{The remaining $D_n^r$-singularities in characteristic 2} \label{subsec: dnr}
In this section, we study the $D_n^r$-singularities with $2 < 4r < n$ in more detail. 
We start with a description of $\balpha_2$-torsors over some of these singularities.

\begin{Lemma} \label{lem: alphap-torsor char 2 Dn}
Let $X$ be an RDP of type $D_n^r$ with $4r < n$ and $n-2r \in 4\ZZ$
in characteristic $p=2$.
Let $Y \to X$ be a local $\balpha_2$-torsor corresponding to a class $e$,
whose annihilator is equal to the annihilator of $H^2_{\idealm}(X,\OO_X)[F]$.
Then, the following hold:
\begin{enumerate}
    \item \label{item: type} $Y$ is an RDP of type $D_{4r}^{\max\{3r-n/2, 0\}}$ if $r \geq 1$, 
    of type $A_1$ if $r = 1/2$, and of type $A_0$ (smooth) if $r = 0$.
    \item \label{item: class} If $e' \in H^2_{\idealm}(X,W_l \OO_X)[F^m]$
    satisfies $F(e') = e$, then the pullback of $e'$ to $Y$ belongs to $H^2_{\idealn}(Y,\OO_Y)[F]$ and satisfies the same assumption as $e$.
\end{enumerate}
\end{Lemma}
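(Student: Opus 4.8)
The strategy mirrors that of Lemma \ref{lem: alphap-torsor char 3} and Lemma \ref{lem: alphap-torsor char 2}: produce an explicit subring $S \subset \Gamma(Y_U,\OO_{Y_U})$ and identify it as the integral closure of $R$ in $Y$ by checking that $\Frac S \supsetneq \Frac R$, that $\OO_{Y_U}[x_i^{-1}] \supseteq S[x_i^{-1}]$ for a suitable regular sequence $x_1,x_2$, and that $\Spec S$ is an RDP (in particular normal) of the asserted type. First I would fix the equation $R = k[[x,y,z]]/(f)$ with $f = z^2 + x^2 y + xy^m + zxy^{m-r}$ in the notation of Table \ref{table: RDPEquations} (here $n = 2m$ since $n - 2r \in 4\ZZ$ forces $n$ even), and recall from Proposition \ref{prop: torsors loclocloc new} that $H^2_{\idealm}(X,\OO_X)[F]$ has $\{f_{1,j}\}$ as relevant generators, with $e$ corresponding (up to a unit, using Example \ref{ex: endomorphismsofalphap}) to the class $f_{1,j_0}$ where $j_0$ is the smallest index with $C(1,j_0) \leq L_n^r$, i.e. the generator of minimal "level". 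Concretely $e = [x^{-1}y^{-j_0}z]$ up to lower-order corrections, and the $\balpha_2$-torsor $Y_U \to U$ is glued from $\OO_{Y_U}[x^{-1}] = R[x^{-1}][t_1]/(t_1^2 - a_1)$ and $\OO_{Y_U}[y^{-1}] = R[y^{-1}][t_2]/(t_2^2 - a_2)$ with $t_1 - t_2 = x^{-1}y^{-j_0}z$ and $a_i$ read off from the equation $(z/(xy^{j_0}))^2 = (\text{stuff})/x^2 + (\text{stuff})/y^{2j_0}$.

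For part \eqref{item: type}, I would set $u_1 := x t_1$, $u_2 := y^{j_0} t_2$ (the "clearing denominators" trick), check these are integral over $R$, put $S := R[u_1,u_2]$, eliminate $x$ and $z$ using the relations (expressing $x$ as a polynomial in $u_2,y$ and $z$ as a polynomial in $u_1,u_2,x,y$), and arrive at a presentation $S = k[[y,u_1,u_2]]/(h)$. The expected outcome is $h = u_1^2 + (\text{a $D$-type equation in } y, u_2) + (\text{bracketed terms of higher order})$; discarding the bracketed terms via the standard "these do not affect the singularity" argument, one reads off the type. The numerology $D_{4r}^{\max\{3r - n/2,0\}}$ should fall out of tracking how the exponent $m$ and the "$r$-shift" interact after the substitution — this is the point where I would be most careful: matching the resulting exponents against Convention \ref{convention} and Table \ref{table: RDPEquations}, and separately handling the degenerate cases $r = 1/2$ (giving $A_1$) and $r = 0$ (giving $A_0$, where $Y \to X$ is the $\balpha_2$-quotient of a smooth surface — consistent with \cite[Lemma 3.6]{Matsumoto alpha}). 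I expect this exponent bookkeeping, rather than any conceptual difficulty, to be the main obstacle; the shape of the computation is dictated by the analogous lemmas but the $D_n^r$ family is genuinely more intricate because $L_n^r$ and $C$ govern which generator $e$ is, and the answer must be uniform in $n, r$.

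For part \eqref{item: class}, the key point is that Frobenius and Verschiebung on local Witt vector cohomology are compatible with pullback along the torsor $\pi \colon Y \to X$, which is a consequence of the functoriality in Proposition \ref{prop: torslocloc} (equivalently, $\pi^*$ commutes with $F, V, R$ on $H^2_{\idealm}(-,W_\bullet\OO)$). Given $e' \in H^2_{\idealm}(X,W_l\OO_X)[F^m]$ with $F(e') = e$, the image $\pi^* e'$ satisfies $F(\pi^* e') = \pi^* e$; but $\pi^* e$ is the class of the \emph{trivial} $\balpha_2$-torsor $Y \times_X Y \to Y$ (it splits because $e$ is the class of $\pi$ itself), hence $\pi^* e = 0$ in $H^2_{\idealm_Y}(Y, \OO_Y)[F]$ — actually one needs slightly more, namely $\pi^* e$ dies already after restricting to $\OO_Y$, which follows since $t_1, t_2 \in \OO_{Y_U}$ trivialise the cocycle. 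Therefore $\pi^* e' \in H^2_{\idealm_Y}(Y,W_{l-1}\OO_Y)[F]$ after applying $R$, and one checks it is nonzero and generates (equivalently, that its annihilator matches that of $H^2_{\idealm_Y}(Y,\OO_Y)[F]$) by comparing lengths: by part \eqref{item: type} and Proposition \ref{prop: torsors loclocloc new} applied to $Y = D_{4r}^{\max\{3r-n/2,0\}}$, the module $H^2_{\idealm_Y}(Y,\OO_Y)[F]$ is one-dimensional (or the relevant $L$-value is $1$), and $\pi^* e'$ must be the nonzero element because $e'$ was not annihilated by $F$. The only subtlety here is to phrase "same assumption as $e$" precisely — i.e. that the pulled-back class has annihilator equal to $\Ann H^2_{\idealm_Y}(Y,\OO_Y)[F]$ — and this is forced once we know the target module has length one, which part \eqref{item: type} delivers.
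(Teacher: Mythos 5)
Your strategy for part (\ref{item: type}) --- construct $S = R[u_1,u_2]$ with $u_1 = xt_1$, $u_2 = y^{j}t_2$, verify that $S$ is the integral closure via the three listed properties, and read off the type from a presentation $k[[\cdot,u_1,u_2]]/(h)$ --- is exactly the paper's, but two concrete errors would derail the computation. First, $n-2r\in 4\ZZ$ does \emph{not} force $n$ even: for half-integer $r$ (the $D_{2m+1}^{r}$ cases of Convention \ref{convention}) $2r$ is an odd integer, so e.g.\ $D_5^{1/2}$ and $D_7^{3/2}$ satisfy the hypotheses, and the lemma's conclusion explicitly covers them (the $A_1$ case is precisely $r=1/2$). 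The paper runs two parallel normal forms, $f = z^2+x^2y+y^{2j}g$ with $g = x(y^r+z)$ for $n$ even and $g=z(y^{r-1/2}+x)$ for $n$ odd; your argument omits the odd case entirely. Second, you identify $e$ with $f_{1,j_0}$ for $j_0$ the \emph{smallest} admissible index (so $j_0=1$), but the hypothesis $\Ann(e)=\Ann\bigl(H^2_{\idealm}(X,\OO_X)[F]\bigr)$ forces $e$ to be, up to a unit and lower-order terms, the generator with the \emph{largest} index, namely $j_0=(n-2r)/4$: the annihilator of the module is the intersection of the annihilators of the $f_{1,j}$, hence equals that of the deepest generator. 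Since every exponent in the subsequent substitution depends on $j_0$, this is not a cosmetic slip.

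Part (\ref{item: class}) has a genuine gap. Functoriality does give $F(\pi^*e')=\pi^*e=0$, but that only places $\pi^*e'$ in $H^2_{\idealm_Y}(Y,W_l\OO_Y)[F]$; the assertion is that it lies in $H^2_{\idealm_Y}(Y,\OO_Y)[F]$ (Witt length one, i.e.\ in the image of $V^{l-1}$) \emph{and} again has annihilator equal to that of the whole module, and neither follows formally. Your fallback --- that $H^2_{\idealm_Y}(Y,\OO_Y)[F]$ is one-dimensional --- is false in general: for $Y$ of type $D_{4r}^{\max\{3r-n/2,0\}}$ this space has dimension $\min\{r,(n-2r)/4\}$ by Proposition \ref{prop: torsors loclocloc new}, which exceeds $1$ as soon as $r\geq 2$ and $n\geq 2r+8$ (e.g.\ $X=D_{12}^2$, $Y=D_8^0$). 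Nor does ``$e'$ is not annihilated by $F$'' imply $\pi^*e'\neq 0$, since pullback along $\pi$ kills $e$ itself. The paper settles all of this by explicit computation: it writes $e'=c\,f_{j'}^{(l')}+(\text{lower terms})$ with $C(l',j')=3j-1$, computes that the pullback equals $[x^{-1}y^{-2^{l'-1}j'}u_1u_2^{\,2j+2^{l'-1}-1}]$ up to a unit, and checks that the annihilator of this class is $(u_1,x,u_2^{j})$. Some such explicit verification appears unavoidable here.
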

\begin{proof}
Let $4j = n-2r$.
Suppose $n$ is even (resp.\ odd).
We may assume $X=\Spec R$ with $R = k[[x,y,z]]/(f)$,
$f = z^2 + x^2 y + y^{2j} g$ and $g = x (y^r + z)$ 
(resp.\ $g = z (y^{r-1/2} + x)$).
After possibly multiplying $e$ by a unit,
we may assume $e = [x^{-1} y^{-j} z]$.
As in Lemma \ref{lem: alphap-torsor char 2}, we have 
$$
\begin{array}{lcl}
  \OO_{Y_U}[x^{-1}] &=& R[x^{-1}][t_1] / (t_1^2 - (x^{-2} g + b)) \quad \text{and} \\
  \OO_{Y_U}[y^{-1}] &=& R[y^{-1}][t_2] / (t_2^2 - (y^{-(2j-1)}  + b))   
\end{array}
$$
for some $b \in R$ and $t_1 - t_2 = x^{-1} y^{-j} z$. 
After possibly adding a constant to the $t_i$, we may assume that $b \in \idealm$.
Set $S = R[u_1, u_2]$, where $u_1 = x t_1$ and $u_2 = y^j t_2$.
Then $S = k[[x, u_1, u_2]] / (h)$ with
\begin{align*}
h &= u_1^2 + x^2 u_2 + x y^r + x u_1 y^j + [x^2 b] , \\
(\text{resp.\ } h &= u_1^2 + x^2 u_2 + x u_2 y^{r-1/2} + x u_1 y^j + [u_1 y^{j+r-1/2} + x^2 b]),     
\end{align*}
where $y = u_2^2 + y^{2j} b$.
We conclude (again using Lemma \ref{lem: description of local alpha_p-torsor})
that $Y = \Spec S$, that the terms in brackets do not affect the singularity, 
and that $S$ is an RDP of the stated type ($D_{4r}^{\max\{2r-2j,0\}}$ if $r \geq 1$, of type
$A_1$ if $r = 1/2$, and of type $A_0$ if $r = 0$). 
This proves Claim (\ref{item: type}).

For Claim (\ref{item: class}), we first note that such an $e'$ exists if and only if $3j-1 < n/2$
(equivalently $2r-2j \geq 0$) and then, $e' = c f_{j'}^{(l')} + e'_2$, where $c$ is the
Teichm\"uller lift of a non-zero scalar, $(l',j')$ is the pair satisfying $C(l', j') = 3j-1$, 
and $e'_2$ is a linear combination of terms $f_{j''}^{(l'')}$ with $C(l'', j'') < C(l', j')$. 
It is straightforward to check that the pullback of $e'$ differs from 
$[x^{-1} y^{-2^{l'-1} j'} u_1 u_2^{2j + 2^{l'-1}-1}]$ by a unit, and that the annihilator of 
this element is $(u_1, x, u_2^j)$ (use that $y$ differs from $u_2^2$ by a unit). 
In other words, the pullback of $e'$ satisfies the assumptions of the lemma, which is what we 
had to show.
\end{proof}

Using the above lemma, we can realise some of these $D_n^r$-singularities as quotient 
singularities. 
Note however, that in these cases, we do not know the uniqueness of the group scheme 
realising them as quotient singularities.

\begin{Proposition} \label{prop: alpha2e} 
In characteristic $p=2$,
\begin{enumerate}
    \item for each $e \geq 1$ and $k \geq 1$, the RDP of type $D_{(2^{e+2}-4)k}^{(2^{e}-2)k}$  
  is a quotient singularity by $\balpha_{2^e}$, and
  \item for each $e \geq 2$, the RDP of type $D_{2^{e+1}-2}^{2^{e-1}-1}$  
  is a quotient singularity by $\bL_{2,e}[V-F^{e-1}]$.
  \end{enumerate}
\end{Proposition}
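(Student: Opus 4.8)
The statement is a two-part existence result: we must exhibit, for the indicated $D_n^r$-singularities in characteristic $2$, an explicit very small action of the specified finite group scheme on $\widehat{\Aff}^2$ whose quotient is (formally isomorphic to) the given RDP. The natural strategy, and the one I would pursue, is iterative: realise the singularity as the top of a tower of local $\balpha_2$-torsors, applying Lemma \ref{lem: alphap-torsor char 2 Dn} at each stage to track how the type of the singularity changes as one climbs the tower, and then assemble these $\balpha_2$-torsors into a single torsor under the asserted group scheme. Concretely, for part (1), one starts from the smooth point $\widehat{\Aff}^2$ and climbs: each $\balpha_2$-quotient step takes an RDP of type $D_{4r}^{\max\{3r-n/2,0\}}$ down to the next $D_{n}^{r}$, by Lemma \ref{lem: alphap-torsor char 2 Dn}(\ref{item: type}). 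Running this $e$ times starting from the smooth point should produce the chain of $D$-types $A_0 \leadsto D_{\ast}^{\ast} \leadsto \cdots \leadsto D_{(2^{e+2}-4)k}^{(2^{e}-2)k}$, and the composite $\widehat{\Aff}^2 \to X$ is then a local torsor under an $e$-fold iterated extension of $\balpha_2$ by itself with underlying scheme $\Spec k[t]/(t^{2^e})$. For part (2), the same idea applies but the tower is arranged so that the composite torsor has structure group $\bL_{2,e}[V-F^{e-1}]$ instead of $\balpha_{2^e}$; the difference lies in which Witt-vector lift of the $\balpha_2$-class one chooses at each level.

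\textbf{Key steps, in order.} First, I would fix the explicit equations $f = z^2 + x^2 y + y^{2j}x(y^r+z)$ (with $4j = n-2r$) from Lemma \ref{lem: alphap-torsor char 2 Dn} and verify the arithmetic of the type-changes: one needs $\max\{3r - n/2, 0\} = \max\{(2^{e-1}-2)k', \ldots\}$ to match the claimed sequence of types $D_{(2^{e+2}-4)k}^{(2^{e}-2)k}$ as $e$ decreases; this is the bookkeeping that makes the recursion close up, and it uses $n - 2r \in 4\ZZ$ and $4r < n$ crucially. Second, I would invoke Lemma \ref{lem: alphap-torsor char 2 Dn}(\ref{item: class}): this says that an element $e' \in H^2_{\idealm}(X, W_l\OO_X)[F^m]$ with $F(e') = e$ pulls back along the $\balpha_2$-torsor to a class satisfying the \emph{same} hypotheses on the next singularity. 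By Proposition \ref{prop: torsors loclocloc new} and Table \ref{table: wittvectorcohomology}, such lifts $e'$ exist precisely when the $V$-chain in $\bG_n^r$ is long enough, which is governed by $L_n^r = \floor{\min\{n/2-1, n-2r-2\}}$ and the condition $2^{l-1}(2j-1) \leq L_n^r$. The third step is to translate the tower of torsors into a single group-scheme torsor: a length-$2^e$ tower of $\balpha_2$-torsors in which the classes at successive levels are related by $F$ corresponds to a torsor under $\balpha_{2^e} = \bL_{1,e}$ (use $\Dieulocloc(\balpha_{2^e}) = \Dieu/(F, V^e)$ from Section \ref{subsec: loclocCartierDieudonne}), whereas a tower with a $V-F^{e-1}$ relation corresponds to $\bL_{2,e}[V - F^{e-1}]$; here one reads off the structure group from the $\Dieu$-module generated by the classes via Corollary \ref{cor: torsloclocwithdieu} and Theorem \ref{thm: Dieudonnemain}. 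Finally, one checks the action on $\widehat{\Aff}^2$ is very small — free outside the origin, fixing the origin — which follows because each intermediate $\balpha_2$-torsor has the property (the torsor is free on $U$ by construction, and the origin is not in the torsor locus since the singularity genuinely changes type).

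\textbf{Main obstacle.} The delicate point is Step 2–3: one must ensure that the \emph{right} lifts $e'$ exist at every level of the tower, i.e.\ that the composite torsor really has structure group $\balpha_{2^e}$ (resp.\ $\bL_{2,e}[V-F^{e-1}]$) and not merely some quotient or a different extension — this is exactly the distinction between parts (1) and (2), and it hinges on a careful analysis of the $\Dieu$-module $\bG_n^r = \Picloc_{X/k}^{\loc,\loc}$ computed in Proposition \ref{prop: picloclocloc of DNr}, specifically the lengths of the $F$- and $V$-orbits in the set $S$. For part (1) one wants the chosen class to sit at the top of an $F$-chain of length $e$ with $V$ acting as the identity shift (giving $\balpha_{2^e}$); for part (2) one wants a class killed by $V - F^{e-1}$, which forces the specific value $r = 2^{e-1}-1$, $n = 2^{e+1}-2$ so that $n - 2r - 1 = 2^e - 1$ and doubling modulo $2^e - 1$ generates the required cyclic orbit structure. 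I expect verifying that these orbit-length conditions are met — and that no obstruction to lifting arises from the ``terms in brackets'' in the equation of $Y$ in Lemma \ref{lem: alphap-torsor char 2 Dn} — to be the technical heart of the argument; the rest is the explicit-equation manipulation already set up in the lemmas. One should also double-check the edge cases $r = 1/2$ and $r = 1$ where Lemma \ref{lem: alphap-torsor char 2 Dn}(\ref{item: type}) gives $A_1$ or the $\max$ degenerates, to make sure the base of the recursion ($e = 1$ in part (1), $e = 2$ in part (2)) is consistent with the already-known $\balpha_2$-quotient results of \cite{Matsumoto alpha}.
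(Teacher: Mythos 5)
Your proposal follows essentially the same route as the paper: pick the right class in the Dieudonn\'e module $\varinjlim_{m,l} H^2_{\idealm}(X,W_l\OO_X)[F^m]$ computed in Proposition \ref{prop: torsors loclocloc new}, and use Lemma \ref{lem: alphap-torsor char 2 Dn} inductively along the resulting tower of $\balpha_2$-torsors to see that each intermediate stage is again a $D$-type RDP of the predicted parameters and that the top of the tower is smooth. One caveat on your Step 3: as phrased (``translate the tower of torsors into a single group-scheme torsor''), the logical order is backwards and would run into the failure of Galois closures for towers of torsors that the paper itself exhibits (Lemma \ref{lemma example 2}); the correct order — and the one the paper uses — is to first choose a single class $f$ over $X$ satisfying $F^e(f)=0$, $F^{e-1}(f)\neq 0$ (resp.\ $F^e(f)=V(f)$), which by Corollary \ref{cor: torsloclocwithdieu} already \emph{is} a local $\balpha_{2^e}$-torsor (resp.\ $\bL_{2,e}[V-F^{e-1}]$-torsor) over $X$, and only then factor it into $\balpha_2$-steps via the Frobenius filtration of the group scheme and apply the Lemma upward. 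Since you do invoke Corollary \ref{cor: torsloclocwithdieu} and correctly identify the orbit analysis in $\bG_n^r$ as the technical heart, this is a matter of ordering rather than a missing idea.
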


\begin{proof}
First, suppose that $X$ is of type $D_{(2^{e+2}-4)k}^{(2^{e}-2)k}$.
The element $f = f_{2^{e-1} k}^{(1)}$ in $\Dieulocloc(\Picloc_{X/k})$ 
satisfies $F^{e}(f) = 0$, and $F^{e-1}(f)$ satisfies the assumption of Lemma \ref{lem: alphap-torsor char 2 Dn}.
Let 
\begin{equation}
\label{eq: alpha2E quotient}
X_e \,\to\, \dots \,\to\, X_i \,\to\, \dots \,\to\, X_0 \,=\, X
\end{equation}
be an $\balpha_{2^e}$-torsor corresponding to this class, 
factored into local $\balpha_2$-torsors.
Using Lemma \ref{lem: alphap-torsor char 2 Dn} inductively, we deduce that $X_i$ is of type $D_{(2^{e+2}-2^{i+2})k}^{(2^{e} - 2^{i+1})k}$ for each $0 < i < e$, 
that the class of the $\balpha_2$-torsor $X_{i+1} \to X_i$ also satisfies the assumption of 
the lemma, and that $X_e$ is smooth. 
In particular, $X$ is a quotient singularity by $\balpha_{2^e}$.

Second, suppose that $X$ is of type $D_{2^{e+1}-2}^{2^{e-1}-1}$.
The element $f = f_{2^{e-2}}^{(2)}$ in $\Dieulocloc(\Picloc_{X/k})$ satisfies $F^e(f) = V(f)$, and $F^{e-1}(f)$ satisfies the assumption of Lemma \ref{lem: alphap-torsor char 2 Dn}. Let 
\begin{equation}
\label{eq: L2E quotient}
X_e \,\to\, \dots \,\to\, X_i \,\to\, \dots \,\to\, X_0 \,=\, X
\end{equation}
be an $\bL_{2,e}[V-F^{e-1}]$-torsor corresponding to this class, 
factored into local $\balpha_2$-torsors. Arguing as in the previous paragraph, we see that $X$ is a quotient singularity by $\bL_{2,e}[V-F^{e-1}]$. 
\end{proof}

However, not all of the $D_n^r$-singularities with $2 < 4r < n$ are quotient singularities, 
as the following result shows:

\begin{Proposition} \label{prop: notquotient dnr}
Let $x\in X$ be an RDP of type $ D_{4k+2}^{1}$ or $D_{4k+3}^{3/2}$ with $k \geq 2$
in characteristic $p=2$. 
Then, $X$ is not a quotient singularity.  
\end{Proposition}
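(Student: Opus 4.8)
The plan is to rule out every candidate very small action by bounding the possible abelian quotients of the acting group scheme and then appealing to Corollary \ref{cor: no primitive class}. Throughout write $p=2$; note that for $X$ of type $D_{4k+2}^1$ or $D_{4k+3}^{3/2}$ with $k\geq2$ one has $4r<n$, and in both families $n-2r-1 = 4k-1$, $n/2-r = 2k$, and the length parameter of Proposition \ref{prop: torsors loclocloc new} equals $L_n^r = 2k$.

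First I would pin down the shape of a hypothetical finite group scheme $G$ with $X\cong\widehat{\Aff}^2/G$. Since $4r<n$, Table \ref{table: torsors with Picloclocloc} gives that $\pietloc(X)$ is trivial, so $G$ is connected by Proposition \ref{prop: etale quotients unique}. Since $D_n^r$-singularities in characteristic $2$ are never F-regular (Table \ref{table: RDPEquations}), $G$ is not linearly reductive, so by Proposition \ref{prop: linearlyreductive or unipotent} it is unipotent with underlying scheme $\Spec k[t]/(t^{2^n})$ and of height $n$; moreover $n\geq1$ because $X$ is singular and $n\geq2$ because $X$ is not an $\balpha_2$-quotient by \cite[Lemma 3.6]{Matsumoto alpha}. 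The point of this reduction is that $G[F^{n-2}]$ is a normal (indeed characteristic) subgroup scheme of $G$ and the quotient $G':=G/G[F^{n-2}]$ is a unipotent group scheme of order $p^2$ with $1$-dimensional tangent space, hence $G'\cong\balpha_4$ or $G'\cong\bM_2$ by Proposition \ref{prop: simple infinitesimal}. So it suffices to prove that $\overline{\Hfl{1}(U,\balpha_4)}_{\prim} = \overline{\Hfl{1}(U,\bM_2)}_{\prim} = \emptyset$: then Corollary \ref{cor: no primitive class}, applied to the quotient $G'$ of $G$, shows $X$ is not a quotient singularity by $G$, and since $G$ was arbitrary we are done.

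The heart of the argument is this emptiness statement. By Proposition \ref{prop: torsfinab} and the definition of a primitive class, it is equivalent to the assertion that the $\Dieu$-module $M := \Dieulocloc(\Picloclocloc_{X/k}) = \varinjlim_{m,l} H^2_{\idealm}(X,W_l\OO_X)[F^m]$ contains no submodule isomorphic to $\Dieulocloc(\balpha_4^D) = \Dieu/(F^2,V)$ or to $\Dieulocloc(\bM_2) = \Dieu/(F^2,F-V)$. I would read this off the explicit presentation of $M$ in Proposition \ref{prop: torsors loclocloc new} and Table \ref{table: wittvectorcohomology}: the generators of $M$ are the $f_j^{(l)}$ with $C(l,j):=2^{l-1}(2j-1)\leq L_n^r = 2k$; since $F(f_j^{(l)}) = f_1^{(1)}$-type output with subscript $C(l,j)-(2k-1)$ and $C(l,j)\leq 2k$, the unique generator on which $F$ is nonzero is the one $w_0$ with $C(l_0,j_0)=2k$, and then $F(w_0) = f_1^{(1)}$ while $V(w_0) = f_{j_0}^{(l_0-1)}$ is the (nonzero, since $l_0\geq2$) generator whose $C$-value is $k\neq1$. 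Because $V(f_j^{(l)}) = f_j^{(l-1)}$ halves the $C$-value, $w_0$ is also the only generator whose image under $V$ involves $f_{j_0}^{(l_0-1)}$. Consequently any $w\in M$ with $Fw\neq0$ has a nonzero $w_0$-component, so $Vw$ has a nonzero $f_{j_0}^{(l_0-1)}$-component and in particular is not a scalar multiple of $f_1^{(1)}$; since $\Image(F) = k\cdot f_1^{(1)}$, this is incompatible both with $Vw=0$ and with $Fw=Vw$ — the relations that would force a copy of $\Dieu/(F^2,V)$, respectively $\Dieu/(F^2,F-V)$, inside $M$. This finishes the plan.

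The main obstacle is exactly this last combinatorial verification: extracting the $F$- and $V$-actions on $\varinjlim_{m,l} H^2_{\idealm}(X,W_l\OO_X)[F^m]$ from Proposition \ref{prop: torsors loclocloc new} and checking carefully that no primitive $\balpha_4$- or $\bM_2$-class survives. It is also precisely here that the hypothesis $k\geq2$ is needed: for $k=1$ one would have $V(w_0) = f_1^{(1)} = F(w_0)$, which does produce a primitive $\bM_2$-torsor, consistent with $D_6^1$ being an $\bM_2$-quotient singularity (Proposition \ref{prop: alpha2e}).
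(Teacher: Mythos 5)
Your proposal is correct and follows essentially the same route as the paper: reduce to showing that $X$ is not an $\balpha_2$-quotient (citing \cite[Lemma 3.6]{Matsumoto alpha}) and that $\overline{\Hfl{1}(U,\balpha_4)}_{\prim}=\overline{\Hfl{1}(U,\bM_2)}_{\prim}=\emptyset$, the group-theoretic reduction being exactly Lemma \ref{lem: possible argument} (which you re-derive rather than cite), and then verify the emptiness on the Dieudonn\'e module of Proposition \ref{prop: torsors loclocloc new} via the same observation that the unique generator with $C(l,j)=2k$ has $V$-image of $C$-value $k$, which equals $f_1^{(1)}$ only when $k=1$. The details, including the role of the hypothesis $k\geq2$, match the paper's argument.
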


\begin{proof}
We know that $X$ is not an $\balpha_2$-quotient singularity by 
\cite[Lemma 3.6]{Matsumotoalpha}.
We claim that $\overline{H^1(U, G')}_{\prim} = \emptyset$ for 
$G' \in  \{ \balpha_{p^2}, \bM_2 \}$. 
Then, it will follow from Lemma \ref{lem: possible argument} 
that $X$ is not a quotient singularity.

To prove $\overline{H^1(U, G')}_{\prim} = \emptyset$ for $G' \in  \{ \balpha_{p^2}, \bM_2 \}$,
we use the description of the Dieudonn\'e module $\varinjlim_{m,l} H^2_{\idealm}(X,W_l\OO_X)[F^m]$ given in Proposition \ref{prop: torsors loclocloc new}. 
We have to show that this Dieudonn\'e module satisfies 
$\Ker(F,V) = \Ker(F^2,V) = \Ker(F^2,V^2,F+V)$.

The Dieudonn\'e module is generated by $f_j^{(l)}$ for the pairs $(l,j)$ satisfying 
$C(l,j) \leq 2k$, and we have $F(f_j^{(l)}) = f_{C(l,j) - (2k-1)}^{(1)}$.
Since $C(l,j)$ is odd for $l = 1$, we have $\Ker(F^2, V) = \Ker(F, V)$.
Next, we have $F(f_j^{(l)}) = f_1$ if $C(l,j) = 2k$, and all other generators are 
annihilated by $F$. 
Letting $(l,j)$ as above, we have $V(f_j^{(l)}) = f_j^{(l-1)}$, 
which belongs to the image of $F$ only if $(l,j) = (2,1)$, 
but this implies $k = 1$, which contradicts the assumption $k \geq 2$.
\end{proof}

Finally, we prove Theorem \ref{thm: D_n in p=2} on the structure of a finite group 
scheme $G$ realising a $D_n^r$-singularity with $4r < n$ as quotient singularity.

\begin{proof}[Proof of Theorem \ref{thm: D_n in p=2}]
Suppose that $x \in X$ is of type $D_n^r$ with $4r < n$ and that $X$ is a 
quotient singularity by a very small action of a finite group scheme $G$.
Since $\pietloc(X)$ is trivial and $X$ is not F-regular, $G$ satisfies 
$G[F^{i+1}]/G[F^{i}] \cong \balpha_2$ for all $i$ smaller than the height of $G$ 
by Proposition \ref{prop: linearlyreductive or unipotent} (\ref{item: unipotent height n}). 
Let $G^{\ab} = G/N$ be its abelianisation.
Then, $\widehat{\Aff}^2/N \to X$ is a local $G^{\ab}$-torsor and induces an injective 
homomorphism $(G^{\ab})^D \to \Picloc_{X/k}$.
Since $G^{\ab}$ is of (loc,loc)-type, this induces an injective homomorphism 
$(G^{\ab})^D \to \Picloclocloc_{X/k}$
and thus, an injective homomorphism between their Dieudonn\'e modules. 
By Proposition \ref{prop: torsors loclocloc new} 
(see also Proposition \ref{prop: picloclocloc of DNr}), 
$\Dieulocloc(\Picloclocloc_{X/k})$ is killed by $2$, hence so is $\Dieulocloc(G^{\ab})$.
Since $G^{\ab}[F^{i+1}]/G^{\ab}[F^{i}] \cong \balpha_2$, this shows that $\Dieulocloc(G^{\ab})$ 
is generated by one element $f$ annihilated by $V^e$, but not by $V^{e-1}$, 
where $2^e$ is the length of $G^{\ab}$.
Since $FV = 2 = 0$ on this Dieudonn\'e module, we have $F(f) = c V^{e-1}(f)$ with $c \in k$. 
If $c = 0$, then $G^{\ab}$ is $\balpha_{2^e}$. 
If $c \neq 0$, then we may assume $c = 1$, hence $G^{\ab} = \bL_{2,e}[V-F^{e-1}]$, 
and $e \geq 2$ follows because $G^{\ab}$ is of (loc,loc)-type.

We have already seen in Proposition \ref{prop: alpha2e} that these group schemes indeed 
occur as $G$.
\end{proof}

The results of this subsection can be viewed as partial answers to the following two questions, 
which are still open in general:

\begin{Questions}
Let $x \in X$ be an RDP of type $D_n^r$ with $2 < 4r < n$ in characteristic $p=2$.
\begin{enumerate}
    \item For which pairs $(n,r)$ is $X$ a quotient singularity?
    \item If $X = \widehat{\Aff}^d/G$ is a realisation of $X$ as a quotient singularity, 
    is $G \in \{ \balpha_{2^e}, \bL_{2,e}[V - F^{e-1}] \}$?
\end{enumerate}
\end{Questions}

\begin{Remark} \label{rem: quotient dnr}
In view of Proposition \ref{prop: alpha2e} and Proposition \ref{prop: notquotient dnr}, an optimistic guess would be that an RDP
of type $D_n^r$ with $2 < 4r < n$ is a quotient by $\balpha_{2^e}$ if and only if $(n,r) = ((2^{e+2}-4)k, (2^{e}-2)k)$ for some $k \geq 1$ and a quotient by $\bL_{2,e}[V - F^{e-1}]$ if and only if $(n, r) = (2^{e+1}-2, 2^{e-1}-1)$,
and not a quotient in all other cases.
\end{Remark}

\subsection{Generalised quotient singularities} \label{subsec: generalised quotient singularities}
In \cite{LRQ}, we have seen that
lrq singularities, that is, quotient singularities by finite and
linearly reductive group schemes, behave extremely nice:
the singularity determines the group scheme and the quotient presentation
uniquely \cite[Section 8]{LRQ} and
many invariants of the singularity can be read off from this
presentation, see \cite[Section 7]{LRQ}.
By \cite[Theorem 11.2]{LRQ}, this applies
to the F-regular RDPs.

If an RDP singularity is not F-regular, then 
Table \ref{table: Quotient- and non-quotient RDPs} shows that in many
cases, the singularity is a quotient singularity in the sense
of Definition \ref{def: quotient singularity}.
Moreover, we even have uniqueness statements in these cases.
Of course, the group scheme will not be linearly reductive
in these cases and by \cite[Proposition 6.4]{LRQ},
the action of this group scheme on $\widehat{\Aff}^2_k$
will not be linearisable.
However, at least some of the invariants, such as the local
fundamental group of the singularity, can be read off from
the presentation, see Proposition \ref{prop: etale quotients unique}.

This begs for the question whether the non-quotient RDPs
of Theorem \ref{thm: not quotient} are quotient singularities
in a more general sense than Definition \ref{def: quotient singularity}.
Here are a couple of notions and suggestions of what such 
generalisations could be:

\begin{Definition}
 \label{def: generalised quotient singularity}
 A \emph{generalised quotient singularity} $x\in X$ over $k$ is a 
 singularity $x \in X$, such that there exists
 a finite morphism $f:\widehat{\Aff}^d\to X$, such that
 at least one of the following holds
 \begin{enumerate}
     \item \label{item: iterated} There exists a factorisation of $f$
    $$
       f\,:\,\widehat{\Aff}^d\,=\,X_e\,\to\,\ldots\,X_1\,\to\, X_0 = X,
    $$
    such that for each step $i$ there exists a finite $k$-group scheme
    $G_i$ and an action on $X_i$ that is free outside the closed point and fixes the closed point and such that $X_i\to X_{i-1}$ is the quotient
    by this $G_i$-action.
    \item  \label{item: generalised}
    There exists a finite flat group scheme $G$ over $X$ that acts on $\widehat{\Aff}^d := \Spec k[[u_1,...,u_d]]$, such that $f$ is the quotient by $G$, $f_U$ is a $G_U$-torsor and $G_x$ fixes the closed point of $\widehat{\Aff}^d$.
     \item \label{item: iteratedandgeneralised}
     There exists a factorisation of $f$
    $$
       f\,:\,\widehat{\Aff}^d\,=\,X_e\,\to\,\ldots\,X_1\,\to\, X_0 = X,
    $$
    such that for each step $i$ there exists a finite flat $X_{i-1}$-group scheme
    $G_i$ and an action of $G_i$ on $X_i$ such that $X_i\to X_{i-1}$ is the quotient
    by this $G_i$-action, $U_i \to U_{i-1}$ is a $(G_i)_{U_{i-1}}$-torsor and $(G_i)_{x_{i-1}}$ fixes the closed point of $X_i$. 
    \end{enumerate}
\end{Definition}

\begin{Remark}
In characteristic $0$, Properties (1), (2), and (3) of Definition \ref{def: generalised quotient singularity} 
are equivalent to each other and also equivalent to $x\in X$ being a quotient singularity 
in the sense of Definition \ref{def: quotient singularity}:

Since Property (3) is the most general, it suffices to show that (3) implies that $x\in X$ 
is a quotient singularity in the sense of Definition \ref{def: quotient singularity}. 
So, suppose that $f$ as in Part (3) of Definition \ref{def: generalised quotient singularity} 
exists. 
Then, $f|_U$ is \'etale and $\widehat{\Aff}^d \setminus \{0\}$ is simply connected, 
hence $\widehat{\Aff}^d \setminus \{0\}$ is the universal \'etale cover of $U$.
Thus, $x\in X$ is a quotient singularity by $\pietloc(X)$. 
\end{Remark}

We will see in Theorem \ref{thm: generalisedquotient} that in characteristic $p > 0$, 
Definition \ref{def: generalised quotient singularity} is indeed more general than 
Definition \ref{def: quotient singularity}. 
First, we recall how to construct a group scheme over $X$ from a realisation of $x\in X$ 
as a quotient of a singularity $y\in Y$ by a derivation.

\begin{Lemma} \label{lem: derivationyieldsgroupscheme}  
Let $x \in X$ and $y \in Y$ be singularities. 
Let $\pi \colon Y \to X$ be a finite morphism.
Suppose $\pi \colon Y \to X$ is a quotient by a $p$-closed derivation $D$.
Let $\Fix(D)$ be the zero locus of $D$. 
Then, there exists a finite flat group scheme $G$ over $X$ and an action of $G$ on $Y$,
such that $\pi$ is the quotient by this $G$-action, 
$\pi_{X \setminus \Fix(D)}$ is a $G_{X \setminus \Fix(D)}$-torsor, 
and the $G$-action fixes $\Fix(D)$.
\end{Lemma}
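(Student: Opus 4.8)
The statement is a \emph{Frobenius-kernel-to-group-scheme} construction: a $p$-closed (i.e.\ restricted, so that $D^p \in \calO_Y \cdot D$) derivation $D$ on $Y = \Spec B$ should be packaged as a height-one finite flat group scheme $G$ over $X = \Spec A$ with $A = \Ker(D)$, acting on $Y$, with $Y \to X$ the quotient and $Y_U \to U$ a torsor where $U := X \setminus \Fix(D)$. The plan is to proceed in three steps: (1) produce the group scheme $G$ from the restricted Lie algebra / $A$-module data of $D$; (2) identify the $G$-action with the derivation action and check that $\pi$ is the categorical quotient; (3) check the torsor property over $U$ and the fixed-point statement over $\Fix(D)$.

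\textbf{Step 1: Construction of $G$.} First I would recall that since $D$ is $p$-closed, $B$ is a purely inseparable extension of $A = \Ker(D)$ of degree $p$ locally where $D \neq 0$, and that $A D \subseteq \Der_A(B)$ is a rank-one projective $A$-submodule closed under $[-,-]$ and $(-)^{[p]}$ (the bracket is zero, and the $p$-operation is $D \mapsto D^{[p]} = gD$ for the function $g$ with $D^p = gD$). This is a \emph{restricted $A$-Lie algebra} that is finite locally free of rank one as an $A$-module; its restricted enveloping algebra $u(AD)$ is a finite locally free $A$-Hopf algebra of rank $p$ (locally $A[t]/(t^p - g t)$ with $t$ primitive), so $G := \Spec_A u(AD)^\vee$ is a finite flat $X$-group scheme of order $p$, of height one. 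Over the locus $\Fix(D) = \{g' : g' = 0 \text{ and } D \text{ vanishes}\}$ — more precisely where the rank-one module degenerates — one must be slightly careful: the cleanest route is to work with the rank-one $A$-module $\calD := AD \subseteq \Der_A(B)$ \emph{as given} (a submodule of a locally free module, hence flat over $A$ if $A$ is nice; here one may need $Y \to X$ finite flat, or invoke that $\Der_A(B)$ is reflexive and $X$ is $S_2$), and define $G$ as the kernel of Frobenius on the associated ``additive'' group scheme $\mathbb{V}(\calD^\vee)$ twisted by the $p$-operation, following the standard equivalence (cf.\ \cite{DemazureGabriel}) between height-one group schemes over a base and restricted Lie algebroids/algebras that are finite locally free.

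\textbf{Step 2: The action and the quotient.} The coaction $B \to B \otimes_A u(AD)^\vee$ is dual to the $u(AD)$-module structure on $B$ coming from $D$ acting as a derivation; concretely, locally where $D = $ (unit)$\cdot\partial$ this is the familiar $\balpha_p$- or $\bmu_p$-like coaction $b \mapsto \sum_{i=0}^{p-1} \frac{D^i(b)}{i!}\otimes t^i$ suitably corrected by $g$. I would check this is a genuine $G$-action (coassociativity and counit from the Hopf structure of $u(AD)$) and that the ring of invariants $B^G$ — equivalently the equalizer of coaction and $b \mapsto b\otimes 1$ — is exactly $\Ker(D) = A$. Then $Y \to X$ is the quotient $Y/G$ in the sense that $\calO_X = (\pi_*\calO_Y)^G$, which is what ``quotient by this $G$-action'' means here.

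\textbf{Step 3: Torsor and fixed locus.} Over $U = X \setminus \Fix(D)$ the module $AD$ is free of rank one and $D$ has no zeros, so $B_U$ is locally $A_U[t]/(t^p - gt)$ with $g$ a unit or $g = 0$; in either case the natural map $B_U \otimes_{A_U} B_U \to B_U \otimes_{A_U} u(A_U D)^\vee$ is an isomorphism (this is the standard torsor criterion for height-one group schemes: a height-one $G$-action is a torsor over the open locus where the Lie-algebra map $\calO \otimes \mathrm{Lie}(G) \to T_Y$ is injective with the ``right'' cokernel), so $Y_U \to U$ is a $G_U$-torsor. Finally, over $\Fix(D)$ the derivation $D$ and hence the whole coaction act trivially modulo the ideal of $\Fix(D)$, so $G$ fixes $\Fix(D)$ pointwise; this is immediate from the local description. \textbf{The main obstacle} will be handling the degeneration of the rank-one module $AD$ over $\Fix(D)$ cleanly — making sure $G$ is flat (not merely generically flat) over all of $X$ and that the Hopf-algebra structure extends across $\Fix(D)$. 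I expect this to require either the hypothesis that $\pi \colon Y \to X$ is finite flat (which forces $B$ to be a locally free rank-$p$ $A$-module, hence $AD \cong \Hom_A(B/A, \ldots)$ is well-behaved) or an appeal to $S_2$-ness of $X$ together with reflexivity of $\Der$, exactly the kind of input that the surrounding sections of the paper (Lemma \ref{lem: carvajal-rojas}) have been setting up; modulo that, everything reduces to the classical height-one correspondence and routine Hopf-algebra bookkeeping.
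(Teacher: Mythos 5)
Your overall strategy --- package the $p$-closed derivation as a rank-one restricted Lie algebra over $A=\Ker(D)$, pass to the restricted enveloping algebra to get a height-one finite flat group scheme $G$, and let $D$ define the coaction --- is exactly the route the paper takes (citing SGA3, Exp.~VII, Th.~7.2(ii) for the Lie-algebra-to-action step). However, you misidentify where the work is.

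The ``main obstacle'' you flag, namely flatness of $G$ across $\Fix(D)$ because the module $AD\subseteq\Der_A(B)$ ``degenerates'' there, is not an obstacle at all. Since $B$ is a domain and $D\neq 0$, the map $A\to\Der_A(B)$, $a\mapsto aD$, is injective, so $AD\cong A$ is \emph{free} of rank one as an abstract $A$-module; the paper simply takes the restricted Lie algebra to be $A$ itself with trivial bracket and $p$-operation $1\mapsto f$ where $D^p=fD$. Then $U_p(\mathfrak g)^\vee$ is free of rank $p$ over $A$ and $G$ is finite flat with no further hypotheses. What fails along $\Fix(D)$ is only that the inclusion $AD\hookrightarrow\Der_A(B)$ is not a subbundle, i.e.\ the induced action of $G$ on $Y$ is not free there --- but that is precisely the intended conclusion (the action fixes $\Fix(D)$), not something to be engineered around. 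So neither finite flatness of $\pi$ nor reflexivity/$S_2$ arguments are needed; your proposed fixes address a problem that does not exist.

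Conversely, you omit the one input that is genuinely non-formal: for the restricted Lie algebra to be defined \emph{over $A$}, the coefficient $f$ in $D^p=fD$ must lie in $A=\Ker(D)$, not merely in $B$. (One checks $(aD)^{[p]}=a^pfD$ for $a\in A$, so closure of $AD$ under the $p$-operation is exactly the statement $f\in A$.) This is where the hypothesis that $Y$ is reduced enters; the paper deduces $f\in A$ from \cite[Lemma 2.3]{Matsumoto inseparable}. Your Step~1 writes $D^{[p]}=gD$ without saying where $g$ lives, and without this the Hopf algebra over $A$ is not defined. Once that is in place, your Steps~2 and~3 (invariants equal $A$, torsor criterion over $U$ checked fibrewise, triviality of the coaction modulo the ideal of $\Fix(D)$) are fine and match the paper's fibrewise verification.
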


\begin{proof}
Assume $X=\Spec R$.
Let $D^p = f D$. 
Since $Y$ is reduced, we have $f \in R$ by \cite[Lemma 2.3]{Matsumotoinseparable}.
We consider the restricted $R$-Lie algebra $\mathfrak{g}$ with underlying $R$-module $R$, 
trivial Lie bracket, and $p$-operation mapping $1$ to $f$. 
Let $G = \Spec ~ U_p(\mathfrak{g})^\vee$ be the finite flat $X$-group scheme of height $1$ 
that is the spectrum of the linear dual of the restricted universal enveloping algebra of $\mathfrak{g}$. 
By \cite[Th\'eor\`eme 7.2. (ii)]{SGA31}, the derivation $D$ determines an action of $G$ on $Y$ 
over $X$. 
The fixed locus of the $G$-action is precisely $\Fix(D)$ and the $G$-action is free 
outside $\Fix(D)$. 
Indeed, this can be checked on fibres of $\pi$, where it is well-known. 
Hence, $\pi$ is a $G$-torsor over $X \setminus \Fix(D)$ and $G$ fixes $\Fix(D)$.
\end{proof}

\begin{Theorem} \label{thm: generalisedquotient}
Let $x\in X$ be an RDP in characteristic $p>0$.
\begin{enumerate}
    \item If $p\geq7$, then $X$ is an lrq singularity.
    \item If $p\geq5$, then $X$ is a quotient singularity in the sense of Definition \ref{def: quotient singularity}. 
    If $p=5$, then there exist RDPs that are not lrq singularities.
    \item If $p\geq3$, then $X$ is a generalised quotient singularity in the sense of Definition \ref{def: generalised quotient singularity} (\ref{item: generalised}).
    If $p=3$, then there exist RDPs that are not quotient singularities in the sense of Definition \ref{def: quotient singularity}. 
    \item \label{item: everythingisgeneralisedquotient} If $p\geq2$, then $X$ is a generalised quotient singularity in the sense of Definition \ref{def: generalised quotient singularity} (\ref{item: iteratedandgeneralised}).
\end{enumerate}
\end{Theorem}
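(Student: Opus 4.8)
The plan is to deduce the four assertions in increasing order of generality, the notions (\ref{item: generalised})$\Leftarrow$(\ref{item: iterated})$\Leftarrow$(\ref{item: iteratedandgeneralised}) of Definition \ref{def: generalised quotient singularity} each being weaker than being a quotient singularity in the sense of Definition \ref{def: quotient singularity}. Part (1) is immediate: every RDP in characteristic $p\geq7$ is F-regular by Table \ref{table: RDPEquations}, hence an lrq-singularity by \cite[Theorem 11.2]{LiedtkeMartinMatsumotoLRQ}. For part (2), the case $p\geq7$ follows from (1); for $p=5$ the only non-F-regular RDPs are $E_8^1$ and $E_8^0$, which are quotient singularities by $\C_5$ and $\balpha_5$ by Theorem \ref{thm: quotient class A and B} and Lemma \ref{lem: main char 5}, and the sharpness assertion holds because an lrq-singularity is F-regular while $E_8^0$ in characteristic $5$ is not.

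The elementary observation driving (3) and (\ref{item: everythingisgeneralisedquotient}) is that a quotient singularity $X\cong\widehat{\Aff}^d/G$ in the sense of Definition \ref{def: quotient singularity} is automatically a generalised quotient singularity in the sense of Definition \ref{def: generalised quotient singularity}(\ref{item: generalised}): one takes $\mathcal{G}:=G\times_kX$, which is finite and flat over $X$, acts on $\widehat{\Aff}^d$ over $X$ with $f$ as its quotient, restricts over $U$ to the $G_U$-torsor $\widehat{\Aff}^d\setminus\{0\}\to U$, and has special fibre $\mathcal{G}_x\cong G$ fixing the closed point of $\widehat{\Aff}^d$. Hence (3) follows from (2) for $p\geq5$, and since sense (\ref{item: generalised}) is the length-one case of sense (\ref{item: iteratedandgeneralised}), part (\ref{item: everythingisgeneralisedquotient}) follows from (3) for $p\geq3$. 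It remains to treat characteristic $3$ in (3) and characteristic $2$ in (\ref{item: everythingisgeneralisedquotient}).

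In characteristic $3$, Theorem \ref{thm: not quotient} (Lemma \ref{lem: main char 3}) shows that $E_8^0$ is the only RDP that is not a quotient singularity, so it suffices to realise it in sense (\ref{item: generalised}). The plan is to exhibit a $p$-closed derivation $D$ on $\widehat{\Aff}^2$ with $\widehat{\Aff}^2/\langle D\rangle\cong E_8^0$ and with $\Fix(D)$ equal to the closed point — such a $D$ can be written down directly from the equation $z^2+x^3+y^5=0$ of Table \ref{table: RDPEquations}, and it necessarily satisfies $D^3=fD$ with $f$ a non-zero non-unit, since $E_8^0$ is not an $\balpha_3$-quotient by \cite[Lemma 3.6]{Matsumoto alpha} — and then to apply Lemma \ref{lem: derivationyieldsgroupscheme} with $Y=\widehat{\Aff}^2$ to obtain the finite flat $X$-group scheme required by (\ref{item: generalised}); the sharpness claim is Lemma \ref{lem: main char 3}. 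In characteristic $2$, by the observation of the previous paragraph together with Theorem \ref{thm: quotient}, it suffices to treat the RDPs that are not quotient singularities, namely $D_n^r$ with $4r>n$, the $D_n^r$ with $2<4r<n$, $D_{4n+2}^0$, $D_{2n+1}^{1/2}$, $E_7^2$, $E_7^1$, $E_7^0$, $E_8^3$ and $E_8^1$. For each such $X$ the plan is to build a finite tower
$$
 \widehat{\Aff}^2\,=\,X_e\,\to\,\cdots\,\to\,X_1\,\to\,X_0\,=\,X
$$
of normal surface germs, each $X_i$ a regular germ or an RDP, in which every $X_i\to X_{i-1}$ is the quotient of $X_i$ by a finite flat $X_{i-1}$-group scheme that is a torsor over $U_{i-1}$ and whose special fibre fixes the closed point; this is exactly Definition \ref{def: generalised quotient singularity}(\ref{item: iteratedandgeneralised}). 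The tower is produced by induction on a complexity invariant of the RDP (for instance its Milnor number, or the subscript in the $D_n$/$E_n$ notation): the $\balpha_2$-torsor computations of Lemma \ref{lem: alphap-torsor char 2}, Lemma \ref{lem: alphap-torsor char 2 Dn} and Proposition \ref{prop: alpha2e}, together with the $\C_2$-torsor $A_{8r-2n-1}\to D_n^r$ of Corollary \ref{cor: non-unipotent D}, already furnish reduction steps with structure group a $k$-group scheme (so still in the more restrictive sense (\ref{item: iterated})), realising for example $E_8^3\rightsquigarrow E_7^2\rightsquigarrow D_5^{1/2}\rightsquigarrow A_1$, $D_n^r\rightsquigarrow D_{4r}^{\bullet}\rightsquigarrow\cdots$, and the full $\balpha_{2^e}$- and $\bL_{2,e}[V-F^{e-1}]$-towers; the base of the induction is a regular germ or an RDP that is already a quotient singularity (such as $A_n=\widehat{\Aff}^2/\bmu_{n+1}$). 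The remaining reduction steps — needed precisely for the RDPs, such as $D_{4n+2}^0$, certain $D_{2m+1}^{s+1/2}$ and $E_7^0$, for which no $\balpha_2$-step is available — are obtained by the same kind of explicit computation with the equations of Table \ref{table: RDPEquations}, now yielding a $p$-closed derivation $D$ with $D^2=fD$ and $f$ a non-zero non-unit, to which Lemma \ref{lem: derivationyieldsgroupscheme} is applied; these are the genuinely $X$-group-scheme (rather than $k$-group-scheme) steps, and the reason the statement is phrased with sense (\ref{item: iteratedandgeneralised}).

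The main obstacle will be the construction, in characteristic $2$, of the $p$-closed derivations with $f\neq0$ realising the reduction steps for the $D_n^r$- and $E_7$-type singularities that are not $\balpha_2$-quotients, together with a proof that the complexity invariant strictly decreases along each step so that the tower terminates; pinning down the explicit derivation for $E_8^0$ in characteristic $3$ is a comparable technical point. A secondary but recurring subtlety is to verify at every step that the zero locus of the derivation is exactly the closed point rather than a curve, so that the $X$-group scheme produced by Lemma \ref{lem: derivationyieldsgroupscheme} fixes only the closed point and the quotient map is a torsor over the whole punctured spectrum.
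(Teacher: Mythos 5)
Your overall architecture coincides with the paper's: reduce (1) and (2) to the earlier quotient results, note that being a quotient singularity in the sense of Definition \ref{def: quotient singularity} formally implies sense (\ref{item: generalised}) of Definition \ref{def: generalised quotient singularity} (via $G\times_k X$), and upgrade realisations of RDPs as quotients by $p$-closed derivations to $X$-group-scheme quotients via Lemma \ref{lem: derivationyieldsgroupscheme}. The gap is that the crucial input -- the existence of the derivation realisations -- is never actually established. For $E_8^0$ in characteristic $3$ you assert that a $p$-closed derivation $D$ on $\widehat{\Aff}^2$ with quotient $E_8^0$ and $\Fix(D)$ equal to the closed point ``can be written down directly from the equation''; this is precisely the nontrivial content. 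Writing down a candidate $D$, checking $p$-closedness, identifying the ring of constants with the $E_8^0$ equation, and verifying that the fixed locus is an isolated point is a genuine computation, and the paper does not redo it: it cites \cite[Theorem 1.1]{Matsumoto inseparable} for exactly this statement and then applies Lemma \ref{lem: derivationyieldsgroupscheme}.

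The same problem is more serious in characteristic $2$. Your tower covers the cases reachable by lemmas already in the paper: $E_8^3\to E_7^2\to D_5^{1/2}\to A_1$ via Lemma \ref{lem: alphap-torsor char 2}, the $D_n^r$ with $4r>n$ via the local $\C_2$-torsor with total space $A_{8r-2n-1}$, and the families of Proposition \ref{prop: alpha2e}. But for $E_7^1$ -- whose primitive $\balpha_2$-cover is $D_7^{1/2}$, to which Lemma \ref{lem: alphap-torsor char 2 Dn} does not apply since $n-2r\notin 4\ZZ$ -- and for $E_7^0$, $E_8^1$, $D_{4n+2}^0$, $D_{2n+1}^{1/2}$ and the generic $D_n^r$ with $2<4r<n$, you only say the missing steps are ``obtained by the same kind of explicit computation'', producing derivations with $D^2=fD$, $f\neq 0$; neither these derivations, nor the verification that their fixed locus is the closed point (your own ``secondary subtlety''), nor the termination of the induction via a strictly decreasing complexity invariant is supplied. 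The paper closes exactly this gap by invoking \cite[Theorem 4.1]{Matsumoto inseparable}, which realises every RDP in every characteristic as an iterated quotient by $p$-closed derivations; feeding that into Lemma \ref{lem: derivationyieldsgroupscheme} gives assertion (\ref{item: everythingisgeneralisedquotient}) at once. As written, your proposal reduces the theorem to a family of unproved explicit constructions rather than proving it; you should either cite these results of \cite{Matsumoto inseparable} or carry out the constructions in full.
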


\begin{proof}
Assertion (1) follows from \cite[Theorem 11.2]{LRQ} and 
Assertion (2) follows from Theorem \ref{thm: quotient}.

For Assertion (3), note that by Theorem \ref{thm: not quotient} it suffices to prove that 
the RDP of type $E_8^0$ in characteristic $3$ is a generalised quotient singularity in the 
sense of Definition \ref{def: generalised quotient singularity} (\ref{item: generalised}). 
Using Lemma \ref{lem: derivationyieldsgroupscheme}, this follows from the realisation of 
$E_8^0$ as a quotient of $k[[x,y]]$ by a $p$-closed derivation given in 
\cite[Theorem 1.1]{Matsumotoinseparable}.

Assertion (4) follows by applying Lemma \ref{lem: derivationyieldsgroupscheme} to 
\cite[Theorem 4.1]{Matsumotoinseparable}.
\end{proof}

\begin{Remark}
In particular, if $p>0$, then the singularities in
Definition \ref{def: generalised quotient singularity} are more general than those
in Definition \ref{def: quotient singularity}.
\end{Remark}

\begin{Remark}
In Theorem \ref{thm: generalisedquotient} (\ref{item: everythingisgeneralisedquotient}), 
it is possible to find a presentation as a generalised quotient singularity, 
such that every intermediate $X_i$ is an RDP as well.
For all RDPs different from $E_8^1$ in characteristic $2$, 
this is proved in \cite[Theorem 4.1]{Matsumotoinseparable}. 
For the remaining case, it is proved in \cite[Remark 4.4]{Matsumotoinseparable} that there is a local $\balpha_4$-torsor 
over $E_8^1$ with total space of type $D_8^0$, and $D_8^0$ is a quotient singularity by $\balpha_2$.
\end{Remark}

\begin{Remark}
\label{rem: KollarConjecture}
In \cite[Conjecture 2.24.1]{KollarLectures}, Koll\'ar conjectured that 
a normal complex variety $Y$ has only quotient singularities if and only if
there exists a finite and dominant morphism $f:Y\to X$ from a smooth variety.
This conjecture is true for surfaces, but open already for threefolds.
If $x\in X$ is an RDP singularity in positive characteristic, 
then Artin showed that there
always exists such a finite and dominant morphism,
see the discussion after \cite[Question (1.3)]{ArtinRDP}.
Theorem \ref{thm: generalisedquotient} shows that $x\in X$
need not be a quotient singularity in the sense of
Definition \ref{def: quotient singularity}.
In particular, Koll\'ar's conjecture is false in positive characteristic.
In fact, it already fails for RDPs, which are two-dimensional.
However, as Theorem \ref{thm: generalisedquotient} shows, 
Koll\'ar's conjecture might still hold in positive characteristics 
if one allows more generalised quotient singularities, for example,
generalised quotient singularities in the sense of Definition \ref{def: generalised quotient singularity}.
\end{Remark}

\section{Pathologies and counter-examples}\label{sec: pathologies}
In this final section, we give a couple of examples of local torsors over the rational double points.
These examples show bad behavior of such torsors over non-F-regular singularities, which
have no analogue in characteristic zero.

\subsection{Non-uniqueness of representatives}
If $G$ is a finite abelian $k$-group scheme and $x\in X=\Spec R$ 
is a singularity over $k$, then we have an exact sequence
 $$
  \Hfl{1}(X,G) \,\to\, \Hfl{1}(U,G) \,\to\, \overline{\Hfl{1}(U,G)}  \,\to\,0,
 $$
where $x\in X$ is the closed point and $U=X-\{x\}$.
Now, if $G$ is \'etale, that is, a finite group, then
$\Hfl{1}(X,G)$ is trivial by Hensel's lemma.
In particular, every class in $\overline{\Hfl{1}(U,G)} $ can be lifted to a 
\emph{unique} local $G$-torsor.

If $G$ is not \'etale, then $\Hfl{1}(X,G)$ is non-trivial and then,
an equivalence class does not have a unique representative.
In some cases, these non-unique representatives lead to local $G$-torsors whose
total spaces are abstractly isomorphic, see, for example, 
Lemma \ref{lem: alphap-torsor char 3} or Lemma \ref{lem: alphap-torsor char 2}.
However, it turns out that there are examples of RDPs where a single equivalence class of 
local torsors admits infinitely many representatives whose total spaces are pairwise 
non-isomorphic RDPs.

\begin{Proposition} \label{prop: non-uniqueness of torsors}
 Let $x\in X$ be an RDP of type $D_4^0$ (resp.\ $D_6^0$) in characteristic $p=2$.
 Let $G = \bmu_2$ (resp.\ $G = \balpha_2$).
 Then, there exists an infinite 
 sequence of local $G$-torsors
  $$ 
      \{X_i\,\to\,X\}_{i\in\NN},
  $$
  such that the images of the classes of $[X_i \to X]\in\Hfl{1}(U,G)$
  in $\overline{\Hfl{1}(U,G)} $ coincide and such that the $X_i$ are pairwise non-isomorphic 
  RDPs of type $D_{4+2i}^{1}$ (resp.\ $D_{4+2i}^0$).
\end{Proposition}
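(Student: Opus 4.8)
The plan is to produce the sequence $\{X_i \to X\}$ by an explicit construction that varies the ``global'' part of a local torsor while keeping the equivalence class fixed. Recall from Example \ref{ex: endomorphismsofalphap} and the exact sequence $\Hfl{1}(X,G) \to \Hfl{1}(U,G) \to \overline{\Hfl{1}(U,G)} \to 0$ that two local $G$-torsors have the same class in $\overline{\Hfl{1}(U,G)}$ precisely when they differ by a \emph{global} $G$-torsor over $X$, i.e.\ by an element of $\Hfl{1}(X,G)$. For $G = \bmu_2$ we have $\Hfl{1}(X,G) = R^\times/(R^\times)^2$ and for $G = \balpha_2$ we have $\Hfl{1}(X,G) = R/F(R) = R/R^{(2)}$ (using that $R$ is $S_2$, so $H^0(X,\OO_X) = H^0(U,\OO_U)$, as in the proof of Proposition \ref{prop: torslocloc}). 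So the first step is: fix one local $G$-torsor $X_0 \to X$ representing a chosen non-trivial primitive class (for $D_4^0$ take $G = \bmu_2$ and a $\bmu_2$-torsor from the class group $\mathrm{Cl}(X) = \C_2^2$; for $D_6^0$ take the $\balpha_2$-torsor of primitive class whose existence is guaranteed by Table \ref{table: torsors with Picloclocloc}, where $\Picloclocloc_{X/k} = \balpha_2$). Then I would write $X_i \to X$ as the torsor obtained by twisting $X_0$ with a carefully chosen sequence of classes $c_i \in \Hfl{1}(X,G)$, so that by construction all $[X_i \to X]$ have the same image in $\overline{\Hfl{1}(U,G)}$.

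\textbf{Explicit models.} The concrete work is to choose the twisting elements so that the total spaces $X_i$ are RDPs of the asserted types and are pairwise non-isomorphic. I would work with the standard equation $R = k[[x,y,z]]/(z^2 + x^2y + y^{n-1})$ (the $D_n$ equation, here $n = 4$ or $n = 6$) as in Table \ref{table: RDPEquations}, compute $H^2_\idealm(X,\OO_X)[F]$ via \v Cech cohomology on $\{D(x), D(y)\}$ exactly as in the proof of Proposition \ref{prop: torsors loclocloc new}, and identify the primitive class with a generator $f_1 = z/(xy)$. In the $\balpha_2$ case: the torsor $X_i \to X$ is cut out by adjoining $t_1, t_2$ with $t_1 - t_2 = f_1$, $t_1^2 = (\text{local function on } D(x)) + b_i$, $t_2^2 = (\text{local function on } D(y)) + b_i$ with $b_i \in R$ the twisting parameter, and then one runs the same normalization argument as in Lemma \ref{lem: alphap-torsor char 2} (setting $u_1 = xt_1$, $u_2 = yt_2$) to get $X_i = \Spec k[[y,u_1,u_2]]/(h_i)$ with $h_i = u_1^2 + (\cdots) + x^2 b_i$. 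The point of choosing $b_i$ appropriately (e.g.\ $b_i$ a unit times a power of $y$ growing with $i$) is that the bracketed $x^2 b_i$ term, which was negligible in Lemma \ref{lem: alphap-torsor char 2}, now \emph{does} change the singularity type, pushing it up to $D_{4+2i}^0$. The $\bmu_2$ case over $D_4^0$ is analogous with multiplicative twisting: adjoin a square root of a unit $w_i \cdot u$ where $u$ is the fixed function defining $X_0$ and $w_i \in R^\times$ varies.

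\textbf{Non-isomorphism and the main obstacle.} Once the $X_i$ are exhibited as RDPs of type $D_{4+2i}^1$ (resp.\ $D_{4+2i}^0$), pairwise non-isomorphism is immediate from Artin's classification \cite{ArtinRDP}: distinct Dynkin diagrams $D_{4+2i}$ give non-isomorphic (even non-homeomorphic) singularities. So the genuinely non-trivial part is the explicit Witt-vector-free calculation showing (a) that each $X_i$ is normal and an RDP — which follows as in Lemma \ref{lem: alphap-torsor char 2} once one checks the singular locus is isolated, via the criterion \cite[Proposition 2.15(2)]{Matsumoto alpha} used there — and (b) that the type is exactly $D_{4+2i}$, which requires identifying the lowest-degree part of $h_i$ after a suitable coordinate change (and, in the odd-index subtlety, getting the Artin coindex right: the claim is type $D_{4+2i}^1$ for the $D_4^0$ case, which I would verify by matching $h_i$ against the normal form $z^2 + x^2y + xy^m + zxy^{m-r}$ from Convention \ref{convention} with $r = 1$). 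The main obstacle is therefore purely computational bookkeeping: choosing the twist parameters $b_i$ (resp.\ $w_i$) so cleanly that the resulting equations are manifestly in the Du Val normal form of the right type, rather than requiring a messy iterated coordinate change to recognize them. I expect that $b_i = y^{?}$ for an explicit exponent depending linearly on $i$ works, and that verifying this reduces to a short parity/degree check on monomials of the kind carried out throughout Section \ref{sec: torsors over rdp}.
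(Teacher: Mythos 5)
Your strategy is sound but takes a genuinely different route from the paper, and its computational core is left unexecuted. The paper's proof has two ingredients: first, it observes that in both cases there is a \emph{unique} primitive class in $\overline{\Hfl{1}(U,G)}$ up to automorphisms of $X$ and $G$ (for $D_4^0$ via the transitive $S_3$-action on ${\rm Cl}(X)\setminus\{0\}=(\C_2^2)\setminus\{0\}$, for $D_6^0$ via the transitive $\Aut(\balpha_2)=R^\times$-action on the one-dimensional space $\overline{\Hfl{1}(U,\balpha_2)}$), so that the requirement that the classes coincide needs no further bookkeeping; second, it simply cites the explicit classification of $\bmu_2$- and $\balpha_2$-coverings of RDPs in \cite[Table 3]{Matsumoto inseparable}, which already contains the families $D_{4+2i}^{1}\to D_4^0$ and $D_{4+2i}^{0}\to D_6^0$. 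You instead fix one representative and twist by global classes in $\Hfl{1}(X,\bmu_2)=R^\times/(R^\times)^2$, resp.\ $\Hfl{1}(X,\balpha_2)=R/F(R)$; this has the advantage that the coincidence of the images in $\overline{\Hfl{1}(U,G)}$ holds literally and by construction rather than only after adjusting by automorphisms of $X$, and it is indeed the correct abstract explanation of why Matsumoto's table produces infinitely many non-isomorphic total spaces over a single class. However, the decisive step in your argument --- exhibiting explicit twists $b_i$ (resp.\ units $w_i$) for which the normalized equation $h_i$, obtained by rerunning the computation of Lemma \ref{lem: alphap-torsor char 2} with the extra term $x^2b_i$, is manifestly in Du Val normal form of type $D_{4+2i}^{0}$ (resp.\ $D_{4+2i}^{1}$, including the Artin coindex), together with the normality check for each $i$ --- is exactly the content the paper outsources to the reference, and you only conjecture its shape ($b_i$ a power of $y$ growing with $i$). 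Everything else in your proposal (the identification of $\Hfl{1}(X,G)$, the primitivity of the chosen class, and the non-isomorphism of the $X_i$ from Artin's classification) is correct, so the proof is complete modulo carrying out that one explicit normalization computation.
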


\begin{proof}
If $X$ is of type $D_4^0$, recall that an equation for $D_4^0$ is given by $z^2 + x^2y + xy^2 = 0$ and that the class group of $D_4^0$ is generated by the ideals $(z,x),(z,y),$ and $(z,x+y)$.
In particular, the action of the symmetric group $S_3$ on $D_4^0$ given by permuting $x$, $y$,
and $x+y$ 
induces a transitive action on ${\rm Cl}(X)$.
If $X$ is of type $D_6^0$, then $\dim_k \overline{\Hfl{1}(U,\balpha_{p})} = 1$ 
and the  $\Aut(\balpha_p) = R^\times$-action induces a transitive action on
$\overline{\Hfl{1}(U,\balpha_{p})} \setminus \{0\}$. 
This means that in both cases there is a unique primitive class in $\overline{\Hfl{1}(U,G)} $ up to automorphisms of $X$ and $G$. Thus, when proving our claim, we do not have to keep track of the class of $[X_i \to X]$ 
in $\overline{\Hfl{1}(U,G)} $. 

If $X$ is $D_4^0$, then the sequence $X_i$ is given in Row 7 of \cite[Table 3]{Matsumotoinseparable} 
if one sets $n = 2$ and $m = i + 1$. 
If $X$ is $D_6^0$, then the sequence $X_i$ is given in Row 5 of \cite[Table 3]{Matsumotoinseparable} 
if one sets $m = 2$ and $m' = i + 1$.
\end{proof}

\subsection{(Non-)normal representatives}

Despite the fact that local torsor classes need not have unique representatives,
it is remarkable that we can always find normal representatives of primitive classes
of $\bmu_n$-torsors and $\balpha_p$-torsors.
However this is not true in general for $\bmu_2 \times \bmu_2$-torsors.

\begin{Theorem} \label{thm: (no) normal representative}
 Let $x \in X$ be an RDP in characteristic $p>0$.
 \begin{enumerate}
   \item \label{item: no normal representative} If $x\in X$ is of type $D_{2m}^r$ and $p=2$
 with $r < m-1$ and $H = {\rm Cl}(X)$, then every primitive class in  $\overline{\Hfl{1}(U,H^D)}$ does not admit a normal representative.
 \item \label{item: normal representative mu} If $H\subseteq {\rm Cl}(X)$ is a subgroup and we are not in the situation
 of (\ref{item: no normal representative}), then every primitive class in  $\overline{\Hfl{1}(U,H^D)}$
 admits a normal representative.
 \item \label{item: normal representative alpha} Every primitive class in $\overline{\Hfl{1}(U,\balpha_p)}$ admits a normal representative.
 \end{enumerate}
\end{Theorem}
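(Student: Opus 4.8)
The three parts are of a different nature. For (\ref{item: normal representative mu}), a primitive class in $\overline{\Hfl{1}(U,H^D)}$ with $H \subseteq \mathrm{Cl}(X)$ corresponds, via Proposition \ref{prop: torsfinabcl}, to an injective homomorphism $H \to \mathrm{Cl}(X) = \mathrm{Cl}(U)$, hence to a cyclic subgroup generated by a Weil divisor class, or more generally to a finite subgroup of $\mathrm{Cl}(X)$. The plan is to take the associated \emph{cyclic cover} (or, for non-cyclic $H$, the iterated/total cover built from $\mathrm{Cl}(X)$), which is the $\mathrm{Spec}$ of the sheaf of algebras $\bigoplus_{\chi \in H^D} \mathcal{O}_X(D_\chi)$ for a suitable choice of divisors $D_\chi$ representing the classes, with multiplication coming from fixed isomorphisms $\mathcal{O}(D_\chi) \otimes \mathcal{O}(D_{\chi'}) \cong \mathcal{O}(D_{\chi\chi'})$. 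This algebra is normal because it is a subsheaf of a product of function fields and is $S_2$ and regular in codimension one over $U$ (where it is a torsor under a diagonalisable group scheme, hence étale-locally split), and normality then propagates across the closed point since $X$ is $S_2$ and the cover is finite. The key point to verify is that this classical construction, which over $\mathbb{C}$ is standard, goes through for $\bmu_n$ in characteristic $p \mid n$: one uses that $\mathrm{Cl}(X) \cong \Pic(U)$ and that the cover is exactly the integral closure of $X$ in the $\bmu_n$-torsor over $U$ determined by the Kummer-type class, so by Lemma \ref{lem: carvajal-rojas} it is automatically $S_2$, and the $R_1$ check is local on $U$ where everything is a genuine torsor.

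\textbf{Part (\ref{item: normal representative alpha}).} For $\balpha_p$-torsors the relevant statement is that the class in $\overline{\Hfl{1}(U,\balpha_p)} \cong H^2_\idealm(X,\OO_X)[F]$ (Corollary \ref{cor: torslocloc}) can be represented by a torsor whose total space is normal, equivalently (since hypersurface or complete-intersection-like total spaces are automatically $S_2$, and in general Lemma \ref{lem: carvajal-rojas}(\ref{item: Y S2}) gives $S_2$ for the integral closure) that one can choose a representative that is regular in codimension one. Here the plan is to invoke Matsumoto's theory of $\balpha_p$-quotients and $\balpha_p$-torsors (\cite{Matsumoto alpha}): a class in $H^2_\idealm(X,\OO_X)[F]$ is given by a rational function $g$ with $g^p$ regular on $U$, the corresponding torsor over $U$ is $\OO_U[t]/(t^p - g^p)$ with derivation $\partial_t$, and its integral closure $Y$ is normal if and only if the derivation extends to one on $Y$ with isolated (or at worst codimension $\geq 2$) fixed locus; by \cite[Proposition 2.15]{Matsumoto alpha} this is equivalent to a certain $1$-form $dg^p$ having zero locus of codimension $\geq 2$. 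For RDPs one exploits the explicit equations of Table \ref{table: RDPEquations} together with the explicit generators $f_j = z/(xy^j)$ of $H^2_\idealm(X,\OO_X)[F]$ from Proposition \ref{prop: torsors loclocloc new}: one checks case by case, mimicking the normality computations already carried out in Lemma \ref{lem: alphap-torsor char 3}, Lemma \ref{lem: alphap-torsor char 2}, and Lemma \ref{lem: alphap-torsor char 2 Dn}, that a suitable representative of each primitive class has the required codimension-$2$ zero-locus property, so $Y$ is normal. In fact those lemmas already \emph{produce} normal (indeed RDP) total spaces for the primitive classes over $E_6^0$, $E_8^1$ (char $3$), $E_7^2$, $E_7^1$, $E_8^3$ (char $2$), and the $D_n^r$ with $n - 2r \in 4\ZZ$; what remains is to record that the same computation works uniformly, or to cite \cite{Matsumoto alpha} for the general normality criterion and then note that primitivity forces $g$ not to be a $p$-th power times a unit, which is exactly what is needed to get the $1$-form non-degenerate in codimension one.

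\textbf{Part (\ref{item: no normal representative}): the main obstacle.} This is the genuinely new content and the hard direction: for $D_{2m}^r$ with $p = 2$ and $r < m - 1$ one must show that \emph{no} representative of a primitive class in $\overline{\Hfl{1}(U, H^D)}$, $H = \mathrm{Cl}(X) = \C_2^2$, is normal. The plan is as follows. A representative is a $\C_2^2$-torsor over $U$ (Cartier dual of $\mu_2 \times \mu_2$ — note in characteristic $2$ one should track whether $H^D$ is $\C_2^2$ or $\mu_2^2$; since $\mathrm{Cl}$ is étale-constant, $H^D = \mu_2 \times \mu_2$), and we want to show its integral closure $Y$ is never normal. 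The idea is that normality of $Y$ would, by Lemma \ref{lem: carvajal-rojas} and standard descent, force a subtorsor structure making $Y$ (or an intermediate cover) a quotient singularity with an étale-after-one-step structure, and then one runs into the obstruction of Lemma \ref{Lemma: quotientsinchar2} (that $\bmu_2 \times \C_2$ admits no very small action on $\widehat{\Aff}^2$) — indeed the proof of Corollary \ref{cor: non-unipotent D} shows precisely that the relevant $D_{2m}^r$ with $4r > n$ are not quotient singularities for exactly this reason, and Proposition \ref{prop: non-uniqueness of torsors} exhibits the non-normal representatives explicitly for $D_4^0$ and $D_6^0$. So the cleanest route is: (i) reduce to showing that a normal representative $Y \to X$ would be a local $\mu_2 \times \mu_2$-torsor with normal (hence, after checking rationality via Lipman's theorem \ref{thm: Lipman}, RDP) total space; (ii) analyse the tower $Y \to X' \to X$ through an intermediate $\mu_2$-quotient $X'$, which by part (\ref{item: normal representative alpha})/\ref{item: normal representative mu} and the $D_n^r$ computations of Lemma \ref{lem: alphap-torsor char 2 Dn} is forced to be of a specific RDP type; (iii) derive a contradiction with the constraints on $\mathrm{Cl}(X')$, $\pietloc(X')$, and $\Picloclocloc_{X'/k}$ from Table \ref{table: torsors with Picloclocloc}, or, more directly, cite the explicit classification in \cite[Table 3 and its analysis]{Matsumoto inseparable} of all covers of these $D_{2m}^r$ to see that the total space always has a non-$S_1$ or non-$R_1$ locus in codimension one — precisely, that the two $\mu_2$-directions cannot simultaneously be taken with separable-in-codimension-one behaviour because $r < m-1$ means the discriminants overlap. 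The principal difficulty is step (iii): making the "the two Kummer directions obstruct each other" heuristic into a clean argument, which I expect requires either a direct local computation with the equation $z^2 + x^2y + xy^m + zxy^{m-r}$ and the three divisor classes $(z,x)$, $(z,y)$, $(z, \ldots)$, showing that adjoining square roots along two of them produces a non-normal ring, or a citation to the relevant table in \cite{Matsumoto inseparable}. I would lead with the explicit-equation computation for general $m, r$ with $r < m - 1$, isolating the monomial obstruction, and fall back on \cite{Matsumoto inseparable} for the bookkeeping.
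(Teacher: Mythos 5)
There are genuine gaps in all three parts, and the central one is that you treat normality as something that comes for free from the torsor structure, when it is precisely the delicate point. For part (\ref{item: normal representative mu}), your claim that the cover $\Spec\bigoplus_{\chi}\mathcal{O}_X(D_\chi)$ is normal because ``the $R_1$ check is local on $U$ where everything is a genuine torsor'' is false in characteristic $p\mid n$: a $\bmu_p$-torsor over a regular surface is not \'etale-locally split, and its total space can be singular along a divisor; normality depends on the chosen representative of the class in $\overline{\Hfl{1}(U,H^D)}$ (this is exactly the content of Example \ref{ex: non-normal} and Proposition \ref{prop: non-uniqueness of torsors}). Worse, your argument makes no use of the hypothesis excluding case (\ref{item: no normal representative}), so if it worked it would also produce a normal representative for $H=\C_2\times\C_2$ over $D_{2m}^r$ with $r<m-1$, contradicting part (\ref{item: no normal representative}). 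The paper instead exhibits normal representatives by explicit graded equations, case by case (for $E_7^r$, $D_{2m}^r$, $D_{2m+1}^{r+1/2}$ the total spaces are sometimes elliptic singularities, not RDPs). The same problem affects part (\ref{item: normal representative alpha}): primitivity of a class does not force any particular representative $t^p=g$ to have its associated $1$-form vanishing only in codimension $\geq 2$, so ``primitivity gives the nondegeneracy'' is not a proof. The actual content is the construction, for \emph{every} primitive class, of a good representative; for $D_n^r$ the classes form an $l$-dimensional space with several orbits under $\Aut(\balpha_{p,X})\times\Aut(X)$ (distinguished by the colength of the annihilator), and the paper writes down, for each $1\leq i\leq l$, an explicit derivation whose quotient realises that colength with normal total space, and similarly two independent normal representatives for $E_8^0$ in characteristic $2$. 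Your earlier-lemma citations only cover single distinguished classes.

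For part (\ref{item: no normal representative}) the decisive idea is missing from your outline. The paper argues: if $\Spec S\to\Spec R$ is a normal representative, then since $\Frac R\subset\Frac S$ is purely inseparable of height one and degree $p^2=4$, one gets $\Frac R=(\Frac S)^{2}$, and normality of $S$ gives $R=S^{(2)}$; hence $S$ is itself an RDP of type $D_{2m}^r$. Then the $\C_2\times\C_2$-grading forces the three generators of $\idealm_S$ to have the three distinct nontrivial degrees, so each $\bmu_2\subset H^D$ acts symplectically in Matsumoto's sense, and the classification of symplectic $\bmu_2$-actions on $D_{2m}^r$ with $r<m-1$ forces each of the three actions to fix one and the same distinguished tangent direction $v_3\in\PP(\idealm_S/\idealm_S^2)$ --- impossible, since no nonzero tangent vector is invariant for all three gradings. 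Your proposed route (towers through intermediate $\bmu_2$-quotients, rationality via Lipman, and obstructions such as Lemma \ref{Lemma: quotientsinchar2}) does not obviously close: Lemma \ref{Lemma: quotientsinchar2} concerns $\bmu_2\times\C_2$ with an \'etale factor and is not applicable to the purely infinitesimal $\bmu_2\times\bmu_2$ situation, and you yourself flag that the key step (iii) is unresolved. Without the Frobenius-descent observation $R=S^{(2)}$ (or an equivalent mechanism pinning down what $S$ can be), the contradiction does not materialise.
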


\begin{proof}
First, assume that $X$ is F-regular. 
If $G$ is finite and abelian and there exists a local $G$-torsor $Y \to X$ of primitive class, 
then $G$ is linearly reductive by \cite[Theorem D]{Carvajal-Rojas} and hence, $Y$ is also F-regular 
by \cite[Theorem C]{Carvajal-Rojas}. 
In particular, $Y$ is normal (in fact, it is an RDP). 
This proves (\ref{item: no normal representative}), (\ref{item: normal representative mu}), and 
(\ref{item: normal representative alpha}) for all F-regular RDPs. In the following, we give a proof for the non-F-regular RDPs, which only exist in characteristic $p$ equal to $5$,$3$, or $2$.

If $p = 5$, the RDPs that are not $F$-regular are the ones of type $E_8^r$. In these cases, ${\rm Cl}(X)$ is trivial, hence (\ref{item: no normal representative}) and (\ref{item: normal representative mu}) are trivially satisfied if $p = 5$. For Claim (\ref{item: normal representative alpha}), this space is trivial if $r=1$, hence we assume $r=0$. Note that, up to scalar multiplication (which is induced by the action of $\Aut(\balpha_p)$, as explained in Example \ref{ex: endomorphismsofalphap}), there is a unique primitive class in $\overline{\Hfl{1}(U,\balpha_p)}$, which is represented by the local $\balpha_p$-torsor realising $X$ as a quotient singularity by $\balpha_p$. In particular, the total space of this local $\balpha_p$-torsor is normal, which proves Claim (\ref{item: normal representative alpha}) if $p = 5$.

If $p = 3$, the RDPs that are not F-regular are $E_6^r,E_7^r$, and $E_8^r$. Assume that $X$ is one of these RDPs. For $H \subseteq {\rm Cl}(X)$, the primitive class in $\overline{\Hfl{1}(U,H^D)}$ is unique up to $\Aut(H)$ and it has a normal representative, either because $H^D$ is \'etale or by \cite[Table 3]{Matsumotoinseparable}. Hence, Claims (\ref{item: no normal representative}) and (\ref{item: normal representative mu}) hold if $p = 3$. Similarly, the primitive class in $\overline{\Hfl{1}(U,\balpha_p)}$, if it exists, is unique up to scaling and a normal representative of this class is given in \cite[Table 3]{Matsumotoinseparable}. This proves Claim (\ref{item: normal representative alpha}) if $p = 3$.

If $p = 2$, we assume that $X$ is one of the RDPs that are not F-regular, that is, either $D_n^r$ or $E_n^r$.

First, consider Claim (\ref{item: no normal representative}), that is, assume that $X$ is of type $D_{2m}^r$ with $r < m -1$ and $H = {\rm Cl}(X) = \C_2 \times \C_2$. Seeking a contradiction, assume that $\Spec S \to \Spec R$ is a local $H^D$-torsor of primitive class with $S$ normal.
Since $S$ is normal and $(\Frac S)^{(2)} \subseteq \Frac R$, we have $R = S^{(2)}$, where $S^{(2)}$ denotes the pullback of $S$ along the Frobenius on $k$. 
In particular $S$ is an RDP of the same type as $R$.
The maximal ideal $\idealn$ of $S$ is generated by three elements $x_1,x_2,x_3 \in \idealn$ that are homogeneous with respect to the $H$-graded structure (recall Lemma \ref{lem: mu_p-torsor} for the graded structure induced by the $H^D$-torsor structure).
If $x_i$ is of degree $0 \in H$, then $x_i \in R = S^{(2)}$, a contradiction.
If $x_i$ and $x_j$ ($i \neq j$) are of the same degree, then $x_i x_j \in R = S^{(2)}$, hence $x_i x_j$ is equal to the square of some element in $\idealn$, which is impossible since $S$ is of type $D_{2m}^r$.
Hence $\{ \deg(x_i) \} = H \setminus \{ 0 \}$.

With respect to each subgroup scheme $\bmu_2 \subseteq H^D$, exactly two of the $x_i$ (say $x_1$ and $x_2$) have non-trivial degree, and as in \cite[Lemma 4.5(2)]{Matsumotomu} there exists a homogeneous element $F \in k[[x_1, x_2, x_3]]$ with $S = k[[x_1, x_2, x_3]] / (F)$. Then $\deg(F) = 0$, since the coefficient of $x_i^2$ is non-zero for some $i$ (otherwise $S$ cannot be an RDP of type $D_{2m}$). 
Since this holds for every subgroup scheme $\bmu_2 \subseteq H^D$, we have $F \in k[[x_1^2, x_2^2, x_3^2, x_1 x_2 x_3]]$.
Suppose the coefficient of $x_1 x_2 x_3$ is non-zero. Then, by Fedder's criterion \cite[Proposition 1.7]{Feddercriterion}, $R$ is $F$-split, contradicting the assumption that $r < m-1$.
But if the coefficient is zero, then $F$ is of the form $w^2 + G$ with $G \in \idealm^4$, which cannot define an RDP.
This contradiction shows that $S$ cannot be normal, hence Claim (\ref{item: no normal representative}) is proved.

To prove Claim (\ref{item: normal representative mu}), assume first that $H$ is not cyclic. Then, by Table \ref{table: torsors with Picloclocloc} and since we are not in the situation of Claim (\ref{item: no normal representative}), $X$ is of type $D_{2m}^{r}$ with $r = m-1$, $H = {\rm Cl}(X) = \C_2 \times \C_2$, and the primitive class in $\overline{\Hfl{1}(U,H^D)}$ is unique up to the action of $\Aut(H)$. Let $S = k[[x_1,x_2,x_3]] / (x_1^2 + x_2^2 + x_3^{2(m-1)} + x_1 x_2 x_3)$ and equip it with a $\ZZ/2\ZZ \times \ZZ/2\ZZ$-graded structure such that the $x_i$ are homogeneous of pairwise distinct non-zero degree. Then $\Spec S \to \Spec S^{(2)}$ is a $\bmu_2 \times \bmu_2$-torsor over $X$. Since $S$ is normal, this proves Claim (\ref{item: normal representative mu}) for non-cyclic $H$. 

Hence, we may assume that $H \subseteq {\rm Cl}(X)$ is cyclic, say of order $n > 1$. If $p \nmid n$, then $H$ is \'etale and all representatives are normal. If $p \mid n$, then $n = 2$ and $X$ is of type $E_7^r$, $n = 2$ and $X$ is of type $D_{2m}^r$, or $n \in \{2,4\}$ and $X$ is of type $D_{2m+1}^{r + 1/2}$.

If $X$ is of type $E_7^r$, then $n = 2$ and the primitive class in $\overline{\Hfl{1}(U,H^D)}$ is unique, so it suffices to give one normal representative of a local $\bmu_2$-torsor of primitive class over $X$. Set $\varepsilon := z,Y,zY,Y^2$ for $r = 3,2,1,0$, respectively. 
Then 
$$
  \Spec k[[x,y,z]] / (x^2 + z^3 + y^4 + xy\varepsilon) \,\to\, 
  \Spec k[[w,z,Y]] / (w^2 + Y(z^3 + Y^2 + w\varepsilon))
$$
with $Y = y^2$ and $w = xy$ is a normal $\bmu_2$-torsor over $E_7^r$,
where the $\bmu_2$-action is given by the $\ZZ/2\ZZ$-graded ring structure with homogeneous 
elements $x, y, z$ of weight $1,1,0$ respectively.
For $r = 3,2$, this is $E_7^3$, $E_8^3$ respectively.
For $r = 1,0$, this is not a rational double point, but a normal elliptic singularity (see Remark \ref{rem: salmon}).

If $X$ is of type $D_{2m}^r$ and $n = 2$, then there are either one or two primitive classes in $\overline{\Hfl{1}(U,H^D)}$ up to the action of $\Aut(X)$. The morphisms
\begin{gather*}
\Spec k[[x,y,z]] / (x^2 + y^2 z + z^m + z^{m-r} xy)
\\ \to \Spec k[[w,z,Y]] / (w^2 + Y(Y z + z^m + z^{m-r} w)))
\end{gather*}
and 
\begin{gather*}
\Spec k[[x,y,z]] / (x^2 + z^2 + z^3 + y^{2(m-1)} z + y^{2(m-r-1)} z xy) 
\\ \to \Spec k[[w,z,Y]] / (w^2 + Y(z^2 + z^3 + Y^{m-1} z + Y^{m-r-1} z w))
\end{gather*}
are normal local $\bmu_2$-torsors of primitive class,
and up to $\Aut(X)$ these classes covers all non-trivial classes of $\overline{\Hfl{1}(U,G)}$ (if $2m > 4$, the first one covers two classes and the second covers the remaining one; if $2m = 4$, then $\Aut(X)$ acts on $\mathrm{Cl}(X) \setminus \{0\}$ transitively). More precisely, the total space of the first $\bmu_2$-torsor is a rational double point of type $D_{3m-2r}^{m/2}$ and the second one is not a rational double point if $2m > 4$, but still normal.

If $X$ is of type $D_{2m+1}^{r+1/2}$, then the primitive class in $\overline{\Hfl{1}(U,H^D)}$ is unique up to $\Aut(H)$. The morphism
\begin{gather*}
\Spec k[[x_1, y_2, z_3]] / ((1 + x_1 z_3) z_3^2 + x_1^2 + y_2^{2m-2r-1} (y_2^{2r} (1+x_1 z_3) + x_1 z_3))
\\ \to \Spec k[[y_2, A, Z]] / (A^2 + (1+A) Z^2 + Z y_2^{2m-2r-1} (y_2^{2r} (1+A) + A))
\\ \to \Spec k[[Y, A, B]] / ((1+A) B^2 + Y A^2 + B Y^{m-r} (Y^r (1+A) + A)),
\end{gather*}
where $A = x_1 z_3$, $Z = z_3^2$, $Y = y_2^2$, $B = y_2 Z$, is a local $\bmu_4$-torsor of primitive class over $X$, 
under the $\ZZ/4\ZZ$-graded ring structure with homogeneous elements $x_1, y_2, z_3$ of indicated weights.
The intermediate torsor is a local $\bmu_2$-torsor of primitive class. The total spaces of both the local $\bmu_4$-torsor and the local $\bmu_2$-torsor are normal. 
This finishes the proof of Claim (\ref{item: normal representative mu}).

Finally, consider Claim (\ref{item: normal representative alpha}). If $X$ is not of type $D_n^r$ or $E_8^0$, then the primitive class in $\overline{\Hfl{1}(U,\balpha_p)}$, if it exists, is unique up to scaling, hence it suffices to give a single local $\balpha_p$-torsor of primitive class over $X$ with normal total space. Examples of those are given in \cite[Tables 2 and 3, and proof of Theorem 4.1]{Matsumotoinseparable}. We will use that, since $p = 2$, the annihilator of the class of a local $\balpha_p$-torsor $\Spec S \to \Spec R$ given by a derivation $D$ is equal to the ideal $(\Image(D)) \subseteq R$ by \cite[Lemma 6.3]{Matsumotoalpha}. 

If $X$ is of type $E_8^0$, then $\overline{\Hfl{1}(U,\balpha_p)}$ is $2$-dimensional, generated as an $R$-module by the local $\balpha_p$-torsor with smooth total space given in \cite[Lemma 3.6]{Matsumotoalpha}. The annihilator of this generator is $(x,y^2,z)$ with respect to the equation of $X$ given in Table \ref{table: RDPEquations}. Next consider the local $\balpha_p$-torsor
\begin{gather*}
\Spec k[[x,y,z]] / (x^2 + z^3 + y^4 (y^3 + z x)) 
\\ \to \Spec k[[w,z,Y]] / (w^2 + Y^3 + z^2 (z^3 + Y^2 w)),
\end{gather*}
where $Y = y^2$ and $w = y^3 + z x$,
with derivation $D$ defined by $(D(x),D(y),D(z)) = (y^2, z, 0)$.
Then $(\Image(D)) = (D(x),D(y),D(xy)) = (Y, z, w)$. This local $\balpha_p$-torsor has normal total space and its class in $\overline{\Hfl{1}(U,\balpha_p)}$ is $k$-linearly independent from the class of the local torsor with smooth total space, since its annihilator has different colength.

If $X$ is of type $D_n^r$, then, by Table \ref{table: torsors with Picloclocloc} (Proposition \ref{prop: torsors loclocloc new}), $\overline{\Hfl{1}(U,\balpha_p)} $ is $l := \floor{\frac{1}{2}(\frac{n}{2}-r)}$-dimensional, generated by an element whose annihilator is $(x,y^l,z)$ with respect to the equation given in Table \ref{table: RDPEquations}.
Thus it suffices to give, for each $1 \leq i \leq l$, a local $\balpha_p$-torsor with normal total space whose class $e_i$ satisfies $\length(R/\Ann(e_i)) = i$.
Take an odd integer $j \gg 1$.
Let $s = 2r \in \ZZ$ (not necessarily $\in 2\ZZ$) and $h = (n - 2r - 4i)/2 \in \ZZ_{\geq 0}$. 
Consider the following local $\balpha_p$-torsor:
\begin{gather*}
\Spec k[[x,y,z]] / (x^2 + z^j + y^{2h} z (z y + y^s + y^{2i} x)) 
\\ \to \Spec k[[w,z,Y]] / (w^2 + z^2 Y + Y^{s} + Y^{2i} (z^j + Y^h z w)),
\end{gather*}
where $Y = y^2$ and $w = z y + y^s + y^{2i} x$,
with derivation $D$ defined by $(D(x),D(y),D(z)) = (z + s y^{s-1}, y^{2i}, 0)$.
We calculate that $(\Image(D)) = (D(x),D(y),D(xy)) = (z, Y^i, w)$, so that the class $e_i$ of this local $\balpha_p$-torsor satisfies $\length(R/\Ann(e_i)) = i$, as desired. Since this local $\balpha_p$-torsor has normal total space, this finishes the proof of Claim (\ref{item: normal representative alpha}).
\end{proof}

\subsection{Non-rational representatives}
Although it is always possible to find normal representatives
of local $\bmu_n$-classes, it may \emph{not} be possible to find representatives
whose total spaces are rational double point singularities:

\begin{Proposition} \label{prop: non-rational rep}
 For both RDPs of type $E_7^1$ and $E_7^0$ in characteristic $p=2$,
 the unique non-trivial class
 in $\overline{\Hfl{1}(U,\bmu_2)} $ can be represented by elliptic
 Gorenstein singularities, but not by RDPs.
\end{Proposition}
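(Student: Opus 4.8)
The plan is to treat the two halves of the statement separately — first producing an elliptic Gorenstein representative, then excluding rational double point representatives — but I would begin by pinning down the set $\overline{\Hfl{1}(U,\bmu_2)}$. By Lipman's computation recorded in Table~\ref{table: torsors with Picloclocloc}, one has ${\rm Cl}(X)\cong\C_2$ for $X$ of type $E_7^1$ or $E_7^0$ in characteristic $2$, and since $\bmu_2^D\cong\C_2$ is étale, Proposition~\ref{prop: torsfinabcl} gives $\overline{\Hfl{1}(U,\bmu_2)}\cong\Hom(\C_2,{\rm Cl}(X))\cong\C_2$. So there really is a unique nontrivial class, and because $\bmu_2$ is simple it is the unique primitive class in the sense of Definition~\ref{def: primitive}.

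For the existence of an elliptic Gorenstein representative I would simply invoke the explicit local $\bmu_2$-torsors already written down in the proof of Theorem~\ref{thm: (no) normal representative}(\ref{item: normal representative mu}): for $X$ of type $E_7^r$ with $r\in\{1,0\}$ these are $\Spec S\to X$ with $S=k[[w,z,Y]]/(w^2+Y(z^3+Y^2+w\varepsilon))$ and $\varepsilon=zY$ (resp.\ $\varepsilon=Y^2$), of primitive class with normal total space. Being a hypersurface, $S$ is Gorenstein, and the same proof identifies $\Spec S$ as a normal elliptic — in particular non-rational — singularity; so $\Spec S$ is an elliptic Gorenstein representative of the unique nontrivial class. (One can equally quote the corresponding entries of \cite{Matsumoto mu} or \cite{Matsumoto inseparable}.)

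For the nonexistence of a rational double point representative I would argue by contradiction: suppose there is a local $\bmu_2$-torsor $\Spec S\to X$ with $S$ an RDP. By Lemma~\ref{lem: carvajal-rojas} the $\bmu_2$-action extends to $\Spec S$; since $S$ is local and $\Spec S\to X$ is finite, the preimage of $x$ is the closed point of $\Spec S$, so the action is free away from that point, and $\Spec S/\bmu_2=\Spec S^{\bmu_2}=X$ because $S^{\bmu_2}$ lies between $R$ and $\Frac R$ and $R$ is normal. Thus $S$ would be a rational double point equipped with a $\bmu_2$-action, free on the punctured spectrum, whose quotient is a rational double point of type $E_7^1$ or $E_7^0$. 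But the classification of $\bmu_2$-actions on rational double points in characteristic $2$ and of their quotients in \cite{Matsumoto mu} shows this cannot happen: among the RDP quotients of RDPs by such $\bmu_2$-actions, the only ones with resolution graph $E_7$ are of type $E_7^3$ (quotient of $E_7^3$ itself) and $E_7^2$ (quotient of $E_8^3$) — exactly the two cases already surfacing in the proof of Theorem~\ref{thm: (no) normal representative} — so $E_7^1$ and $E_7^0$ do not occur. This excludes all representatives at once, which is essential, since by Proposition~\ref{prop: non-uniqueness of torsors} a single class can be represented by infinitely many pairwise nonisomorphic total spaces.

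The only step that is not bookkeeping is this last appeal to \cite{Matsumoto mu}. If the classification is not stated there in quite the shape I need, I would fall back on a direct computation: a $\bmu_2$-action on an RDP with RDP (hence Gorenstein) quotient is necessarily symplectic, so it is in one of the normal forms of \cite{Matsumoto mu}; I would then compute the quotient equation from each $E_7$-type normal form and check that the output is always $E_7^3$ or $E_7^2$, never $E_7^1$ or $E_7^0$. That finite but slightly delicate characteristic-$2$ case-check is where the real work sits; the rest is assembling already-available facts.
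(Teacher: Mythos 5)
Your proposal follows the paper's own proof essentially verbatim: for existence you quote the explicit normal elliptic (hence Gorenstein, being hypersurfaces) total spaces of local $\bmu_2$-torsors of primitive class over $E_7^1$ and $E_7^0$ constructed in the proof of Theorem \ref{thm: (no) normal representative}, and for non-existence you invoke the classification of $\bmu_2$-actions on RDPs in characteristic $2$ that are free outside the closed point and observe that no such quotient is of type $E_7^1$ or $E_7^0$ (the paper cites \cite[Table 3]{Matsumoto inseparable} for this; your appeal to the symplectic normal forms of \cite{Matsumoto mu} is the same underlying classification, and your remark that one must exclude \emph{all} representatives, not just the exhibited one, is exactly the point). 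One small slip to correct: in the displayed torsor $\Spec k[[x,y,z]]/(x^2+z^3+y^4+xy\varepsilon)\to\Spec k[[w,z,Y]]/(w^2+Y(z^3+Y^2+w\varepsilon))$ the ring $k[[w,z,Y]]/(w^2+Y(z^3+Y^2+w\varepsilon))$ is the \emph{base} $E_7^r$, not the total space; the elliptic Gorenstein representative is the source $k[[x,y,z]]/(x^2+z^3+y^4+xy\varepsilon)$.
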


\begin{proof}
We have constructed elliptic Gorenstein singularities as total spaces
of local $\bmu_2$-torsors over the $E_7^1$ and $E_7^0$-singularities
in characteristic $2$ in the proof of Theorem \ref{thm: (no) normal representative}.
On the other hand, by \cite[Table 3]{Matsumotoinseparable}, there exist no $\bmu_2$-actions
on a rational double point singularity in characteristic $2$ that are free outside the closed point and with quotient of type $E_7^1$ and $E_7^0$.
\end{proof}

\begin{Remark}
We have already seen in the proof of Lemma \ref{lem: E81main}
(see also Remark \ref{rem: E81remark}) that if $x\in X$ is the RDP 
of type $E_8^1$ in characteristic $2$, then the primitive classes in $\overline{\Hfl{1}(U,\balpha_2)} $ can be represented by normal and non-rational
Gorenstein singularities, but not by RDPs.
What makes the previous proposition remarkable is that this phenomenon
also shows up for local $\bmu_2$-torsors, which should be better behaved
than local $\balpha_2$-torsors as $\bmu_2$ is linearly reductive.
\end{Remark}

\subsection{Local torsors as normalisations of global torsors}
Another weird phenomenon, which can only occur because the total space of a torsor under an infinitesimal group scheme over a smooth scheme is not necessarily smooth, is the following:
the example of an RDP $x\in X$ and a global $\balpha_p$-torsor over $X$ with non-normal total space and whose normalisation is a local $\balpha_p$-torsor that no longer extends to $X$.

\begin{Example} \label{ex: non-normal}
  Let $x\in X$ be the RDP of type $E_6^0$ in characteristic $p=3$.
  Then, there exists an $\balpha_3$-torsor $Y\to X$,
  such that $Y$ is non-normal, and such that the composition $Y^{\nu} \to Y \to X$ 
  has the structure of a local $\balpha_3$-torsor of primitive class, where $Y^\nu$ 
  denotes the normalisation of $Y$.
\end{Example}

\begin{proof}
Let $R:= k [[x,y,z]]/(z^2+x^3-y^4)$ be the RDP of type $E_6^0$ in $p=3$.
Set $S:= k [[u,v]]$ and consider the embedding $R\to S$ given by
$x\mapsto u^4-v^2$, $y\mapsto u^3$, $z\mapsto v^3$.
This map exhibits the $E_6^0$-singularity $x\in X$ as a quotient of $S$ by the
additive vector field $D=v\frac{\partial}{\partial u}-u^3\frac{\partial}{\partial v}$.
In particular, this gives an explicit description of $x\in X$ as $\balpha_3$-quotient
singularity.

Next, consider the embedding $R\to R[\sqrt[3]{y}]=:T$. 
In particular, $\Spec T\to\Spec R$ carries the structure of an $\balpha_3$-torsor.
We define an embedding of $R$-algebras $T\to S$ via $\sqrt[3]{y}\mapsto u$.
This way, $T$ becomes a subring of $S$ and both have the same field of fractions.
Now, $T$ is not normal and since $S$ is normal and 
integral over $T$, it follows that $S$ is the normalisation of $T$.
\end{proof}

\begin{Remark}
We remark that the morphism $Y^{\nu} \to Y$ in Example \ref{ex: non-normal} is 
\emph{not} $\balpha_3$-equivariant.

In fact, let $x\in X$ be a singularity, let 
$Y \to X$ be a local $G$-torsor for some finite group scheme $G$, 
and let $Z \to Y$ be any morphism such that the composition
$Z \to Y \to X$ admits the structure of a local $G$-torsor. 
If $Z \to Y$ is equivariant, then it restricts to an equivariant morphism of $G$-torsors 
over $U$, hence to an isomorphism over $U$ and then, by the universal property of 
integral closures, this shows that $Z \to Y$ is an isomorphism. 
In particular, $Y$ is normal if and only if $Z$ is and thus, in the setting of 
Example \ref{ex: non-normal}, it is impossible to choose the two $\balpha_3$-actions 
compatibly.
\end{Remark}

\subsection{Infinite towers}
The local \'etale fundamental group of a rational double point is finite
and the local \'etale covers lead to a ``simpler'' rational double point that covers the given one.
In characteristic zero, the universal local \'etale cover is actually smooth.
When studying local $G$-torsors under finite and infinitesimal group schemes, by Proposition \ref{prop: non-rational rep}, the total space of the local $G$-torsor can have a singularity that is more complex than the one we started with. 
In the following, we will note that the singularity might not change at all, that is, there are RDPs that are local torsors over themselves.
The following proposition, which was proven by the third named author in \cite[Table 3]{Matsumotoinseparable}, gives a classification of such RDPs in the case where $G \in \{\balpha_p,\bmu_p\}$.

\begin{Proposition}\label{prop: infinitetowers}
Let $x \in X$ be a rational double point and $G \in \{\balpha_p,\bmu_p\}$. 
Then, $X$ admits the structure of a local $G$-torsor over itself if and only if one of the following holds:
\begin{enumerate}
    \item $p = 3$, $G = \bmu_3$, and $X$ is of type $E_6^1$.
    \item $p = 3$, $G = \balpha_3$, and $X$ is of type $E_8^0$.
    \item $p = 2$, $G = \bmu_2$, and $X$ is of type $D_{4m}^{m}$.
    \item $p = 2$, $G = \bmu_2$, and $X$ is of type $E_7^3$.
    \item $p = 2$, $G = \balpha_2$, and $X$ is of type $D_{4m+2}^0$.
    \item $p = 2$, $G = \balpha_2$, and $X$ is of type $D_{8m}^{2m}$.
\end{enumerate}
\end{Proposition}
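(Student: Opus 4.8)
The plan is to recast the condition ``$x\in X$ carries the structure of a local $G$-torsor over itself'' in terms of very small actions of $G$ and then to read off the answer from the classification of $\bmu_p$- and $\balpha_p$-quotients of rational double points in \cite{Matsumoto inseparable}.

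First I would observe that such a self-torsor is the same datum as a very small $G$-action on $X$ whose quotient is again $X$. Indeed, if $Y\to X$ is a local $G$-torsor with $Y\cong X$, then $Y$ is normal, so by Lemma \ref{lem: carvajal-rojas} and the discussion following Definition \ref{def: local torsor}, $Y\to X$ is the integral closure of a $G$-torsor $V\to U$ and $G$ acts on $Y$ freely over $U$; the unique closed point of $Y$ lies over $x$, hence is fixed by $G$, so the $G$-action on $Y\cong X$ is very small and exhibits $X$ as $Y/G$. Conversely, any very small $G$-action on $X$ with $X/G\cong X$ produces such a self-torsor, by the discussion following Lemma \ref{lem: carvajal-rojas}. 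Thus the proposition is equivalent to classifying, for $G\in\{\balpha_p,\bmu_p\}$, those RDPs $x\in X=\Spec R$ admitting a very small $G$-action whose quotient is an RDP of the same type as $X$.

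For $G=\bmu_p$ a very small action is a $\ZZ/p\ZZ$-grading on $R$ that is free outside $\idealm$, and the quotient is $\Spec R_0$; for $G=\balpha_p$ it is a $p$-closed derivation $D$ on $R$ whose zero locus equals the closed point, with quotient $\Spec(\Ker D)$. Both kinds of action on an RDP, together with the type of the resulting quotient singularity, are tabulated in \cite[Table 3]{Matsumoto inseparable} (see also \cite{Matsumoto mu} for $\bmu_p$ and \cite{Matsumoto alpha} for $\balpha_p$). The proof then amounts to scanning that table for the rows in which the source RDP and its quotient have the same type: these are precisely the six cases (1)--(6), and the explicit gradings, respectively derivations, listed in loc.\ cit.\ simultaneously supply the self-torsor structures, giving the converse implication.

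The only substantive input is the classification from \cite{Matsumoto inseparable}, which we use as a black box; granting it, identifying the self-quotient rows is a finite check, and I expect the main (purely expository) subtlety to be the equivalence in the first step — in particular that the $G$-action must be transported along the normalisation, which need not be $G$-equivariant, as in the remark after Example \ref{ex: non-normal}. As a consistency check one may compare the six cases with Table \ref{table: torsors with Picloclocloc}: for $E_6^1$ in $p=3$ one has ${\rm Cl}(X)=\C_3$, so primitive local $\bmu_3$-torsors exist; for $E_8^0$ in $p=3$ one has $\Picloclocloc_{X/k}=\balpha_9$, which contains $\balpha_3$; and the cases $D_{4m}^m$, $E_7^3$, $D_{4m+2}^0$, $D_{8m}^{2m}$ in $p=2$ are compatible with the corresponding entries for ${\rm Cl}(X)$ and $\Picloclocloc_{X/k}$ — in each instance the relevant torsor loops back to a singularity of the same type.
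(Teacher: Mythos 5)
Your proposal is correct and follows essentially the same route as the paper, which simply records that the classification was "proven by the third named author in \cite[Table 3]{Matsumoto inseparable}": one translates a self-torsor into a very small $\bmu_p$- or $\balpha_p$-action (a grading, respectively a $p$-closed derivation) with quotient of the same type and scans that table for the self-quotient rows. Your extra care in justifying the translation (and noting that $Y\cong X$ is already normal, so no non-equivariant normalisation issue actually arises) is a reasonable expansion of what the paper leaves implicit.
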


In particular, these examples give rise to infinite towers of local $G$-torsors
$$
   ...\,\to\,X_{i+1} \,\to\, X_i \,\to\, ...
$$
such that each $X_i$ is a rational double point of the same type. 
We note that this phenomenon can even happen for rational double points that are quotient singularities 
by Table \ref{table: Quotient- and non-quotient RDPs}.

\subsection{Deformations of (non-)quotient singularities}
Deformations of quotient singularities have been studied by
Schlessinger \cite{Schlessinger} and Riemenschneider \cite{Riemenschneider}.
Over the complex numbers, quotient singularities are rigid in dimension $\geq3$ and in dimension $2$, deformations
of quotient singularities are again quotient singularities by
a theorem of Esnault and Viehweg 
\cite{EsnaultViehweg2} (and former conjecture of Riemenschneider).
Similar results hold in characteristic $p>0$ for quotient singularities
by linearly reductive group schemes, that is, lrq singularities,
see \cite{LRQ}.

For quotient singularities by finite group schemes in the sense
of Definition \ref{def: quotient singularity} that are not
linearly reductive, these results no longer hold true.
For example, Schlessinger's rigidity in dimension $d\geq3$ 
result may fail, as we already 
observed in \cite[Section 9.2]{LRQ}.

\begin{Example}
Let $x\in X$ be an RDP of type $E_8^r$ in characteristic $p=2$.
For $r\leq s\leq 4$, there exists an equi-characteristic
deformation of $x\in X$ to an RDP of type $E_8^s$. 
Moreover, there are deformations of $x \in X$ to an RDP of type $A_1$, as well as
smoothing deformations. Inspecting Table \ref{table: Quotient- and non-quotient RDPs},
we see that this gives examples of deformations of RDPs, such that
\begin{enumerate}
    \item a quotient singularity deforms to a non-quotient
    singularity and vice versa.
    This is in contrast to characteristic zero \cite{EsnaultViehweg2}
    or to lrq singularities \cite{LRQ, SatoTakagi},
    where a quotient singularity always deforms to quotient singularity
    (but maybe with respect to a different group (scheme)).
    \item Even if a quotient singularity deforms to a quotient
    singularity, then upper semicontinuity of the lengths
    of the group schemes may not hold.
    We refer to \cite[Section 12]{LRQ} for a
    discussion of this semicontinuity and
    especially \cite[Proposition 12.11]{LRQ},
    where we establish it for RDPs.
    \item Even if a quotient singularity by a group scheme $G$ deforms to a quotient singularity 
    by a group scheme $G'$ of the same length as $G$, then $G'$ need not be a deformation of $G$.
\end{enumerate}
\end{Example}

Using Table \ref{table: Quotient- and non-quotient RDPs}, it is not difficult to 
see that some of these phenomena also show up for RDPs 
in characteristic $p=3$ and $p=5$, but we leave this to the reader.

\begin{Remark}
The results of this section show that local torsors over RDPs can display rather
pathological behaviour, which cannot occur for F-regular RDPs or in characteristic $0$.
Of course, non-F-regular RDPs exist in characteristic $2\leq p\leq 5$ only.
However, it is very likely that these phenomena will also appear in arbitrary positive
characteristic, when considering canonical singularities 
of arbitrary dimension.
\end{Remark}

\bibliographystyle{alpha}
\bibliography{Bibliography}

\end{document}